\documentclass[a4paper, 12pt]{amsart}

\makeatletter
\def\@tocline#1#2#3#4#5#6#7{\relax
  \ifnum #1>\c@tocdepth 
  \else
    \par \addpenalty\@secpenalty\addvspace{#2}
    \begingroup \hyphenpenalty\@M
    \@ifempty{#4}{
      \@tempdima\csname r@tocindent\number#1\endcsname\relax
    }{
      \@tempdima#4\relax
    }
    \parindent\z@ \leftskip#3\relax \advance\leftskip\@tempdima\relax
    \rightskip\@pnumwidth plus4em \parfillskip-\@pnumwidth
    #5\leavevmode\hskip-\@tempdima
      \ifcase #1
       \or\or \hskip 1em \or \hskip 2em \else \hskip 3em \fi
      #6\nobreak\relax
    \hfill\hbox to\@pnumwidth{\@tocpagenum{#7}}\par
    \nobreak
    \endgroup
  \fi}
\makeatother

\usepackage{amsmath,amssymb,amsthm,url}
\usepackage[utf8]{inputenc}
\usepackage[T1]{fontenc}
\usepackage{quiver}
\usepackage{libertine}
\usepackage[libertine,cmintegrals,cmbraces]{newtxmath}
\usepackage[shortlabels]{enumitem}

\usepackage{mathtools}
\usepackage{adjustbox}

\usepackage{xcolor}
\usepackage{tikz}
\usepackage{tikz-cd}
\usetikzlibrary{calc,intersections}
\usepackage{nicefrac}
\usepackage{dsfont}
\usepackage{todonotes}
\usepackage{a4wide}

\makeatletter
\newcommand{\mynameis}[1]{#1\renewcommand{\@currentlabel}{#1}}
\makeatother

\AtBeginDocument{
   \def\MR#1{}
} 
\usepackage{euscript}
\usepackage[all]{xy}

\usepackage[pagebackref]{hyperref}
  
\hypersetup{
  bookmarksnumbered=true,
  colorlinks=true,
  linkcolor=seagreen,
  citecolor=seagreen,
  filecolor=seagreen,
  menucolor=seagreen,
  urlcolor=seagreen,
  pdfnewwindow=true,
  pdfstartview=FitBH}

\allowdisplaybreaks

\usepackage{xcolor}
\usepackage{amsmath}
\definecolor{seagreen}{RGB}{46,139,87}
\definecolor{maroon}{RGB}{128,0,0}
\definecolor{darkviolet}{RGB}{148,0,211}
\definecolor{twelve}{RGB}{100,100,170}
\definecolor{thirteen}{RGB}{100,150,50}
\definecolor{fourteen}{RGB}{200,0,0}
\definecolor{fifteen}{RGB}{0,200,0}
\definecolor{sixteen}{RGB}{0,0,200}
\definecolor{seventeen}{RGB}{200,0,200}
\definecolor{eighteen}{RGB}{0,200,200}

\DeclareMathOperator{\colim}{\mathrm{colim}}

\newcommand{\del}{\partial}
\newcommand{\Z}{\mathbb{Z}}

\newcommand{\FM}{\text{FM}}

  \newcommand{\adjunction}[4]{
\xymatrix{
#1:#2 \ar@<.5ex>[r] &
\ar@<.5ex>[l] #3:#4
}}

\newtheorem{thm}{Theorem}[subsection]
\newtheorem{prop}[thm]{Proposition}
\newtheorem{example}[thm]{Example}

\newtheorem{warning}[thm]{Warning}

\newtheorem{lem}[thm]{Lemma}
\newtheorem{cor}[thm]{Corollary}

\newtheorem*{thm*}{Theorem}

\theoremstyle{definition}

\newtheorem{definition}[thm]{Definition}

\newtheorem{remark}[thm]{Remark}
\newtheorem{ex}[thm]{Example}

\title{Surgery on manifold operads}
\author{Xujia Chen}
\author{Connor Malin}
\author{Paolo Salvatore}

\begin{document}

\begin{abstract} 
We study cobordisms of a class of topological operads called ``manifold operads''. These operads are generalizations of the Fulton-MacPherson operad: an operad built from configurations of points in Euclidean space.  Cobordism of manifold operads, along with the associated theory of surgery, depends crucially on delicate combinatorial results for trees associated to operadic bimodules.  As an application of surgery, we produce infinitely many manifold operads which are left or right ``bimodule cobordant'' to, but not homotopy equivalent to the Fulton-MacPherson operad.

\end{abstract}
\maketitle

\tableofcontents

\section{Introduction and main results}

The $d$-dimensional Fulton-MacPherson operad $\mathrm{FM}_d$ was defined in \cite{getzler_jones} to have underlying spaces the so-called ``infinitesimal configurations'' of points in $\mathbb{R}^d$ modulo the relations given by translation and scaling by a positive real number. The $m$th space $\mathrm{FM}_d(m)$ is a compact manifold with boundary, and it is homotopy equivalent to the ordered configuration space of $m$ points $\mathrm{Conf}(\mathbb{R}^d,m)$. The operad structure is given by insertion of infinitesimal configurations. 

The Fulton-MacPherson operad $\FM_d$ is an example of a topological manifold operad $O$ of dimension $d$ (Definition \ref{manifoldoperad_dfn}): 
a reduced operad $O$ such that
for all $m \geq 2$,
$O(m)$ is a compact manifold
  of dimension $md-d-1$, and 
the boundary $\partial O(m)$ is freely generated by the operations of arity less than $m$.
In particular, there is an equality \[\partial O(m) = \mathrm{Decom}(O)(m)\] between the boundary of $O(m)$ and the space of decomposable elements of arity $m$.

Manifold operads are a setting in which it is possible to make surgery theoretic arguments. To make this precise, we introduce bimodule cobordisms between manifold operads (Definition \ref{bimodulecobordism_dfn}): a bimodule cobordism between two manifold operads $O,P$ of dimension $d$ is a reduced operadic $(O,P)$-bimodule $W$ such that for all $m \geq 2$, $W(m)$ is a compact manifold of dimension $md-d$, and the boundary $\partial W(m)$ is freely
     generated by the actions of the operations of the operads $O$ and $P$ on elements of $W$ of arity less than $m$.

     In particular, there is an equality \[\partial W(m) = O(m) \cup \mathrm{Decom}(W)(m) \cup P(m), \] 
where $\mathrm{Decom}(W)$ denotes the image of the bimodule composites involving elements of $W$ of arity $\geq 2$, while the copies of $O(m)$ and $P(m)$ are those involving $W(1)=\{*\}$.

Unlike cobordism of manifolds which is symmetric, bimodule cobordism of operads is asymmetric due to the asymmetry of left and right modules over operads. Given a $(B,O)$-bimodule cobordism $W$ and an $(O,P)$-bimodule cobordism $V$
\[B \hookrightarrow W \hookleftarrow O \hookrightarrow V \hookleftarrow P\]
we may form the relative composite $W \circ_O V$ which is a $(B,P)$-bimodule. In arity $2$ this relative composite is simply the gluing of cobordisms, however in arity $m$ it is more complicated and involves products of all the lower arity spaces of operations.

\begin{thm}[Theorem \ref{thm: composition is manifold}]
    If $W$ is a $(B,O)$-bimodule cobordism, $V$ an $(O,P)$-bimodule cobordism, and everything in sight admits levelwise $\Sigma_m$-equivariant collars, then the relative composite $W \circ_O V$ is a $(B,P)$-bimodule cobordism. 
\end{thm}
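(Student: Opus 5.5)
We argue arity by arity, by induction on $m$. First we describe $(W\circ_O V)(m)$ concretely: it is the coequalizer of $W\circ O\circ V \rightrightarrows W\circ V$. Since $O$, $W$, $V$ are all ``freely generated on the boundary'', every point of $W\circ_O V(m)$ has a normal form. This is a two-level tree with vertices of arity $\geq 2$ labeled by interior points of $W$ (top level) and interior points of $V$ (bottom level), modulo the identification of $W(1)$ and $V(1)$ with the unit. Equivalently, $W\circ_O V(m)$ is a quotient of the disjoint union, over such trees $T$, of the products $\prod_{v} W(|v|)\times\prod_u V(|u|)$, where an $O$-decomposition on the boundary of a $W$-vertex is slid down into the $V$-vertices (and conversely). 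The first step is to set up this stratification by trees, and to verify that the $(B,P)$-bimodule structure maps are injective with disjoint images in the expected sense. This identifies the would-be boundary
\[\partial(W\circ_O V)(m) = B(m)\cup \mathrm{Decom}(W\circ_O V)(m)\cup P(m).\]

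The second step is to show that $W\circ_O V(m)$ is a compact topological manifold of dimension $md-d$. Compactness is immediate, since it is a finite quotient of compact spaces. Away from the strata where an $O$-identification occurs, a point lies in a product of interiors glued only along $W(1)=V(1)$. Its local structure is therefore that of a product of manifolds, and a dimension count gives the right value: a tree with top vertex of arity $k$ and bottom vertices of arities $m_1,\dots,m_k$ contributes $(kd-d)+\sum(m_id-d)=md-d$. The essential case is a point $x$ on an interface where $W(k)$ and $V(m_i)$ are glued along $O$. We use the $\Sigma$-equivariant collars: near $\partial W(k)\supset O$-decomposables, $W(k)\cong$ (collar)$\times[0,1)$, and similarly for $V$. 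The gluing then locally looks like $(\partial\text{-stratum})\times[0,1)\cup_{0}(\partial\text{-stratum})\times(-1,0]$, which is a chart of $\mathbb{R}^{md-d}$. For arity $2$ this is exactly the usual gluing of cobordisms along $O(2)$.

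For higher arity, the strata of $O$-decompositions form a corner structure indexed by trees. A point decomposed through $j$ levels of $O$-operations sits at a codimension-$j$ corner of $W$-factors, matched with a corner of $V$-factors. One has to show that the union over all the ways of distributing the $O$-tree between the $W$ side and the $V$ side is locally Euclidean. Here we use the tree combinatorics from the earlier sections: for a fixed $O$-tree with $j$ internal edges, the ways of cutting it into a $W$-part and a $V$-part correspond to the vertices of the cube $[-1,1]^j$ subdivided into orthants. The collars give a homeomorphism of the union of the glued pieces with $\mathbb{R}^j\times(\text{stratum})$. This requires choosing the collars compatibly with the operad and bimodule compositions, which is where the levelwise equivariant collar hypothesis enters, together with induction on arity so that lower-arity collars are already compatible.

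Finally, the interior of $W\circ_O V(m)$ is the set of points not in the image of $B$, $P$, or lower-arity $W\circ_O V$ composites. Boundary points are then handled by the same collar argument applied to the $B$- and $P$-boundaries of $W$ and $V$. This yields the required description of $\partial$ as freely generated, since the bimodule structure maps on these strata are the free ones by the first step. I expect the main obstacle to be the corner-gluing step: proving that the cube of $W/V$ splittings of an $O$-tree assembles to a Euclidean neighborhood, consistently across all trees and $\Sigma_m$-equivariantly.
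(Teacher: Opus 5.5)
Your overall plan is the right shape: obtain conical neighborhoods from the collars, then show that the link of every stratum of $W\circ_O V(m)$ is a sphere, or a disc for strata meeting $B$ or $P$. But the step you flag as the main obstacle is where the argument fails, because the local model you propose for it is wrong.

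Strata of $W\circ_O V(m)$ are indexed by $5$-colored trees, with colors $P,V,O,W,B$ read from leaves to root ($P$ plays the role of the paper's $R$). The $O$-vertices of such a tree can be nested. So distributing the $O$-part between the $W$-side and the $V$-side is not an independent binary choice per $O$-vertex. If an $O$-vertex is absorbed into $W$, all its $O$-ancestors must be too; if it is absorbed into $V$, all its $O$-descendants must be too.

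Concretely, take an $O$-vertex $v$ of arity $b$ sitting on an $O$-vertex $w$ of arity $a$, with $a+b-1=m$. The stratum $O(a)\times O(b)$ has codimension $2$. Around it there are exactly three top-dimensional chambers: $W(m)$, $V(m)$ and $W(a)\times V(b)$. They are separated by three walls: $O(m)$, $W(a)\times O(b)$ and $O(a)\times V(b)$. The normal slice is $\mathbb{R}^2$ cut into three sectors, not four quadrants.

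In general the chambers correspond to the cuts (order ideals) of the $O$-subtree. You get the cube $[-1,1]^j$ with its orthants only when the $O$-vertices are pairwise incomparable. Also, the codimension is the number of $O$-vertices, not the number of internal edges of the $O$-tree, so your $j$ is used inconsistently. Hence the homeomorphism between a cube subdivided into orthants and the union of glued pieces does not exist in general. Collars alone do not tell you how the chambers fit together; that is a combinatorial question about the link. For the same reason, ``the same collar argument'' for boundary points does not handle strata containing $B$-, $O$- and $P$-vertices at once.

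The missing ingredient is the height-function polytope used in the paper. For a $5$-colored tree $T$, consider functions $h$ on vertices that are non-decreasing from root to leaves, subject to:
\begin{itemize}
\item $h=-1$ on $W$-vertices and $h=1$ on $V$-vertices;
\item $h\in[-1,1]$ on $O$-vertices;
\item $h\le -1$ on $B$-vertices and $h\ge 1$ on $P$-vertices.
\end{itemize}

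Exactly as in the proof of Proposition \ref{link_prop}, the face poset of this polytope $X(T)$ is $\mathcal{P}_T^{\mathrm{op}}$. Moreover, $X(T)$ splits as a product of a $B$-part, a $P$-part and an $O$-part. The $B$- and $P$-parts are translates of cones and contribute discs when nonempty. The $O$-part is a bounded polytope whose dual boundary is a sphere. So the link is a join of discs with a sphere: a sphere when $T$ has no $B$- or $P$-vertices, a disc otherwise, in the correct dimension in both cases.

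This link computation, combined with the conical neighborhoods $\mathring\phi(T)\times\mathring{\mathrm{Cone}}(Lk(T))$, proves that $W\circ_O V(m)$ is a manifold with the stated boundary. Those neighborhoods come from stratifications of $W$ and $V$ (Theorem \ref{thm: bimodule cobordism is stratified}), chosen to agree on $O$ and left Kan extended to the composite. Accordingly, your normal form should be $5$-colored trees with interior labels, including $O$-labels at the interfaces, rather than two-level $W/V$ trees.
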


One may define the naïve category of bimodule cobordisms $\mathrm{BimodBord}_{/O}$ on a fixed and possibly $n$-truncated manifold operad $O$, where $n$-truncated means only defined up to arity $n$. The objects are the bimodule cobordisms which have a right action by $O$ and the morphisms are compatible pairs of operad and bimodule maps. In particular, underlying the morphisms in this category is the data of a sequence of cobordisms on the manifolds $O(n)$

\[\begin{tikzcd}
	P(n) & W(n) & O(n) \\
	{P'(n)} & {W'(n)} & O(n)
	\arrow[hook, from=1-1, to=1-2]
	\arrow[from=1-1, to=2-1]
	\arrow[from=1-2, to=2-2]
	\arrow[hook', from=1-3, to=1-2]
	\arrow["{=}", from=1-3, to=2-3]
	\arrow[hook, from=2-1, to=2-2]
	\arrow[hook', from=2-3, to=2-2]
\end{tikzcd}\]

Similarly, one can define the category of bimodule cobordisms $\mathrm{BimodBord}_{O/}$  under a given, possibly $n$-truncated, manifold operad $O$.

By forgetting higher arity information there are truncation functors
\[(-)^{\leq n}:\mathrm{BimodBord}_{/O} \rightarrow \mathrm{BimodBord}_{/O^{\leq n}},\]
\[(-)^{\leq n}:\mathrm{BimodBord}_{O/} \rightarrow \mathrm{BimodBord}_{O^{\leq n}/}.\]
The main construction of the paper is arity extension for bimodule cobordisms:
\begin{thm}[Theorem \ref{thm: left surgery}, Theorem \ref{thm: right surgery}]\label{1_thm}
There are explicit functors
    \[X_L:\mathrm{BimodBord}_{/O^{\leq n}} \rightarrow \mathrm{BimodBord}_{/O},\]
\[X_R:\mathrm{BimodBord}_{O^{\leq n}/} \rightarrow \mathrm{BimodBord}_{O/}\]
providing sections to the $n$-truncation functors $(-)^{\leq n}$.
\end{thm}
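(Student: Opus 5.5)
The plan is to build $X_L$ explicitly from the composition theorem (Theorem \ref{thm: composition is manifold}) together with a cylinder bimodule on $O$. I treat $X_L$ in detail; $X_R$ is the mirror construction with left and right actions swapped.

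\textbf{The cylinder.} The first step is to give the full manifold operad $O$ a ``cylinder'' $(O,O)$-bimodule cobordism $C_O$. Arity by arity, this is a collar-type thickening: $C_O(m)$ is homeomorphic to $O(m)\times[0,1]$, and its boundary decomposes as
\[O(m)\cup \mathrm{Decom}(C_O)(m)\cup O(m).\]
This should follow from the levelwise $\Sigma_m$-equivariant collars that we assume throughout. A natural candidate is to take $C_O$ to be the free $(O,O)$-bimodule on $O\times\{1/2\}$, with one collar coordinate remembered, but I would need to check that the boundary description holds.

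\textbf{Definition of $X_L$.} Let $W$ be a $(P,O^{\le n})$-bimodule cobordism. I would define
\[X_L(W)=W\circ_{O^{\le n}}C_O^{\le n}\ \cup\ \text{free extension},\]
or more precisely the following. First, form the free $(P^{\infty},O)$-bimodule generated by $W$ in arities $\le n$ and by the cylinder pieces of $C_O$ in arities $>n$. Here $P^\infty$ is the arity extension of $P$ obtained by pushing out along $O^{\le n}\to O$. The heuristic is that in arity $m>n$ there is no new data in $W$. So $X_L(W)(m)$ is glued from trees whose vertices are labelled by $W(k)$ for $k\le n$ and by $O$ or $C_O$ elsewhere, and its boundary pieces are exactly the decomposables together with a copy of $O(m)$.

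\textbf{Functoriality and the section property.} On morphisms, the construction only uses the vertex labels, so a map of pairs $(P\to P',W\to W')$ induces a map of trees. This gives functoriality. Restricting back to arities $\le n$ recovers $W$ (glued to the trivial cylinder, which I identify with $W$ via the collar). This gives $(X_L W)^{\le n}\cong W$ and hence the section property.

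\textbf{Main obstacle.} The hard step is checking that $X_L(W)(m)$ is a compact manifold of dimension $md-d$ with boundary freely generated as required. This needs the combinatorics of trees for bimodules: the strata indexed by trees have to glue along codimension-one faces exactly as in the proof of Theorem \ref{thm: composition is manifold}. My first attempt would be induction on $m$. At each step I would exhibit $X_L(W)(m)$ as a mapping cylinder or collar on the already-constructed manifold $\mathrm{Decom}(X_L W)(m)\cup O(m)$, whose boundary pieces have been matched by the inductive hypothesis. I would then check that gluing along the collars is $\Sigma_m$-equivariant.
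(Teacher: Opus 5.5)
The gap is the operad acting on the left. The content of Theorem \ref{thm: left surgery} is that its arities $m>n$ must be \emph{built}, and your proposal never builds them.

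Your ``$P^\infty$ obtained by pushing out along $O^{\le n}\to O$'' is not defined. A $(P,O^{\le n})$-bimodule cobordism only gives embeddings $P\hookrightarrow W\hookleftarrow O^{\le n}$ of symmetric sequences, not an operad map $O^{\le n}\to P$; in the main application $P(2)=M\neq S^{d-1}$. More seriously, any free or pushout-type extension has no indecomposables in arity $m>n$. So $P^\infty(m)$ would just be the decomposables $\colim_{\mathcal{T}(m)\setminus\bullet_m}\phi$, a closed $(md-d-2)$-manifold, rather than a compact $(md-d-1)$-manifold with that boundary. By Proposition \ref{prop: operad obstruction}, whether such a manifold exists is governed by an equivariant bordism class that is nonzero in general. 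The theorem therefore hinges on exhibiting an explicit nullbordism, and nothing in your plan does this. Correspondingly, your description of $\partial X_L(W)(m)$ as ``decomposables together with a copy of $O(m)$'' has no $P^\infty(m)$ face.

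The composite $W\circ_{O^{\le n}}C_O$ does not supply one either. For $m>n$ there is no white corolla of arity $m$. So the $O$-colored (middle) face $O(m)$ at the left end of $C_O(m)$, together with the strata that would have been glued to $W(m)$, is left exposed as boundary that is neither $\mathrm{Decom}$ nor $O(m)$. If you declare that face to be $P^\infty(m)$, its left action cannot be defined by grafting, since that would put white vertices above middle-colored ones. Also, the free $(O,O)$-bimodule on $O\times\{1/2\}$ is not a cylinder; a cylinder would be $W_{[0,1]}(O)$ via $O\cong W(O)$, though none is needed.

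The paper's route is as follows. Induct on $|S|>l$, with $\phi$ already defined in all smaller arities (new blue and white corollas included), and set $Z(S):=\colim_{\mathcal{T}^{RBW}(S)\setminus\{\bullet_{W,S},\bullet_{B,S}\}}\phi$. This is $O(S)$ glued to all decomposables, blue ones included. The key lemma is that $Z(S)$ is a manifold whose boundary is exactly $\colim_{\mathcal{T}^{B}(S)\setminus\bullet_{B,S}}\phi$:
\begin{itemize}
\item by Proposition \ref{link_prop}, the link of a non-blue stratum is the sphere $\partial Lk(T)$;
\item for blue $T$, the link is $\partial Lk(T)$ with the top cell $T\to\bullet_B$ removed, which is a disc because the complement of a top-dimensional PL ball in a PL sphere is a ball.
\end{itemize}
Then $X_L(W)(S):=(Z(S)\cup\text{collar})\times[0,1]$. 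The new left arity is \emph{defined} as $\overline{\partial X_L(W)(S)\setminus Z(S)\times\{1\}}$, a pushed-off copy of $Z(S)$, and hence automatically a nullbordism of the blue decomposables. The structure maps are prescribed on the functor $\phi$, not by grafting.

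Your final paragraph's ``collar on $\mathrm{Decom}(X_LW)(m)\cup O(m)$'' is essentially this $(Z(m)\cup\text{collar})\times[0,1]$. However, it contradicts your main definition. It also takes the manifold property of $Z(m)$, which has boundary, as already given, when that is precisely where the link combinatorics and the disc-complement argument are needed. And it misses that the rest of the boundary of this cylinder is what defines $P^\infty(m)$.

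Finally, the paper's $\phi$ is literally $W$ on $\mathcal{T}^{RBW}_{\le l}$, so the section property holds on the nose. Identifying $W\circ C_O$ with $W$ via collars gives it only up to a noncanonical homeomorphism.
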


As discussed in Section \ref{section: surgery on manifold operads}, this result asserts there are two canonical ways (up to choices of collars) to propagate an equivariant surgery on the interior of $O(n)$ to the higher arities of $O$, and the resulting manifold operad will be bimodule cobordant to $O$ on either the left or right depending on the construction used. 
We expect this construction to be universal among sections of the truncation functors.

As the main application of our theorem, we produce the first examples of manifold operads which are not isomorphic to $\mathrm{FM}_d$.

\begin{thm}[Theorem \ref{thm: sufficient condition for existence}]\label{main_thm}
  If $M$ is a nullbordant closed $\Sigma_2$-manifold, then there is a manifold operad $O$ such that $O(2)=M$.
\end{thm}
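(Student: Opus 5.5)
Fix the dimension $d$ by $\dim M = 2d-d-1 = d-1$, so $\dim M = d-1$. The plan is to realize $M$ as the arity-$2$ part of a $2$-truncated manifold operad that is bimodule cobordant to $\mathrm{FM}_d^{\leq 2}$, and then extend to all arities using the arity extension functors of Theorem \ref{1_thm}.

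The arity-$2$ data is simple. In arity $2$ a manifold operad $O$ consists of a closed $\Sigma_2$-manifold $O(2)$ of dimension $d-1$, since there are no decomposables in arity $2$ and so $\partial O(2)=\emptyset$. Likewise a bimodule cobordism $W$ in arity $2$ is a compact $\Sigma_2$-manifold $W(2)$ of dimension $d$ with $\partial W(2) = O(2)\sqcup P(2)$. Note that $\mathrm{FM}_d(2) = S^{d-1}$ with the antipodal action, and that $S^{d-1}$ is equivariantly nullbordant because it bounds $D^d$ with the action $x\mapsto -x$.

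By hypothesis $M = \partial N$ for a compact $\Sigma_2$-manifold $N$. Take the equivariant boundary connected sum, or simply the disjoint union, with $D^d$ reversed. The result $W(2) = N \sqcup D^d$ is a $\Sigma_2$-bordism from $S^{d-1}$ to $M$; more precisely, its boundary is $M \sqcup S^{d-1}$. I would declare $W(2)$ to be the arity-$2$ part of an $(\mathrm{FM}_d^{\leq 2}, P)$-bimodule cobordism, or the reverse ordering as needed, where $P$ is the $2$-truncated operad with $P(2)=M$. One must check that the $2$-truncated objects (with arity $1$ a point) really satisfy the definitions. They do, because all freeness conditions are vacuous in arity $\le 2$ apart from the identification of the boundary pieces. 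This yields an object of $\mathrm{BimodBord}_{O^{\leq 2}/}$ or of $\mathrm{BimodBord}_{/O^{\leq 2}}$ with $O = \mathrm{FM}_d$.

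Next, apply $X_R$ (or $X_L$) from Theorem \ref{1_thm} with $n=2$. This produces a bimodule cobordism between $\mathrm{FM}_d$ and a full manifold operad whose $2$-truncation is the given one. Since $X_R$ is a section of truncation, the new operad $P'$ satisfies $P'(2) = M$, which is the desired $O$.

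The main obstacle is the collar hypotheses implicit in the surgery theorems. The constructions require levelwise $\Sigma_m$-equivariant collars on $W$ and on $\mathrm{FM}_d$, so I must verify that the chosen $N$ admits an equivariant collar of $M$. This holds by the equivariant collar theorem for smooth (or locally linear) $\Sigma_2$-manifolds, possibly after smoothing $N$. I also need to check that $\mathrm{FM}_d$ has compatible equivariant collars in all arities, which I would take from earlier results in the paper. A secondary subtlety is choosing the correct direction ($X_L$ versus $X_R$) to match which side $\mathrm{FM}_d$ sits on. Either direction works, because $W(2)$ may be read in both directions.
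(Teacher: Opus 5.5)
Your proposal is essentially the paper's proof: cobord $M$ to $S^{d-1}=\mathrm{FM}_d(2)$ by taking the disjoint union of a nullbordism of each, regard this as a $2$-truncated bimodule cobordism, and apply $X_L$ or $X_R$. The one step to repair is the collar on $N$: smoothing is not available for a general topological $\Sigma_2$-manifold, so instead glue an external collar $M\times[0,1]$ onto $N$ as the paper does, which makes the $2$-truncated cobordism collarable (the closed ends are trivially collarable) and hence stratifiable by Theorem \ref{thm: bimodule cobordism is stratified}.
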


Manifold operads have a few excellent properties. Recall that $W(O)$ is a certain cofibrant replacement of $O$ \cite{boardman_vogt_1973} which for manifold operads can be understood as adding an external collar compatible with the stratification by trees, as witnessed by the following generalization of the third author's result for the Fulton-MacPherson operad \cite{salvatore_2021}.

\begin{prop}[Proposition \ref{prop: isomorphic to w construction}]
    If $O$ is a manifold operad and each $O(m)$ admits a $\Sigma_m$-equivariant collar, then there is an operad isomorphism
    \[O \cong W(O).\]

\end{prop}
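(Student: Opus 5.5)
We construct the isomorphism arity by arity, by induction on $m$. The Boardman--Vogt construction $W(O)(m)$ is the quotient of $\coprod_T \prod_{v\in T} O(|v|)\times [0,1]^{E_{int}(T)}$ over planar rooted trees $T$ with $m$ leaves and vertices of valence $\ge 2$ (reducedness lets us discard unary vertices). Edges of length $0$ are contracted by operadic composition, and edges of length $1$ are where the operad structure of $W(O)$ is defined by grafting. The idea is that the internal edge lengths in $[0,1]$ play the role of a collar coordinate. Using the $\Sigma_m$-equivariant collar $c_m:\partial O(m)\times[0,1)\to O(m)$, we build a $\Sigma_m$-equivariant homeomorphism $\phi_m: W(O)(m)\to O(m)$. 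It is the identity on $O(m)$ for the corolla, and it sends a tree with edge lengths $t_e<1$ to an interior point obtained by flowing inward along collars. To make room, we first reparametrize $O(m)$ as $O(m)\cup_{\partial O(m)} \partial O(m)\times[0,1]$, which is homeomorphic to $O(m)$ by the collar. The added external collar is then exactly what must be matched with the part of $W(O)(m)$ coming from nontrivial trees.

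\textbf{Inductive step.} Assume the isomorphisms $\phi_k$ are constructed for $k<m$ and are compatible with the operad compositions. Since $O$ is a manifold operad, $\partial O(m)=\mathrm{Decom}(O)(m)$ is freely generated by lower arities. Hence $\partial O(m)$ is the colimit over trees with at least two vertices of the products $\prod_v O(|v|)$, glued along contracted edges. Correspondingly, the subspace of $W(O)(m)$ consisting of non-corolla trees with some edge of length $1$ is $\mathrm{Decom}(W(O))(m)$, which is isomorphic via the $\phi_k$ to $\partial O(m)$. The remaining data of a tree point is the tuple of edge lengths. Taking the maximum edge length $s=\max_e t_e$ as the collar coordinate, the points of $W(O)(m)$ with $s>0$ are identified with $\mathrm{Decom}(W(O))(m)\times(0,1]$. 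Concretely, we rescale the lengths so that the largest becomes $1$, which gives a radial structure on the cube $[0,1]^{E}$ over each tree. The points with $s=0$ are $O(m)$ itself.

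\textbf{Assembling the map.} Define $\phi_m$ on a point with $s>0$ by sending it to $(\phi(\text{rescaled point}), s)$ in $\partial O(m)\times[0,1]$, and let $\phi_m$ be the identity on $O(m)$. This is a bijection because both sides decompose as $O(m)\sqcup \partial O(m)\times(0,1]$. It is continuous and compact-to-Hausdorff, hence a homeomorphism. Equivariance follows from the equivariance of the collars and of the $\phi_k$. The new boundary, at $s=1$, is again $\mathrm{Decom}$, and compatibility with composition holds there by construction. Thus the $\phi_m$ assemble to an operad isomorphism.

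\textbf{Main obstacle.} The hard step is showing that the radial rescaling is compatible with the face identifications. When an edge length is $0$ the tree contracts, and the collar on $\partial O(m)$ has to respect the corner stratification by trees. We must therefore check that the identification $\mathrm{Decom}(W(O))(m)\cong\partial O(m)$ and the rescaling glue consistently across strata where several edges have length $1$, and that using the maximum $s$ gives a well-defined, continuous coordinate there. Our first attempt would be to use the freeness of $\partial O(m)$ to reduce this to a cube-by-cube check over each tree $T$. The collar itself enters only through the internal $s=0$ gluing, so it needs no compatibility with strata.
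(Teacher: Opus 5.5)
There is a genuine gap, and it sits at the $s=0$ end, which your last sentence declares unproblematic. In $W(O)(m)$, take a point with maximal edge length $s$ and rescaled point $y\in\mathrm{Decom}(W(O))(m)$. As $s\to 0$ it converges to $\mu(y)$, where $\mu\colon \mathrm{Decom}(W(O))(m)\to\partial O(m)$ sets every edge length to $0$ and composes the labels. Thus $W(O)(m)\cong O(m)\cup_{\mu}\big(\mathrm{Decom}(W(O))(m)\times[0,1]\big)$ is $O(m)$ with a mapping cylinder of $\mu$ attached. It is not an external collar glued along your identification $\phi\colon\mathrm{Decom}(W(O))(m)\to\partial O(m)$ built from the $\phi_k$. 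Your map ($x\mapsto(\phi(y),s)$ for $s>0$, identity on $O(m)$) is therefore continuous at $s=0$ only if $\phi=\mu$. This holds in arity $3$, where $\phi_2=\mathrm{id}$, but fails from arity $4$ on, because $\mu$ collapses the edge lengths inside lower-arity factors and $\phi$ does not.

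Here is a concrete failure. Fix $a,b,c\in O(2)$ and $0<\tau<1$. Take the tree with root labelled $a$, child $b$, grandchild $c$, lower edge of length $\sigma\tau$ and upper edge of length $\sigma$. As $\sigma\to0$ these points converge in $W(O)(4)$ to the corolla $(a\circ b)\circ c$, which lies in a codimension-two stratum and which your map fixes. Their images are $(\phi_3(w_\tau)\circ c,\sigma)$, where $w_\tau\in W(O)(3)$ is the tree $a$--$b$ with edge length $\tau$, and these converge to $\phi_3(w_\tau)\circ c$. Since $\phi_3(w_\tau)\in\mathring O(3)$ and composition with $c$ is injective, this limit differs from $(a\circ b)\circ c$. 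So the map is not continuous, and the compact-to-Hausdorff step has nothing to apply to. By contrast, your worry about $s=1$ is harmless: freeness of the colimits handles it.

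The missing ingredient is control of the corners of $O(m)$. Since $\mu$ is a non-injective surjection, you must show its mapping cylinder is homeomorphic to $\partial O(m)\times[0,1]$ by a homeomorphism restricting to the identity on $\partial O(m)$ and to $\phi$ on the free end. A single collar of $\partial O(m)$, used only for the $s=0$ gluing, cannot do this.

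To see what is needed, look at the local model near the open stratum $Z_T$ of a tree $T$. There $O(m)$ looks like $Z_T\times[0,\infty)^{E(T)}$, the cone on $Lk(T\to\bullet)$, which is a simplex. Correspondingly $W(O)(m)$ looks like $Z_T\times\mathbb{R}^{E(T)}$, with negative coordinates $x_e=-t_e$ recording edge lengths. In this model the added region is a product collar via $x\mapsto(x+s\mathbf{1},s)$ with $s=\max_e\max(0,-x_e)$. This map mixes edge lengths with the normal coordinates of the faces, which radial rescaling followed by $\phi$ never does.

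Globalizing this requires conical (tubular) neighbourhoods of all boundary strata of every $O(k)$, $k\le m$, compatible with composition. These are built inductively from the collars, as in the proof of Theorem \ref{thm: manifold operad is stratified}. The paper remarks that the isomorphism $O\cong W(O)$ encodes exactly this data, and the argument of \cite{salvatore_2021} that it invokes must supply it.
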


This topological operad isomorphism has interesting implications for the underlying algebraic operad $C_\ast(O; \mathbb{Z}/2)$ generalizing work of Getzler-Jones \cite{getzler_jones}, as well as work of the second author \cite{malinhha}. In particular, it implies that this algebraic operad is equivalent to a shift of its Koszul dual operad, at least at the level of homology. Let $B(-)$ denote the cooperad given by the bar construction of an algebraic operad, $[-]$ the operation of operadic suspension, and $(-)^\vee$ the linear dual.

\begin{prop}[Proposition \ref{prop: koszul self dual}]
    If $O$ is a manifold operad of dimension $d$ and each $O(m)$ admits a $\Sigma_m$-equivariant collar, then there is an algebraic operad isomorphism \[H_\ast (C_\ast(O; \mathbb{Z}/2)) \cong H_\ast (B(C_\ast (O;\mathbb{Z}/2))^\vee)[d].\] 
\end{prop}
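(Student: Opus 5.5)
The plan is to deduce the Koszul self-duality from the isomorphism $O \cong W(O)$ of the previous proposition, together with Poincaré–Lefschetz duality mod $2$ on the manifolds $O(m)$. Working mod $2$ means no orientation characters appear; this is why the statement is over $\mathbb{Z}/2$.

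First, I would identify the bar construction geometrically. For a reduced topological operad $O$, the $W$-construction $W(O)(m)$ is a union over trees of cubes $[0,1]^{\mathrm{int.\,edges}} \times \prod_v O(|v|)$. The subcomplex where some internal edge has length $1$ is the decomposable part $\mathrm{Decom}(W(O))(m)$. The quotient $W(O)(m)/\mathrm{Decom}(W(O))(m)$ is then, cellularly, a model for the bar construction: filtering by number of internal edges and taking cellular chains gives a chain isomorphism
\[
C_\ast\bigl(W(O)(m), \mathrm{Decom}(W(O))(m); \mathbb{Z}/2\bigr) \simeq B(C_\ast(O;\mathbb{Z}/2))(m),
\]
up to a suspension convention. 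Each edge coordinate contributes a degree-one shift, and the endpoint $0$ recovers the operadic composition differential. This is the standard Boardman–Vogt/Getzler–Jones comparison, and I would check it is compatible with the cooperad structure. The cooperad structure on the relative chains comes from the maps that collapse complementary parts of the tree, i.e.\ from the boundary faces where an edge has length $1$.

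Second, I would transport this through $O \cong W(O)$. Because $O$ is a manifold operad, $\partial O(m) = \mathrm{Decom}(O)(m)$, and this identification matches $\mathrm{Decom}(W(O))(m)$. Hence
\[
B(C_\ast(O;\mathbb{Z}/2))(m) \simeq C_\ast(O(m), \partial O(m); \mathbb{Z}/2).
\]
Lefschetz duality for the compact $(md-d-1)$-manifold $O(m)$, with $\mathbb{Z}/2$ coefficients, gives
\[
H_\ast(O(m),\partial O(m)) \cong H^{md-d-1-\ast}(O(m)).
\]
Dualizing then identifies $H_\ast(B(C_\ast O)^\vee)(m)$ with $H_\ast(O(m))$ shifted by $md-d-1$, up to sign and indexing. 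The bar construction's own desuspension accounts for the $-1$, and the $md-d$ is exactly the arity-$m$ shift of the $d$-fold operadic suspension $[d]$. This yields the isomorphism arity by arity, and $\Sigma_m$-equivariance holds because the collar is equivariant and mod-$2$ duality is natural.

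The main obstacle is showing that this collection of duality isomorphisms respects the operad structures, i.e.\ that Lefschetz duality intertwines the composition in $H_\ast(O)$ with the dual of the bar cooperad decomposition. My first approach would be to realise both operations geometrically. The operadic composition $O(k)\times O(l) \to O(k+l-1)$ is the inclusion of a codimension-zero stratum of $\partial O(k+l-1)$. The cooperad decomposition on the relative chains is the boundary-restriction map
\[
H_\ast(O(m),\partial O(m)) \to H_{\ast-1}(\partial O(m)) \to H_{\ast-1}(O(k)\times O(l), \partial),
\]
which sends a class to the stratum indexed by the given tree. Under Poincaré duality, this restriction-to-a-face map is dual to pushforward along the face inclusion, which is the composition. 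I would verify this compatibility using a collar: the collar of the face $O(k)\times O(l)$ inside $O(m)$ is exactly the edge-length coordinate of the $W$-construction, so the Thom class and the cap product can be computed locally. This reduces the compatibility to the product formula for duality, $\mathrm{PD}(a\times b) = \mathrm{PD}(a)\times \mathrm{PD}(b)$, on $O(k)\times O(l)$.
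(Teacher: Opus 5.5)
Your proposal is correct and takes essentially the paper's route. The paper defers to Malin's argument, itself a streamlining of Getzler--Jones: model the bar construction geometrically by the $W$-construction relative to its decomposable boundary, use that $W(O)$ is a manifold operad (equivalently $O\cong W(O)$), and apply mod $2$ Poincaré--Lefschetz duality, checking that it exchanges the partial composites of $O$ with the boundary-face cocomposites of the bar cooperad.

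The difference is only cosmetic. The paper phrases this with Ching's topological bar construction and the $(md-d)$-dimensional manifold $W_{[0,1]}(O)(m)$, so the shift is exactly $md-d$. You instead work with $W(O)(m)$ relative to $\partial W(O)(m)$ and absorb the extra degree into the bar construction's suspension convention.
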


\subsection{Outline of the proof}

We now sketch the idea behind Theorem \ref{main_thm} which constructs a manifold operad $O$ such that $O(2)=M$ given an equivariant cobordism $W(2)$ between $S^{d-1}$ with the antipodal action and some closed $\Sigma_2$-manifold $M$.

Let us assume that $O(2),\ldots,O(n-1)$ have already been constructed. 
The only requirement imposed on the manifold $O(n)$ by the definition of a manifold operad is that its boundary is already determined: it is a space (which we call $Q(n)$) obtained by gluing some products of lower-arity terms of $O$ along their boundary strata in a specified way. 
Now, two problems arise: 
\begin{itemize}
    \item Since $Q(n)$ is supposed to be $\del O(n)$, it must be a closed manifold. Is this actually the case?

    \item Supposing that $Q(n)$ is indeed a closed manifold, is it nullbordant? 
\end{itemize}

Regarding this first problem, one can use a natural stratification of $Q(n)$ to show that a point $p$ in a stratum $Z$ has neighborhoods of the form 
$$\mathbb{R}^{\dim(Z)}\times \mathrm{Cone}(\mathrm{link}(Z))$$ 
for some space $\mathrm{link}(Z)$ that depends only on the combinatorics of the stratification (see below). 
A sufficient condition for the glued-together space to be a closed topological manifold is that $\mathrm{link}(Z)$ is homeomorphic to a sphere for all strata $Z$ (and for a general manifold with boundary one should check the links of points on the boundary are discs):
\begin{center}
    \includegraphics[scale=0.3]{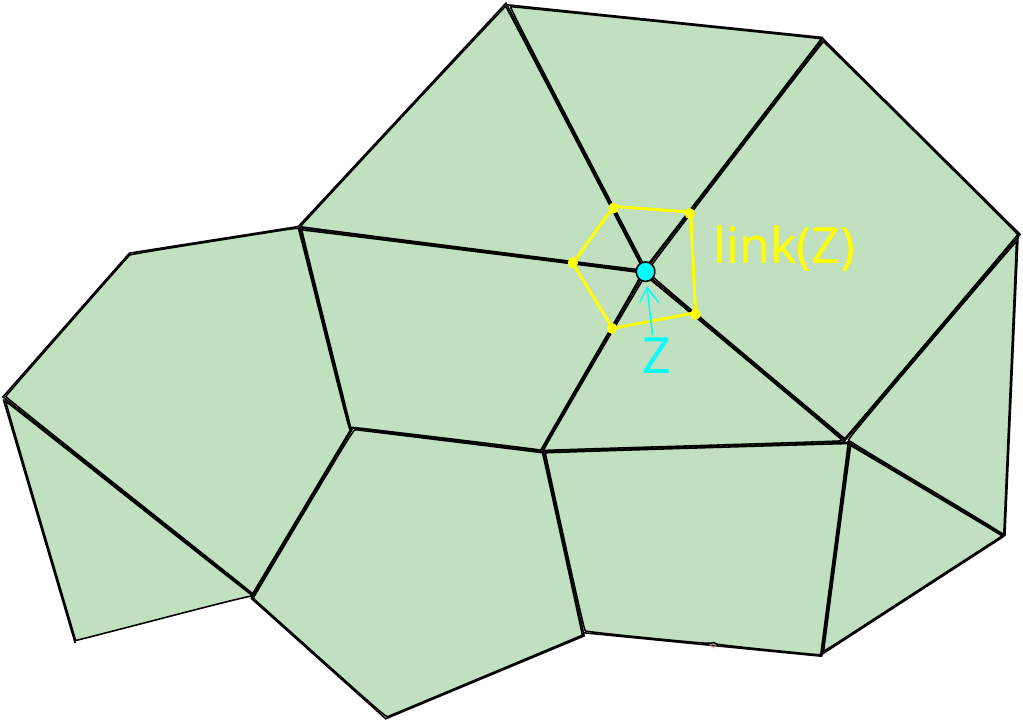} 
\end{center}
The answer to this first question is relatively simple, and some gentle combinatorics will show that $Q(n)$ is indeed a topological manifold. 

The second problem is more difficult, and we construct $Q(n)$'s nullbordism $O(n)$ (the $n$th space of our desired operad) by doing surgery on $\mathrm{FM}_d(n)$. By hypothesis, $M$ is cobordant to $\mathrm{FM}_d(2) = S^{d-1}$.  To start the inductive construction, we seek a manifold $O(3)$ whose boundary is 
$$\big(O(2)\times O(2)\big)\sqcup \big(O(2)\times O(2)\big)\sqcup\big(O(2)\times O(2)\big).$$
Observe that 
\begin{align*}
    \del \big(W(2)\times O(2)\big)&=\big(\mathrm{FM}_d(2)\times O(2)\big)\sqcup\big(O(2)\times O(2)\big),\\
    \del \big(\mathrm{FM}_d(2)\times W(2)\big)&=\big(\mathrm{FM}_d(2)\times \mathrm{FM}_d(2)\big)\sqcup\big(\mathrm{FM}_d(2)\times O(2)\big).
\end{align*}
Define a manifold $X$ by gluing the above two manifolds together along their common boundary $\mathrm{FM}_d(2)\times O(2)$.

The boundary of $X$ is then 
$$\del X=\big(\mathrm{FM}_d(2)\times \mathrm{FM}_d(2)\big)\sqcup\big(O(2)\times O(2)\big).$$
We can now glue $\bigsqcup_3X$ to $\mathrm{FM}_d(3)$ along their common boundary, and the result is a manifold whose boundary is $\bigsqcup_3\big(O(2)\times O(2)\big)$, and so yields the desired nullbordism. We define $O(3)$ to be this nullbordism:
\begin{center}   \includegraphics[scale=0.13]{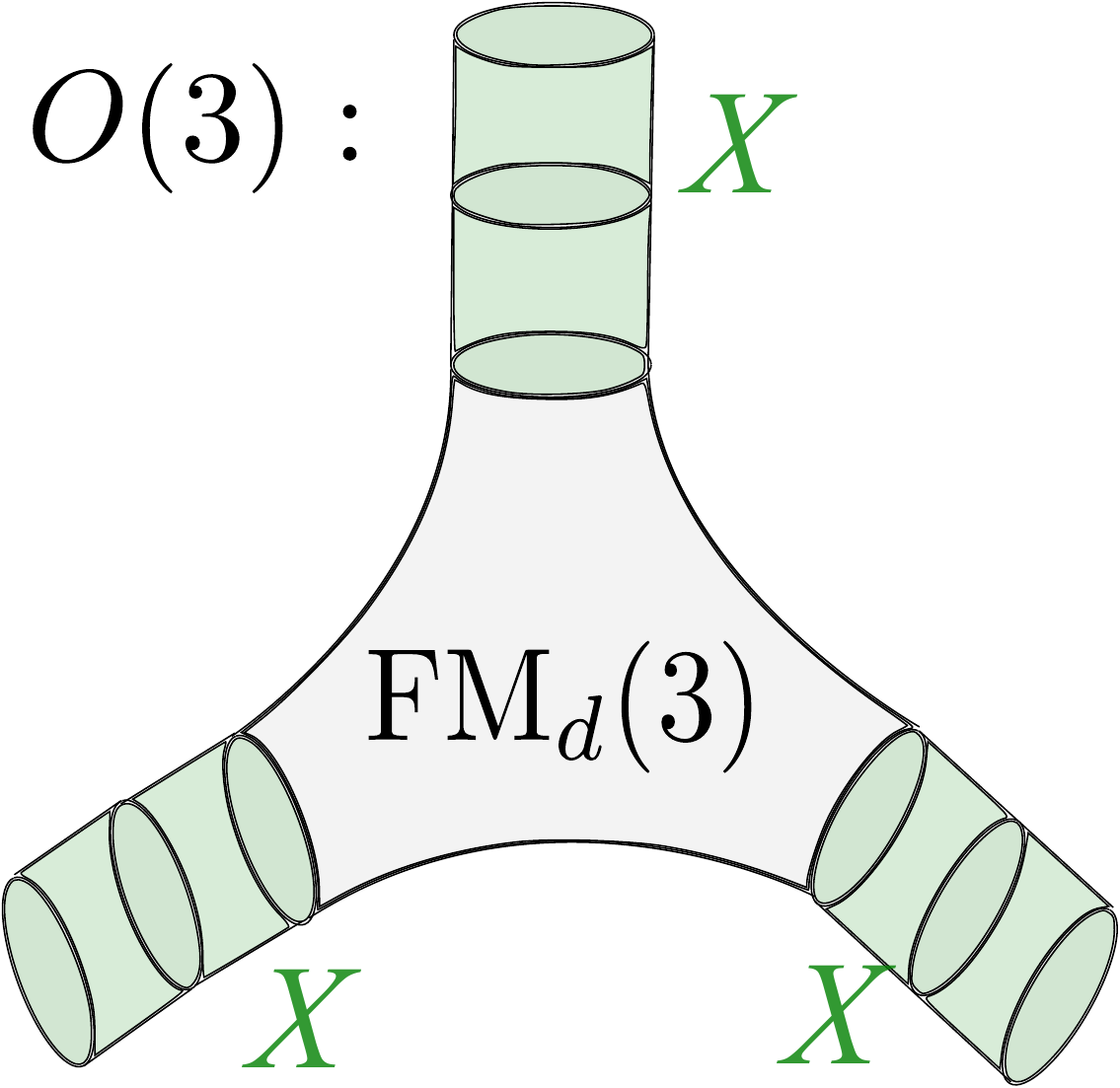}
\end{center}

Defining $O(3)$ like this, we have made use of $W(2)$---a cobordism between $\mathrm{FM}_d(2)$ and $O(2)$. To construct $O(4)$, it is natural to expect that some ``cobordism'' $W(3)$ between $O(3)$ and $\mathrm{FM}_d(3)$ is required, see Definition \ref{RBWmfld_dfn} for our notion of cobordism between manifolds with boundary.

In fact, a natural choice for this cobordism is $W(3):=O(3)\times[0,1]$.
Its boundary can be decomposed into three pieces: 
$$\del W(3)=\mathrm{FM}_d(3)\,\cup \,\sqcup_3X\cup \big(O(3)\,\cup\, \del O(3)\times[0,1]\big)\cong \mathrm{FM}_d(3)\,\cup \,\sqcup_3X\cup O(3),$$
where the ``$\cong$'' holds when we assume $\del O(3)$ has an equivariant collar neighborhood in $O(3)$. 

Constructing $O(4),O(5),\ldots$ is much more complicated. For $n>3$, $\del\mathrm{FM}_d(n)$ is no longer a disjoint union of closed manifolds: $\mathrm{FM}_d(n)$ has the canonical structure of a manifold with corners and its boundary stratification becomes relevant. 
Nonetheless, it turns out that a similar strategy to the arity-3 case will work. Suppose $O(m)$ and $W(m)$ are already constructed for all $2\le m<n$, then by gluing together products of $\mathrm{FM}_d(-),W(-),$ and $O(-)$ according to a certain poset of $3$-colored trees called ``RBW-trees''

\begin{figure}[h]
    \centering    \includegraphics[scale=0.25]{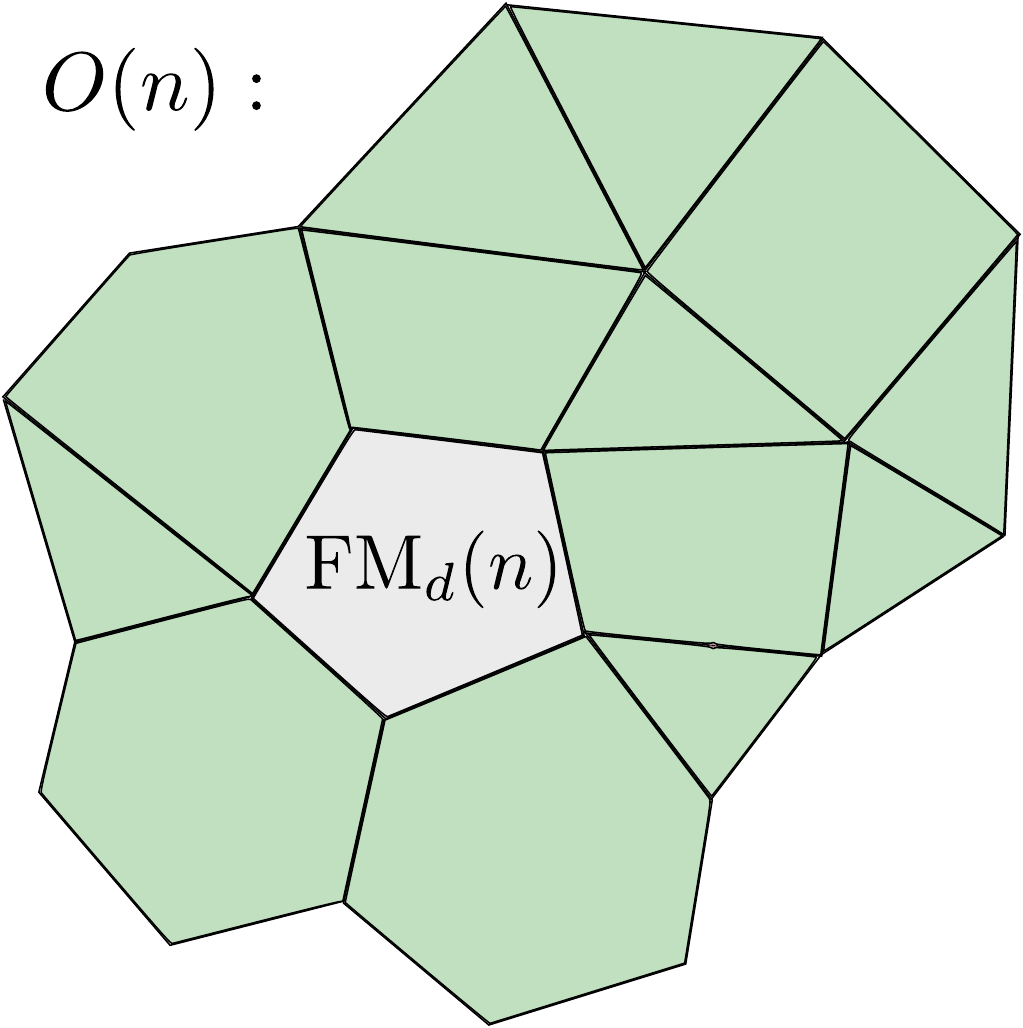}    \caption{Constructing $O(n)$ by gluing pieces to $\mathrm{FM}_d(n)$}
    \label{fig:O(n)_from_FM(n)}
\end{figure}

we may define $O(n)$ and take $W(n)= O(n) \times [0,1]$. The major difficulty then lies in proving that the space $O(n)$ is actually a topological manifold with boundary. 
We prove directly that the links are spheres and discs, and so the space is a topological manifold with boundary. The key to this is the purely combinatorial Proposition \ref{link_prop}. The main technical tool we use to keep track of this data is the notion of stratified bimodules between stratified operads. We demonstrate that the collection $W:=\{W(n)\}_{n=2}^{\infty}$ forms a stratified bimodule between $O$ and $\mathrm{FM}_d$ which implies that $O$ is a manifold operad by Theorem \ref{thm: bimodule cobordism is stratified}.

\textbf{Smoothness.} There is a natural definition of smooth manifold operads which asserts that each $O(m)$ is a smooth manifold with corners, and its canonical boundary stratification agrees with the stratification induced by the operad structure. A natural, yet exceedingly difficult, question is whether or not the construction of operadic surgery just sketched can be replicated in the smooth world.

\subsection{Organization of the paper}

In Section \ref{section: operads and manifolds}, we introduce the main objects of study: manifold operads and bimodule cobordisms. 

In Section \ref{section: Combinatorics of three colored trees}, we study the combinatorics of trees. In particular, we study geometric realizations of posets associated to three-colored trees which govern operadic bimodules.

 Section \ref{section: Stratifications of manifold operads and bimodule cobordisms} is a technical section dedicated to packaging the results of the previous section in such a way that they can be rigorously applied to our situation.

In Section \ref{section: surgery on manifold operads}, we formulate elementary surgery for manifold operads. This main result is the construction of $X_R$ and $X_L$, the two truncated bimodule cobordism extension functors. We study several examples of surgery on $\mathrm{FM}_d$. Examples may be found in Section \ref{subsection: examples}.

Section \ref{section: appendix} is an appendix dealing with point set topology of colimits.

\subsection{Acknowledgments}
We express our thanks to Victor Turchin, Thomas Willwacher, Ricardo Campos, Joost Nuiten, Noah Porcelli, and Kaif Hilman for enlightening conversations.

The authors thank the Max-Planck-Institut f\"ur Mathematik, Bonn where most of this project was worked out. 

Connor Malin would like to thank Ezra Getzler for conversations which motivated this project during his Ph.D. 

Paolo Salvatore acknowledges the MUR
Excellence Project MatMod@TOV awarded to the
	Department of Mathematics, University of Roma Tor Vergata,  CUP E83C18000100006.

Connor Malin also extends his deep gratitude to the members of the Capstone Village for their support while this project was in its early stages. He is especially grateful to Professors Christel and Robert Bell who continue to be a source of inspiration to him. 

The authors used ChatGPT to find the example of Proposition \ref{prop: rightnullbordant}, to identify the spherical tangent bundle in Example \ref{542_example}, and to point out numerous typos after the completion of the paper.

\section{Operads and manifolds} \label{section: operads and manifolds}

\subsubsection{Operads and trees}
A \textit{symmetric sequence} $O$ is a sequence of spaces $\{O(m)\}_{m \in \mathbb{N}}$ equipped with respective $\Sigma_m$-actions. An \textit{operad} is a symmetric sequence $O$ equipped with suitably equivariant and associative composition maps
\[O(k) \times O(j_1) \times \dots \times O(j_k) \rightarrow O(j_1 + \dots + j_k ).\]
An \textit{$n$-truncated operad} $O$ is an \textit{$n$-truncated symmetric sequence} $\{O(m)\}_{m \leq n}$ together with the operadic composition maps which have codomain $O(m)$ for $m \leq n$, satisfying the relations which involve only the spaces $O(m')$ for $m' \leq n$. We will often refer to operads as $\infty$-truncated operads, in which case any conditions involving the top arity of the truncated operad should be ignored. We refer to $O(m)$ as the space of $m$-ary operations.

We will study operads in terms of categories of rooted trees. We work specifically with \textit{reduced operads}: operads $O$ with $O(0)=\emptyset$ and $O(1)=\{*\}$. 
Consequently, our trees have the property (\ref{stability_item}) in Definition \ref{tree_dfn} below.

It will be quite useful to label our spaces by finite sets $S$ rather than cardinalities $n \in \mathbb{N}$, and we will do so without further comment. The term ``tree'' refers to the following combinatorial object unless otherwise stated.

\begin{definition}\label{tree_dfn}
For a finite set $S$, an \textit{$S$-labeled tree} consists of
\begin{itemize}
\item a tree $T$; we denote the vertex set and edge set of $T$ by $V(T), E(T)$\footnote{The given conditions will determine $E(T)$ up to canonical bijection. We implicitly mod out by this bijection; see \cite[Definition 7.2.]{chingSalvatore} for a canonical description of $E(T)$ in terms of $V(T)$.}, respectively;
\item a vertex $r\in V(T)$, ``the root'';
for two vertices $v\neq w\in V(T)$, if the unique path between $v$ and $r$ passes through $w$, we say $w$ is an {\it ancestor} of $v$ and $v$ is a {\it descendant} of $w$, and denote $v>w$; 
if, additionally, $v$ and $w$ are also adjacent, then we say $w$ is the {\it parent} of $v$ and $v$ is a {\it child} of $w$;  
denote the set of children of $v$ by $cld(v)$; 
\item to every vertex $v\in V(T)$ an assignment  $lb(v)\subset S$; elements of $lb(v)$ are called the \textit{labels} of $v$; 
\end{itemize}
The assignments should satisfy the conditions
\begin{enumerate}

\item $\bigsqcup_{v\in V(T)}lb(v)=S$;
\item \label {stability_item}for all $v$, $|cld(v)|+|lb(v)|\ge2$. 
\end{enumerate}

We denote 
$$\overline{cld}(v):=cld(v)\sqcup lb(v).$$
\end{definition}

For an $S$-labeled tree $T$ and a subset $E\subset{E(T)}$, define the contracted tree
$\mathfrak{c}_E(T)$ to be the $S$-labeled tree obtained from $T$ by contracting each connected component in the union of $E$ to a vertex. The labels of these new vertices are the (disjoint) union of the labels of the vertices which contract to it.  For each vertex $v\in \mathfrak{c}_E(T)$, denote by $\mathfrak{c}^{-1}(v)$ the maximal subtree of $T$ that contracts to $v$.

\begin{center}
    \begin{tikzpicture}
        \draw [fill] (0,0) circle [radius=0.02]; 
        \node [left] at (0,0) {\tiny \textcolor{seagreen}{5}};
        \node [below] at (0,0) {\tiny root}; 
        \draw [fill] (-1,1) circle [radius=0.02];
        \node [left] at (-1,1) {\tiny \textcolor{seagreen}{1}};
        \draw [fill] (0,1) circle [radius=0.02];
        \draw [fill] (1,1) circle [radius=0.02];
        \node [right] at (1,1) {\tiny \textcolor{seagreen}{3,9}};
        \draw [fill] (-1.5,1.7) circle [radius=0.02];
        \node [left] at (-1.5,1.7) {\tiny \textcolor{seagreen}{2,6}};
        \draw [fill] (-0.7,2) circle [radius=0.02];
        \node [above] at (-0.7,2) {\tiny \textcolor{seagreen}{7,10,11}};
        \draw [fill] (0.7,2) circle [radius=0.02];
        \node [above] at (0.7,2) {\tiny \textcolor{seagreen}{4,8}};
        \draw (0,0) to (-1,1) to (-1.5,1.7);
        \draw (-0.7,2) to (0,1) to (0.7,2);
        \draw (0,1) to (0,0) to (1,1);
        \node at (-1.35,1.3) {\tiny \textcolor{gray}{$e_1$}};
        \node at (-0.65,0.5) {\tiny \textcolor{gray}{$e_2$}};
        \node at (-0.5,1.5) {\tiny \textcolor{gray}{$e_3$}};
        \node at (0.55,1.5) {\tiny \textcolor{gray}{$e_4$}};
        \node at (-0.13,0.5) {\tiny \textcolor{gray}{$e_5$}};
        \node at (0.7,0.5) {\tiny \textcolor{gray}{$e_6$}};
        
        \node at (0,-1) {\tiny \textcolor{seagreen}{$S=\{1,2,\ldots,11\}$}};

        \node at (3,1) {$\xrightarrow{\mathfrak{c}_{E},\ E=\{e_{1},e_{4},e_{5},e_6\}}$}; 

        \draw [fill] (6,0.5) circle [radius=0.02]; 
        \node [left] at (6,0.5) {\tiny \textcolor{seagreen}{3,4,5,8,9}};
        \node [below] at (6,0.5) {\tiny root}; 
        \draw [fill] (5.5,1.5) circle [radius=0.02];
        \node [left] at (5.5,1.5) {\tiny \textcolor{seagreen}{1,2,6}};
        \draw [fill] (6.5,1.5) circle [radius=0.02];
        \node [right] at (6.5,1.5) {\tiny \textcolor{seagreen}{7,10,11}};
        \draw (5.5,1.5) to (6,0.5) to (6.5,1.5);
    \end{tikzpicture}
\end{center}
If the collection of edges is written as the disjoint union of subtrees $T_1\sqcup \dots \sqcup T_i$, then we write $T / (T_1\sqcup \dots \sqcup T_i )$ for the contracted tree and $[T_k]$ for the quotient vertices.

When we draw a tree, we always draw the root as the lowermost vertex; in future pictures we omit the word ``root'' and always tacitly assume that the root is the lowermost vertex.

By convention, we will draw trees with as few edges and vertices as possible. For example, a tree with leaf set $\{1,2,3,4\}$ and two internal edges can be drawn as
\begin{tikzpicture}[scale=0.7]
        \draw [fill] (0,0) circle [radius=0.02]; 
        \draw [fill] (-0.5,0.5) circle [radius=0.02]; 
        \node [left, seagreen] at (-0.5,0.5) {\tiny $1,2$}; 
        \draw (-0.5,0.5) to (0,0); 
        \draw [fill] (0.5,0.5) circle [radius=0.02]; 
        \node [right,seagreen] at (0.5,0.5) {\tiny $3,4$}; 
        \draw (0.5,0.5) to (0,0); 
    \end{tikzpicture} 
instead of as
\begin{tikzpicture}[scale=0.7]
        \draw [fill] (0,0) circle [radius=0.02]; 
        \draw [fill] (-0.5,0.5) circle [radius=0.02]; 
        \draw (-0.5,0.5) to (0,0); 
        \draw [fill] (0.5,0.5) circle [radius=0.02]; 
        \draw (0.5,0.5) to (0,0); 
        
        \draw [fill, seagreen] (-1,1) circle [radius=0.02]; 
        \draw [fill,seagreen] (-0.3,1) circle [radius=0.02]; 
        \draw [fill,seagreen] (0.3,1) circle [radius=0.02]; 
        \draw [fill,seagreen] (1,1) circle [radius=0.02]; 
        \draw[seagreen] (-1,1) to (-0.5,0.5) to (-0.3,1); 
        \draw[seagreen] (0.3,1) to (0.5,0.5) to (1,1);
        \draw [brown] (0,0) to (0,-0.5);

        \node [above, seagreen] at (-1,1) {\tiny 1}; 
        \node [above, seagreen] at (-0.3,1) {\tiny 2}; 
        \node [above, seagreen] at (0.3,1) {\tiny 3}; 
        \node [above, seagreen] at (1,1) {\tiny 4}; 
    \end{tikzpicture} 
which is another common convention. We use this convention because it allows us to drop the set of leaves from the picture when it is not relevant to the argument, as well as avoid repeatedly distinguishing between leaf edges and internal edges.

We use the notation $\bullet_S$ to denote the $S$-labeled tree with a single vertex. 
When $S=\{1,\ldots,n\}$, we denote $\mathcal{T}(n):=\mathcal{T}(S)$ and $\bullet_n:=\bullet_S$. 

\begin{definition}
    The category of $S$-labeled trees $\mathcal{T}(S)$ is the poset whose objects are $S$-labeled trees and has morphisms $ T \leq \mathfrak{c}_E(T)$.
\end{definition}

We say that a tree is \textit{decomposable} if its set of edges is non-empty. 

A tree is a \textit{corolla} if it has no edges. The $S$-labeled corolla $\bullet_S$ is the maximal element of  $\mathcal{T}(S)$.

\begin{definition}
    For $n\leq \infty$, the category of $n$-truncated trees $\mathcal{T}_{\leq n}$ has objects the $S$-labeled trees for all finite sets $S$ such that $|S| \leq n$ and morphisms $T \rightarrow T'$ are given by a pair
    $(f,\mathfrak{c})$
    where $f$ is a bijection $lb(T) \cong lb(T')$, and $\mathfrak{c}$ is a contraction from $T$ to $f^{-1}(T')$, where $f^{-1}(T')$ denotes relabeling via the bijection $f$.
\end{definition}

We shall write $\mathcal{T}:=\mathcal{T}_{\leq \infty}$, and $\mathcal{T}_n$ for the subcategory of $\mathcal{T}$ of trees with label set exactly of cardinality $n$.

\begin{definition}
For $n\leq \infty$, an \textit{$n$-truncated reduced operad} $O$ is specified by the data of a functor
     \[\phi:\mathcal{T}_{\leq n} \rightarrow \mathrm{Top}\]
     equipped with compatible and equivariant homeomorphisms
       \[\phi(T)\cong \prod_{v \in V(T)} \phi(\bullet_{\overline{cld}(v)})\]
   such that 
    if $T$ is covered by disjoint subtrees $T_1,\dots,T_i$ then the value of $\phi$ on a contraction $T \leq T / (T_1\sqcup \dots \sqcup T_i )$ is given by
       \[\phi(T \rightarrow T') \cong  \prod^i_{k=1} \phi(T_k \rightarrow \bullet_{\overline{cld}([T_k])}).\]
       
Write
$$O(m):=\phi(\bullet_{\{1,\ldots,m\}}) \textnormal{ for }{2 \leq m \leq n}.$$
 \end{definition}

By considering trees with one internal edge, one may extract maps
\[O(m-j+1) \times O(j)  \rightarrow O(m)\]
which determine the operad structure. These are called the partial composites of $O$.

\begin{definition}
    For $n\leq \infty$, given an $n$-truncated reduced operad $O$ represented by $\phi:\mathcal{T}_{\leq n} \rightarrow \mathrm{Top}$, then for $S$ with $|S|=m\leq n$ the operad decomposables are 
    \[\mathrm{Decom}(O)(S) := \bigcup_{T \in \mathcal{T}(S) \setminus \bullet_{S} } \mathrm{Image}\big(\phi(T) \rightarrow \phi(\bullet_{S})\big).\]
\end{definition}

The space $\mathrm{Decom}(O)(S)$ has a universal approximation which depends only on $O^{\leq m-1}$:
\[\colim_{{\mathcal{T}(S)\backslash\bullet_{S}}}\phi. \]
It supports a continuous and surjective map 
\[\colim_{{\mathcal{T}(S)\backslash\bullet_{S}}}\phi \rightarrow \mathrm{Decom}(O)(S),\]
and the fibers of this map encode the relations the truncated operad $O^{\leq m}$ satisfies.

\subsection{Manifold operads}

We are interested in a certain class of operads which intertwine important categorical invariants of operads with the point set topology of manifolds. Unless otherwise specified, we take $n \leq \infty$.

\begin{definition}\label{manifoldoperad_dfn}

    An \textit{$n$-truncated, $d$-dimensional manifold operad} is an $n$-truncated operad $O$, represented by $\phi: \mathcal{T}_{\leq n} \rightarrow \mathrm{Top}$, which for all $S$ with $|S|=m\leq n$ satisfies:
    \begin{enumerate}
        \item \label{manifoldoperad1_item}
        $O(S)$ is a compact $(md-d-1)$-dimensional topological manifold with boundary, and its boundary is $\mathrm{Decom}(O)(S)$.
        \item \label{manifoldoperad2_item}
        The canonical map  
        \[\colim_{{\mathcal{T}(S)\backslash\bullet_{S}}}\phi 
          \rightarrow \mathrm{Decom}(O)(S) \]
       is a homeomorphism.
    \end{enumerate}
\end{definition}

If the dimension $d$ is not relevant, we will  refer to $O$ as an $n$-truncated manifold operad. If $n=\infty$, we will refer to them as manifold operads.

This definition implies, by  Lemma \ref{colimtopology_lmm} and Lemma \ref{operadtopology_lmm}, that for every $T\in \mathcal{T}(S)\backslash\bullet_S$, the obvious map $\phi(T)\to \colim_{\mathcal{T}(S)\backslash\bullet_S}\phi$ (therefore also the map $\phi(T)\to O(S)$) is a closed embedding, and the images of these maps only intersect ``when necessary''.

The prototypical example of a $d$-dimensional manifold operad is $\FM_d$, the $d$th Fulton-MacPherson operad, originally defined by Getzler and Jones \cite{getzler_jones}, see also
\cite{sinhacompact,salvatore_2022}.

We recall the definition of its underlying spaces.
\begin{definition}
    Consider the map 
\begin{align*}
    \psi_d:\mathrm{Conf}(\mathbb{R}^d,m)
&\to (S^{d-1})^{\binom{m}{2}} \times [0,+\infty]^{\binom{m}{3}} \\
(x_1,\dots,x_m) &\mapsto ((\frac{x_i-x_j}{|x_i-x_j|})_{i<j},(|x_i-x_j|/|x_i-x_k|)_{i<j<k})
\end{align*}
The space $\mathrm{FM}_d(m)$ is the closure of the image 
of $\psi_d$. 
\end{definition}
We recall that $\FM_d(m)$ is a smooth manifold with corners of dimension $md-d-1$.
Its interior is identified via $\psi_d$ to $\mathrm{Conf}(\mathbb{R}^d,m)$, the configuration space of $m$ ordered points in $\mathbb{R}^d$, modulo translations and positive scaling. 
The boundary strata of $\mathrm{FM}_d(m)$ are indexed by decomposable trees in $\mathcal{T}_m$ and correspond to possible degenerations of configurations where some points come infinitesimally close. The operad composition replaces a point in a configuration by a cluster of points that are infinitesimally close to each other. 
Precisely the operad composition
$\circ_i: \FM_d(m) \times \FM_d(n) \to \FM_d(m+n-1)$
is the unique map such that
$$(x_1,\dots,x_m)\circ_i (y_1,\dots,y_n)= \lim_{\varepsilon \to 0^+}(x_1,\dots,x_{i-1},x_i+\varepsilon{y_1},\dots,x_i+\varepsilon{y_n},x_{i+1},\dots,x_m)$$
when $(x_1,\dots,x_m)$ and $(y_1,\dots,y_n)$ are configurations with barycenter in the origin and norm 1.

The Fulton-MacPherson operads are manifold operads, and in fact seem to be the only known examples. However, for finite $n$, we will see that in fact it is quite easy to produce examples of $n$-truncated manifold operads. These may be obtained by fixing an $n$-truncated manifold operad and performing a $\Sigma_n$-equivariant surgery on the interior of the manifold of $n$-ary operations. For instance, corresponding to surgery on the orbit of a small embedded $S^0 \times D^{nd-d-1}$ in $\mathring{\mathrm{FM}}_d(n)$, there is an $n$-truncated manifold operad of dimension $d$ such that
\[
  O(m) \cong
  \begin{cases} 
   \FM_d(m) & m <n\\
    \FM_d(n) \#_{n!} (S^{ 1} \times S^{ nd-d-2})  & m =n.
  \end{cases}
\]
Issues arise when trying to extend this technique to surgery below the top arity. This is because the operad structure necessarily propagates the effects of surgery towards the higher arities.  In fact, it is quite easy to obstruct the existence of certain manifold operads. Recall that the collection of all unoriented topological bordism groups \[\Omega^\mathrm{Top}:= \bigoplus_{n \in \mathbb{N}} \Omega_n^\mathrm{Top}\] supports the structure of a commutative ring where addition is given by disjoint union and multiplication given by cartesian product.

\begin{prop} \label{prop: example nonexistence}

    If $M$ is such that
    \[[M]^2 \neq 0 \in \Omega^\mathrm{Top}, \]
    then there is no $\geq 3$-truncated manifold operad satisfying $O(2)=M$.
\end{prop}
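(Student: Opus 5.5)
Suppose, for contradiction, that $O$ is a $\geq 3$-truncated manifold operad with $O(2)=M$; let $d$ be its dimension. I will compute $\partial O(3)$ from the definition and show it is a disjoint union of three copies of $M\times M$. Since $\partial O(3)$ bounds the compact manifold $O(3)$, this forces $3[M]^2=[M]^2=0$ in $\Omega^{\mathrm{Top}}$, using that unoriented bordism is $2$-torsion, i.e.\ $2[N]=[\partial(N\times[0,1])]=0$.

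First I enumerate $\mathcal{T}(\{1,2,3\})\setminus\bullet_{\{1,2,3\}}$. By the stability condition (\ref{stability_item}), every non-root vertex must have $|cld(v)|+|lb(v)|\geq 2$, so with three labels the only decomposable trees are the three trees $T_{\{i,j\}}$. In $T_{\{i,j\}}$ the root carries the label $k$ and has a single child carrying the labels $i,j$. There are no edges to contract further, so these trees are pairwise incomparable, and the only morphisms in the indexing category are identities. Hence
\[\colim_{\mathcal{T}(3)\setminus\bullet_3}\phi=\bigsqcup_{\{i,j\}}\phi(T_{\{i,j\}})\cong \bigsqcup_{3} O(2)\times O(2),\]
using the product decomposition $\phi(T)\cong\prod_v\phi(\bullet_{\overline{cld}(v)})$. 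Each vertex of $T_{\{i,j\}}$ has $|\overline{cld}(v)|=2$, so each factor is a copy of $O(2)=M$.

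Condition (\ref{manifoldoperad2_item}) of Definition \ref{manifoldoperad_dfn} says the canonical map from this colimit to $\mathrm{Decom}(O)(3)$ is a homeomorphism. Condition (\ref{manifoldoperad1_item}) identifies $\mathrm{Decom}(O)(3)$ with $\partial O(3)$, and $O(3)$ is compact. So $O(3)$ is a compact topological manifold with boundary homeomorphic to $\bigsqcup_3 M\times M$. This gives $3[M\times M]=0$, hence $[M]^2=0$, contradicting the hypothesis.

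The main point needing care is the combinatorial check that no other trees appear, in particular that there are no trees with unary vertices. This follows directly from the stability condition. Beyond that, the argument only uses that the boundary of a compact topological manifold is nullbordant, which is standard (via a collar, which exists in the topological category by Brown's collaring theorem).
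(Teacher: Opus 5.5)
Your proof is correct and follows the paper's argument: identify $\partial O(3)=\mathrm{Decom}(O)(3)$ with three disjoint copies of $O(2)\times O(2)$, so $O(3)$ nullbords $3[M]^2$, and conclude $[M]^2=0$ because unoriented bordism is $2$-torsion. You also spell out the enumeration of the three decomposable trees in $\mathcal{T}(3)$ and why the colimit is a disjoint union, which the paper leaves implicit.
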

\begin{proof}
    If $O$ is a $3$-truncated manifold operad, then $\partial O(3) = \mathrm{Decom}(O)(3) \cong  (O(2) \times O(2)) \sqcup (O(2) \times O(2)) \sqcup (O(2) \times O(2))$. By definition, $O(3)$ will be a nullbordism of $3[M]^2$, and so represent $0$ in the cobordism group. If $3[M]^2=0$ in the unoriented bordism ring, then $[M]^2=0$ since every element is $2$-torsion.

\end{proof}

\begin{cor}
      There is no manifold operad with $O(2)= \mathbb{R}P^2$.
\end{cor}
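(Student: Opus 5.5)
The plan is to reduce the corollary to Proposition \ref{prop: example nonexistence}: once $[\mathbb{R}P^2]^2 \neq 0$ in $\Omega^{\mathrm{Top}}$ is established, no manifold operad (even a $3$-truncated one) can have $O(2)=\mathbb{R}P^2$. We may take $d=2$, since $\dim O(2) = 2d-d-1 = d-1$ must equal $2$; in any case the dimension plays no role in the obstruction. There is no need to worry about which free involution $\Sigma_2$ acts by, because the obstruction only uses the underlying manifold.

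To show $[\mathbb{R}P^2\times\mathbb{R}P^2]\neq 0$, I will use Stiefel--Whitney numbers. These are topological bordism invariants: a closed topological manifold that bounds a compact topological manifold has all Stiefel--Whitney numbers zero. This follows from the standard Pontryagin argument, which only needs Poincaré--Lefschetz duality with $\mathbb{Z}/2$ coefficients and the topological tangent microbundle, whose Stiefel--Whitney classes are defined through Wu's formula. It is therefore enough to exhibit one nonzero Stiefel--Whitney number of $\mathbb{R}P^2\times\mathbb{R}P^2$.

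Let $a \in H^1(\mathbb{R}P^2;\mathbb{Z}/2)$ be the generator. Then $w(\mathbb{R}P^2)=(1+a)^3 = 1+a+a^2$, so $w_2(\mathbb{R}P^2)=a^2$ and $w_2[\mathbb{R}P^2]=1$. On the product, write $a$ and $b$ for the pulled-back generators. The Whitney product formula gives $w_4(\mathbb{R}P^2\times\mathbb{R}P^2)=w_2\times w_2 = a^2b^2$, which evaluates to $1$ on the fundamental class. So $w_4[\mathbb{R}P^2\times \mathbb{R}P^2]\neq 0$, hence $[\mathbb{R}P^2]^2\neq 0$. Equivalently, the Euler characteristic mod $2$ is $\chi = 1\cdot 1 = 1$, which is odd, and $\chi \bmod 2 = w_{\mathrm{top}}$ is a bordism invariant. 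Proposition \ref{prop: example nonexistence} now gives the corollary.

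The only step needing care is the claim that Stiefel--Whitney numbers are invariants of \emph{topological} bordism, since $\Omega^{\mathrm{Top}}$ rather than smooth bordism appears in the proposition. I would use the Euler characteristic version, because it avoids characteristic classes. If a closed $4$-manifold $N$ bounds a compact $W$, then doubling gives $\chi(DW)=2\chi(W)-\chi(N)$. Also $\chi(DW)=0$, since $DW$ is a closed odd-dimensional manifold, and Poincaré duality with $\mathbb{Z}/2$ coefficients forces its Euler characteristic to vanish. Hence $\chi(N)=2\chi(W)$ is even. Since $\chi(\mathbb{R}P^2\times\mathbb{R}P^2)=1$ is odd, the product cannot bound.
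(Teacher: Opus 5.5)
Your proposal is correct and follows the paper's route: the corollary is Proposition~\ref{prop: example nonexistence} combined with $[\mathbb{R}P^2]^2\neq 0$ in $\Omega^{\mathrm{Top}}$. The paper leaves that nonvanishing implicit, and your Euler-characteristic parity argument supplies it cleanly for topological bordism, using the double and $\mathbb{Z}/2$-Poincar\'e duality. One harmless slip: $\dim O(2)=d-1=2$ forces $d=3$, not $d=2$, but as you note, the dimension plays no role.
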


\subsection{Obstructing collarable manifold operads}

Fix some $n \leq \infty$.

\begin{definition}
    A collarable $n$-truncated manifold operad $O$ is an $n$-truncated manifold operad $O$ such that for all $m \leq n$ there exists a $\Sigma_m$-equivariant collar neighborhood of $\partial O(m) \subset O(m)$.
\end{definition}

In practice, this collaring condition is not too severe. If $G$ is a finite group, a smooth $G$-manifold with boundary always admits a $G$-equivariant collar. 
If $G$ is a finite group acting freely on a topological $G$-manifold with boundary, then a $G$-equivariant collar neighborhood exists, by lifting the collar of the quotient manifold.

Therefore we know that $\mathrm{FM}_d$ is collarable.

\begin{definition}
    Let $G$ be a finite discrete group. By a \textit{$G$-cobordism of $\,G$-manifolds} $W$ we mean a compact $G$-manifold $W$ and an ordered $G$-invariant decomposition of the boundary
    \[\partial W = \partial_1 W \sqcup \partial_2 W.\]
\end{definition}

We say that a $G$-cobordism is collarable if it admits an equivariant collar of its boundary. This collarability is required, for instance, if one wishes to show that the trivial cobordism acts as an identity for the composition of equivariant cobordisms. 

\begin{definition}
    For a finite group $G$, the \textit{topological $G$-cobordism group} $\Omega_{m}^{G\mathrm{Top}}$ is obtained as the quotient of the monoid (under disjoint union) of closed topological $m$-dimensional manifolds with a $G$-action  by $G$-cobordisms.
\end{definition}

Note that although the collection of $G$-cobordisms does not coincide with the collection of collarable $G$-cobordisms, there exists a $G$-cobordism between $M$ and $N$ if and only if there exists a collarable $G$-cobordism between $M$ and $N$. This is because one can glue an external collar onto any $G$-cobordism without affecting the equivariant homeomorphism type of the boundary. We state and prove the following which depends crucially on the rather technical Section \ref{section: Stratifications of manifold operads and bimodule cobordisms}.

\begin{prop} \label{prop: operad obstruction}
    If $O$ is an $n$-truncated, $d$-dimensional collarable manifold operad, then there is a canonical obstruction in $\Omega_{nd-2}^{\Sigma_{n+1}\mathrm{Top}}$ which vanishes if and only if $O$ extends to an $(n+1)$-truncated, $d$-dimensional collarable manifold operad.
\end{prop}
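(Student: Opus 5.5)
The obstruction will be the equivariant bordism class of the space that is forced to be the boundary of $O(n+1)$. Let $S=\{1,\dots,n+1\}$ and define
\[Q(n+1):=\colim_{\mathcal{T}(S)\setminus\bullet_S}\phi .\]
This colimit depends only on the $n$-truncated operad $O$, and it carries a natural $\Sigma_{n+1}$-action coming from relabeling. Definition \ref{manifoldoperad_dfn} says that any extension $O^{\leq n+1}$ must satisfy $\partial O(n+1)\cong Q(n+1)$ equivariantly. The expected dimension of $O(n+1)$ is $(n+1)d-d-1=nd-1$, so $Q(n+1)$ should be a closed $(nd-2)$-manifold. The plan is:
\begin{enumerate}
\item prove that $Q(n+1)$ is a closed topological $\Sigma_{n+1}$-manifold of dimension $nd-2$;
\item define the obstruction to be $[Q(n+1)]\in\Omega^{\Sigma_{n+1}\mathrm{Top}}_{nd-2}$, which is canonical because $Q(n+1)$ is built functorially from $O$;
\item show that this class vanishes if and only if a collarable extension exists.
\end{enumerate}

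For step (1) I would use the stratification of $Q(n+1)$ by decomposable trees $T\in\mathcal{T}(S)$. The open stratum indexed by $T$ is $\prod_{v}\mathring{O}(\overline{cld}(v))$. Each factor $O(\overline{cld}(v))$ has arity at most $n$, so it is a manifold whose boundary is its own decomposables; consequently the stratum indexed by $T$ has codimension $|E(T)|-1$ in $Q(n+1)$.

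Using the equivariant collars of the $O(m)$ with $m\le n$, I would build a neighbourhood of a point in the $T$-stratum of the form $\mathbb{R}^{\dim}\times\mathrm{Cone}(L_T)$. Here the link $L_T$ is, at least up to homeomorphism, the realization of the poset of decomposable trees strictly below the corolla, taken under $T$ as contractions of $T$'s edges. That poset is the proper face poset of a simplex on $E(T)$ with the top face removed, so $L_T$ is the boundary of a simplex of dimension $|E(T)|-1$, which is a sphere of dimension $|E(T)|-2$. This is exactly what is needed for a codimension $|E(T)|-1$ stratum in a manifold.

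Compactness of $Q(n+1)$ is clear. Hausdorffness and local compactness of the colimit should follow from the appendix lemmas (Lemma \ref{colimtopology_lmm}, Lemma \ref{operadtopology_lmm}). This local-chart construction is the step I expect to be the main obstacle. The collars must be chosen compatibly across all faces, that is, as stratified collars in the sense of Section \ref{section: Stratifications of manifold operads and bimodule cobordisms}, so that the cone coordinates glue consistently. My first attempt would be to invoke the paper's link computation (Proposition \ref{link_prop}) and the stratified framework, which is what the statement says this proposition depends on.

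For step (3), suppose first that a collarable extension exists. Then $O(n+1)$ is a compact $\Sigma_{n+1}$-manifold with boundary, and condition (2) of Definition \ref{manifoldoperad_dfn} identifies that boundary with $Q(n+1)$. Hence $O(n+1)$ is an equivariant nullbordism and the class vanishes.

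Conversely, suppose the class vanishes, so there is a compact $\Sigma_{n+1}$-manifold $N$ with an equivariant homeomorphism $h\colon\partial N\cong Q(n+1)$. As noted above, one can glue an external collar onto $N$, so we may assume $N$ is collarable. Set $O(n+1):=N$. The composition maps landing in arity $n+1$ are the composites
\[\phi(T)\to Q(n+1)\xrightarrow{\,h^{-1}\,}\partial N\hookrightarrow N,\]
and the functor $\phi$ is extended to trees with label set of size $n+1$ by declaring $\phi(\bullet_S)=N$ for $|S|=n+1$. Two things then need checking:
\begin{itemize}
\item Associativity and equivariance follow because the maps factor through the colimit over $\mathcal{T}(S)\setminus\bullet_S$, in which all relations among lower arities already hold.
\item $\mathrm{Decom}(O)(n+1)$ is the image of $Q(n+1)$, which is $\partial N$. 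The canonical map from the colimit to this image is $h^{-1}$, a homeomorphism, so both conditions of Definition \ref{manifoldoperad_dfn} hold in arity $n+1$.
\end{itemize}
Finally, $\partial N$ has an equivariant collar, so the extended $(n+1)$-truncated operad is collarable.
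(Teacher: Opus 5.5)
Your proposal is correct and follows the paper's own proof. The obstruction is the equivariant bordism class of $\colim_{\mathcal{T}(n+1)\setminus\bullet_{n+1}}\phi$. It is shown to be a closed $(nd-2)$-manifold using the stratification that collarability provides (Theorem \ref{thm: manifold operad is stratified} together with Proposition \ref{prop: decomposables are manifold}), and both directions are handled as you describe. Your observation that in the single-colored case $\partial Lk(T)$ is just the boundary of the simplex on $E(T)$ is exactly the single-colored analogue of Proposition \ref{link_prop} that the paper invokes at this step.
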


\begin{proof}

  Let $O$ be represented by $\phi:\mathcal{T}_{\leq n} \rightarrow \mathrm{Top}$.  We will take our obstruction to be $$[\colim_{{\mathcal{T}(n+1)\backslash\bullet_{n+1 }}}\phi ]\in \Omega_{nd-2}^{\Sigma_{n+1}\mathrm{Top}}$$ This colimit has a canonical $\Sigma_{n+1}$-action and by Theorem \ref{thm: manifold operad is stratified} combined with Proposition \ref{prop: decomposables are manifold} is a manifold. Checking the maximal faces of the colimit, corresponding to trees with one edge, one sees that the dimension must be
    \[\mathrm{dim}\big(O(n+2-j) \times O (j)\big) = ((n+2-j)d -d -1) + (jd -d -1) = nd +2d -jd -d -1 +jd-d-1=nd-2. \]

    Hence, the obstruction $[\colim_{{\mathcal{T}(n+1)\backslash\bullet_{n+1 }}}\phi]\in \Omega_{nd-2}^{\Sigma_{n+1}\mathrm{Top}}$ is well defined. We now must show that this is a complete obstruction to the problem of extending $O$ to a collarable $(n+1)$-truncated $d$-dimensional manifold operad. 

    If $O(n+1)$ is defined so as to extend $O$ to a $(n+1)$-truncated $d$-dimensional manifold operad, then by the definition of a manifold operad $O(n+1)$ is a nullbordism of $\colim_{{\mathcal{T}(n+1)\backslash\bullet_{n +1}}}\phi$. Hence, the obstruction is trivial in $\Omega_{nd-2}^{\Sigma_{n+1}\mathrm{Top}}$. Conversely, given a nullbordism $W$ of $\colim_{{\mathcal{T}(n+1)\backslash\bullet_{n+1 }}}\phi$, which can be assumed to be collarable by our previous discussion, we may simply set $O(n+1):=W$ and define its new operadic structure maps to factor through the boundary inclusion $\colim_{{\mathcal{T}(n+1)\backslash\bullet_{n+1 }}}\phi\hookrightarrow W$. This satisfies the definition of a $d$-dimensional $(n+1)$-truncated manifold operad by construction.
\end{proof}

\begin{remark}
    The above obstruction theory does not readily generalize to the entire class of manifold operads, in the sense that for an arbitrary $n$-truncated manifold operad $O$ the space $\colim_{{\mathcal{T}(n+1)\backslash\bullet_{n +1}}}\phi$ doesn't seem to be a manifold.
\end{remark}

It is natural to ask whether the collars of a collarable manifold operad can be chosen to respect the operad composition. In order to formulate this condition, we recall a variant of the $W$-construction of an operad \cite{boardman_vogt_1973}.

\begin{definition}
If $O$ is an $n$-truncated operad, the \textit{thickened $W$-construction} $W_{[0,1]}(O)$ is the $n$-truncated operad defined by setting for $S$ with $|S|\leq n$
    \[W_{[0,1]}(O)(S):= \left(\bigsqcup_{T \in \mathcal{T}(S)} \prod_{v \in V(T)} [0,1] \times O\big(\overline{cld}(v)\big)\right) \bigg/ \sim.\]
We describe the relation $\sim$ in terms of \textit{``$O$-labeled metric trees''} by noting that a point in the disjoint union may be identified with 
\begin{itemize}
    \item An $S$-labeled tree $T$ with the addition of a hair\footnote{A hair is an edge attached to only one vertex, also called a half-edge.} attached to its root.
    \item An assignment of a length $t_e \in [0,1]$ to each edge $e \in E(T)$, as well as to the hair. 
    \item An assignment of a point in $O(\overline{cld}(v))$ to each vertex $v \in V(T)$.
\end{itemize}
Under this identification, the relation $\sim$ contracts length $0$ edges, but not a length $0$-hair, and uses the operadic composition to multiply the relevant operad labels. Operadic composition is given by grafting trees via the root's hair.
\end{definition}

\begin{definition}
    Given an $n$-truncated operad $O$, the \textit{$W$-construction} $W(O)$ is the $n$-truncated suboperad of $W_{[0,1]}(O)$ for which the underlying metric tree has a length $1$ hair attached to the root.
\end{definition}

It is important to note that as $n$-truncated symmetric sequences $W(O) \times [0,1] \cong W_{[0,1]}(O)$, though this identification does not interact with the operad structures in a reasonable way.

\begin{prop}\label{prop: isomorphic to w construction}
    An $n$-truncated collarable manifold operad admits an $n$-truncated operad isomorphism
    \[O \cong W(O).\]
    Furthermore, for $m \leq n$ the subspace of $W(O)(m)$ consisting of points which admit a representation by a decomposable labeled metric tree is a collar neighborhood of \[\partial W(O)(m) = \mathrm{Decom}(W(O))(m).\]
\end{prop}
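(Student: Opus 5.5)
We prove the statement by induction on the arity $m\le n$, constructing $\Sigma_m$-equivariant homeomorphisms $f_m: O(m)\to W(O)(m)$ that commute with the partial composites. For $m=2$, $W(O)(2)=O(2)$ because there are no decomposable $2$-labeled trees, so we take $f_2=\mathrm{id}$.

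For the inductive step, assume $f_k$ has been constructed for all $k<m$. These maps assemble into an isomorphism of the functors $\phi_O$ and $\phi_{W(O)}$ restricted to $\mathcal{T}(S)\setminus\bullet_S$. Passing to colimits gives a homeomorphism
\[\colim_{\mathcal{T}(S)\setminus\bullet_S}\phi_O \cong \colim_{\mathcal{T}(S)\setminus\bullet_S}\phi_{W(O)}.\]
By Definition \ref{manifoldoperad_dfn}, the left side is $\partial O(S)=\mathrm{Decom}(O)(S)$.

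For the right side, we analyse $W(O)(S)$ directly. Let $D_S\subset W(O)(S)$ be the subspace of points represented by a decomposable metric tree with at least one internal edge of positive length.

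1. Assigning to such a point the maximal internal edge length $t\in[0,1]$ gives a function on $D_S$. The locus $t=1$ is the image of the grafting maps, which is $\mathrm{Decom}(W(O))(S)$. The locus $t=0$ consists of trees all of whose internal edges have length $0$; after contraction these are corollas labeled by points of $\mathrm{Decom}(O)(S)=\partial O(S)$.
2. Rescaling all internal edge lengths by $1/t$ identifies $D_S\cup\{t=0\}$ with $\mathrm{Decom}(W(O))(S)\times[0,1]$. Here one checks that rescaling is compatible with the relation $\sim$, which holds because $\sim$ only contracts length-$0$ edges, and these are preserved by rescaling.
3. Consequently $W(O)(S)$ is $O(S)$ with an external collar $\mathrm{Decom}(W(O))(S)\times[0,1]$ glued along $\partial O(S)\cong \mathrm{Decom}(W(O))(S)\times\{0\}$. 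Here the identification $\partial O(S)\cong\mathrm{Decom}(W(O))(S)$ is the inductive colimit homeomorphism above, made explicit as follows: a point in the image of a tree $T$ corresponds to $T$ with all edge lengths set to $1$ and vertex labels transported by the $f_k$.

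This gives the second assertion of the proposition: the decomposable-representable subspace is a collar of $\partial W(O)(S)=\mathrm{Decom}(W(O))(S)$. Checking that this collar is exactly the decomposable locus, and that the topology of the quotient agrees with the product topology, uses Lemma \ref{colimtopology_lmm} and Lemma \ref{operadtopology_lmm} together with compactness.

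To define $f_m$, choose a $\Sigma_m$-equivariant collar $c:\partial O(S)\times[0,1]\hookrightarrow O(S)$, which exists since $O$ is collarable. Define $f_m$ by the following three rules:
\begin{itemize}
\item on $O(S)\setminus c(\partial O\times[0,1))$, compose the reparametrization $c(x,s)\mapsto c(x,2s-1)$ (for $s\ge 1/2$), extended by the identity, with the inclusion of $O(S)$ into $W(O)(S)$ as length-$0$ configurations;
\item send $c(x,s)$ with $s\le1/2$ to the collar point $(f(x),1-2s)$;
\item require $f_m$ to restrict on $\partial O(S)$ (the locus $s=0$) to the colimit homeomorphism.
\end{itemize}
This $f_m$ is an equivariant bijective continuous map between compact Hausdorff spaces, hence a homeomorphism. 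It is compatible with composites landing in arity $m$, because composites land in $\partial O(S)$ and on $\partial O(S)$ the map $f_m$ is the colimit map built from the $f_k$ with $k<m$.

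The main obstacle is step 2, the collar identification inside $W(O)(S)$. The difficulty is that rescaling must be shown to be well defined across different tree representatives of the same point. This needs the manifold-operad condition that the colimit maps homeomorphically onto the decomposables: it guarantees that a point of $\mathrm{Decom}$ has a unique minimal tree representative up to the relations. I would first prove a normal form statement for points of $W(O)(S)$, namely that each has a unique representative tree with all positive internal lengths and indecomposable interior vertex labels, using that condition inductively.
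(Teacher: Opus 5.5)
Your induction scheme has the right shape: build a boundary homeomorphism $h:\partial O(S)\to\mathrm{Decom}(W(O))(S)$ from the $f_k$, then extend it over an equivariant collar. That is the inductive structure of the argument the paper imports from \cite{salvatore_2021}. However, step 2 is false, and step 3 and the definition of $f_m$ both rest on it.

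Let $\pi:\mathrm{Decom}(W(O))(S)\to\partial O(S)$ be the map that sets every internal edge length to $0$ and composes the labels. Under your rescaling, $(x,t)$ converges to the corolla $\pi(x)$ as $t\to 0$. So the decomposable locus $D_S\cup\{t=0\}$ is the mapping cylinder of $\pi$, and it is attached to $O(S)$ along $\pi$, not along $h$.

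For $|S|\ge 4$ the map $\pi$ is not injective. Over a point of $\partial O(S)$ whose normal form is a tree $T$ with interior labels, the fibre is the disc $\{u\in[0,1]^{E(T)}:\max_e u_e=1\}$. For instance, on a two-edge tree, the lengths $(1,u)$ with $u\in(0,1]$ give distinct points with the same image. Since $D_S$ is dense in the locus, no extension of $p\mapsto(p/t,t)$ can be a homeomorphism onto $\mathrm{Decom}(W(O))(S)\times[0,1]$.

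Concretely, your $f_m$ is not well defined. At $s=1/2$, rule one gives the corolla $x$, while rule two gives $\pi(h(x))$. These differ because $\pi\circ h\neq\mathrm{id}$: already $\pi_3\circ f_3$ collapses each collar segment $\{c(y_0,s):s\in[0,1/2]\}\subset O(3)$ to $y_0$.

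Using $\pi\circ h$ on the boundary in rule one instead destroys injectivity. For $a\in\mathring O(2)$, all the points $a\circ_i c(y_0,s)$ with $s\in[0,1/2]$ have the same image $a\circ_i y_0$. At $m=3$ both $\pi$ and $\pi\circ h$ are the identity, which is why the scheme looks correct at first.

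The difficulty you flagged, well-definedness of rescaling across tree representatives, is harmless once you have normal forms. The missing idea is a homeomorphism from this mapping cylinder onto a product collar that is compatible with operadic composition. Over each stratum the mapping cylinder is a cube $\mathring{\phi}(T)\times[0,1]^{E(T)}$.

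Uniform rescaling cannot supply this homeomorphism. The boundary strata of $W(O)(S)$ are indexed by which edges have length exactly $1$, not by the shape of the normal form.

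What is needed is to construct inductively, from the equivariant collar, a compatible family of cubical tubular neighbourhoods $\mathring{\phi}(T)\times[0,\epsilon)^{E(T)}$ of the boundary strata of $O(S)$. Here the cube is $\mathrm{Cone}(Lk(T\to\bullet))$, the cone on a simplex. The internal edge lengths then serve as the cubical coordinates. This is exactly the data the paper says the isomorphism $O\cong W(O)$ encodes, namely compatible tubular neighbourhoods of the boundary strata.
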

\begin{proof}
The proof is the same as that of the main theorem of \cite{salvatore_2021}, up to arity $m$, replacing $\FM_d$ with $O$. 
\end{proof}
The data of the isomorphism from the proposition encodes a collection of compatible tubular neighborhoods of the boundary strata of $O$. Note that an obvious consequence of this proposition is that the $W$-construction of a collarable manifold operad is a manifold operad, though the full strength of equivariant collars is not necessary to prove this, see Corollary \ref{cor: w of stratified}.

We briefly remark on two homotopical implications of this result. Recall that an $n$-truncated operad is called $\Sigma$-cofibrant if the underlying symmetric sequence is cofibrant in the projective model structure on symmetric sequences. When everything involved is a compact manifold, this is quite close to, but unfortunately not exactly equivalent with, requiring that the symmetric group actions are free. 

\begin{cor}
A collarable $n$-truncated manifold operad is cofibrant in the projective model structure on $n$-truncated operads if and only if the underlying symmetric sequence is cofibrant in the projective model structure on $n$-truncated symmetric sequences.
\end{cor}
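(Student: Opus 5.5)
The plan is to use the isomorphism $O \cong W(O)$ of Proposition \ref{prop: isomorphic to w construction} and reduce the statement to the standard fact that the Boardman--Vogt $W$-construction of a $\Sigma$-cofibrant operad is cofibrant in the projective model structure on ($n$-truncated) operads. Cofibrancy is invariant under isomorphism, so it is enough to show that $W(O)$ is cofibrant exactly when the underlying symmetric sequence of $O$ is cofibrant.

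\emph{Necessity.} I would appeal to the general fact that cofibrant operads are $\Sigma$-cofibrant. The forgetful functor from $n$-truncated reduced operads to $n$-truncated symmetric sequences preserves cofibrations between cofibrant objects. This holds because the free operad on a projectively cofibrant symmetric sequence is $\Sigma$-cofibrant, and because retracts and transfinite compositions of pushouts along free maps stay $\Sigma$-cofibrant. That is the usual Berger--Moerdijk argument, and since our operads are reduced and truncated, it goes through arity by arity. A cofibrant $O$ therefore has a cofibrant underlying symmetric sequence. This direction does not need the manifold structure at all.

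\emph{Sufficiency.} Now suppose the underlying symmetric sequence of $O$ is cofibrant. I would filter $W(O)$ by arity: $W(O)^{\leq m-1} \to W(O)^{\leq m}$. Passing from arity $m-1$ to arity $m$ is a pushout along a free operad map generated by the $\Sigma_m$-map
\[
\partial\big([0,1]\times O(m)\big)\cup\text{(decomposable part)} \;\hookrightarrow\; [0,1]\times O(m),
\]
or, more precisely, by the latching inclusion $\mathrm{Decom}(W(O))(m)\cup \{1\}\times O(m) \hookrightarrow W(O)(m)$. This uses the description in Proposition \ref{prop: isomorphic to w construction} of the decomposable metric trees as a collar of $\partial W(O)(m)$. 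The resulting inclusion is the pushout-product of $\{0,1\}\hookrightarrow[0,1]$ (the hair length, together with its boundary) with $\emptyset \hookrightarrow O(m)$. Because $O(m)$ is a projectively cofibrant $\Sigma_m$-space, this pushout-product is a projective $\Sigma_m$-cofibration. Each stage is therefore a cellular extension of operads, and the composite of these stages shows $W(O)$ is cofibrant.

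\emph{Main obstacle.} The difficult step is making the latching-map identification precise. One has to show that the new cells at arity $m$ are attached along exactly $\mathrm{Decom}(W(O))(m)$, with no extra identifications, so that the attaching map is a genuine pushout-product. This is where the manifold operad hypothesis is used: condition \eqref{manifoldoperad2_item} of Definition \ref{manifoldoperad_dfn}, that the colimit over $\mathcal{T}(S)\setminus\bullet_S$ maps homeomorphically onto $\mathrm{Decom}$, gives the needed control. Lemmas \ref{colimtopology_lmm} and \ref{operadtopology_lmm} then ensure the relevant maps are closed embeddings, so the point-set pushouts are the expected ones.
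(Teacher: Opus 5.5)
Your overall reduction matches the paper's. Via $O\cong W(O)$, necessity is the general fact that cofibrant operads are $\Sigma$-cofibrant, and sufficiency is cofibrancy of the $W$-construction of a $\Sigma$-cofibrant operad, which the paper simply cites from Vogt. Had you cited that fact too, the proof would be complete.

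\textbf{The gap.} The gap is in your attempt to prove the sufficiency fact by hand. The skeletal framework (pushouts along free maps on latching inclusions) is fine, but your identification of the arity-$m$ latching map is wrong.
\begin{enumerate}[(a)]
\item In $W(O)$ the root hair has fixed length $1$, so there is no hair-length coordinate. The picture ``$[0,1]\times O(m)$'' describes $W_{[0,1]}(O)(m)\cong W(O)(m)\times[0,1]$, not $W(O)(m)$.
\item Consequently $\{1\}\times O(m)$ is not part of the latching object. That object is identified with $\mathrm{Decom}(W(O))(m)$, the trees having an internal edge of length $1$.
\item The actual latching map is $\mathrm{Decom}(W(O))(m)\hookrightarrow W(O)(m)$. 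Under the operad isomorphism $O\cong W(O)$ this is just $\partial O(m)\hookrightarrow O(m)$. It is not the pushout-product of $\{0,1\}\hookrightarrow[0,1]$ with $\emptyset\hookrightarrow O(m)$, whose source is two copies of $O(m)$.
\end{enumerate}
If you treat $W(O)(m)$ as one block ($O(m)$ plus a collar), you need $\partial O(m)\hookrightarrow O(m)$ to be a projective $\Sigma_m$-cofibration. That is not a formal consequence of $O(m)$ being cofibrant. And if you had it, $O$ would already be cofibrant by its own skeletal filtration, with no need for $W(O)$.

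\textbf{What actually makes the $W$-construction work.} It is a finer, tree-indexed cell structure. A point of $W(O)(m)$ outside the latching object has a unique representative as an $O$-labeled metric tree with every internal edge length in $(0,1)$. Filter by the number of internal edges. The $k$-th stage is then a pushout along the coproduct, over $\Sigma_m$-orbits of trees $T$ with $k$ internal edges, of the maps
\[
\Sigma_m\times_{\Sigma_T}\Big(\prod_{v}O(\overline{cld}(v))\times\partial[0,1]^{E(T)}\Big)\longrightarrow\Sigma_m\times_{\Sigma_T}\Big(\prod_{v}O(\overline{cld}(v))\times[0,1]^{E(T)}\Big),
\]
where $\Sigma_T$ is the stabilizer of the labeled tree. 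Faces where an edge has length $1$ land in the latching object. Faces where an edge has length $0$ land in an earlier stage via the composition of $O$. These maps are pushout-products of $\emptyset\to\prod_v O(\overline{cld}(v))$ with cube-boundary inclusions, so only $\Sigma$-cofibrancy of the $O(k)$ is needed, not of the pairs $(O(k),\partial O(k))$. This is the content of Vogt's theorem: either cite it, as the paper does, or carry out this filtration.

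Relatedly, your ``main obstacle'' is misplaced. The canonical decomposition of $W(O)$ along length-$1$ edges holds for every reduced operad. The manifold and collar hypotheses enter only through the isomorphism $O\cong W(O)$.
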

\begin{proof}
    Assume $n=\infty$, the truncated case being similar. The $W$-construction of a $\Sigma$-cofibrant operad is cofibrant in the projective model structure on operads \cite[4.1. Theorem]{vogtcofibrant}, and all cofibrant operads are $\Sigma$-cofibrant  \cite[Proposition 2.3.1]{chainruleac}.
\end{proof}

Recall that there is a bar construction $B$ for topological operads lifting the algebraic bar construction \cite[Proposition 9.39]{chingDerivativesOfIdentity}. Let $s_d$ denote algebraic operadic suspension, defined as tensoring with the endomorphism operad of $k[-d]$ where $k$ is the ground field.

\begin{prop}\label{prop: koszul self dual}
    A $d$-dimensional, possibly truncated, collarable manifold operad  satisfies
    \[H_\ast(O;\mathbb{Z}/2) \cong s_d \bar{H}_\ast (B(C_\ast(O; \mathbb{Z}/2)^\vee)).\]
\end{prop}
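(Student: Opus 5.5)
The plan is to deduce the statement from Proposition \ref{prop: isomorphic to w construction} together with Poincaré--Lefschetz duality mod $2$. The idea is that the isomorphism $O \cong W(O)$ makes $O(m)$ look like a cone on its boundary, and the boundary is built from lower arities by the tree colimit.

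\textbf{Step 1 (chains and the bar construction).} Use the cellular-type filtration of $W(O)(m)$ by the number of internal edges of the underlying metric tree. The strata are indexed by $T \in \mathcal{T}(m)$ and look like $[0,1]^{E(T)} \times \prod_v O(\overline{cld}(v))$, with faces $t_e = 0$ (edge contraction, i.e. operadic composition) and $t_e = 1$. The subspace with some $t_e = 1$ is the decomposable part $\partial W(O)(m)$. So the relative chains $C_\ast(W(O)(m), \partial W(O)(m); \mathbb{Z}/2)$ are computed, up to quasi-isomorphism, by a complex
\[ \bigoplus_{T} \Big( \bigotimes_{v} C_\ast(O(\overline{cld}(v))) \Big) [\,|E(T)|\,], \]
with differential given by the internal differential plus contraction of edges. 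This is exactly the bar construction $B(C_\ast(O;\mathbb{Z}/2))(m)$, up to the usual operadic suspension conventions. Thus $H_\ast(O(m), \partial O(m)) \cong H_\ast(B(C_\ast O))(m)$, suitably shifted. I would make this identification precise by treating $W(O)(m)$ relative to its boundary as a Boardman--Vogt cube complex, arguing as in the classical proof that $W$ computes the bar--cobar resolution. I would also check that the shifts add up to the operadic suspension.

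\textbf{Step 2 (duality).} Each $O(m)$ is a compact topological $(md-d-1)$-manifold with boundary, so Lefschetz duality over $\mathbb{Z}/2$ (no orientability needed) gives
\[ H_k(O(m);\mathbb{Z}/2) \cong H^{md-d-1-k}(O(m),\partial O(m);\mathbb{Z}/2) \cong H_{md-d-1-k}(O(m),\partial O(m);\mathbb{Z}/2)^\vee. \]
The degree $md-d-1$, combined with the tree-degree shift from Step 1, is precisely the arity-$m$ effect of $s_d$ together with the bar suspension. This step is levelwise and gives an isomorphism of graded symmetric sequences, with $\Sigma_m$-equivariance automatic since the fundamental class mod $2$ is invariant.

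\textbf{Step 3 (compatibility with composition), the main obstacle.} One must check that the duality isomorphisms intertwine the operad structure on $H_\ast(O)$ with the operad structure on the dual of the bar cooperad. The partial composite $O(a)\times O(b) \to O(a+b-1)$ is the inclusion of a codimension-one boundary face. Under Lefschetz duality, the pushforward of a class along it should correspond to the dual of the connecting map $H_\ast(O(a+b-1),\partial) \to H_{\ast-1}(\partial O(a+b-1))$ followed by restriction to that face. In the bar complex, this is the one-edge decomposition component of the cooperad structure.

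I would verify this compatibility using the collar. Proposition \ref{prop: isomorphic to w construction} identifies a neighborhood of the face with $[0,1] \times O(a) \times O(b)$ inside $W(O)$. There, the cap product with the fundamental class splits as a product of the fundamental classes of the factors times that of the interval. This local product formula, made compatible across all faces simultaneously via the tree stratification and the compatibility of collars in $W(O)$, is the delicate part. Restricting to homology rather than chain level, which is what the statement asserts, avoids the need for coherent homotopies.
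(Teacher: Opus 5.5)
Your proposal is correct and takes essentially the same route as the paper. The paper defers to Malin's Theorem 5.10 and Proposition 5.8: it identifies $\bar{H}^\ast$ of Ching's topological bar construction $B(O)$ with the cohomology of $W_{[0,1]}(O)$ relative to its boundary, applies mod-$2$ Poincar\'e--Lefschetz duality to $W_{[0,1]}(O)(m)\simeq O(m)$, and matches partial composites with bar decompositions. The only difference is that you work with $W(O)(m)$ relative to its boundary, which is a desuspension of the same quotient, and you build by hand both the chain-level comparison with $B(C_\ast(O;\mathbb{Z}/2))$ and the face-inclusion/connecting-map compatibility, instead of importing them from Ching and Malin.
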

\begin{proof}
    The statement and proof of the theorem are identical to those of \cite[Theorem 5.10]{malinhha} which is a simplification of the original argument for the Fulton-MacPherson operad \cite[Corollary 3.5]{getzler_jones}. Briefly, since $W(O)$ is a manifold operad by Corollary \ref{cor: w of stratified}, the operad partial composites of $\bar{H}^\ast (B(O);\mathbb{Z}/2)$ are Poincaré-Lefschetz dual to the partial composites of  $H_\ast(W_{[0,1]}(O);\mathbb{Z}/2) \cong H_\ast (O;\mathbb{Z}/2)$ \cite[Proposition 5.8]{malinhha}.
\end{proof}

\begin{remark}

Proposition \ref{prop: koszul self dual} is stated with $\mathbb{Z}/2$-coefficients. It is possible to formulate this theorem with $\mathbb{Z}$-coefficients for a suitable definition of oriented manifold operads which requires some attention to signs. The lack of signs in the second author's definition of an oriented Poincaré duality operad \cite[Definition 5.2]{malinhha} means that when $d$ is even the Fulton-MacPherson operads do not satisfy the given definition. This is an error and is a consequence of incorrectly defining operadic suspension.
\end{remark}

\subsection{Bimodules and trees}\label{subsection: bimodules and trees}
Analogies are abound between bimodules between algebras and cobordisms between manifolds. With no eye to subtlety, we prepare to study the class of operadic bimodules between manifold operads that form a symmetric sequence of cobordisms between the underlying symmetric sequences of manifolds.

Recall that if $B$ and $R$ are operads, then a $(B,R)$-bimodule $W$ is classically defined as a symmetric sequence $W$ together with commuting left and right actions
\[B \circ W \circ R \rightarrow W,\]
where $\circ$ is a certain product of symmetric sequences.
 Bimodules arise naturally in many instances. Given a span of operads $P \leftarrow O \rightarrow P'$ one may form the relative composition product $P \circ_O P'$, defined as the coequalizer of the diagram
\begin{center}

\begin{tikzcd}
P \circ O \circ  P' \ar[r,shift left=.75ex]
  \ar[r,shift right=.75ex,swap,]
&
P \circ P' \ar[r] 
&
P \circ_O P'. 

\end{tikzcd}

\end{center}
This symmetric sequence inherits a left action by $P$ and a right action by $P'$ making it into a $(P,P')$-bimodule. Dually, given a cospan of operads \[P \rightarrow O \leftarrow P',\] $O$ naturally inherits the structure of a $(P,P')$-bimodule, as well as a $(P',P)$-bimodule, by pushing forward elements of $P$ and $P'$ along the operad maps and using the composition in $O$.

 Given a $(B,R)$-bimodule $W$ such that all three symmetric sequences $R,B,W$ are reduced 
\[R(0)=B(0)=W(0)=\emptyset,\]
\[R(1)=B(1)=W(1)=\ast,\] we will say that the bimodule is {\it reduced}. Reduced bimodules can be described in terms of a category of three-colored trees. The objects of this category are defined as follows.

\begin{definition}\label{RBtree_dfn}
    Given an $S$-labeled tree $T$, an \textit{$(R,B,W)$-coloring} of $T$ is an assignment of an element of $\{R,B, W \}$\footnote{They represent red, blue and white.} to each vertex of $T$, which we call its \textit{color}. The colors must satisfy the conditions:
    \begin{itemize}
        \item all descendants of a vertex of color $R$ must be of color $R$; 
        \item all ancestors of a vertex of color $B$ must be of color $B$;
        \item no vertex of color $ W $ can be an ancestor or descendant of another vertex of color $ W $. 
    \end{itemize}
We use 
$$\text{Color}:V(T)\to \{R,B, W \}$$
to denote the assignment of a color. 
We call such a tree a \textit{legal RBW-tree} or simply an \textit{RBW-tree}. 
\end{definition}
 If an assignment $V(T)\to\{R,B, W \}$ does not satisfy the above conditions it is an \textit{illegal coloring}.

 There are several equivalent formulations of legal colorings. A coloring is legal if each path in $T$ going from a leaf to the root is of the form 
$$\underbrace{R\ R\ \ldots\ R\ }_{\ge0 \text{ times}}\underbrace{ W }_{0 \text{ or }1 \text{ time}}\underbrace{\ B\ldots\ B\ B}_{\ge0\text{ times}}.$$
This can be checked on pairs: a coloring is legal exactly when for each pair of vertices $v_1,v_2\in V(T)$ such that $v_2$ is an ancestor of $v_1$, the coloring of $(v_1,v_2)$ is one of the following ``legal colorings'':
$$(R,R), (R, W ),(R,B),( W ,B), (B,B).$$ 
It is even enough to check this condition on pairs $(v_1,v_2)$ such that $v_2$ is a parent of $v_1$.
The remaining \textit{illegal} colorings of such pairs $(v_1,v_2)$ are then $( W , R), ( W , W ), (B,R), (B, W )$. 
Notice that 
\begin{itemize}
    \item changing a color in an illegal coloring of such a pair from anything to $ W $ will never make the coloring of the pair legal; 
    \item changing a color in a legal coloring of such a pair from $ W $ to anything else, the resulting coloring of such a pair is still legal; as a consequence, given a legal $RBW$-tree, changing any $W$-colored vertex to either $B$ or $R$ will result in a legal $RBW$-tree. 
\end{itemize}

For $C=R,B$ or $ W $, we use $\bullet_{C,S}$ to denote the colored $S$-labeled tree with only one vertex and its color is $C$. When $S$ is clear from the context, we also just write $\bullet_C$.

\begin{definition}\label{contraction_dfn}
    An  $RBW$-\textit{contraction} $\mathfrak{c}$ of colored, labeled trees consists of 
    \begin{itemize}
        \item a finite set $S$ and two colored $S$-labeled trees $T',T$; 
        \item a contraction $\mathfrak{c}:T'\to T$ as uncolored $S$-labeled trees;
    \end{itemize}
    satisfying the condition: 
    \begin{itemize}
        \item for each $v\in V(T)$, if the color of $v$ is $R$ (resp. $B$), then all the vertices of $\mathfrak{c}^{-1}(v)$ must be of color $R$ (resp. $B$). 
    \end{itemize}
\end{definition}

\begin{center}
    \begin{tikzpicture}
        \draw [fill,blue] (0,0) circle [radius=0.02]; 
        \node [left] at (0,0) {\tiny \textcolor{seagreen}{5}};
        \node [right] at (0,0) {\tiny \textcolor{blue}{B}};
        \draw [fill,red] (-1,1) circle [radius=0.02];
        \node [left] at (-1,1) {\tiny \textcolor{seagreen}{1}};
        \node [right] at (-1,1) {\tiny \textcolor{red}{R}};
        \draw [fill] (0,1) circle [radius=0.02];
        \node [right] at (0,1) {\tiny {W}};
        \draw [fill,blue] (1,1) circle [radius=0.02];
        \node [above] at (1,1) {\tiny \textcolor{seagreen}{3,9}};
        \node [right,blue] at (1,1) {\tiny \textcolor{blue}{B}};
        \draw [fill,red] (-1.5,1.7) circle [radius=0.02];
        \node [left] at (-1.5,1.7) {\tiny \textcolor{seagreen}{2,6}};
        \node [right] at (-1.5,1.7) {\tiny \textcolor{red}{R}};
        \draw [fill,red] (-0.7,2) circle [radius=0.02];
        \node [above] at (-0.7,2) {\tiny \textcolor{seagreen}{7,10,11}};
        \node [right] at (-0.7,2) {\tiny \textcolor{red}{R}};
        \draw [fill,red] (0.7,2) circle [radius=0.02];
        \node [above] at (0.7,2) {\tiny \textcolor{seagreen}{4,8}};
        \node [right] at (0.7,2) {\tiny \textcolor{red}{R}};
        \draw (0,0) to (-1,1) to (-1.5,1.7);
        \draw (-0.7,2) to (0,1) to (0.7,2);
        \draw (0,1) to (0,0) to (1,1);
        \node at (-1.35,1.3) {\tiny \textcolor{gray}{$e_1$}};
        \node at (-0.65,0.5) {\tiny \textcolor{gray}{$e_2$}};
        \node at (-0.5,1.5) {\tiny \textcolor{gray}{$e_3$}};
        \node at (0.55,1.5) {\tiny \textcolor{gray}{$e_4$}};
        \node at (-0.13,0.5) {\tiny \textcolor{gray}{$e_5$}};
        \node at (0.7,0.5) {\tiny \textcolor{gray}{$e_6$}};

        \node at (3,1) {$\xrightarrow{\mathfrak{c}_{E},\ E=\{e_{1},e_{4},e_{5},e_6\}}$}; 

        \draw [fill] (6,0.5) circle [radius=0.02]; 
        \node [left] at (6,0.5) {\tiny \textcolor{seagreen}{3,4,5,8,9}};
        \node [right] at (6,0.5) {\tiny {W}};
        \draw [fill,red] (5.5,1.5) circle [radius=0.02];
        \node [left] at (5.5,1.5) {\tiny \textcolor{seagreen}{1,2,6}};
        \node [right] at (5.5,1.5) {\tiny \textcolor{red}{R}};
        \draw [fill] (6.5,1.5) circle [radius=0.02];
        \node [above] at (6.5,1.5) {\tiny \textcolor{seagreen}{7,10,11}};
        \node [right] at (6.5,1.5) {\tiny \textcolor{red}{R}};
        \draw (5.5,1.5) to (6,0.5) to (6.5,1.5);
        \node at (3,-0.2) {\tiny{A valid $RBW$-contraction}}; 
    \end{tikzpicture}
\end{center}

One immediately sees that the composition $\mathfrak{c}\circ\mathfrak{c}':T''\to T$ of two contractions $\mathfrak{c'}:T''\to T'$ and $\mathfrak{c}:T'\to T$ is again a contraction. 
There is at most one contraction between two trees, and so to specify a contraction it suffices to list the domain and target. We call the identity contraction $T\to T$ \textit{trivial}. 

Notice that there are instances where simply changing the color of a vertex from $R$ to $W$ or from $B$ to $W$ is a contraction, and this would be considered a nontrivial contraction. However, this is not always a valid contraction. 
    For example, in the tree 
    \begin{tikzpicture}[scale=0.7]
        \draw [fill] (0,0) circle [radius=0.02]; 
        \node [right] at (0,0) {\tiny $B$};
        \draw [fill] (-0.5,0.5) circle [radius=0.02]; 
        \node [left] at (-0.5,0.5) {\tiny $ W $}; 
        \draw (-0.5,0.5) to (0,0); 
        \draw [fill] (0.5,0.5) circle [radius=0.02]; 
        \node [right] at (0.5,0.5) {\tiny $B$}; 
        \draw (0.5,0.5) to (0,0); 
    \end{tikzpicture} 
    the upper $B$-colored vertex can be turned into a $ W $-colored vertex, but the lower $B$ vertex cannot; the right $B-B$ edge can be contracted to a vertex of color $B$, but the left $ W -B$ edge cannot be contracted to a vertex of color $ W $. 

\begin{definition}
    The poset $\mathcal{T}^{RBW}(S)$ has objects the $S$-labeled RBW-trees with morphisms given by $RBW$-contractions.
\end{definition}

\begin{definition}\label{labelchangetreecategory_dfn}

    For $n\leq \infty$, the category of $n$-truncated $RBW$-trees $\mathcal{T}^{RBW}_{\leq n}$ has objects the $S$-labeled trees for all finite $S$ such that $|S| \leq n$ and morphisms $T \rightarrow T'$ are given by a pair
    $(f,\mathfrak{c})$
    where $f$ is a bijection $lb(T) \cong lb(T')$, and $\mathfrak{c}$ is an $RBW$-contraction from $T$ to $f^{-1}(T')$.

\end{definition}

  The category $\mathcal{T}_{\leq n}^{RBW}(S)$ has subcategories consisting of the trees all of whose vertices are red or blue, respectively, $\mathcal{T}^{R}(S),\mathcal{T}^{B}(S)$:
\[\mathcal{T}(S)\cong \mathcal{T}^B(S)\subset \mathcal{T}^{RBW}(S) \supset  \mathcal{T}^R(S) \cong \mathcal{T}(S),\] 
and similarly for the category of trees where the labels may vary:
\[\mathcal{T}_{\leq n}\cong \mathcal{T}_{\leq n}^{B}\subset \mathcal{T}_{\leq n}^{RBW} \supset  \mathcal{T}_{\leq n}^{R} \cong \mathcal{T}_{\leq n}.\] 

We shall sometimes write $\mathcal{T}^{RBW}:=\mathcal{T}^{RBW}_{\leq\infty} $. Let $\mathcal{T}^{RBW}_{n}$ be the subcategory of $\mathcal{T}^{RBW}$ consisting of trees whose label set is of size $n$. 
\begin{definition}\label{bimodule_dfn}
For $n \leq \infty$,
 a $n$-truncated \textit{reduced $(B,R)$-bimodule} $W$ is specified by the data of a functor
     \[F:\mathcal{T}_{\leq n}^{RBW} \rightarrow \mathrm{Top}\]
     equipped with compatible, equivariant isomorphisms
       \[F(T)\cong \prod_{v \in V(T)} F(\bullet_{\mathrm{color}(v),\overline{cld}(v)})\]
    such that 
    if $T$ is covered by disjoint subtrees $T_1,\dots,T_i$ then the value of $F$ on a legal $RBW$-contraction defined via \[T \leq T / (T_1\sqcup \dots \sqcup T_i )\] \[\mathrm{Color}([T_k])= \eta_k\] is given by
       \[F(T \rightarrow T') \cong  \prod^i_{k=1} F(T_k \rightarrow \bullet_{\eta_k,\overline{cld}(T_k)}),\] so that
   \begin{itemize}
       \item The reduced $n$-truncated operad 
       defined by restricting $F$ to $\mathcal{T}_{\leq n}^R$ is $R$.
       \item The reduced $n$-truncated  operad 
       defined by restricting $F$ to $\mathcal{T}_{\leq n}^B$ is $B$.
   \end{itemize}
Write $W(m):=F(\bullet_{W,m})$ for ${2 \leq m \leq n}$.
 \end{definition}

\begin{remark}
    A reduced $(B,R)$-bimodule $W$ (for $n=\infty$) in the above sense is an operadic bimodule in that it admits bimodule structure maps
    \[B \circ W \circ R \rightarrow W.\]
The bimodule structure maps can be defined via the value of the functor on contractions of the form 
\begin{center}
    \begin{tikzpicture}
        \draw [blue] (0,0) circle [radius=0.4]; 
        \node at (0,0) {\tiny \textcolor{blue}{\begin{tabular}{c} a \\ $B$-tree 
        \end{tabular}}};
        \draw [fill] (-1,1) circle [radius=0.02];
        \node [right] at (-1,1) {\tiny {W}};
        \draw [fill] (0,1) circle [radius=0.02];
        \node [right] at (0,1) {\tiny {W}};
        \draw [fill] (1,1) circle [radius=0.02];
        \node [right] at (1,1) {\tiny {W}};
        \draw [red] (-1,1.7) circle [radius=0.4];
        \node at (-1,1.7) {\tiny \textcolor{red}{\begin{tabular}{c} an \\ $R$-tree 
        \end{tabular}}};
        \draw [red] (0,2) circle [radius=0.4];
        \node at (0,2) {\tiny \textcolor{red}{\begin{tabular}{c} an \\ $R$-tree 
        \end{tabular}}};
        \draw [red] (1,1.7) circle [radius=0.4];
        \node at (1,1.7) {\tiny \textcolor{red}{\begin{tabular}{c} an \\ $R$-tree 
        \end{tabular}}};
        \draw (-0.275,0.275) to (-1,1) to (-1,1.3);
        \draw (0,0.4) to (0,1) to (0,1.6);
        \draw (0.275,0.275) to (1,1) to (1,1.3);
        \node at (1.3,0.5) {$\cdots$}; 

        \node at (3.8,1) {$\longrightarrow$}; 

        \draw [fill] (6,1) circle [radius=0.02]; 
        \node [right] at (6,1) {\tiny W};
    \end{tikzpicture}
\end{center}
     However, one might be surprised that our definition of a reduced bimodule also implies maps of the form
    \[B  \rightarrow W,\]
    \[ R \rightarrow W,\]
    \[B \circ R \rightarrow W\]
    obtained by considering label changes from blue to white, red to white, and the collapse of adjacent blue and red vertices to a white vertex. The existence of these maps is a consequence of the hypothesis that $W(1)=\ast$. In general, to define a bimodule in terms of a category of three-colored trees, one must include $W$ colored vertices which have $|\overline{cld}(W)|=1$ to encode the composites involving $W(1)$. However, when $W(1)=\ast$ these composites are equivalent to the data of associative left and right actions:
    \[B \cong B \circ 1 \rightarrow B \circ W \rightarrow W,\]
    \[R \cong 1 \circ R \rightarrow W \circ R \rightarrow W,\]
    which commute with one another:
  \[B \circ R \cong B \circ 1 \circ R \rightarrow  B \circ W \circ R \rightarrow W.\]
  The diligent reader who has been worrying about infinitesimal left actions \cite[Definition 3.6]{aroneTurchin} versus actual left actions should now rest easy since a reduced left module is automatically an infinitesimal left module.
\end{remark}

\begin{definition}
    If $W$ is an $n$-truncated reduced $(B,R)$-bimodule and $|S| \leq n$, then the reduced bimodule decomposables are given by
    \[\mathrm{Decom}(W)(S) := \bigcup_{T \in \mathcal{T}^{RBW}(S) \setminus \{\bullet_{W}, \bullet_{R},\bullet_{B}\}} \mathrm{image}(\phi(T) \rightarrow \phi(\bullet_{W})).\]
\end{definition}
If we considered the actual bimodule decomposables rather than ``reduced bimodule decomposables'', one would also include the images of $B(S)$ and $R(S)$ in $W(S)$, as they correspond to the composites involving $W(1)$. It is easy to reformulate the coming definitions in terms of the unreduced decomposables if one so desires. We will simultaneously be considering operad decomposables\footnote{Our definition of operad decomposables is inherently reduced.} and reduced bimodule decomposables. Our notation for these is the same, and they are distinguished from each other implicitly by whether we evaluate on an operad or on a reduced bimodule.

\subsection{Bimodule cobordisms}\label{section: bimodule cobordisms}
In order to study cobordisms between manifold operads, we must clarify what we mean by a cobordism between two manifolds with boundary.

\begin{definition}\label{RBWmfld_dfn}
    An \textit{$RBW$-manifold} $(W,B,R)$ is a compact topological manifold with boundary $W$ with disjoint chosen subspaces $B,R\subset\partial W$. We require of the triple $(W,B,R)$ that $B,R,$ and $\partial W  \setminus (\mathring{R} \sqcup \mathring{B})$ are compact codimension $0$ submanifolds of $\partial W$ such that $\partial W  \setminus (\mathring{R} \sqcup \mathring{B})$ is a cobordism between $\partial R$ and $\partial B$.

\end{definition}

\[
\begin{tikzpicture}
\draw (0,0) coordinate (A) -- (1,0) coordinate (B) -- (1,1) coordinate (D) -- (0,1) coordinate (C) -- (A) -- (0.6,0.3) coordinate (E) -- (1.6,0.3) coordinate (F) -- (B);
\draw (C) -- (0.6,1.3) coordinate (G) -- (1.6,1.3) coordinate (H) -- (D);
\draw (E) -- (G); 
\draw (F) -- (H); 
\fill[red, opacity=0.5] (C) -- (D) -- (H) -- (G) -- (C) -- cycle; 
\draw [red, opacity=0.5] (C) -- (D) -- (H) -- (G) -- cycle;
\fill[blue, opacity=0.5] (A) -- (B) -- (F) -- (F) -- (E) -- (A) -- cycle; 
\draw [blue, opacity=0.5] (A) -- (B) -- (F) -- (E) -- cycle; 
\end{tikzpicture}
\]

Given a finite group $G$, an \textit{$RBW$ $G$-manifold} is an $RBW$-manifold $(W,B,R)$ with an action of $G$ on $W$ such that $R$ and $B$ are preserved by the action.

\begin{definition}\label{bimodulecobordism_dfn}
    Let $n \leq \infty$ and suppose $R,B$ are $n$-truncated manifold operads of dimension $d$. An $n$-truncated {\it bimodule cobordism} $W$ from $B$ to $R$ is an $n$-truncated reduced $(B,R)$-bimodule $W$ which for all $S$ with cardinality $m\le n$ satisfies 
    \begin{enumerate}
    \item \label{bimodcobor1} The canonical maps $B(S) \rightarrow W(S)$ and $R(S) \rightarrow W(S)$ are embeddings.
        \item \label{bimodcobor2} The triple $\big(W(S),B(S),R(S)\big)$ forms a $(md-d)$-dimensional $RBW$ $\Sigma_S$-manifold.
        \item \label{bimodcobor3} The boundary is given by $\partial W(S) = B(S) \cup \mathrm{Decom}(W)(S) \cup R(S)$.
        \item \label{bimodcobor4} The canonical map $$\colim_{{\mathcal{T}^{RBW}(S)\backslash\bullet_{ W }}}\phi \longrightarrow B(S) \cup \mathrm{Decom}(W)(S) \cup R(S) $$ is a homeomorphism.

    \end{enumerate}
\end{definition}

In other words, a bimodule cobordism contains the data of \begin{itemize}
    \item A symmetric sequence of nullbordisms of $\mathrm{Decom}(R)$ given by $R$ itself.
     \item A symmetric sequence of nullbordisms of $\mathrm{Decom}(B)$ given by $B$ itself.
     \item A symmetric sequence of cobordisms between the closed manifolds $\mathrm{Decom}(B)$ and $\mathrm{Decom}(R)$ given by $\mathrm{Decom}(W)$.
     \item A symmetric sequence of nullbordisms of the closed manifolds \[\mathrm{Decom}(B) \cup \mathrm{Decom}(W) \cup \mathrm{Decom}(R).\]
\end{itemize}

This definition implies, by  Lemma \ref{colimtopology_lmm} and Lemma \ref{operadtopology_lmm}, that for every $T\in \mathcal{T}^{RBW}(S)\backslash\bullet_W$, the obvious map $\phi(T)\to \colim_{\mathcal{T}^{RBW}(S)\backslash\bullet_W}\phi$ (therefore also the map $\phi(T)\to W(S)$) is a closed embedding, and the images of these maps only intersect ``when necessary''.

Given a bimodule cobordism $W$, we will sometimes write $\partial_L W$ for the manifold operad which acts on the left and $\partial_R W$ for the manifold operad which acts on the right. We say that $W$ is a bimodule cobordism from $\partial_L W$ to $\partial_R W$. As before, for $C\in\{R,B,W\}$, we write $C(m):=C(\{1,\ldots,m\})$ and $\mathcal{T}^{RBW}(m):=\mathcal{T}^{RBW}(\{1,\ldots,m\})$. 

\begin{definition}
    We say that an $n$-truncated bimodule cobordism $W$ is collarable if each $W(m)$ for $m \leq n$ admits a $\Sigma_m$-equivariant collar. 
\end{definition}

We emphasize that a bimodule cobordism from $B$ to $R$ being collarable does not imply $B$ and $R$ are collarable.

\begin{prop}\label{prop: thick W bimodule}
    Suppose $O$ is an $n$-truncated manifold operad such that $W(O)$ is also an $n$-truncated manifold operad, e.g. this is the case if $O$ is collarable. The thickened $W$-construction $W_{[0,1]}(O)$ is an $n$-truncated collarable $(O,W(O))$-bimodule cobordism where the bimodule structure is induced from embedding  $O$ as the suboperad given by the corollas with root hair of length $0$ and embedding $W(O)$ as the suboperad of all trees with root length $1$.

\end{prop}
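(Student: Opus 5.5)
We will verify the four conditions of Definition \ref{bimodulecobordism_dfn} for $V:=W_{[0,1]}(O)$, using the identification $V(S)\cong W(O)(S)\times[0,1]$ of $n$-truncated symmetric sequences. Under this identification the root hair length is the $[0,1]$ coordinate. The subspace with root hair of length $1$ is $W(O)(S)\times\{1\}$. The subspace with root hair of length $0$ is the image of $O(S)$: here the relation $\sim$ does not contract a length $0$ hair, so these are trees with a length $0$ hair, and we use the isomorphism $O\cong W(O)$ of Proposition \ref{prop: isomorphic to w construction} in the collarable case. Thus $V(S)$ is a compact manifold of dimension $(md-d-1)+1=md-d$, with $\Sigma_S$-action. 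Its boundary is
\[
\partial V(S)=W(O)(S)\times\{0\}\ \cup\ \partial W(O)(S)\times[0,1]\ \cup\ W(O)(S)\times\{1\}.
\]
The two caps are $O(S)$ and $W(O)(S)$, and the middle piece is a cobordism between their boundaries, so $(V(S),O(S),W(O)(S))$ is an $RBW$ $\Sigma_S$-manifold. A $\Sigma_S$-equivariant collar is obtained from an equivariant collar of $W(O)(S)$ times $[0,1]$, after the standard rounding of the corners. Conditions (1) and (2) of Definition \ref{bimodulecobordism_dfn} follow.

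The bimodule structure is given by functoriality on $\mathcal{T}^{RBW}_{\leq n}$: $F(\bullet_{W,S})=V(S)$, $F(\bullet_{B,S})=O(S)$ and $F(\bullet_{R,S})=W(O)(S)$. An RBW-tree is sent to the product of its vertex labels. For a contraction we graft metric trees as follows. An $R$-subtree is grafted with its root hairs (length $1$) turned into internal edges of length $1$. A $W$ vertex keeps its own hair length. A $B$ subtree is composed in $O$, which amounts to grafting length $0$ hairs, which are then contracted. Associativity and equivariance are inherited from the $W$-construction.

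For condition (3), write a point of $V(S)$ as a metric tree with hair length $t$. It lies on $\partial V(S)$ iff $t\in\{0,1\}$ or its $W(O)$-part lies in $\partial W(O)(S)=\mathrm{Decom}(W(O))(S)$. Suppose the underlying metric tree is decomposable, with some internal edge of length $1$ (if only edges shorter than $1$ occur, one still uses the decomposable-tree representation coming from the collar description in Proposition \ref{prop: isomorphic to w construction}). Cut the tree into its $R$-part above the length-$1$ edges and a root part. The root part is white with hair $t$, or blue if $t=0$. This exhibits the point as an element of $\mathrm{Decom}(V)(S)$. Conversely, every RBW-decomposable lands in the boundary.

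The main obstacle is condition (4): showing that $\colim_{\mathcal{T}^{RBW}(S)\setminus\bullet_W}F\to\partial V(S)$ is a homeomorphism. Since the source is compact and the target is Hausdorff (by Lemma \ref{colimtopology_lmm}), it suffices to prove bijectivity. The plan is to give a canonical normal form for each boundary point: the \emph{maximal} RBW-tree (in the poset order) through which it factors. Blue vertices are the parts composed at hair length $0$. White vertices carry $t\in(0,1)$ together with interior points of the $W(O)$ metric-tree factor. Red vertices are subtrees hanging from length-$1$ edges. We then check that any two factorizations are related by zigzags of contractions to this normal form. This uses the hypothesis that $W(O)$ and $O$ are manifold operads, so that their own decomposables colimits are homeomorphisms, together with the uniqueness of reduced labeled metric trees in the $W$-construction. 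The delicate points are the identifications at $t=0$ and $t=1$, where $B$- and $R$-colored faces meet the white faces. We will handle these by checking them separately on each corner stratum $\partial W(O)(S)\times\{0,1\}$.
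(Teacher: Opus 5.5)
\textbf{There is a genuine gap: you have misidentified the $B$-cap.} Your overall route is the paper's: identify $W_{[0,1]}(O)(S)$ with $W(O)(S)\times[0,1]$ via the root hair, read the bimodule pieces off the boundary, and collar by a product. But the identification you use for conditions (2) and (3) is false. The face of root hair length $0$ is \emph{not} the image of $O(S)$.

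The structure map $F(\bullet_{B,S}\to\bullet_{W,S})$ fixed in the statement is the inclusion of \emph{corollas} with root hair $0$. For $|S|\ge 3$ this is only a proper codimension-$0$ part of $W(O)(S)\times\{0\}$. The rest of that face consists of root-hair-$0$ metric trees with some internal edge of positive length. These are the composites $x\circ(w_1,\dots,w_k)$ with $x\in O(k)$ and $w_i$ of hair lengths $t_i$, not all $0$, i.e.\ exactly the left $O$-module decomposables (blue root, white children).

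The isomorphism $\psi\colon O\cong W(O)$ of Proposition \ref{prop: isomorphic to w construction} is a different map. On $\partial O(S)$ it sends $y\circ_i z$ to $\psi(y)\circ_i\psi(z)$, a grafting along a length-$1$ edge, not to a corolla.

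With your cap $B(S)=W(O)(S)\times\{0\}$, the map in (4) cannot be injective. Take a point of the blue-root/white-children stratum with some $t_i>0$. In the colimit it is identified with nothing coming from $\bullet_B$: the only common refinements are all-blue trees, and these land where all $t_i=0$. Yet its image lies in your $B(S)$. Likewise, your middle piece $\partial W(O)(S)\times[0,1]$ has lower end $\partial W(O)(S)\times\{0\}$, not $\partial O(S)$, so your check of the $RBW$ condition is for the wrong triple.

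\textbf{The fix is the paper's four-piece decomposition:}
\begin{align*}
\partial W_{[0,1]}(O)(S)={}&\bigl(O(S)\times\{0\}\bigr)\cup\bigl(W(O)(S)\times\{1\}\bigr)\\
&\cup\bigl((W(O)(S)\setminus\mathring O(S))\times\{0\}\bigr)\cup\bigl(\mathrm{Decom}(W(O))(S)\times[0,1]\bigr).
\end{align*}
Here the third piece is the left $O$-decomposables and the fourth is the right $W(O)$-decomposables (cut along length-$1$ edges). For (2) you need the third piece to be a collar of $\partial W(O)(S)\times\{0\}$ inside the bottom face. In the collarable case this is the second half of Proposition \ref{prop: isomorphic to w construction} (decomposable metric trees form a collar). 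Then the third and fourth pieces together are a cobordism from $\partial O(S)$ to $\partial W(O)(S)$.

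This also resolves your parenthetical in (3). Points with $t=0$ whose internal edges are all shorter than $1$ but not all $0$ belong to the third piece, not to $O(S)$.

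With this corrected, the rest of your plan is acceptable:
\begin{itemize}
\item Your product collar with rounded corners can replace the paper's explicit collar (push the root hair toward $\tfrac12$ and scale the internal edges down).
\item Your normal-form sketch for (4) goes beyond the paper, which simply says (4) holds by construction. However, the canonical representative should be the \emph{minimal} tree in the order $T\le\mathfrak{c}_E(T)$, i.e.\ the open stratum containing the point. The set of trees through which a point factors is upward closed, so ``maximal'' does not single out a tree.
\end{itemize}
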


\begin{proof}
    By definition and hypothesis, the conditions $(\ref{bimodcobor1})$ and $(\ref{bimodcobor2})$ are satisfied. Condition $(\ref{bimodcobor3})$ is satisfied because the boundary of $W_{[0,1]}(O)(m)$ is \[(O(m) \times \{0\})\cup (W(O)(m) \times \{1\}) \cup ((W(O)(m) \setminus \mathring{O}(m)) \times \{0\}) \cup (\mathrm{Decom}(W(O))(m) \times [0,1]) \]
where the first two factors correspond to the embedded operads, the third corresponds to the left $O$-module decomposables, and the fourth corresponds to the right $W(O)$-module decomposables. Condition $(\ref{bimodcobor4})$ holds by construction.

An equivariant collar of the thickened $W$-construction is given by pushing the length of the root hair towards one half and uniformly scaling down the lengths of the internal edges.
\end{proof}

   \textbf{Warning.} Although the operad embeddings also make $W_{[0,1]}(O)$ into a $(W(O),O)$-bimodule, this bimodule structure does not make it into a $(W(O),O)$-bimodule cobordism as both conditions $(3)$ and $(4)$ fail.
   
\begin{cor}\label{cor: reflexivity}
    If $n \leq \infty$ and $O$ is an $n$-truncated collarable manifold operad, then there is an $n$-truncated collarable bimodule cobordism from $O$ to $O$ which is homeomorphic as a symmetric sequence to $O \times [0,1]$.
\end{cor}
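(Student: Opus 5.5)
The plan is to combine Proposition \ref{prop: thick W bimodule} with Proposition \ref{prop: isomorphic to w construction}. Since $O$ is collarable, Proposition \ref{prop: isomorphic to w construction} gives an $n$-truncated operad isomorphism $\alpha\colon O \cong W(O)$. In particular $W(O)$ is an $n$-truncated manifold operad, because the property of being a manifold operad (compact manifold levels, boundary equal to the decomposables, the colimit condition) is invariant under operad isomorphism. So the hypothesis of Proposition \ref{prop: thick W bimodule} holds, and $W_{[0,1]}(O)$ is an $n$-truncated collarable $(O,W(O))$-bimodule cobordism.

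Next I would transport the right action along $\alpha$. Restrict the right $W(O)$-action on $W_{[0,1]}(O)$ along $\alpha\colon O\to W(O)$ to get an $(O,O)$-bimodule $V$. In the tree-functor language, compose the functor $F\colon \mathcal{T}^{RBW}_{\leq n}\to \mathrm{Top}$ with $\alpha$ on red vertices: on $R$-colored corollas take $O(\overline{cld}(v))$ and precompose the relevant structure maps with $\alpha$. Because $\alpha$ is a levelwise equivariant homeomorphism, conditions (\ref{bimodcobor1})--(\ref{bimodcobor4}) of Definition \ref{bimodulecobordism_dfn} transfer directly. The embedded copy of $O$ on the right is the image of $W(O)(S)$ precomposed with a homeomorphism, so it is still an embedding. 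The decomposables and the colimit over $\mathcal{T}^{RBW}(S)\setminus\bullet_W$ correspond under the natural isomorphism of diagrams induced by $\alpha$. Collarability is untouched, since the underlying spaces $V(S)=W_{[0,1]}(O)(S)$ and their $\Sigma_S$-actions are unchanged.

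Finally, for the homeomorphism type I would use the identification $W_{[0,1]}(O)\cong W(O)\times[0,1]$ of $n$-truncated symmetric sequences noted before Proposition \ref{prop: isomorphic to w construction}, given by recording the root hair length separately. Composing it with $\alpha^{-1}\times \mathrm{id}$ gives $V\cong O\times[0,1]$ equivariantly.

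The main point needing care is checking that this identification is an honest $\Sigma_S$-equivariant homeomorphism. Concretely, one must verify that sending a metric tree to the pair (the same tree with hair length reset to $1$, the original hair length) is well defined and continuous on the quotient. The concern is that the relation $\sim$ never contracts the hair, but it does contract internal zero-length edges independently of the hair length, so the map should respect $\sim$. Continuity and bijectivity are then checked on each tree-indexed piece, and compactness together with the Hausdorff property finishes the argument.
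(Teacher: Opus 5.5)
Your proposal is correct and follows the paper's argument. The paper likewise combines the isomorphism $O\cong W(O)$ from Proposition \ref{prop: isomorphic to w construction} with the $(O,W(O))$-bimodule cobordism $W_{[0,1]}(O)$ from Proposition \ref{prop: thick W bimodule}, leaving implicit the two steps you spell out soundly: transporting the right action along $O\cong W(O)$, and the symmetric-sequence identification $W_{[0,1]}(O)\cong W(O)\times[0,1]$ via a continuous bijection from a compact space to a Hausdorff one.
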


\begin{proof}
    By Proposition \ref{prop: isomorphic to w construction}, $O\cong W(O)$. Hence, the above bimodule cobordism $W_{[0,1]}(O)$ is an example.
\end{proof}

Let $n \leq \infty$ and denote by $1$ the unique reduced $n$-truncated operad which is empty in every degree $2 \leq m \leq n$. We call it the \textit{trivial $n$-truncated operad}, and it is an $n$-truncated manifold operad of every dimension.

\begin{definition}
    We say that an $n$-truncated manifold operad $O$ is left (right) nullbordant if it supplies the left (right) action of an $n$-truncated bimodule cobordism with $1$.
\end{definition}

\begin{example}\label{example: bordism between empty}
    The collection of nullbordisms (either left or right) of the operad $1$ as a manifold operad of dimension $d$ is equivalent to the collection of symmetric sequences of closed manifolds for which the $m$th space has dimension $md-d$.
\end{example}

\begin{prop} \label{leftnullbordant}
    The Fulton-MacPherson operad $\mathrm{FM}_d$ is left nullbordant through a collarable bimodule cobordism. The bimodule cobordism is equivalent as a bimodule to the commutative operad $\mathrm{com} = \{\ast \}_{m \geq 2}$ with left action induced from the unique operad map $\mathrm{FM}_d \rightarrow \mathrm{com}$.
\end{prop}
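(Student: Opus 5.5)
We construct the bimodule cobordism $V$ explicitly, as a compactification of the space of \emph{possibly colliding} configurations at infinity. For a finite set $S$ with $|S|=m\geq 2$, let
\[E(S):=(\mathbb{R}^d)^S/\mathbb{R}^d\cong \mathbb{R}^{md-d},\]
the space of all maps $S\to\mathbb{R}^d$ modulo translation, with collisions allowed. We set $V(S):=\overline{E}(S)$, a Fulton--MacPherson type compactification of $E(S)$ \emph{at infinity}. Concretely, $\overline{E}(S)$ is the closure of the image of
\[E(S)\to D^{md-d}\times \prod_{\substack{A\subset S\\ |A|\geq 2}} \mathrm{FM}_d(A),\]
where the first coordinate is the radial compactification $x\mapsto x/(1+|x|)$, and the coordinate for $A$ records the rescaled configuration of the points of $A$ when they are pairwise distinct. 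This coordinate is only partially defined, so in practice we take the closure of the graph over the locus of pairwise distinct points, and we need to check that this closure contains all of $E(S)$ in its interior. We also check that $\overline{E}(S)$ is a $\Sigma_S$-equivariant manifold with corners of dimension $md-d$ whose interior is $E(S)$. The chart computations mirror those for $\mathrm{FM}_d$, with the scaling parameter $\varepsilon\to 0^+$ replaced by $R\to+\infty$.

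Next we identify the boundary strata with $RBW$-trees having a blue subtree at the root and white vertices above, and no red vertices. A point at infinity is a nested sequence of scales $R_1\gg R_2\gg\dots$. At each scale the clusters that remain at finite relative distance form white vertices, each labelled by an element of $\overline{E}(A)$ recording the finite-size configuration, which may have collisions. The hierarchy of relative positions of the clusters is an $\mathrm{FM}_d$-labelled blue tree, exactly as in the boundary of $\mathrm{FM}_d$. A stratum in which every cluster is a singleton is the image of $\mathrm{FM}_d(S)=B(S)\hookrightarrow V(S)$. The operations are as follows.
\begin{itemize}
\item The left action $\mathrm{FM}_d(k)\times\prod_i V(A_i)\to V(S)$ places the clusters $x^{(i)}\in E(A_i)$ at the points $R\,y_i$, with $y\in\mathrm{FM}_d(k)$ normalised to barycenter $0$ and norm $1$, and takes the limit $R\to\infty$. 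This extends continuously to the compactifications.
\item The right action by the trivial operad $1$ is vacuous.
\item There is no $R(S)$ piece, since $1(S)=\emptyset$.
\end{itemize}
Conditions (\ref{bimodcobor1})--(\ref{bimodcobor3}) of Definition \ref{bimodulecobordism_dfn} then amount to the statement that $\partial\overline{E}(S)$ is the union of the images of the strata indexed by non-corolla trees. The face $B(S)=\mathrm{FM}_d(S)$ is a codimension-$0$ submanifold of the boundary, with boundary $\mathrm{Decom}(\mathrm{FM}_d)(S)$. This makes $(V(S),\mathrm{FM}_d(S),\emptyset)$ an $RBW$ $\Sigma_S$-manifold.

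For condition (\ref{bimodcobor4}), each closed stratum is the image of a closed embedding of $\phi(T)=\prod_v\phi(\bullet_{\overline{cld}(v)})$. Two such images meet exactly along the images of common contractions. Hence the canonical map from $\colim_{\mathcal{T}^{RBW}(S)\setminus\bullet_W}\phi$ to $\partial V(S)$ is a continuous bijection from a compact space to a Hausdorff space, and so a homeomorphism. Collarability holds because $\overline{E}(S)$ is a smooth manifold with corners with a smooth action of the finite group $\Sigma_S$, so an equivariant collar exists, for example from an invariant inward vector field.

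Finally, $V(S)$ is a compact manifold whose interior is $\mathbb{R}^{md-d}$, hence contractible; in fact $V(S)$ is homeomorphic to a disc. Therefore the unique bimodule map $V\to\mathrm{com}$, where $\mathrm{com}$ carries the left $\mathrm{FM}_d$-action through $\mathrm{FM}_d\to\mathrm{com}$, is a levelwise homotopy equivalence.

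The main obstacle is the manifold-with-corners structure near the corners at infinity. There the blue tree is deep and the white clusters themselves contain collisions. We have to check that the collision locus inside a white cluster imposes no extra blow-up, so that $V(A_i)$ enters as an honest factor and no unwanted boundary appears in the interior. The first thing we would try is to write local coordinates $(R_1^{-1},\dots,R_k^{-1})\in[0,\epsilon)^k$, one inverse scale per internal blue edge, times the normalised cluster data. We would then verify that this gives a homeomorphism onto a neighbourhood, exactly as in the standard corner charts of $\mathrm{FM}_d$.
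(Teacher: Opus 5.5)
\textbf{The closure you define blows up the finite collisions.} Your $\mathrm{FM}_d(A)$-coordinates are scale invariant, so they detect collisions at finite distance, not only at infinity.

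\begin{itemize}
\item Take a finite point $x\in E(S)$ with $x_i=x_j$. Approach $x$ through pairwise distinct configurations from different directions. This puts every point of $\mathrm{FM}_d(\{i,j\})=S^{d-1}$ into the closure of the graph, so the fibre over $x$ contains a sphere rather than a single point.
\item What you obtain is essentially a Fulton--MacPherson blow-up of $E(S)$ along all diagonals, not a compactification of $E(S)$. Already for $|S|=2$, with $A=S$ as your indexing allows, the closure of $\{(x/(1+|x|),x/|x|)\}$ in $D^d\times S^{d-1}$ is the annulus $S^{d-1}\times[0,1]$.
\item That annulus has an extra boundary sphere over $x=0$ and is not contractible. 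So condition (\ref{bimodcobor3}) and the equivalence with $\mathrm{com}$ both fail, and the check you flag (that the closure contains $E(S)$ in its interior) cannot succeed.
\item Conceptually, a boundary stratum over a finite collision would be a right-module decomposable, indexed by a tree with a red vertex. The right operad here is $1$, which is empty in arities $\geq 2$, so finite collisions must stay interior points.
\end{itemize}

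\textbf{A repair.} Use scale-sensitive coordinates defined on all of $E(S)$. For each $A\subseteq S$ with $|A|\geq 2$, let $\pi_A\colon E(S)\to E(A)$ be the projection and set $\rho_A(x)=\pi_A(x)/(1+|\pi_A(x)|)\in D(E(A))$, its radial compactification. Take the closure of the image of $(\rho_A)_A$ in $\prod_A D(E(A))$. The factor $A=S$ alone maps $E(S)$ homeomorphically onto the open disc, so only points at infinity are added. The blocks of the relation ``$|x_i-x_j|$ stays bounded'' then become your white clusters, carrying finite configurations in $E(A_i)$ with collisions allowed. The unbounded $\pi_A$ record $\mathrm{FM}_d$-type directions. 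This is the object your informal description by nested scales actually describes.

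\textbf{What remains undone, and how the paper avoids it.} Even after the repair, the substance of your proof is still a plan: the corner charts near deep strata, continuity of the extended left action, and the stratum-intersection property behind condition (\ref{bimodcobor4}). Your collar argument also depends on the smooth corner structure. The free-action argument is unavailable here, because $\Sigma_S$ fixes the totally collided configuration.

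The paper sidesteps all chart computations:
\begin{itemize}
\item It defines $L$ as the quotient of $W_{[0,1]}(\mathrm{FM}_d)$ that identifies metric trees agreeing after collapsing every subtree whose root hair has length $1$.
\item Using $W_{[0,1]}(\mathrm{FM}_d)(m)\cong \mathrm{FM}_d(m)\times[0,1]$ (Proposition \ref{prop: isomorphic to w construction}), it identifies $L(m)$ with the disc of translation classes of configurations, collisions allowed, of diameter $\leq 1$.
\item Manifoldness and the boundary (diameter $=1$, i.e.\ root hair $0$) are read off from this disc. Condition (\ref{bimodcobor4}) is inherited from Proposition \ref{prop: thick W bimodule}, because the quotient relation is compatible with the colimit.
\item The diameter coordinate supplies both the equivariant collar and the contraction.
\end{itemize}

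Your repaired construction should give a stratified-homeomorphic space, since its interior $E(S)$ matches $\{\mathrm{diam}<1\}$. It would buy an intrinsic description as a compactification at infinity with a genuine smooth corner structure. The price is redoing the Fulton--MacPherson chart analysis that the paper imports from the $W$-construction.
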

\begin{proof}
    The data of a $(\mathrm{FM}_d,1)$-bimodule is simply that of a left $\mathrm{FM}_d$-module, i.e. it is specified by its restriction to trees with no red vertices. We construct this left module $L$ as a quotient of the left $\mathrm{FM}_d$-module $W_{[0,1]}(\mathrm{FM}_d)$. As before, the left action will be given by grafting trees onto a corolla with length $0$ root hair.

   The equivalence relation $\sim$ that we impose on $W_{[0,1]}(\mathrm{FM}_d)$ is defined for any operad $O$: $\sim$ identifies any two metric trees with labels in $\mathrm{FM}_d$ which agree after collapsing all, not necessarily proper and not necessarily having the same root, subtrees which have their root hair of length $1$. We define $L:=W_{[0,1]}(\mathrm{FM}_d)/\sim $ and observe it is still a left $\mathrm{FM}_d$-module. It remains to show properties $(\ref{bimodcobor1})-(\ref{bimodcobor4})$ of the definition of a bimodule cobordism, as well as the existence of an equivariant collar. Property $(\ref{bimodcobor1})$ is obviously true.
   
    By Proposition \ref{prop: isomorphic to w construction}, the space $W_{[0,1]}(\mathrm{FM}_d)(m)$ is homeomorphic to $\mathrm{FM}_d(m) \times [0,1]$.\footnote{This case of the proposition was originally proven by the third author in \cite{salvatore_2021}.} The latter maps to the subspace of $(\mathbb{R}^d)^m/\mathrm{translation}$ where the maximal distance between any two points in a configuration is at most $1$ in two steps:

    \begin{itemize}
        \item  First, collapse any infinitesimal configurations to get a tuple in $(\mathbb{R}^d)^m/\mathrm{translation,scaling}$;
        \item Second, assert that the maximum distance between any two points is exactly $1-t$ where $t$ is the interval coordinate of $\mathrm{FM}_d(m) \times [0,1]$.
    \end{itemize}
    Note that if $t=1$, every tree will be sent to the unique tuple where all points coincide.  
    
    This map is surjective, and the codomain is a compact manifold with boundary: a disc.
    Hence, if we identify points in the domain which have equal image under this map, the result will be a compact manifold with boundary homeomorphic to the codomain. Observe that two elements \[((x_1,\dots,x_m),t),((x'_1,\dots,x'_m),t') \in \mathrm{FM}_d(m) \times [0,1]\] have the same image exactly when they agree after collapsing the infinitesimal configurations or  when the interval coordinate is $1$. 
    
    Under the operad identification
$W_{[0,1]}(\mathrm{FM}_d) \cong \mathrm{FM}_d \times [0,1] $ this equivalence relation exactly corresponds to $\sim$ and implies $(\ref{bimodcobor2})$. Under our identification, the boundary of $L$ corresponds to the tuples of points which have a maximum distance of $1$ between them. In $L$, this corresponds to trees which have root hair length $0$. This is the union of the embedded copy of $\mathrm{FM}_d$ and the reduced left module decomposables, and so property $(\ref{bimodcobor3})$ holds. Property $(\ref{bimodcobor4})$ holds because it holds before quotienting by $\sim$ by Proposition \ref{prop: thick W bimodule} and because $\sim$ is compatible with the colimit in question.

The existence of an equivariant collar is evident from the description of $L$ as tuples of points with maximum pairwise distance at most $1$. Finally, the claimed equivalence of $L$ with $com$, whose left action is given by the operad map $\mathrm{FM}_d \rightarrow \mathrm{com}$ is equivalent to the levelwise contractibility of the symmetric sequence $L$. This contraction is given by scaling the maximal pairwise distance to $0$.
\end{proof}

\begin{prop} \label{prop: rightnullbordant}
    The Fulton-MacPherson operad $\FM_d$ is right nullbordant through a collarable bimodule cobordism. 
\end{prop}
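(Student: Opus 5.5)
The plan is to take for $W(m)$ the Fulton--MacPherson compactification of configurations of $m$ points on the flat torus $T^d=\mathbb{R}^d/\mathbb{Z}^d$, modulo the translation action of $T^d$. Since $T^d$ is framed, in fact parallelizable by translation, its Fulton--MacPherson compactifications form a right $\FM_d$-module. A right nullbordism of $\FM_d$ is exactly a reduced $(1,\FM_d)$-bimodule cobordism: only trees with red and white vertices occur, and the left operad contributes nothing. So we need a reduced right $\FM_d$-module $W$ with $W(1)=\ast$, such that each $W(m)$ is a compact $(md-d)$-manifold whose boundary is the union of the embedded copy of $\FM_d(m)$ (the composite with $W(1)$) and the reduced right decomposables. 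Concretely, we set
\[W(m):=\overline{\mathrm{Conf}(T^d,m)}^{\,\mathrm{FM}}\big/T^d .\]
Because $T^d$ acts freely on $\overline{\mathrm{Conf}(T^d,m)}^{\,\mathrm{FM}}$, this quotient is a compact smooth manifold with corners of dimension $md-d$, and $W(1)=T^d/T^d=\ast$.

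The steps are as follows.
\begin{enumerate}
\item Define the right action by infinitesimal insertion. Use the global framing of $T^d$ to identify the tangent space at each point with $\mathbb{R}^d$. Then $\circ_i$ replaces the $i$th point by an infinitesimal $\FM_d$-configuration, exactly as in the formula for the operad composition of $\FM_d$. This is well defined modulo translation, and it is associative and equivariant by the same limit argument as for $\FM_d$ itself.
\item Identify the boundary strata. Strata of the compactification are indexed by the ways points collide, that is, by RW-trees whose root is white. A tree whose white root has $|\overline{cld}|\geq 2$ gives a reduced right decomposable. A tree whose white root has a single child, corresponding to all $m$ points colliding, gives the image of $W(1)\times\FM_d(m)=\FM_d(m)$. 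The codimension-one faces of the colliding strata are thus matched with one-edge RW-trees, which yields condition (\ref{bimodcobor3}).
\item Check the remaining conditions. Condition (\ref{bimodcobor1}) holds since the embedding of $\FM_d(m)$ is the inclusion of a face. Condition (\ref{bimodcobor2}) holds with the blue part empty: the boundary of the red part $\FM_d(m)$ is exactly the part of that face meeting the decomposables. For condition (\ref{bimodcobor4}), compare the colimit over $\mathcal{T}^{RW}(m)\setminus\bullet_W$ with the union of the closed faces. The face poset of a manifold with corners of FM type is this tree poset, and faces intersect exactly along the contractions of trees, so the colimit maps homeomorphically onto the boundary.
\item Finally, equivariant collars exist because $W(m)$ is a smooth $\Sigma_m$-manifold with corners, and smooth actions of finite groups admit equivariant collars, as remarked earlier.
\end{enumerate}

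I expect the main obstacle to be step (2) together with condition (\ref{bimodcobor4}). One has to verify carefully that in the quotient by $T^d$ the total-collision stratum is really a copy of $\FM_d(m)$, with no residual position parameter, since the position has been quotiented away. One also has to check that the corner structure of the Axelrod--Singer type compactification of $T^d$ matches the RW-tree combinatorics, with the right action of $\FM_d$ on nested clusters. I would establish this by transporting the standard description of $\FM_d$'s strata: locally near a collision stratum, $W(m)$ looks like a product of $\FM_d$ factors, one for each cluster, times the interior of a lower configuration space of $T^d$.
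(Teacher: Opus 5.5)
Your proposal is correct and is essentially the paper's own proof. The paper takes $W(m)=\FM_G(m)/G$ for a compact $d$-dimensional Lie group $G$ (with $G=(S^1)^d$ as the example), gets the right $\FM_d$-action by infinitesimal insertion, and identifies $\partial W(m)$ as the total-collision copy of $\FM_d(m)$ together with the reduced right decomposables, just as in your steps (1)--(3). The only divergence is the collar: the paper asserts that $\Sigma_m$ acts freely on $W(m)$, which is false (for $d=1$ the swap fixes the class of an antipodal pair in $W(2)\cong[0,1]$), so your appeal to the smooth finite-group action is the better justification, provided you extend the earlier remark from boundaries to corners, e.g.\ by averaging an inward-pointing vector field over $\Sigma_m$.
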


\begin{proof}
Let $G$ be a compact Lie group of dimension $d$, for example $G=(S^1)^d$. Consider the symmetric sequence $\FM_G$ consisting of compactified configuration spaces of points in $G$.  This forms a right module over $\FM_d$
\cite{markl}.
One can visualize elements of $\FM_G(m)$ as infinitesimal clusters of points (labeled by $m$) in $G$; we write $1 \leq k \leq m$ for the number of clusters. The action of $\FM_d$ replaces points by an infinitesimal cluster of points. 
The space $\FM_G(m)$ is a smooth manifold with corners of dimension $md$. Its interior is the configuration space $\mathrm{Conf}(G,m)$ ($k=m$), and its boundary are the configurations with an infinitesimal component $(k <m)$. From our description, it is clear these are the (nonreduced) right module decomposables.
Consider the free action of $G$ on $\FM_G(m)$ induced by the multiplication of $G$. The quotient $W(m)=\FM_G(m)/G$ is a compact manifold of dimension $md-d$. For instance, if $G=S^1$, then $W(m)$ is the disjoint union of $(m-1)!$ copies of a $(m-1)$-dimensional polyhedron called cyclohedron.
The symmetric sequence $W$ inherits a right module structure over $\FM_d$ whose boundary $\partial W(m)$  is exactly the (nonreduced) right module decomposable elements. In particular, $\partial W(m)$ is the union of a copy of $\FM_d(m)$, corresponding to $k=1$, and the reduced right module decomposables $\mathrm{Decom}(W)(m)$, corresponding to $1 < k <m$. This demonstrates that $W$ is a right nullbordism of $\mathrm{FM}_d$.
It is collarable since the symmetric group $\Sigma_m$ acts freely on  $W(m)$.
\end{proof}

We have just demonstrated, perhaps surprisingly, that the manifold operad $\mathrm{FM}_d$ is both right and left nullbordant. A priori, there should be no implication between the existence of left and right nullbordisms. Remarkably, right nullbordism of a manifold operad does formally imply left nullbordism. Before we state the result, let us first note that both left and right nullbordant manifold operads are plentiful. The constructions Theorem \ref{thm: left surgery} and Theorem \ref{thm: right surgery} guarantee that every $\Sigma_2$-nullbordism appears as the $2$-ary space of a right nullbordism and of a left nullbordism, respectively, by applying them to Example \ref{example: bordism between empty}. 

\begin{thm} \label{thm: swap}
If a collarable $n$-truncated manifold operad $P$ is right nullbordant through a collarable $n$-truncated bimodule cobordism $Y$, then it is canonically left nullbordant.
\end{thm}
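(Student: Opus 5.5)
The plan is to build the left nullbordism $L$ of $P$ directly out of $P$ and $Y$, as a quotient of a free-type construction. Recall that $Y$ is a reduced $(1,P)$-bimodule cobordism. It is therefore a right $P$-module with $Y(1)=\ast$, each $Y(m)$ is a compact $(md-d)$-manifold, and by (\ref{bimodcobor3})–(\ref{bimodcobor4}) we have
\[
\partial Y(m)=P(m)\cup \mathrm{Decom}(Y)(m)\cong \colim_{\mathcal{T}^{RBW}(m)\setminus \bullet_W}\phi_Y .
\]
Only trees with red and white vertices contribute here, since the left operad is $1$. We want a left $P$-module $L$ of the same dimensions with $\partial L(m)=P(m)\cup\mathrm{Decom}(L)(m)$, where the decomposables now come from blue vertices.

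\textbf{Construction.} First replace $P$ by $W(P)$ using Proposition \ref{prop: isomorphic to w construction}, and replace $Y$ by a version with a compatible external collar, so that all boundary strata come with product neighbourhoods. Then define $L(m)$ as a colimit over two-level trees: blue vertices labelled by $W_{[0,1]}(P)$ sit near the root, and above them white vertices are labelled by $Y$. We impose two kinds of identifications.
\begin{itemize}
\item A white vertex whose label lies in $P(k)\subset \partial Y(k)$ is identified with a blue vertex at hair length $0$. This is what glues $Y$ into the left $P$-action.
\item A white vertex whose label lies in the face $Y(k)\circ_i P(j)\subset\mathrm{Decom}(Y)$ is paired with the corresponding configuration in which the red $P(j)$ has been pushed through to the other side. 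This uses the collar parameter of $W_{[0,1]}(P)$, in the same way that the quotient $\sim$ in Proposition \ref{leftnullbordant} collapses subtrees with hair length $1$.
\end{itemize}
The left action is grafting onto a root corolla of hair length $0$, exactly as in Proposition \ref{prop: thick W bimodule}. By construction this is a $(P,1)$-bimodule, hence a left $P$-module, and it is canonical up to the choice of collars.

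\textbf{Dimension count and boundary.} The top strata have dimension $md-d$; for example, $P(k)\times\prod Y(j_i)$ thickened by one collar coordinate has dimension $(kd-d-1)+\sum_i(j_id-d)+1=md-d$. To identify the boundary, I will argue in three steps.
\begin{enumerate}
\item The faces coming from $\mathrm{Decom}(Y)$ are interior after the second identification, because each such face is glued to exactly one other face.
\item The hair-length-$0$ faces give $P(m)$ together with the reduced left decomposables of $L$.
\item The hair-length-$1$ faces are collapsed, as in Proposition \ref{leftnullbordant}.
\end{enumerate}
Condition (\ref{bimodcobor4}) should then follow as in Proposition \ref{prop: thick W bimodule}, since each identification is compatible with the colimit over $\mathcal{T}^{RBW}(m)\setminus\bullet_W$. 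The equivariant collar comes from shrinking hair lengths towards $\tfrac12$, together with the collar of $Y$.

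\textbf{Main obstacle.} The hard step is proving that $L(m)$ is a topological manifold with boundary. Concretely, the links of all strata must be spheres or discs, including at the strata where several pieces meet: a $Y$-vertex, red faces and blue faces. I would stratify $L(m)$ by the three-coloured trees with swapped roles, apply the combinatorial link computation of Proposition \ref{link_prop} via the stratified bimodule framework (Theorem \ref{thm: bimodule cobordism is stratified}), and reduce each link to a join of links in $Y$ and in $P$, which are known to be spheres or discs. The point I am least sure of is the pairing of the $\mathrm{Decom}(Y)$ faces in the second identification. If that pairing does not close up correctly, the fallback is to realise $L$ through the relative composite of Theorem \ref{thm: composition is manifold}, using the collarability hypotheses, and read off the boundary from that theorem.
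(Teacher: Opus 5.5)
\textbf{There is a genuine gap.} It sits where you were unsure: the second identification cannot be defined, and the obstruction is your restriction on tree shapes.

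Take a face $Y(k)\circ_i P(j)$ of $\mathrm{Decom}(Y)$, with a white root and a red vertex above it. To become interior it must be glued to a second top-dimensional piece that has it as a codimension-one face. The natural partner is $Y(k)\circ_i Y(j)$, glued along $P(j)\subset\partial Y(j)$, i.e.\ a white vertex stacked on a white vertex. Your two-level trees put $Y$-vertices only above a blue region, so they exclude exactly these shapes. Moving $P(j)$ ``to the other side'' of $Y(k)$ changes how the leaves are partitioned, so it gives no canonical pairing of faces. In any case, faces with a $P$-vertex at the root must stay in $\partial L$ as left decomposables.

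The sketch also contradicts itself elsewhere:
\begin{itemize}
\item Your first identification glues the hair-length-$0$ blue corollas to $P(k)\subset\partial Y(k)$, which makes them interior. Yet the left action lands there, and you count them as $P(m)\cup\mathrm{Decom}(L)(m)\subset\partial L(m)$. Already before any collapsing, $L(2)=Y(2)\cup_{P(2)}P(2)\times[0,1]$ has $P(2)\times\{0\}$ in its interior.
\item Collapsing hair-length-$1$ faces works in Proposition \ref{leftnullbordant} only because of the explicit disc model for $\mathrm{FM}_d$. For general $P$ it produces cones on $P(m)$, and in arity $2$ it removes the boundary altogether.
\item The fallback fails for a formal reason. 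A relative composite of a $(B,O)$- and an $(O,R)$-bimodule is a $(B,R)$-bimodule, so any composite involving the $(1,P)$-bimodule $Y$ is again a right nullbordism. Producing a $(P,1)$-bimodule this way would already require a left nullbordism of some operad.
\end{itemize}

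\textbf{The missing idea} is to drop all legality constraints on where $P$- and $Y$-labelled vertices may sit. Set $Z(S):=\colim_{\mathcal{T}^{RW}_{\mathrm{local}}(S)}F_S$, where trees have red vertices labelled by $P$ and white vertices labelled by $Y$ in arbitrary positions. There are only two kinds of identification:
\begin{itemize}
\item a red vertex may turn white, via $P\subset\partial Y$;
\item a red vertex may be collapsed into its parent, which keeps its colour, via composition in $P$ or the right action on $Y$.
\end{itemize}
The left action grafts onto a new red root. No $W$-construction, hair lengths or collapsing are needed.

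Write each point uniquely as a tree with interior labels. A non-root red vertex then has two sides (collapse it, or whiten it), while a red root has only one. So a point with $r$ red vertices has a neighbourhood equal to an open $(|S|d-d-r)$-disc times $(-1,1)^r$ if the root is white, and times $[0,1)\times(-1,1)^{r-1}$ if the root is red. Hence $\partial Z(S)=P(S)\cup\mathrm{Decom}(Z)(S)$.

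For instance, $Z(3)=Y(3)\cup\bigsqcup_3 Y(2)\times Y(2)$, glued along $\bigsqcup_3 Y(2)\times P(2)$, has boundary $P(3)\cup\bigsqcup_3 P(2)\times Y(2)$.

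Note also that the relevant links are posets in $\mathcal{T}^{RW}_{\mathrm{local}}$, not joins of $\mathcal{T}^{RBW}$-links. Removing an external red vertex replaces the link by its suspension, since the ``collapse'' and ``whiten'' cones meet along their base. So Proposition \ref{link_prop} is not the input you need.
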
 

We provide a sketch of the proof which we will revisit in Section \ref{section: reverse} once we have developed the basic theory of stratifications. Before doing so, we introduce a definition:

\begin{definition}
Let $\mathcal{T}_\mathrm{local}^{RW}(S)$ be the category 
with objects $S$-labeled trees with vertices of color $R$ or $W$, with no requirement of legality as for $RBW$-trees, and with morphisms tree contractions $f: T\to T'$ such that for each vertex $v$ of $T'$, $f^{-1}(v)\to v$ is a $RBW$-contraction of $\overline{cld}(v)$-labeled trees. 
\end{definition}

\begin{proof}
Since $Y$ is an $n$-truncated right $P$-module,
for $|S|\leq n$ the hypothesis defines a functor $F_S:\mathcal{T}_\mathrm{local}^{RW}(S) \to \mathrm{Top}$. Given a tree $T$, 
$F_S(T)$ is the product of a copy 
of $P(\overline{cld}(v))$ for each red vertex $v$ of $T$, and of a copy of 
$Y(\overline{cld}(v))$ for each white vertex of $T$. The value of $F_S$ on morphisms is determined by the operad structure of $P$ and the right $P$-module structure of $Y$.
Let us define $Z(S) := \colim  F_S$. Any point of $Z(S)$ is represented (non-uniquely) by a $S$-labeled tree, with the vertices labeled either by $P$ or $Y$. 
We claim that $Z$ is an $n$-truncated $(P,1)$-bimodule cobordism. 
First of all we observe that $Z$
is an $n$-truncated left $P$-module. The action of $P$ is defined by gluing trees together along a new root vertex, labeled by $P$. In particular the elements of $\mathrm{Decom}(Z)$ are represented by trees with a root vertex labeled by $P$, that are not corollas. To conclude we need to prove that 
$Z(S)$ is a compact manifold with boundary $\partial Z(S)=
\mathrm{Decom}(Z)(S) \cup P(S)$ of dimension
$|S|d-d$, where $d$ is the dimension of the manifold operad $P$. Clearly $Z(S)$ is compact as it is a finite union of products of compact manifolds.
Any point $z \in Z(S)$ can be written uniquely 
as a $S$-labeled tree such that each vertex $v$ of red (resp. white) color
is labeled by an element in the interior of 
the manifold $P(\overline{cld}(v))$ (resp.
$Y(\overline{cld}(v))$ ). The maximal trees are those having only white vertices. A combinatorial count shows that the points represented by a given maximal tree form a manifold of dimension $|S|d-d$. 
More generally the points represented by a given tree form a manifold of dimension $|S|d-d-r$ where $r$ is the number of red vertices.
A neighborhood of a given point $z$ contains trees of the same shape, or trees obtained by iterated applications of one of the following two moves: replace a red vertex by a white vertex, or collapse an edge coming from a red vertex to its parent into a single vertex, with the same color of the parent.
 This shows that a neighborhood of a given point $z$ in $Z(S)$ is the product of an open disc of dimension  $|S|d-d-r$ by $r$ open intervals, if the root vertex is white, and the product of an open disc of dimension $|S|d-d-r$ by a copy of $[0,1)$ and  $r-1$ open intervals, if the root vertex is red.
\end{proof}

\begin{ex}
If we apply Theorem \ref{thm: swap} to the operad $FM_1$ and its right nullbordism as constructed in Proposition \ref{prop: rightnullbordant} with $G=S^1$, then the resulting left nullbordism $Z$ is not isomorphic to the left nullbordism $L$ described in Proposition \ref{leftnullbordant}. In fact $L(3)$ is a 2-dimensional disc, whereas  $Z(3)$ is a genus one surface with one boundary component, 
obtained by gluing together two hexagons and three squares. 
\end{ex}

Bimodule cobordisms allow one to formulate the theory of secondary obstructions to the extension of truncated manifold operads which measure the uniqueness of the extension.

\begin{prop}
Suppose $B,R$ are $d$-dimensional, collarable $(n+1)$-truncated manifold operads, and that $W$ is an $n$-truncated collarable bimodule cobordism from $B^{\leq n}$ to $R^{\leq n}.$ There is a canonical obstruction in $\Omega_{nd-1}^{\Sigma_{n+1}\mathrm{Top}}$ which vanishes if and only if $W$ extends to a collarable $(n+1)$-truncated bimodule cobordism from $B$ to $R$.
\end{prop}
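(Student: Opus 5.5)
The argument runs parallel to the proof of Proposition \ref{prop: operad obstruction}. Let $W$ be represented by $F:\mathcal{T}^{RBW}_{\leq n}\to\mathrm{Top}$. We extend $F$ to all trees in $\mathcal{T}^{RBW}(n+1)$ other than the white corolla $\bullet_{W,n+1}$, as follows. A proper RBW-tree with $n+1$ labels has every vertex of arity at most $n$, so its value is a product of spaces already defined. The red and blue corollas are sent to $R(n+1)$ and $B(n+1)$, and the maps between these values come from the structure of $R$, $B$ and $W$. Call the resulting functor $\widehat F$ and set
\[Q:=\colim_{\mathcal{T}^{RBW}(n+1)\backslash\bullet_{W}}\widehat F .\]
This space carries a canonical $\Sigma_{n+1}$-action. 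The obstruction will be the class $[Q]\in\Omega^{\Sigma_{n+1}\mathrm{Top}}_{nd-1}$.

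The first step is to check that this class is well defined, i.e.\ that $Q$ is a closed topological manifold of dimension $nd-1$. The space $Q$ is the union of $B(n+1)$, $R(n+1)$ and the reduced decomposable colimit $D$, glued along $\partial B(n+1)$ and $\partial R(n+1)$. For the manifold claim I would invoke the bimodule analogues of the results used before, namely Theorem \ref{thm: bimodule cobordism is stratified} together with the corresponding statement that such decomposable colimits are manifolds. By these, $D$ is a manifold with boundary $\partial B(n+1)\sqcup\partial R(n+1)$, and $Q$ is closed. Collarability of $W$, $B$ and $R$ is used to make the gluing produce a genuine manifold. For the dimension, the top strata of $D$ are indexed by trees with a single white vertex and one edge, e.g.\ $W(n+2-j)\times R(j)$; this has dimension $((n+2-j)d-d)+(jd-d-1)=nd-1$. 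This agrees with $\dim R(n+1)=(n+1)d-d-1=nd-1$.

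Next I show the obstruction vanishes exactly when an extension exists.

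\emph{Necessity.} If $W$ extends, then by condition (\ref{bimodcobor4}) of Definition \ref{bimodulecobordism_dfn}, $W(n+1)$ is a compact $\Sigma_{n+1}$-manifold with boundary equivariantly homeomorphic to $Q$. Hence $[Q]=0$.

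\emph{Sufficiency.} Conversely, take an equivariant nullbordism of $Q$; after attaching an external collar we may assume it is collarable, as in the remark preceding Proposition \ref{prop: operad obstruction}. Set $W(n+1)$ equal to this nullbordism, and let all structure maps into arity $n+1$ factor through the inclusion $Q\hookrightarrow W(n+1)$. The bimodule relations in arity $n+1$ hold because they already hold in the colimit $Q$. I then check conditions (\ref{bimodcobor1})--(\ref{bimodcobor4}). Conditions (\ref{bimodcobor1}), (\ref{bimodcobor3}) and (\ref{bimodcobor4}) hold by construction. For (\ref{bimodcobor2}), the RBW-manifold condition is that $D$ is a cobordism between $\partial B(n+1)$ and $\partial R(n+1)$; this was shown in the first step.

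The main obstacle is the first step: proving that $D$ is a manifold whose boundary is exactly $\partial B\sqcup\partial R$ in arity $n+1$. This requires the link computations for RBW-trees, namely that links are spheres at white-rooted strata and discs at the strata meeting $B$ or $R$. I would extract this from the combinatorial Proposition \ref{link_prop} through the stratified-bimodule machinery, rather than redo it here.
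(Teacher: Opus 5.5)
Your proposal is correct and is essentially the paper's proof: the obstruction is the equivariant bordism class of $\colim_{\mathcal{T}^{RBW}(n+1)\setminus\bullet_{W}}\phi$, shown to be a closed $(nd-1)$-manifold via Theorem~\ref{thm: bimodule cobordism is stratified} and Proposition~\ref{prop: bimodule decomposables are manifold}, with the dimension read off a face $W(n+2-j)\times R(j)$ and completeness argued exactly as in Proposition~\ref{prop: operad obstruction}. You go slightly further by checking condition~(\ref{bimodcobor2}), which the paper leaves implicit; there, the boundary strata of $D$ are exactly those of non-corolla all-blue or all-red trees (link $\partial Lk(T)$ minus the open cell of $T\to\bullet_B$ or $T\to\bullet_R$, a disc), while every other stratum has the full sphere $\partial Lk(T)$ as link, including blue-rooted ones such as a blue root with a red or white child, so ``white-rooted'' is not the right dividing line.
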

\begin{proof}
    Let $W$ be represented by $\phi:\mathcal{T}_{\leq n}^{RBW} \rightarrow \mathrm{Top}$. The obstruction is given by 
    \[[\colim_{{\mathcal{T}^{RBW}(n+1)\backslash\bullet_{ W }}}\phi]  \in \Omega_{nd-1}^{\Sigma_{n+1}\mathrm{Top}}.\]
  The space has a canonical $\Sigma_{n+1}$-action, and the fact that it is a manifold follows from Theorem \ref{thm: bimodule cobordism is stratified} combined with Proposition \ref{prop: bimodule decomposables are manifold}.  Checking the dimension of a maximal face of the colimit, corresponding to a tree with a white root and red vertex, one sees that the dimension must be
    \[\mathrm{dim}\big(W(n+2-j) \times R (j)\big) = ((n+2-j)d -d) + (jd -d -1) = nd +2d -jd -d  +jd-d-1=nd-1. \]
    
    The argument for the completeness of this obstruction is identical to that of Proposition \ref{prop: operad obstruction}.
\end{proof}

As for manifold operads, it is easy to produce examples of $n$-truncated bimodule cobordisms for fixed $n<\infty$, for instance by surgery on the interior of $W_{[0,1]}(O)^{\leq n}(n)$ or on the interior of the $n$-ary operations of $O^{\leq n}(n)$.

We end the section with a brief discussion on the application of bimodule cobordisms to the determination of when two manifold operads are weakly homotopy equivalent. Let $n\leq \infty$ and recall an $n$-truncated operad $O$ is weakly homotopy equivalent to $P$ if there is a zigzag of $n$-truncated operad maps between $O$ and $P$ all of which induce isomorphisms on homotopy groups for any choice of basepoint. 

\begin{definition}

If $H$ is an $n$-truncated bimodule between reduced $n$-truncated operads, we say it is homotopically trivial if each map in the cospan
    \[B \hookrightarrow H \hookleftarrow R\]
    is a weak homotopy equivalence of $n$-truncated symmetric sequences.
\end{definition}
\begin{definition}
    An $n$-truncated bimodule $h$-cobordism $H$ from an $n$-truncated manifold operad $B$ to an $n$-truncated manifold operad $R$ is a homotopically trivial $n$-truncated bimodule cobordism.
\end{definition}

\begin{prop}
    If $H$ is a homotopically trivial bimodule from an $n$-truncated operad $B$ to an $n$-truncated operad $R$, then $B$ is weakly equivalent to $R$ as an $n$-truncated operad.
\end{prop}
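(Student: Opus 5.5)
The plan is to pass from the bimodule to a single operad that receives equivalences from both ends. The natural candidate is a two-sided bar-type construction, or more simply a \emph{linear operad} built from $H$. Since $H$ is reduced, it comes with operad-like data indexed by $RBW$-trees, and the maps $B \to H \leftarrow R$ are weak equivalences of symmetric sequences. An abstract route would treat $H$ as a weak equivalence in the $\infty$-category of bimodules and argue that $B$ and $R$ are Morita-equivalent with an invertible bimodule. I find that harder to make precise, so instead I would build an explicit zigzag of $n$-truncated operads.

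\textbf{Step 1 (the auxiliary operad).} Define an $n$-truncated operad $K$ whose arity-$S$ space is the colimit over $RBW$-trees $T\in\mathcal{T}^{RBW}(S)$, with white vertices permitted, of the functor $F$ defining $H$. This is in the spirit of the construction $Z$ in the proof of Theorem \ref{thm: swap}, except that red, blue and white all remain available. Operadic composition grafts trees and recolors: grafting a tree into a leaf of another puts the lower tree's white or blue vertices below the upper tree's vertices. To keep colorings legal, I would use a version of the $W$-construction $W_{[0,1]}$ applied to the \emph{two-colored operad} encoding the bimodule $(B,H,R)$. That is, I view the triple as an algebra over the colored operad of bimodules, and take $K$ to be the \emph{operad of the mapping cylinder}. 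Concretely, $K(S)$ consists of metric $RBW$-trees in which each edge from a red region to a white or blue region carries a length in $[0,1]$.

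\textbf{Step 2 (inclusions are operad maps).} Check that $B \hookrightarrow K$ (all-blue trees) and $R \hookrightarrow K$ (all-red trees) are maps of $n$-truncated operads. This is immediate from the tree description.

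\textbf{Step 3 (inclusions are equivalences).} Show the inclusions are levelwise weak equivalences. I would do this by induction on arity via a filtration of $K(S)$ by the number of white vertices. The stratum with white vertices $w_1,\dots,w_k$ is a product of copies of $H$, $B$ and $R$. Since $B\to H$ and $R\to H$ are equivalences, each white vertex can be homotoped to a blue one (respectively a red one) compatibly with the gluing, using the homotopy extension property of the cofibrant pushouts, induction on arity, and the gluing lemma for homotopy colimits. The result is that $K(S)\simeq B(S)$ and $K(S)\simeq R(S)$. This gives the zigzag $B\to K\leftarrow R$ of weak equivalences.

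I expect Step 3 to be the main obstacle. The colimits in question are not automatically homotopy colimits, and I would need cofibrancy of the latching maps, namely decomposables into each arity. For manifold operads and bimodule cobordisms this follows from the collar and homeomorphism conditions (\ref{bimodcobor4}). For a general homotopically trivial bimodule between arbitrary reduced operads, however, I would first cofibrantly replace, via the $W$-construction, so that the relevant latching inclusions are cofibrations. If that replacement proves awkward, the fallback is the standard argument in the model category of bimodules: a bimodule map from $B$ (viewed as a $(B,B)$-bimodule) to $H$ that is an equivalence induces $R\simeq \mathrm{End}_{B}(H)\simeq B$ through derived endomorphism operads.
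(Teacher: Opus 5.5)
\textbf{Step 1 has a genuine gap: you never actually construct an operad $K$, and the obstruction is structural rather than technical.} Read literally, the colimit of $F$ over all of $\mathcal{T}^{RBW}(S)$ is just $F(\bullet_{W,S})=H(S)$, because $\bullet_{W,S}$ is terminal in $\mathcal{T}^{RBW}(S)$, and $H$ carries no composition.

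The deeper problem is this. Any operad containing $B$ and $R$ as suboperads must contain composites $r\circ_i b$, i.e.\ a blue vertex grafted above a red one. Since your $K$ also contains white corollas, it must contain composites $h\circ_i h'$ as well, i.e.\ white above white. Both are illegal colorings. The bimodule only supplies composites of shape $B\circ H\circ R$ (blue below white below red), so nothing lets you evaluate such trees.

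Your ``recoloring'' that pushes blue and white vertices below the red ones would mean commuting a $B$-operation past an $R$-operation. That presupposes the comparison $R\to B$ you are trying to construct. Adding edge lengths does not help. The $W$-construction of the two-colored operad encoding $(B,H,R)$ is still two-colored. It has operations from the $R$-color to the $B$-color, such as $*\in H(1)$, but none in the other direction. Extracting a one-colored $K$ would require inverting that unary operation, and computing such a localization is the whole content of the proposition.

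So Steps 2 and 3 have nothing to act on. Step 3 has further problems of its own. It tacitly assumes structured homotopy inverses to $B\to H$ and $R\to H$, which you do not have. It also leans on collar and cofibrancy properties of manifold bimodules, which are not hypotheses here: the statement concerns arbitrary reduced operads.

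\textbf{Your fallback points toward the right idea but puts it on the wrong side.} $H$ is not a $(B,B)$-bimodule; the map $B\to H$ is only a map of left $B$-modules. Moreover, a right $R$-action on a left $B$-module cannot be packaged as an operad map into an endomorphism operad, because $H\circ(-)$ is not linear in its second variable. So ``$R\simeq\mathrm{End}_B(H)$'' has no meaning as stated.

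The version that works is the paper's argument:
\begin{enumerate}
\item A $(B,R)$-bimodule structure on $H$ is the same as a $B$-algebra structure on $H$ in right $R$-modules with Day convolution $\circledast$. This gives operad maps $B\to\mathrm{End}(H)\to\mathrm{End}^h(H)$.
\item Since $R\hookrightarrow H$ is an equivalence of right $R$-modules, $\mathrm{End}^h(H)\simeq\mathrm{End}^h(R)\simeq R$. This uses that $R$ is free over itself and that Day convolutions of free right modules are free.
\item The resulting map is an equivalence. In arity $m$ it is $B(m)\to\mathrm{RMod}^h_R(H^{\circledast m},H)\simeq\mathrm{RMod}^h_R(R^{\circledast m},H)\simeq H(m)$, and this composite is homotopic to the given equivalence $B\hookrightarrow H$.
\end{enumerate}
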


\begin{proof}    
    We prove this when $n=\infty$, and the finitely truncated case is analogous. Recall that to put a $(B,R)$-bimodule structure on $H$ is equivalent to producing a $B$-algebra structure on $H$ in the category of right $R$-modules with the Day convolution product $\circledast$ \cite[9.1.2 Proposition]{Fresse2009}. This yields maps of operads
    \[B \rightarrow \mathrm{End}(H) \rightarrow \mathrm{End}^h(H),\]
    where $\mathrm{End}(H) \rightarrow \mathrm{End}^h(H)$ denotes the map from the endomorphism operad to the derived endomorphism operad of $H$, both computed in the category of right modules with Day convolution, given by applying a topological cofibrant replacement functor. Since $R \hookrightarrow H$ is a weak homotopy equivalence of right modules, in the homotopy category this composite is equivalent to a map
    \[B \rightarrow \mathrm{End}^h(R). \]
There are equivalences of operads \[\mathrm{End}^h(R)\simeq \mathrm{End}(R)\simeq R\]
    using that $R$ is a free right module over itself and the Day convolution of free right modules is free, a consequence of the universal property of Day convolution. It remains to show that the map $B \rightarrow \mathrm{End}^h(R)$ just constructed is an isomorphism in the homotopy category of operads. It suffices to prove the following composition is an equivalence in the homotopy category of symmetric sequences
    \[B \rightarrow \mathrm{End}^h(H):= \{\mathrm{RMod}_R^h(H^{\circledast m},H)\}_{m \geq 2} \simeq \{\mathrm{RMod}_R^h(R ^{\circledast m},H)\}_{m \geq 2 }\simeq H .\]
    A diagram chase shows this map is homotopic to the original inclusion $B \xhookrightarrow{\simeq}H$ which is an equivalence by assumption.
\end{proof}

Since a bimodule $h$-cobordism is a particular instance of a homotopically trivial bimodule, we have the following corollary:

\begin{cor}\label{cor: h cobordism implies equiv}
    If an $n$-truncated manifold operad $B$ is bimodule $h$-cobordant to an $n$-truncated manifold operad $R$, then $B$ is weakly homotopy equivalent to $R$ as an $n$-truncated operad.
\end{cor}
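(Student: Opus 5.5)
This is an immediate specialization of the preceding proposition, and I would simply quote it. By definition, an $n$-truncated bimodule $h$-cobordism $H$ from $B$ to $R$ is a homotopically trivial $n$-truncated bimodule cobordism. In particular, $H$ is an $n$-truncated reduced $(B,R)$-bimodule in the sense of Definition \ref{bimodule_dfn}, and each map in the cospan
\[B \hookrightarrow H \hookleftarrow R\]
is a weak homotopy equivalence of $n$-truncated symmetric sequences. The manifold conditions (\ref{bimodcobor1})--(\ref{bimodcobor4}) are not needed for the conclusion; we only use the underlying bimodule structure together with homotopical triviality.

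So I would apply the preceding proposition to $H$, viewed just as a homotopically trivial bimodule between the reduced $n$-truncated operads $B$ and $R$. It yields that $B$ is weakly equivalent to $R$ as an $n$-truncated operad. That equivalence is an isomorphism in the homotopy category of $n$-truncated operads, passing through $\mathrm{End}^h(H)\simeq \mathrm{End}^h(R)\simeq R$.

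The only point to check is what ``weakly homotopy equivalent'' means here. Per the paper's convention, it means a zigzag of $n$-truncated operad maps, each inducing isomorphisms on all homotopy groups at all basepoints. The proposition produces an isomorphism in the homotopy category of operads. In the standard (projective) model structure on topological $n$-truncated operads this is realized by such a zigzag, for instance $B \xleftarrow{\simeq} W(B) \to \cdots$ obtained via cofibrant replacement. There is therefore no real obstacle, and the corollary follows.
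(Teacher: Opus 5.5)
Your proof is correct and is the paper's argument: the corollary follows from the preceding proposition because a bimodule $h$-cobordism is in particular a homotopically trivial bimodule, and the manifold conditions play no role. In your side remark, $W(B)$ is a cofibrant replacement only when $B$ is $\Sigma$-cofibrant; in general, take any cofibrant replacement $QB \xrightarrow{\simeq} B$ and represent the homotopy-category isomorphism by a weak equivalence $QB \to R$, which works since all topological operads are fibrant.
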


As a consequence, to demonstrate that manifold operads $B$ and $R$ are weakly equivalent, it is sufficient to produce a bimodule cobordism $H$ from $B$ to $R$ and ``surger'' it until it becomes a bimodule $h$-cobordism.

\begin{remark} \label{remark: homology h cob}
    There is an obvious notion of a homology bimodule $h$-cobordism; a variation of the above argument shows that if $O$ is homology bimodule $h$-cobordant to $P$ then there is a chain of quasi-isomorphisms
    $C_\ast(O) \simeq \dots \simeq C_\ast (P)$
    as operads in the category of chain complexes or even differential graded $E_\infty$-coalgebras. 
\end{remark}

\section{Combinatorics of three-colored trees} \label{section: Combinatorics of three colored trees}

\subsection{Elementary properties of three-colored trees}
In Section \ref{subsection: bimodules and trees}, we introduced the category of RBW-trees in order to describe reduced bimodules over reduced operads. In this section, we produce some elementary lemmas regarding the manipulation of such trees. Note that most of these lemmas can be applied directly to single-colored trees, as they embed into RBW-trees by choosing the vertices to be either entirely red or entirely blue.

For the rest of this section, all trees and contractions are assumed to be RBW-trees and contractions.

\begin{definition}
    If there exist contractions $T\to T'$ and $T'\to T''$, then $T\to T'$ is an \textit{intermediate contraction} of $T\to T''$. 
\end{definition}

\begin{lem}\label{elementarycontraction_lmm}
    Every contraction can be written (nonuniquely) as a composition of the following \textit{elementary contractions}.\footnote{By such a picture we mean that all other vertices and edges in the tree remain unaffected.} The elementary contractions $(2),(3),(5),$ are always legal contractions, though the legality of $(1),(4),(6)$ depends on a neighborhood of the vertex.
    \begin{center}
    \begin{tikzpicture}
        \draw [fill] (0,0) circle [radius=0.02]; 
        \node [below] at (0,0) {\tiny $R$};
        \draw [->] (0.4,0) to (1,0); 
        \draw [fill] (1.3,0) circle [radius=0.02]; 
        \node [below] at (1.3,0) {\tiny $ W $};
        \node at (0.8,-1) {\tiny (1) turn the color of one vertex from $R$ to $ W $};
    \end{tikzpicture}
    \hspace{2cm}
    \begin{tikzpicture}
        \draw [fill] (0,0) circle [radius=0.02]; 
        \node [right] at (0,0) {\tiny $R$};
        \draw [fill] (0,1) circle [radius=0.02];
        \node [right] at (0,1) {\tiny $R$};
        \draw (0,0) to (0,1); 
        \draw [->] (0.4,0.5) to (1,0.5); 
        \draw [fill] (1.3,0.5) circle [radius=0.02]; 
        \node [below] at (1.3,0.5) {\tiny $R$};
        \node at (0.8,-1) {\tiny (2) contract an $(R-R)$ edge to an $R$-vertex};
    \end{tikzpicture}

    \begin{tikzpicture}
        \draw [fill] (0,0) circle [radius=0.02]; 
        \node [right] at (0,0) {\tiny $ W $};
        \draw [fill] (0,1) circle [radius=0.02];
        \node [right] at (0,1) {\tiny $R$};
        \draw (0,0) to (0,1); 
        \draw [->] (0.4,0.5) to (1,0.5); 
        \draw [fill] (1.3,0.5) circle [radius=0.02]; 
        \node [below] at (1.3,0.5) {\tiny $ W $};
        \node at (0.8,-1) {\tiny (3) contract an $(R- W )$ edge to a $ W $-vertex};
    \end{tikzpicture}
    \hspace{2cm}
    \begin{tikzpicture}
        \draw [fill] (0,0) circle [radius=0.02]; 
        \node [below] at (0,0) {\tiny $B$};
        \draw [->] (0.4,0) to (1,0); 
        \draw [fill] (1.3,0) circle [radius=0.02]; 
        \node [below] at (1.3,0) {\tiny $ W $};
        \node at (0.8,-1) {\tiny (4) turn the color of one vertex from $B$ to $ W $};
    \end{tikzpicture}
    \hspace{2.5cm}
    \begin{tikzpicture}
        \draw [fill] (0,0) circle [radius=0.02]; 
        \node [right] at (0,0) {\tiny $B$};
        \draw [fill] (0,1) circle [radius=0.02];
        \node [right] at (0,1) {\tiny $B$};
        \draw (0,0) to (0,1); 
        \draw [->] (0.4,0.5) to (1,0.5); 
        \draw [fill] (1.3,0.5) circle [radius=0.02]; 
        \node [below] at (1.3,0.5) {\tiny $B$};
        \node at (0.8,-1) {\tiny (5) contract a $(B-B)$ edge to a $B$-vertex};
    \end{tikzpicture}
    \hspace{2.5cm}
    \begin{tikzpicture}
        \draw [fill] (0,0) circle [radius=0.02]; 
        \node [below] at (0,0) {\tiny $B$};
        \draw [fill] (-0.8,1) circle [radius=0.02]; 
        \node [above] at (-0.8,1) {\tiny $ W $}; 
        \draw (-0.8,1) to (0,0); 
        \draw [fill] (-0.5,1) circle [radius=0.02]; 
        \node [above] at (-0.5,1) {\tiny $ W $}; 
        \draw (-0.5,1) to (0,0); 
        \node at (0,1) {$\ldots$};
        \draw [fill] (0.5,1) circle [radius=0.02]; 
        \node [above] at (0.5,1) {\tiny $ W $}; 
        \draw (0.5,1) to (0,0); 
        \draw [fill] (0.8,1) circle [radius=0.02]; 
        \node [above] at (0.8,1) {\tiny $ W $}; 
        \draw (0.8,1) to (0,0); 
        \draw [->] (0.8,0.5) to (1.2,0.5); 
        \draw [fill] (1.5,0.5) circle [radius=0.02]; 
        \node [below] at (1.5,0.5) {\tiny $ W $};
        \node at (0.8,-1) {\tiny \begin{tabular}{c} (6) contract a subtree of the form\\ ``all of the $ W $-vertices attached to one $B$-vertex''\\ to a $ W $-vertex\end{tabular}};
    \end{tikzpicture}
    \end{center}
\end{lem}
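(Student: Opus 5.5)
I would prove this by induction on the size of a contraction $\mathfrak{c}:T'\to T$, measured by the pair (number of contracted edges, number of vertices whose color changes). The contraction has two ingredients: the underlying uncolored contraction, which collapses each $\mathfrak{c}^{-1}(v)$ to $v$, and the recoloring. Since there is at most one contraction between two trees, a composite of elementary contractions from $T'$ to $T$ automatically equals $\mathfrak{c}$. So it suffices to produce a chain of legal RBW-trees $T'=T_0\to T_1\to\dots\to T_k=T$ in which each step is one of the moves (1)--(6). Moreover, the fibers $\mathfrak{c}^{-1}(v)$ are disjoint subtrees, and contracting one of them does not affect legality elsewhere. The reason is that legality is checked on parent--child pairs, and the colors seen from outside a fiber only change from $R$ or $B$ to $W$ at the image vertex. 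So I would treat the fibers one at a time, handling fibers that map to $R$ or $B$ vertices before those that map to $W$ vertices.

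\textbf{Red and blue fibers.} If $\mathrm{Color}(v)=R$, then Definition \ref{contraction_dfn} says $\mathfrak{c}^{-1}(v)$ is entirely red, and it can be collapsed edge by edge using move (2). Each intermediate tree is legal because merging two adjacent red vertices into one red vertex preserves the legal pattern $R\cdots R\,W\,B\cdots B$ along every root path. The blue case is the same, using move (5).

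\textbf{White fibers.} Suppose $\mathrm{Color}(v)=W$, and let $U=\mathfrak{c}^{-1}(v)$. Every root-to-leaf path in $U$ reads $R^{*}W^{\le 1}B^{*}$. The blue vertices of $U$ form a subtree $U_B$ containing the root of $U$, or $U_B$ is empty. Above $U_B$ hang the white vertices, with red subtrees above them and above blue vertices. I would proceed in four steps:
\begin{enumerate}
\item Collapse $U_B$ to a single blue vertex using move (5), and collapse each maximal red subtree of $U$ to a single red vertex using move (2).
\item Turn every red vertex of $U$ that is a child of a blue vertex white using move (1). This step is legal because its parent is blue and its children are red.
\item Absorb the red children of each white vertex using move (3).
\item If a blue vertex remains, all of its children in $U$ are now white. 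Apply move (6) to obtain a single white vertex.
\end{enumerate}
The case where $U$ contains no blue vertex is easier, since a legal red-and-white subtree has either a white root or is all red. If $U$ is all red, I use move (1) on its collapsed vertex. If $U$ has a white root, I use move (3) repeatedly. The case of a single blue vertex mapping to a white vertex is move (4).

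The main obstacle is move (6), together with the legality of the intermediate trees at the boundary of $U$. Move (6) requires that \emph{all} children of the blue vertex be white, including children lying outside $U$. Those outside children map to other vertices $w\neq v$ of $T$ whose parent is $v$, and these must be red because $v$ is white. So before applying (6), those outside children have already been contracted to red vertices, but they still obstruct (6). My plan here is to refine step (2) and use a move of type (1) on the outside red neighbours only when the target fiber for $w$ is also white. Failing that, I would fall back on the freedom in the statement and allow move (6) to collapse only the white children inside $U$, together with the blue vertex. The phrase ``depends on a neighborhood'' in the statement suggests this is how legality of (6) is intended. I expect this verification of legality at each intermediate step, using the pairwise criterion of legal colorings stated after Definition \ref{RBtree_dfn}, to be the only delicate part of the argument.
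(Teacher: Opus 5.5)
Your proposal is correct and follows the paper's proof. You reduce fiber by fiber, collapse red and blue fibers with (2) and (5), and dismantle a fiber over a white vertex using (2), (5), (1), (3) and finally (6); the paper does this last part with a leaf-by-leaf greedy algorithm instead of your four phases, but it uses exactly the same moves. Your concern about (6) is genuine, and the paper passes over it in its reduction to corollas, but your ``fallback'' is forced rather than optional: (6) must contract the blue vertex together with all its white children while leaving its other children (red in your situation) untouched, since under the strict reading the lemma fails (a blue root with one white and one red child cannot reach the white root with a red child), and your alternative of applying (1) to the outside neighbours is vacuous because those neighbours map to red vertices of the target.
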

\begin{proof}
    Given a contraction $\mathfrak{c}:T\to T'$, for each vertex $v\in V(T')$, $\mathfrak{c}^{-1}(v)$ is a subtree of $T$, and $\mathfrak{c}$ can be decomposed by contracting these subtrees one at a time. This reduces the lemma to showing a contraction from $T$ to a corolla can decompose into the elementary contractions above. 

    A contraction of the form $T\to \bullet_{R \ (\textnormal{resp. }B)}$ can be decomposed into elementary contractions of form (2) (resp. (5)), by contracting one arbitrary edge at a time, since necessarily all vertices are red (resp. blue).

    It remains to show that a contraction of form $T\to \bullet_{ W }$ can also be decomposed into elementary contractions. To do this, we perform the following algorithm on a tree $T$:  

    Step 1: If $T$ has only one vertex, then: 
    \begin{itemize}
        \item if it is of color $R$, then perform (1) and terminate the algorithm;
        \item if it is of color $B$, then perform (4) and terminate the algorithm; 
        \item if it is of color $ W $, then terminate the algorithm. 
    \end{itemize}
    If $T$ has more than one vertex, go to Step 2.
    
    Step 2: Pick an arbitrary leaf vertex $w$ of $T$ that is not of color $ W $; go to Step 3. If all leaf vertices are of color $ W $, then go to step 4. 

    Step 3: 
    \begin{itemize}
        \item If $w$ is of color $R$ and its parent is of color $R$, then perform (2) to $(w,\textnormal{parent}(w))$; 
        \item If $w$ is of color $R$ and its parent is of color $ W $, then perform (3) to $(w,\textnormal{parent}(w))$; 
        \item If $w$ is of color $R$ and its parent is of color $B$, then perform (1) to $w$;
        \item If $w$ is of color $B$, then its parent must be of color $B$ as well; perform (5) to $(w,\textnormal{parent}(w))$. 
    \end{itemize}
    Go back to Step 1. 

    Step 4: All leaf vertices are of color $ W $, and so there is no $R$ vertex left. If there exists a vertex of color $B$, then choose a vertex $w$ of color $B$ such that no descendant of $w$ is also of color $B$, and perform (6) on $w$ and its children. Go back to Step 1. 

    By construction, the algorithm terminates when it has decomposed the contraction $T\rightarrow \bullet_W$ into elementary contractions. The algorithm will terminate in finite time since each of Steps 3 and 4 removes at least one vertex from the tree.

\end{proof}

In general, given a poset $P$ and elements $a \leq c$, we may form the half open interval $(a,c]$ which is the subposet of elements $b$ which satisfy $a < b \leq c$. Note that $(a,a]$ is always empty. We also denote by $(-,a]$ and $[a,-)$ the subposet of all elements $b$ satisfying $b\le a$ and $b\ge a$, respectively. 

\begin{definition}\label{definition: link of a tree}
    The poset $\mathcal{P}_\mathfrak{c}$ of nontrivial intermediate contractions associated to a contraction $\mathfrak{c}:T \rightarrow T'$ is $(T,T']$. We sometimes abbreviate $\mathcal{P}_T:=\mathcal{P}_{T\to\bullet_{ W }}$.
\end{definition}

It is helpful to think about the elements of $\mathcal{P}_\mathfrak{c}$ as being the uniquely determined contractions $T \rightarrow T''$, rather than simply the tree $T''$, and we will refer to them in this way.

\begin{definition}
    For a colored, labeled tree $T$, we define 
    $$\text{Codim}(T)=\text{the number of vertices of }T\text{ of color }R\text{ or }B.$$ For a contraction $\mathfrak{c}:T\to T'$, define 
    $$\text{Codim}(\mathfrak{c})=\text{Codim}(T)-\text{Codim}(T').$$
\end{definition}

It follows immediately from the definition that for any contraction $\mathfrak{c}$, $\textnormal{codim}(\mathfrak{c})\ge 0$ with equality  only if $\mathfrak{c}$ is the trivial contraction.

Recall that a \textit{chain} $C$ in a poset $P$ is a subset of $P$ such that every two elements in $C$ are comparable. The \textit{length} of a chain $C$ is $|C|-1$. We say a chain $C$ \textit{ends} at an element $x\in C$ if $x$ is maximal in $C$.  Note that the length of the empty chain is $-1$.

\begin{lem}
    Suppose $T\to T'$ is a contraction. Then the maximal length of a chain in $\mathcal{P}_{T\to T'}$ is $\textnormal{codim}(T\to T')-1$.
\end{lem}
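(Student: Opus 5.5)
We prove two inequalities: every chain in $\mathcal{P}_{T\to T'}$ has length at most $\mathrm{codim}(T\to T')-1$, and some chain attains this length.

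\textbf{Upper bound.} The key observation is that $\mathrm{Codim}$ is strictly monotone along nontrivial contractions. As noted after the definition, $\mathrm{codim}(\mathfrak{c})\ge 0$, with equality only when $\mathfrak{c}$ is trivial. Take a chain $T<T_1<\dots<T_k\le T'$ in $(T,T']$; this chain has length $k-1$. Since the contractions are composable and each inclusion is strict, we get
\[\mathrm{Codim}(T)>\mathrm{Codim}(T_1)>\dots>\mathrm{Codim}(T_k)\ge \mathrm{Codim}(T').\]
These are $k$ distinct integers lying in the half-open range $[\mathrm{Codim}(T'),\mathrm{Codim}(T))$, which has exactly $\mathrm{codim}(T\to T')$ elements. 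Hence $k\le \mathrm{codim}(T\to T')$, and the length $k-1$ is at most $\mathrm{codim}(T\to T')-1$. The same count handles the empty chain when $T=T'$.

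\textbf{Lower bound.} We must produce a chain in which every step drops $\mathrm{Codim}$ by exactly one. By Lemma \ref{elementarycontraction_lmm}, $T\to T'$ factors as a composite of elementary contractions, so it suffices to refine each elementary step into steps of codimension exactly one. Types (1) through (5) each lower the number of $R$/$B$ vertices by exactly one, so they already qualify.

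Type (6) contracts a $B$-vertex together with all $j$ of its $W$-children into a single $W$-vertex. The number of $R$/$B$ vertices drops by one, since the $W$-children contribute nothing to $\mathrm{Codim}$. So type (6) is also a codimension-one step.

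Hence every elementary contraction has codimension exactly $1$. Factoring $T\to T'$ into $N$ elementary contractions therefore gives $N=\mathrm{codim}(T\to T')$. The intermediate trees after each of the $N$ steps are distinct, because $\mathrm{Codim}$ strictly decreases, and they all lie in $(T,T']$. They form a chain with $N$ elements, so its length is $N-1=\mathrm{codim}(T\to T')-1$.

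\textbf{Where care is needed.} The step requiring attention is whether the factorization of Lemma \ref{elementarycontraction_lmm} really passes through legal RBW-trees and legal contractions at every stage. The lemma's proof first decomposes $T\to T'$ by contracting the subtrees $\mathfrak{c}^{-1}(v)$ one at a time. Contracting one such subtree while leaving the others intact must yield a legal tree that still contracts to $T'$.

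To check this, we color the partially contracted tree by assigning each already-contracted subtree its target color and leaving all other vertices with their original colors. Legality then follows from the pairwise criterion on parent–child pairs. If a contracted vertex is $W$, its subtree contains at most one $W$-vertex on each root-to-leaf path, with $R$ above and $B$ below. So every pair adjacent to the new vertex already appeared as a legal pair somewhere in $T$ or $T'$.

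Inside each subtree, the algorithm's steps produce legal trees, because it only contracts a leaf into its parent or recolors a vertex in an allowed way.
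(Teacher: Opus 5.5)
Your proof is correct and matches the paper's argument. The lower bound comes from factoring $T\to T'$ into elementary contractions, each of which drops $\mathrm{Codim}$ by exactly one; the upper bound is the strict decrease of $\mathrm{Codim}$ along nontrivial contractions, which you make explicit as a counting argument. Your extra legality check can be dropped, since it is already covered by Lemma \ref{elementarycontraction_lmm}; its stated reason via root-to-leaf paths is also loose, and the real justification is the parent--child case check in Proposition \ref{prop: join of contraction systems}.
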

\begin{proof}
   Since each elementary contraction (as in Lemma \ref{elementarycontraction_lmm}) has codimension 1 and $T\to T'$ can be decomposed into elementary contractions, we have a chain in $\mathcal{P}_{T\to T'}$ of length $\textnormal{codim}(T\to T')-1$. This chain is necessarily maximal because otherwise $T'$ would have too few vertices of color $R$ and $B$.
\end{proof}

    \begin{definition}
    A \textit{system of subtrees with colors} of $T$ is  
    \begin{itemize}
        \item a collection $\{T_i\}_{i\in I}$ of non-empty pairwise disjoint subtrees of $T$ which cover $T$,
        \item a color $\eta_i\in\{R,B, W \}$ assigned to each $T_i$, such that if $T_i$ consists of a single vertex $v$, then $\eta_i\neq\textnormal{Color}(v)$. 
    \end{itemize}
    We denote such a system of subtrees with colors as a set of pairs $\Theta=\{(T_i,\eta_i)\}_{i\in I}$, and denote by $T/\Theta$ the tree obtained by contracting each subtree $T_i$ and coloring the new vertex $\eta_i$ (although this coloring may be illegal). 
\end{definition}
\begin{definition}\label{contractionsystem_dfn}
    A \textit{contraction system} of $T$ is a system of subtrees with colors $\Theta=\{(T_i,\eta_i)\}_{i\in I}$ of $T$ such that each $T_i \rightarrow \bullet_{\eta_i}$ is a legal contraction and $T/\Theta$ is a legal RBW-tree.
\end{definition}

By definition, if $\Theta$ is a contraction system of $T$, then $T \rightarrow T/\Theta$ is a legal contraction.

\begin{definition}
    The poset $\mathcal{P}_T^\mathrm{sys}$  of contraction systems on $T$ is the poset with objects the contraction systems on $T$, and
    \[(T_i,\eta_i) \leq (T'_j,\eta'_j)\]
    if for each $T_i$ there is some $T'_j$ that contains it as a subtree  and if $\eta'_j$ is red then $\eta_i$ is red and if $\eta'_j$ is blue $\eta_i$ is blue. For a contraction system $\Theta$ of $T$ we let $\mathcal{P}^\mathrm{sys}_\Theta$ denote $(-,\Theta]$.
\end{definition}

If $T$ is a tree with all red (blue resp.) vertices, then there is a contraction system $\{(T,{R \textnormal{ resp. }B})\}$, and this is less than the contraction system $\{(T,{W})\}$. We denote these contraction systems $\Theta^T_{\bullet{R}},\Theta^T_{\bullet{B}},\Theta^T_{\bullet{ W }}$. Note $\mathcal{P}^\mathrm{sys}_{\Theta^T_{\bullet{ W }}}=\mathcal{P}^\mathrm{sys}_T$.

\begin{lem}\label{lem: iso between contractions and systems}
    Given a colored, labeled tree $T$, there is a poset isomorphism
    \[\mathcal{P}_T \cong \mathcal{P}_T^\mathrm{sys}.\]

  This poset isomorphism is defined by
    \begin{itemize}
        \item Given a contraction $\mathfrak{c}:T\to T'$, we associate to it the contraction system 
        $$\{\mathfrak{c}^{-1}(v),\textnormal{Color}(v)\}_{v \in V(T')}$$
 
        \item Given a contraction system $\Theta$ of $T$, we associate to it $T\to T/\Theta$.
    \end{itemize}
    
   Hence, for any contraction $\mathfrak{c}$ and associated contraction system $\Theta$ there is another poset isomorphism
    \[\mathcal{P}_\mathfrak{c} \cong \mathcal{P}_\Theta^\mathrm{sys}.\]

\end{lem}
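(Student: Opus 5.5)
The plan is to show that the two assignments in the statement are well defined, mutually inverse, and order preserving in both directions. Throughout I regard an element of $\mathcal{P}_T=(T,\bullet_W]$ as the unique contraction $\mathfrak{c}:T\to T'$. Since such a contraction is nontrivial, I will also treat the extremal elements explicitly: the trivial contraction $T\to T$ would correspond to the system of singletons with their own colors, which is excluded because a singleton subtree must receive a color different from its vertex. Hence both posets contain exactly the nontrivial data.

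\emph{Well-definedness.} Let $\mathfrak{c}:T\to T'$ be a contraction. The fibers $\mathfrak{c}^{-1}(v)$, for $v\in V(T')$, are nonempty, pairwise disjoint, and cover $T$. Give each fiber the color $\mathrm{Color}(v)$. By Definition \ref{contraction_dfn}, the contraction $\mathfrak{c}^{-1}(v)\to\bullet_{\mathrm{Color}(v)}$ is legal, and $T/\Theta=T'$ is a legal tree, so $\Theta$ is a contraction system. The singleton condition needs care. If a fiber is a single vertex $u$ with $\mathrm{Color}(u)=\mathrm{Color}(v)$, that part of $\Theta$ is not allowed. I will handle this by following the paper's convention that such pieces are simply omitted, i.e.\ trivial pieces are not recorded. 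If instead the definition really requires every subtree to change, I will check that the intended reading is that unchanged singletons are implicitly allowed; this bookkeeping is the main point to pin down. Conversely, given a contraction system $\Theta$, the map $T\to T/\Theta$ is an uncolored contraction. It is an $RBW$-contraction because each $T_i\to\bullet_{\eta_i}$ is legal: if $\eta_i\in\{R,B\}$, every vertex of $T_i$ has color $\eta_i$. Since $T/\Theta$ is legal, we get an element of $\mathcal{P}_T$.

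\emph{Inverse bijections.} The fibers of $T\to T/\Theta$ are exactly the $T_i$, colored by $\eta_i$. In the other direction, $T/\{\mathfrak{c}^{-1}(v),\mathrm{Color}(v)\}$ recovers $T'$, because an uncolored contraction is determined by its fibers and the colors agree by construction.

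\emph{Order.} Suppose $\mathfrak{c}_1:T\to T_1'$ and $\mathfrak{c}_2:T\to T_2'$ satisfy $\mathfrak{c}_1\le\mathfrak{c}_2$, meaning there is a contraction $g:T_1'\to T_2'$ with $g\mathfrak{c}_1=\mathfrak{c}_2$. Then each fiber of $\mathfrak{c}_1$ lies in a fiber of $\mathfrak{c}_2$. The $RBW$ condition on $g$ says that if the target vertex is $R$ (resp.\ $B$), the source vertex is $R$ (resp.\ $B$), which is exactly the color condition in $\mathcal{P}^{\mathrm{sys}}_T$. Conversely, if $\Theta_1\le\Theta_2$, the containment of subtrees induces a map $T/\Theta_1\to T/\Theta_2$ obtained by contracting the images of the $T'_j$. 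Each preimage subtree is the connected image of $T'_j$, and the color condition makes this an $RBW$-contraction.

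The relative statement $\mathcal{P}_\mathfrak{c}\cong\mathcal{P}^{\mathrm{sys}}_\Theta$ follows by restricting the isomorphism to the interval below the element corresponding to $\mathfrak{c}$. The main obstacle is therefore only the bookkeeping in the singleton convention described above.
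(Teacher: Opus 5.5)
\textbf{Verdict: there is a real but small gap.} Your route is the one the paper intends: its proof is just the sentence that the lemma follows from the definitions. Once the convention discussed below is fixed, your individual checks are right:
\begin{itemize}
\item the fibers of $\mathfrak{c}$ form a contraction system;
\item $T\to T/\Theta$ is an $RBW$-contraction;
\item an uncolored contraction is determined by its fibers;
\item for the order, the $R$/$B$ clause of the induced contraction $T/\Theta_1\to T/\Theta_2$ is exactly the color clause of $\Theta_1\le\Theta_2$;
\item the relative statement is the restriction to principal down-sets.
\end{itemize}
You should also note that $T/\Theta\le\bullet_W$. This is automatic, since the $RBW$-condition is vacuous over a $W$-vertex.

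The singleton point you defer is the gap. It has to be settled, and your first paragraph adopts the reading under which the lemma fails.

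Taken literally, the requirement $\eta_i\neq\mathrm{Color}(v)$ for singleton pieces forces every vertex of $T$ to be altered. Then $\mathfrak{c}\mapsto\{(\mathfrak{c}^{-1}(v),\mathrm{Color}(v))\}$ is undefined on any nontrivial contraction that leaves some vertex untouched. A concrete case is the example after Definition \ref{contraction_dfn}: recoloring the upper $B$-vertex to $W$ is a legal contraction. Its fibers include the root and the $W$-vertex as singletons keeping their colors.

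The consistent reading is that singleton pieces may keep their color. The only excluded system is then the totally trivial one, $\{(\{v\},\mathrm{Color}(v))\}_{v\in V(T)}$. This is also the reading the paper uses in Proposition \ref{prop: join of contraction systems}. There the added minimum in each factor has to be the trivial system on $T_i$ for the union to cover $T$.

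Under this reading, the identity contraction is excluded by that global condition, not by a per-singleton color change as you wrote. The correspondence then hits $\mathcal{P}_T=(T,\bullet_W]$ exactly. Your ``omit trivial pieces'' convention is only notation for the same thing. That holds provided omitted vertices are still treated as singleton pieces, both in the covering requirement and in the containment clause of the order.
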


\begin{proof}
    This follows from the definitions.
\end{proof}

Recall that the product of posets $\prod_{i=1}^n P_i$ has underlying set the cartesian product and 
\[(p_i) \leq (p'_i) \iff p_i \leq p'_i \quad \forall i \leq n.\]

\textbf{Notation:} for a poset $P$ we denote by $\underline{P}$ the poset obtained from $P$ by adding in a minimal element. Namely, $\underline{P}=\{0\}\sqcup P$ as a set, $0<p,\forall p\in P$, and the inequalities between elements of $P$ are unchanged.

\begin{prop}\label{prop: join of contraction systems}
    Suppose $\Theta=\{(T_1,\eta_1),\ldots,(T_k,\eta_k)\}$ is a contraction system of a tree $T$. There is a bijection of posets

    \[\prod^k_{i=1} \underline{\mathcal{P}}^\mathrm{sys}_{\Theta^{T_i}_{\bullet{\eta_i}}} \cong \underline{\mathcal{P}}^\mathrm{sys}_\Theta, \]
    \[(\theta_1,\dots,\theta_k) \mapsto \bigcup^k_{i =1} \theta_i.\]
\end{prop}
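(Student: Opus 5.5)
We need to interpret the map carefully. An element of $\underline{\mathcal{P}}^\mathrm{sys}_{\Theta^{T_i}_{\bullet\eta_i}}$ is either the added minimum $0$ or a contraction system $\theta_i$ of $T_i$ lying below $\Theta^{T_i}_{\bullet\eta_i}=\{(T_i,\eta_i)\}$. We interpret $0$ as the ``trivial system'' on $T_i$: all vertices of $T_i$ kept as singletons with their original colors. This is not literally a system of subtrees with colors, because of the condition $\eta\neq\mathrm{Color}(v)$ on singletons. Likewise the added minimum of $\underline{\mathcal{P}}^\mathrm{sys}_\Theta$ corresponds to the trivial contraction $T\to T$. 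So the union $\bigcup_i\theta_i$ is to be read as: take the nontrivial pieces of each $\theta_i$, and leave the remaining vertices of $T$ uncontracted. If every $\theta_i=0$, the result is the minimum.

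The plan is to pass through Lemma~\ref{lem: iso between contractions and systems} and argue with contractions. Under it, $\underline{\mathcal{P}}^\mathrm{sys}_\Theta\cong[T,T/\Theta]$, the closed interval of contractions $T\to T''\to T/\Theta$. Likewise each factor is $[T_i,\bullet_{\eta_i}]$ in the poset of colored $\overline{cld}$-labeled trees on $T_i$. Here $T_i$ carries its induced coloring, which is legal since legality is checked on parent–child pairs.

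\textbf{Forward map.} Given intermediate contractions $T_i\to T_i''\to\bullet_{\eta_i}$, form $T''$ by replacing each $T_i$ in $T$ with $T_i''$. We must check two things.
\begin{enumerate}
\item $T''$ is legal.
\item The contractions $T\to T''\to T/\Theta$ are legal $RBW$-contractions.
\end{enumerate}
For (2), the fiber condition in Definition~\ref{contraction_dfn} is local to each $T_i$, so it holds automatically.

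For (1) we check parent–child pairs. Pairs inside some $T_i''$ are legal because $T_i''$ is legal. A pair crossing between pieces comes from an edge of $T$ joining $T_j$ (child side) to $T_i$ (parent side). Let $u\in T_j''$ and $w\in T_i''$ be its endpoints. We know $(\eta_j,\eta_i)$ is a legal pair, since $T/\Theta$ is legal. We also know the fiber of $w$ lies in $T_i$ and maps to $\bullet_{\eta_i}$, and similarly for $u$.

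The key observation is that $\mathrm{Color}(w)\in\{\eta_i,\dots\}$ is constrained by the contraction to $\bullet_{\eta_i}$.
\begin{itemize}
\item If $\eta_i\in\{R,B\}$, then all of $T_i''$ has color $\eta_i$.
\item If $\eta_i=W$, then $T_i''$ is a legal tree contracting to a white corolla.
\end{itemize}
The cases are then as follows.
\begin{itemize}
\item If $\eta_j=R$, then $u$ is red, and red-as-child is legal with any parent.
\item If $\eta_i=B$, then $w$ is blue, and a blue parent is legal with any child.
\item The remaining legal pairs $(\eta_j,\eta_i)$ are $(W,B)$ and $(B,B)$, plus those already covered, so these are covered as well.
\item The pair $(W,W)$ never occurs, since it is illegal in $T/\Theta$.
\end{itemize}
The only delicate point is $\eta_j=W$ with $\eta_i=B$: then $w$ is blue, which is legal. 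So crossing pairs are always legal, and this case analysis on $(\eta_j,\eta_i)$ is the core check.

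\textbf{Inverse map.} Given $T\to T''\to T/\Theta$, each vertex of $T''$ has its fiber in $T$ contained in a single $T_i$. Hence $T''$ restricts to contractions $T_i\to T_i''\to\bullet_{\eta_i}$. Here $T_i''$ is the subtree of $T''$ spanned by the vertices over $[T_i]$, and the fiber color conditions are inherited.

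The two constructions are mutually inverse, since a contraction is determined by its fibers and their colors. Both are monotone, because $\le$ is defined fiberwise: containment of subtrees together with the red/blue condition. Finally, translating back through Lemma~\ref{lem: iso between contractions and systems}, the element $T''$ corresponds to the system obtained by taking the union of the $\theta_i$ (with the trivial-system convention above). This gives the stated formula.

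The main obstacle is the bookkeeping of the added minimal elements and the singleton color convention, rather than any deep step.
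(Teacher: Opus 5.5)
Your proposal is correct and follows essentially the paper's argument. The core is the same crossing-edge check: a child-side piece colored $R$ or a parent-side piece colored $B$ makes the edge legal, and these two cases exhaust the legal pairs occurring in $T/\Theta$. The inverse is likewise given by restricting to each $T_i$.

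The only differences are presentational. You phrase everything through Lemma~\ref{lem: iso between contractions and systems} as intervals $[T,T/\Theta]$ and $[T_i,\bullet_{\eta_i}]$. You also make explicit the bookkeeping of the added minima and of trivial singleton pieces, which the paper leaves implicit.
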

\begin{proof}
We first show that the function is well defined. Fix contraction systems $\theta_i$ of $T_i$ such that $\theta_i \leq \Theta^{T_i}_{\bullet{\eta_i}}$: $\theta_i$ is a contraction system which assigns each subtree of $T_i$ in $\theta_i$ the color $\eta_i$ if $\eta_i=R,B$ and has no restriction beyond legality if $\eta_i= W $. We must show that the union as $i$ varies yields a contraction system $\bigcup^k_{i =1} \theta_i$ of $T$.

The first condition is automatic: the contraction of each subtree in the contraction system to its specified color is a legal contraction because this condition already had to hold as contraction systems of the relevant $T_i$. It remains to see that the contracted tree associated to $\bigcup^k_{i =1} \theta_i$ is a legal coloring. One can verify legality of colorings by checking every adjacent pair of vertices $X,Y$ where $X$ is a descendant of $Y$. If $X,Y$ are the images of subtrees of the same $T_i$, then this adjacent pair is legal because $\theta_i$ is a contraction system of $T_i$. What remains are the vertices which are adjacent, but for which $X$ is in the image of $T_i$, $Y$ is in the image of $T_j$ and $i\neq j$.  If $T_i$ is assigned to $R$ by $\Theta$, then this edge coloring is legal. This is because necessarily $X$ will be assigned to $R$ in $\theta_i$
and any color before $R$ is legal. Similarly, if $T_j$ is assigned to $B$ by $\Theta$. However, any legal edge is of one of these forms, so $\bigcup^k_{i =1} \theta_i$ is a contraction system of $T$.

The definition of the poset structure on contraction systems implies that any contraction system $\Theta'$ of $T$ such that $\Theta' \leq \Theta$ naturally decomposes into a union of contraction systems $\Theta'_1,\dots, \Theta'_k$ where $\Theta'_i$ is a contraction system for $T_i$. Recording this tuple of contraction systems gives us a function
\[  \underline{\mathcal{P}}^\mathrm{sys}_\Theta \rightarrow \prod^k_{i=1} \underline{\mathcal{P}}^\mathrm{sys}_{\Theta^{T_i}_{\bullet{\eta_i}}}.\]
It is straightforward to see these functions are poset maps and inverses to each other.
\end{proof}

\subsection{Regular CW-complexes and posets}
Let $X_n$ denote the $n$-skeleton of a CW-complex.
\begin{definition}
    A \textit{regular CW-complex} is a CW-complex $X$ such that, for each $n>0$ and each $n$-dimensional cell $e^n$, the attaching map $t_{e^n}: S^{n-1}\to X_{n-1}$ is a homeomorphism onto its image, and the image of $t_{e^n}$ is a subcomplex of $X_{n-1}$. 
\end{definition}
\begin{remark}
    The standard definition of a regular CW-complex does not require the last condition. However, a space is homeomorphic to a regular CW-complex using one definition, if and only if it is homeomorphic to a regular CW-complex using the other \cite[Chapter III, Lemma 1.3, Theorem 2.1]{CWcplx}.
\end{remark}
\begin{definition}
    The \textit{face poset} $\mathcal{F}(X)$ of a regular CW-complex $X$ is the set of all the cells of $X$ with partial order $e_1\leq e_2\iff e_1\subset {e_2}$. 
\end{definition}

The following is well known and follows from the contractibility of the boundary-fixing homeomorphism group of the standard disc. 

\begin{lem}\label{CWuniqueness_lmm}
    Suppose $X,Y$ are regular CW-complexes. If $f:\mathcal{F}(X)\to\mathcal{F}(Y)$ is an isomorphism of posets, then there exists a CW-map $\hat{f}:X\to Y$ which is a homeomorphism, and $\hat{f}$ maps a cell $e$ to the cell $f(e)$. 
 \end{lem}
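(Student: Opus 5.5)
We prove Lemma \ref{CWuniqueness_lmm} by induction on skeleta, building homeomorphisms $\hat f_n : X_n \to Y_n$ that send each cell $e$ of dimension at most $n$ onto $f(e)$ and extend $\hat f_{n-1}$.

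\textbf{Dimension.} First we check that $f$ preserves dimension. In a regular CW-complex with our convention, the boundary of a cell $e^n$ is the subcomplex $t_{e^n}(S^{n-1})$, which is a regular CW-decomposition of $S^{n-1}$. So the closure of $e^n$ is the union of the cells lying below $e$ in $\mathcal{F}(X)$, and every maximal chain in $(-,e]$ has length exactly $n$. One way to see this: each cell of dimension $k<n$ in the boundary sphere lies in the boundary of some $(k+1)$-cell, since a $k$-cell cannot be a maximal cell of a sphere of dimension $n-1>k$. Hence $\dim e$ is determined by the poset $(-,e]$, and $f$ preserves dimension.

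\textbf{Base case.} The $0$-cells are the minimal elements of $\mathcal{F}(X)$. Let $\hat f_0$ be the bijection $X_0 \to Y_0$ induced by $f$.

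\textbf{Inductive step.} Assume $\hat f_{n-1}$ is a homeomorphism $X_{n-1}\to Y_{n-1}$ respecting cells. Fix an $n$-cell $e$ with characteristic map $\chi_e : D^n \to X$.
\begin{itemize}
\item Because $\hat f_{n-1}$ respects cells and $f$ is an order isomorphism, $\hat f_{n-1}$ maps $\partial \bar e$, the union of the cells below $e$, homeomorphically onto $\partial \overline{f(e)}$.
\item Regularity makes both attaching maps embeddings. So $h := t_{f(e)}^{-1}\circ \hat f_{n-1}\circ t_e$ is a homeomorphism $S^{n-1}\to S^{n-1}$.
\item Extend $h$ over $D^n$ by coning (the Alexander trick), $H(rx)=r\,h(x)$.
\item Define $\hat f_n$ on $\bar e$ by $\chi_{f(e)}\circ H\circ \chi_e^{-1}$. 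This is well defined because $\chi_e$ is injective on $\bar e$ by regularity, and on $\partial\bar e$ it agrees with $\hat f_{n-1}$.
\end{itemize}
Doing this for every $n$-cell gives a continuous bijection $X_n \to Y_n$, since both spaces carry the weak topology. Its inverse is built the same way from $f^{-1}$, so $\hat f_n$ is a homeomorphism.

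\textbf{Conclusion.} Passing to the colimit over $n$ gives $\hat f$. It is a homeomorphism by the weak topology, and it maps each cell $e$ onto $f(e)$.

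\textbf{Main obstacle.} The step needing the most care is showing that $\hat f_{n-1}$ carries $\partial\bar e$ exactly onto $\partial\overline{f(e)}$. This uses the strengthened definition of regularity, which requires the image of the attaching map to be a subcomplex, so that the closure of each cell is the union of the cells below it in the face poset. The extension itself needs only the Alexander trick. The stated contractibility of the boundary-fixing homeomorphism group is more than is needed for existence; it would only matter for uniqueness up to isotopy.
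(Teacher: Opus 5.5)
Your argument is correct and matches the route the paper has in mind: the paper gives no proof, only calling the lemma well known and citing contractibility of the boundary-fixing homeomorphism group of the disc, and your skeleton-by-skeleton induction extending over each cell by coning is that argument (you are also right that existence needs only the coning extension, not full contractibility). The one step to tighten is dimension preservation: non-maximality of a $k$-cell in $\partial\bar e\cong S^{n-1}$ only gives some higher-dimensional cell above it, so either iterate inside that cell's boundary sphere, or note that $\partial\bar e\cong S^{n-1}$ must contain an $(n-1)$-cell (by homology), so the longest chain ending at $e$ has length $\dim e$, which is all you need.
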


In other words, a regular CW-complex is determined by its face poset up to the parametrization of its cells (which is a contractible space of choices). If a poset $P$ is the face poset of a regular CW-complex, then the poset is called a ``CW-poset'' (see \cite{Bjorner}). 
\begin{remark}\label{remark: inductive cw poset}

The property that a poset $P$ is a CW-poset can be characterized inductively: $P$ is a CW-poset if for each cell $e$ of $P$, the subposet $\{e'\in P|e'<e\}$ of $P$ is a CW-poset, and its corresponding regular CW-complex is homeomorphic to a sphere whose dimension is the length of the longest chain in $P$ ending at $e$ minus 1. 
\end{remark}

Let $ \mathrm{CW}^\mathrm{reg}$ be the category of regular CW-complexes with morphisms given by inclusions of subcomplexes. Let $\mathrm{Poset}^\mathrm{CW}$ denote the subcategory of the category of posets with objects the CW-posets and morphisms the poset morphisms corresponding to inclusions of subcomplexes.
\begin{lem}\label{CWcolim_lmm}
    Suppose $I:D \rightarrow \mathrm{CW}^\mathrm{reg}$ is a diagram, then its colimit exists and the face poset of $\textnormal{colim}_DI$ is canonically isomorphic to the poset colimit $\textnormal{colim}_D(\mathcal{F}\circ I)$. 
\end{lem}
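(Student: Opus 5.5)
The plan is to construct the colimit in the category of topological spaces and check that it carries a regular CW structure whose cells correspond to the elements of the poset colimit. Let $X := \colim_D I$ be the colimit of the underlying spaces. Each morphism of $D$ goes to an inclusion of a subcomplex, so the structure maps are cellular closed embeddings.

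The first step is to identify the cells of $X$. I would define the set of cells as the colimit in $\mathrm{Set}$ of the cell sets $\mathcal{F}(I(d))$, that is, the quotient of $\bigsqcup_d \mathcal{F}(I(d))$ by the relation generated by $e \sim I(f)(e)$. The underlying set of $X$ is the analogous quotient of $\bigsqcup_d I(d)$. Since every point of $I(d)$ lies in the interior of a unique cell, and the maps $I(f)$ carry open cells homeomorphically onto open cells, every point of $X$ lies in the image of the interior of some cell.

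The second step is to show that $X$ is the disjoint union of these open cells, with one open cell for each element of the cell colimit. If two points from open cells of $I(d)$ and $I(d')$ are identified in $X$, they are linked by a zigzag of structure maps. Each map in the zigzag sends the open cell containing the point onto an open cell, so the two cells are identified in the cell colimit, and the points correspond under the induced homeomorphism of interiors. Conversely, identified cells have compatible characteristic maps. Here the injectivity of each $I(f)$ and the fact that characteristic maps are determined on closed cells are what I would use.

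The third step, which I expect to be the main obstacle, is the topology. I would take the characteristic map of a cell of $X$ to be the composite $D^n \to I(d) \to X$. I need these maps to be well defined and to be homeomorphisms onto their images, meaning no further collapsing of closed cells occurs in the colimit. I also need $X$ to carry the weak topology with respect to the closed cells.

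For the weak topology: a subset $A \subseteq X$ is closed iff every preimage in every $I(d)$ is closed, iff it meets every closed cell of every $I(d)$ in a closed set. That reduces the claim to the closed cells of $X$, provided the closed cells map injectively.

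Injectivity on a closed cell $\bar e \subseteq I(d)$ follows from the second step, because $\bar e$ is a union of open cells that are pairwise distinct in the cell colimit. The face poset relation is preserved by the structure maps, so distinct faces of $e$ stay distinct as long as the colimit of face posets does not identify them. This is the delicate point, and I expect to argue it using the regularity condition that boundaries are subcomplexes. If the diagram could identify two faces of one cell, I would note that the lemma as stated implicitly assumes the result is a regular CW-complex, and I would add the mild hypothesis that closed cells embed, as in the paper's applications.

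Granting injectivity, a compact Hausdorff closed cell maps continuously and bijectively onto its image in $X$. Once $X$ is shown to be Hausdorff (via the CW filtration), this map is a homeomorphism. Then the skeleta are attached along subcomplexes, and the face poset of $X$ is the poset colimit, with order given by closure of cells, which is generated by the orders in each $\mathcal{F}(I(d))$.
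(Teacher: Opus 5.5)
Your worry at the ``delicate point'' is justified, and you should state it as a fact rather than a hedge. If the structure maps are cellular embeddings onto subcomplexes, as you assume, and $D$ is arbitrary, the lemma is false, so any proof needs an extra hypothesis. The coequalizer of the two endpoint inclusions $\mathrm{pt}\rightrightarrows[0,1]$ is a circle with one $0$-cell and one $1$-cell. That circle is not regular, and the corresponding poset coequalizer $\{v<e\}$ is not a CW-poset. Poset-shaped diagrams fail the same way: take $p,q<X,Y$ with $I(p)=I(q)=I(Y)=\mathrm{pt}$ and $I(X)=[0,1]$, with $p,q$ going to the two endpoints. This is exactly the step the paper's proof passes over. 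It reduces to coproducts and coequalizers and asserts that a coequalizer of two inclusions of regular complexes is regular (``simply glue the decompositions along the identified cells''), and the first example refutes that. Your direct description of the cells as a set colimit is the better route, because it shows where a hypothesis has to enter.

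However, you locate the failure in the wrong place. You say injectivity on a closed cell follows from your second step as long as the colimit of face posets does not identify two faces of a cell. It does not follow. In the second step a zigzag can carry an open cell back to itself by a nontrivial homeomorphism, so ``the points correspond under the induced homeomorphism of interiors'' is an assumption, not a consequence.

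For a concrete case, coequalize $\mathrm{id}$ and $t\mapsto t^2$ on $[0,1]$. Both are cellular homeomorphisms onto the whole complex. For a poset-shaped version, use $a,b<X,Y$ with all four spaces $[0,1]$ and $I(b<Y)$ equal to $t\mapsto t^2$. Both maps induce the identity on face posets, so the poset colimit is the face poset of an interval. Yet the topological colimit $[0,1]/(t\sim t^2)$ is not even Hausdorff.

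The hypothesis you need is at the point-set level: every closed cell of every $I(d)$ maps injectively into $\colim_D I$. This holds automatically when the structure maps are literal subset inclusions. Then a point of $I(d)$ is only ever identified with itself, and the open cell containing it is the same set in every space along a zigzag. That is the situation in the paper's applications: all the $Lk(T\to T')$ are subcomplexes of the single complex $Lk(T)$, and the colimit is just their union $\partial Lk(T)$.

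With that hypothesis stated at the outset, the rest of your outline goes through:
\begin{itemize}
\item the weak topology and Hausdorffness via the skeletal pushouts;
\item regularity of the attaching maps;
\item the face order being generated by the orders on the $\mathcal{F}(I(d))$, hence equal to the poset colimit.
\end{itemize}
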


\begin{proof}
    Using the coequalizer formula for colimits, it suffices to check that both $\mathrm{CW}^\mathrm{reg}$ and $\mathrm{Poset}^\mathrm{CW}$ are closed under coproducts and coequalizers. Clearly, the coproduct (i.e. disjoint union) of regular CW-complexes is still regular and corresponds to the coproduct of the face posets. Almost as obvious is the fact that the coequalizer of two inclusions of regular CW-complexes is regular: simply glue the decompositions along the identified cells. On face posets this corresponds to the coequalizers of the associated poset morphism, so we are done.
\end{proof}

\begin{definition}\label{join_dfn}
    The \textit{join} $P_1*P_2*\ldots*P_n$ of posets $P_1,\ldots,P_n$ is the following poset: 
    \begin{itemize}
        \item the underlying set is $\underline{P_1}\times\ldots\times\underline{P_n}\backslash(0,\ldots,0)$, where the $i$-th 0 denotes the minimal elements of $P_i$; 
        \item the partial order is $(x_1,\ldots,x_n)\leq (y_1,\ldots,y_n)\iff\forall\ 1\le i\le n, x_i\leq y_i$. 
    \end{itemize}
Define the \textit{cone} of a poset $P$,   $\textnormal{Cone}(P)$ to be $P*\{\textnormal{pt}\}$, where $\{\textnormal{pt}\}$ is the poset with one element. 
\end{definition}

\begin{definition}\label{suspension_dfn}
    The \textit{suspension} $\Sigma P$ of a poset $P$ is the following poset: 
    \begin{itemize}
        \item the underlying set is $\textnormal{Cone}(P)\sqcup\{D\}$; 
        \item the partial order on $\textnormal{Cone}(P)$ is as above; we then add the relations $D>x,\forall\ x\in P
        \times\{0\}$. 
    \end{itemize}
\end{definition}

   \[
\begin{tikzpicture}

\coordinate (o) at (0,0);
\coordinate (a) at (-1,-1.2); 
\coordinate (b) at (-0.3,-1.5); 
\coordinate (c) at (1,-1.2); 
\coordinate (d) at (0.3,-0.9);

\fill[gray!25]
  (o)--(a)--(c)--cycle;

\fill[gray!25]
  (o)--(a)
  arc (-180:0:1)
  --(o)--cycle;

\foreach \p in {o,a,b,c,d}
  \draw[fill] (\p) circle [radius=0.02];

\draw (a)--(b)--(c);
\draw[gray] (c)--(d)--(a);
\draw (o)--(a); 
\draw (o)--(b);
\draw (o)--(c);
\draw[gray] (o)--(d);
\draw (a) arc (-180:0:1);

\node at (-2,-1) {$\Sigma X$:};
\node at (-2.5,-1.5) {};

\coordinate (a0) at (-7,-1.2); 
\coordinate (b0) at (-6.3,-1.5); 
\coordinate (c0) at (-5,-1.2); 
\coordinate (d0) at (-5.7,-0.9); 

\foreach \p in {a0,b0,c0,d0}
  \draw[fill] (\p) circle [radius=0.02];

\draw (a0)--(b0)--(c0)--(d0)--cycle;

\node at (-8,-1) {$X$:}; 
\node at (-8.2,-1.6) {};

\end{tikzpicture}
\]

Recall the join of two spaces $$X*Y:=X\times Y\times[0,1]\big/\{(x,y,0)\sim(x,y',0),(x,y,1)\sim (x',y,1),\forall x,x'\in X,y,y'\in Y\}.$$ The join of a finite number of finite CW-complexes is determined up to a natural homeomorphism by the fact the join is associative. The following lemma justifies the names ``join'' and ``suspension''. 

\begin{lem}\label{lem: join and suspension of CW}
\:

    \begin{enumerate}[label=(\arabic*)]
        \item Suppose $X_1,\ldots,X_k$ are finite regular CW-complexes, then $\mathcal{F}(X_1)*\ldots*\mathcal{F}(X_k)$ is the face poset of another regular CW-complex that is homeomorphic to $X_1*\ldots*X_k$. 
        \item Suppose $X$ is a regular CW-complex that is homeomorphic to $S^l$, then the unreduced suspension $\Sigma X$ has a regular CW-structure with face poset $\Sigma (\mathcal{F}(X))$. The CW-structure is the union of $\mathrm{Cone}(X)$ with the standard CW-structure induced from $X$ together with a single $(l+1)$-disc attached via $X \cong \mathrm{S}^l$.
    \end{enumerate}
\end{lem}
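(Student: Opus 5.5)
Both parts rest on the same strategy: exhibit an explicit regular CW-structure on the target space and identify its face poset with the stated poset. Lemma \ref{CWuniqueness_lmm} then transports homeomorphisms along poset isomorphisms. For (1), associativity of both the poset join and the topological join, together with an induction on $k$, reduces the claim to two complexes $X,Y$. I would describe the cells of $X*Y$ directly. Besides the cells of $X$ (sitting at join parameter $0$) and the cells of $Y$ (at parameter $1$), each pair of cells $e\in\mathcal{F}(X)$, $f\in\mathcal{F}(Y)$ gives a cell $e*f$, namely the image of $\bar e\times\bar f\times[0,1]$, which is the join of two closed discs. These correspond exactly to the elements $(e,0)$, $(0,f)$ and $(e,f)$ of $\underline{\mathcal{F}(X)}\times\underline{\mathcal{F}(Y)}\setminus(0,0)$.

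The facts needed for (1) are the following.
\begin{enumerate}[label=(\roman*)]
\item If $\bar e\cong D^a$ and $\bar f\cong D^b$, then $\bar e*\bar f\cong D^{a+b+1}$, with boundary $\partial\bar e*\bar f\cup\bar e*\partial\bar f\cong S^{a+b}$. This is the standard fact $D^a*D^b\cong D^{a+b+1}$, proved for instance by viewing both as convex bodies placed in skew affine subspaces.
\item The interiors of these cells partition $X*Y$. A point $(x,y,t)$ with $0<t<1$ lies in the open cell $\mathring e*\mathring f\times(0,1)$ for the unique cells containing $x$ and $y$.
\item The closure of $e*f$ is the union of the cells $e'*f'$ with $e'\le e$ and $f'\le f$, including the degenerate ones $e'*0=e'$ and $0*f'=f'$. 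This shows that the attaching map is an embedding onto a subcomplex.
\end{enumerate}
Together these show that the face poset ordering is the product ordering, which is exactly the definition of the join poset.

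For (2), I would take $\mathrm{Cone}(X)=X*\{\mathrm{pt}\}$ with the join CW-structure from (1), whose face poset is $\mathrm{Cone}(\mathcal{F}(X))$. Since $X\cong S^l$, the subcomplex $X\times\{0\}$ of this structure is an $l$-sphere. I attach one $(l+1)$-cell $D$ along the identity map of this sphere. The result is $\mathrm{Cone}(S^l)\cup_{S^l}D^{l+1}\cong S^{l+1}\cong\Sigma X$, and it is regular because the attaching map is an embedding onto a subcomplex. The closure of $D$ consists of $D$ together with all cells of $X\times\{0\}$, so the new relations are $D>(x,0)$ for $x\in\mathcal{F}(X)$, which is exactly the order defining $\Sigma(\mathcal{F}(X))$.

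I expect the main obstacle to be the regularity verification in (1). One has to check that the characteristic map of $e*f$ is injective on the closed disc and that its boundary sphere is precisely the subcomplex described in (iii). This needs care with the collapse relations at $t=0$ and $t=1$. The join formula for discs handles this cleanly, because the collapsed ends correspond to the degenerate faces $\bar e*\emptyset$ and $\emptyset*\bar f$.
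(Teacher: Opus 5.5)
Your proposal is correct and follows essentially the same route as the paper: reduce to $k=2$ by associativity, take as cells the cells of $X$, the cells of $Y$, and the joins $e*f$ of pairs of closed cells (discs whose boundary spheres are the subcomplexes $\bigcup_{e'\le e,\,f'\le f} e'*f'$), and for (2) attach a single $(l+1)$-cell to $\mathrm{Cone}(X)=X*\{\mathrm{pt}\}$ along the base $X\cong S^l$. The paper phrases (1) as attaching the cells $e*f$ inductively starting from $X\sqcup Y$, while you describe all cells at once and check regularity via $D^a*D^b\cong D^{a+b+1}$; this is the same argument made slightly more explicit.
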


\begin{proof}
(1) It suffices to assume $k=2$ because both the join of posets and the join of spaces is associative.
As a set, the cells of $X \ast Y$ will be
$$\{\textnormal{cells in }X\}\sqcup\{\textnormal{cells in }Y\}\sqcup\{e_X*e_Y|e_X\textnormal{is a cell of }X\textnormal{ and }e_Y\textnormal{ is a cell of }Y\}.$$

We start with the CW-complex $X_1 \sqcup X_2$ with the disjoint union cell structure with cells $e^d_i$ and $f^d_i$, respectively. This is embedded into the join in the natural way. Start by attaching the joins of $0$-cells, which are intervals, by attaching the corresponding pair of cells in $X_1 \sqcup X_2$. The result is again embedded in the natural way and is still a regular CW-complex. We inductively attach cells in the following way:

 Since both CW-structures on $X_1$ and $X_2$ are regular, the join of the embedded cells $e^d_i \ast f^{d'}_{i'}$ necessarily embeds into $X_1 \ast X_2$, and the boundary must be embedded homeomorphically onto the union of the embedded joins of any pairs of cells $a \ast b$ where $a \subset e^d_i$ and $b \subset f^{d'}_{i'}$. By induction, the union of these cells is a sub CW-complex, and so we attach the new cell via this map. The result is still a regular CW-complex because the boundary is a union of lower dimensional cells.

 Once all cells are attached in this way, the result is a homeomorphism from $X_1 \ast X_2$ to a regular CW-complex whose face poset is $\mathcal{F}(X_1) \ast \mathcal{F}(X_2)$.

    (2) Add a single cell using the homeomorphism provided, corresponding to the maximal element of the CW-poset.
\end{proof}

\begin{warning}
The name ``suspension'' is justified only in the case the regular CW-complex associated to the regular CW-poset is a sphere. In any other case, Remark \ref{remark: inductive cw poset} shows the result cannot even be a CW-poset.
\end{warning}

\begin{lem}\label{link1_lmm}
    Let $T$ be an RBW-tree and $\Theta=\{(T_1,\eta_1),\ldots,(T_k,\eta_k)\}$ be a contraction system of $T$. Then 
    $$\mathcal{P}_{T\to T/\Theta}
    \cong
    \mathcal{P}_{T_1\to\bullet_{\eta_1}}*\ldots*\mathcal{P}_{T_k\to\bullet_{\eta_k}}.$$
\end{lem}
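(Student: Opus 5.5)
The plan is to obtain the isomorphism by chaining the two poset isomorphisms already available and then matching the result with Definition \ref{join_dfn}. Write $\mathfrak{c}:T\to T/\Theta$. By Lemma \ref{lem: iso between contractions and systems}, $\mathcal{P}_\mathfrak{c}=(T,T/\Theta]$ is isomorphic to the poset of contraction systems $\Theta'$ of $T$ with $\Theta'\le\Theta$, with the trivial contraction system (all singletons with their own colors, corresponding to the identity $T\to T$) removed. The same lemma, applied to each subtree $T_i$ with the single-subtree system $\Theta^{T_i}_{\bullet\eta_i}$, identifies $\mathcal{P}_{T_i\to\bullet_{\eta_i}}$ with the contraction systems of $T_i$ lying below $\Theta^{T_i}_{\bullet\eta_i}$, again with the trivial system removed.

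I will use the trivial system of $T_i$ as the adjoined minimal element $0$ in $\underline{\mathcal{P}}^\mathrm{sys}_{\Theta^{T_i}_{\bullet\eta_i}}$. With this convention, Proposition \ref{prop: join of contraction systems} gives a poset bijection
\[\prod_{i=1}^k\underline{\mathcal{P}}^\mathrm{sys}_{\Theta^{T_i}_{\bullet\eta_i}}\cong\underline{\mathcal{P}}^\mathrm{sys}_\Theta,\qquad(\theta_i)\mapsto\textstyle\bigcup_i\theta_i,\]
where on the right $0$ is the trivial system of $T$.

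The union $\bigcup_i\theta_i$ is the trivial system of $T$ exactly when every $\theta_i$ is the trivial system of $T_i$. So under the bijection, the tuple $(0,\dots,0)$ corresponds to the minimum of $\underline{\mathcal{P}}^\mathrm{sys}_\Theta$. Removing $(0,\dots,0)$ from the left and the minimum from the right yields
\[\mathcal{P}_{T_1\to\bullet_{\eta_1}}*\cdots*\mathcal{P}_{T_k\to\bullet_{\eta_k}}\cong\mathcal{P}_{T\to T/\Theta},\]
since both the join of Definition \ref{join_dfn} and the product order are defined coordinatewise. Composing with the isomorphisms of Lemma \ref{lem: iso between contractions and systems} finishes the argument.

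The main obstacle is the bookkeeping around the minimal elements. Proposition \ref{prop: join of contraction systems} is phrased with formally adjoined minima $\underline{P}$, whereas here the minima have to be the actual trivial systems; I need to check that this is the reading under which the proposition's map makes sense. Concretely, this means verifying that the trivial system of $T_i$ always lies below $\Theta^{T_i}_{\bullet\eta_i}$, and that no nontrivial system of $T_i$ gets identified with it. Both should follow directly from the definition of the order on contraction systems.

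A smaller point is that the color constraint in that order must be read per piece. If $\eta_i=R$ then every subtree of $\theta_i$ is colored $R$, and likewise for $B$, while $\eta_i=W$ imposes no constraint. This is exactly the condition $\theta_i\le\Theta^{T_i}_{\bullet\eta_i}$, so the factors match $\mathcal{P}_{T_i\to\bullet_{\eta_i}}$ and not some larger poset.
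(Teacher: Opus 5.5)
Your proposal is correct and is essentially the paper's own argument, which simply cites Lemma~\ref{lem: iso between contractions and systems}, Proposition~\ref{prop: join of contraction systems}, and the definition of the join. Your reading of the adjoined minimum $0$ as the trivial system is exactly the implicit convention that makes the paper's one-line proof work. The checks you flag are immediate: legality of $T_i\to\bullet_{\eta_i}$ forces every vertex of $T_i$ to have color $\eta_i$ when $\eta_i\in\{R,B\}$, so the trivial system of $T_i$ lies below $\Theta^{T_i}_{\bullet\eta_i}$.
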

\begin{proof}
    This follows from Lemma \ref{lem: iso between contractions and systems} and Proposition \ref{prop: join of contraction systems}, and the definition of the join of posets.
\end{proof}

\subsection{Regular CW-complexes associated to three-colored trees}\label{link_sec}

The critical combinatorial result needed to develop the theory of surgery for manifold operads is:

\begin{prop}\label{link_prop}
    For all nontrivial contractions between colored, labeled trees $\mathfrak{c}:T\to T'$, $\mathcal{P}_{\mathfrak{c}}$ is the face poset of a regular CW-complex homeomorphic to a ball of dimension $\text{Codim}(\mathfrak{c})-1$.
\end{prop}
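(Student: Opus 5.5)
Via Lemma \ref{link1_lmm} the statement reduces to contractions onto a corolla. A nontrivial contraction $\mathfrak{c}:T\to T/\Theta$ has $\mathcal{P}_\mathfrak{c}\cong \mathcal{P}_{T_1\to\bullet_{\eta_1}}*\dots*\mathcal{P}_{T_k\to\bullet_{\eta_k}}$. Factors with $T_i=\bullet_{\eta_i}$ (trivial pieces) contribute empty posets, and the join with the empty poset is the identity. By Lemma \ref{lem: join and suspension of CW}(1) the remaining factors realize to the join of their realizations. A join of balls is a ball, and a join of a ball with a sphere is a ball; dimensions add as $\sum(\dim+1)-1$, which matches $\mathrm{Codim}(\mathfrak{c})-1=\sum_i \mathrm{Codim}(T_i\to\bullet_{\eta_i})-1$. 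So it suffices to treat $\mathfrak{c}:T\to\bullet_\eta$ nontrivial. Along the way I will also need to know which such posets realize spheres rather than balls, so I prove the two statements simultaneously by induction on $\mathrm{Codim}$.

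\textbf{Single-colored targets ($\eta=R$ or $B$).} All vertices of $T$ then have color $\eta$, and the half-open interval $(T,\bullet_\eta]$ is the poset of edge contractions of an uncolored tree, ordered by inclusion of contracted edge sets, including the top element. This is the Boolean lattice on $E(T)$ minus its bottom, i.e. the face poset of a simplex of dimension $|E(T)|-1=\mathrm{Codim}-1$, hence a ball.

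\textbf{White target ($\eta=W$).} Here $\mathcal{P}_T=(T,\bullet_W]$ has a maximum $\bullet_W$. By Remark \ref{remark: inductive cw poset}, it suffices to show two things. First, every lower interval $(T,T'']$ with $T''\neq \bullet_W$ is a CW-poset; this is a smaller contraction, so it is handled by induction and the join reduction above. Second, the proper part $(T,\bullet_W)$ realizes to a sphere $S^{\mathrm{Codim}(T)-2}$. Then adding the top cell gives a ball of the right dimension, but only if the sphere is the boundary of that cell. So the key claim I will prove is: the boundary complex $\partial_T:=(T,\bullet_W)$ is a regular CW-sphere of dimension $\mathrm{Codim}(T)-2$.

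\textbf{Proving the key claim.} I will induct on the number of vertices, guided by the elementary moves of Lemma \ref{elementarycontraction_lmm}. First, pick a leaf vertex $w$ of color $R$, if one exists, with parent $p$. Split $\partial_T$ according to what happens to $w$: either $w$ is merged into its parent's block, or $w$ stays a singleton block (red, or turned white when allowed). I expect $\partial_T$ to decompose as the suspension-like union $\Sigma(\partial_{T'})$ of Definition \ref{suspension_dfn}, where $T'$ is the tree with $w$ deleted and its label moved to $p$. The cone part collects the systems in which $w$ is a separate block; the extra cell $D$ and the base correspond to systems in which $w$ is absorbed. Lemma \ref{lem: join and suspension of CW}(2) then gives a sphere of one higher dimension. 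Blue vertices adjacent to the root are handled dually. Second, suppose all leaves are white. Then the move (6) case arises: for a lowest blue vertex $b$ with white children $W_1,\dots,W_j$, the systems restricted near $b$ form a join of pieces, and the boundary sphere is assembled as a join of spheres and of the boundaries of simplices, via Proposition \ref{prop: join of contraction systems}. The base case is a single-vertex tree, where $\bullet_R\to\bullet_W$ or $\bullet_B\to\bullet_W$ has codimension $1$: $\mathcal{P}$ is a point (a $0$-ball), and the proper part is empty, i.e. $S^{-1}$.

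\textbf{Main obstacle.} The hard step is precisely the key claim: the combinatorial identification of $(T,\bullet_W)$ as a suspension or join in each case. Legality constraints, such as a vertex being allowed to turn white only if no ancestor or descendant is white, make the splitting by $w$ non-uniform. If the direct decomposition fails, I would first try to show the poset is shellable, or to realize it as the boundary of a convex polytope (for instance by assigning coordinates from $\mathrm{Codim}$-type functionals), which would give sphericity immediately.
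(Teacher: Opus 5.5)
Your reductions are sound. The join decomposition via Lemma~\ref{link1_lmm} works, with trivial pieces contributing the empty poset. So does the identification of $(T,\bullet_R]$ and $(T,\bullet_B]$ with the face poset of a simplex on $E(T)$. Using Remark~\ref{remark: inductive cw poset}, you correctly reduce everything, by induction on $\mathrm{Codim}$, to the claim that $\partial\mathcal{P}_T=(T,\bullet_W)$ realizes a sphere of dimension $\mathrm{Codim}(T)-2$. But that claim is the entire content of the proposition, and you do not prove it. The suspension step is only ``expected'' and the blue/white case is only gestured at. The fallback (shellability, or a polytope ``from $\mathrm{Codim}$-type functionals'') is not carried out. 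As it stands, the proposal reduces the statement to its hard part but does not prove it.

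The sketch is also wrong in its details. For a red leaf $w$ with parent $p$, the correct dichotomy is whether or not $w$ remains a red singleton.
\begin{itemize}
\item The contractions in which $w$ stays a red singleton restrict bijectively to the nontrivial contractions of your $T'$. They form the base $\partial\mathcal{P}_{T'}$ together with the extra cell $D$, which is $w$ red with all the rest collapsed to one white vertex.
\item The others form the cone point and the open cone cells. The cone point is $T\to T'$ when $p$ is red or white, and the recoloring of $w$ to white when $p$ is blue.
\end{itemize}
So you have interchanged cone and base. Moreover, when $p$ is blue, ``$w$ turned white'' belongs on the same side as ``$w$ absorbed'', not with ``$w$ stays a singleton''.

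The recursion must also stop at codimension one. A white vertex with a single red leaf gives $T'=\bullet_W$, where $\Sigma(\partial_{T'})$ is meaningless; the robust form is $\mathcal{P}_T\cong\mathrm{Cone}(\mathcal{P}_{T'})$, as in Lemma~\ref{link3_lmm}.

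The trees without red vertices are blue trees with white leaves attached. For these, ``all leaves are white'' misses blue leaves. It is unclear what ``handled dually'' means, since reversing the root-to-leaf direction does not preserve rooted trees. No join formula is actually stated. Something like $\partial\mathcal{P}_T\cong\Sigma\partial\mathcal{P}_{Br_1}*\cdots*\Sigma\partial\mathcal{P}_{Br_m}$ over the blue branches at the root (Lemma~\ref{link2_lmm}) is needed, with proof.

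The paper bypasses all of this with exactly the polytope you mention as a fallback. Consider height functions $h$ with $h=0$ on white vertices, $h\ge0$ on red, $h\le0$ on blue, and $h(v)\ge h(w)$ whenever $v$ is a child of $w$. These form a pointed polyhedral cone whose face poset is $\mathcal{P}_T^{\mathrm{op}}$: a face corresponds to contracting the edges whose endpoints have equal height and coloring by the sign of $h$. Slicing by $\sum_v|x_v|=1$ and dualizing exhibits $\partial\mathcal{P}_T$ as the boundary complex of a convex polytope, hence a sphere. General $\mathfrak{c}$ correspond to faces of that polytope.
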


Before proving this proposition, we introduce some terminology. By Lemma \ref{CWuniqueness_lmm}, this regular CW-complex is unique up to CW-isomorphism, and we denote it by $Lk(\mathfrak{c})$.

Note $\mathcal{P}_\mathfrak{c}$ has one element $\mathfrak{c}$ that is bigger than all others, and so the assertion that its realization is homeomorphic to a ball is an immediate consequence of it being regular.  Moreover, since the maximal length of a chain in $\mathcal{P}_\mathfrak{c}$ is $\text{Codim}(\mathfrak{c})-1$, $Lk(\mathfrak{c})$ is necessarily a ball of dimension $\text{Codim}(\mathfrak{c})-1$. The boundary is a subcomplex homeomorphic to $S^{\text{Codim}(\mathfrak{c})-2}$, and we call this subcomplex $\partial Lk(\mathfrak{c})$. It is the regular CW-complex associated to the CW-poset $\partial\mathcal{P}_{\mathfrak{c}}$ defined to be the subposet of $\mathcal{P}_\mathfrak{c}$ with the maximal contraction $\mathfrak{c}$ removed.  When $\mathfrak{c}=(T\to \bullet_{ W })$ for some $T$, we may also use the abbreviations $Lk(T):=Lk(\mathfrak{c})$, $\partial Lk(T):=\partial Lk (\mathfrak{c})$, $\mathcal{P}_T:=\mathcal{P}_\mathfrak{c}$, $\partial\mathcal{P}_{T}:=\partial\mathcal{P}_{\mathfrak{c}}$. 

\begin{remark}
It is possible to prove Proposition \ref{link_prop} by reducing to the following two illuminating, yet quite technical lemmas:
\begin{lem}\label{link3_lmm}
    Let $T$ be a colored, labeled tree, and $v$ an $R$-colored leaf vertex of $T$. Define $R(T,v)$ as follows: 
    \begin{enumerate}
        \item  if the parent of $v$ is of color $R$ or $ W $, then define $R(T,v)$ as the quotient of $T$ by the edge between $v$ and $\text{parent}(v)$ colored by $\text{Color}(\text{parent}(v))$; 
        \item if the parent of $v$ is of color $B$, then define $R(T,v)$ as $T$ with the color of $v$ changed to $ W $.
    \end{enumerate}
    There is an isomorphism $$\mathcal{P}_{(T\to\bullet_{ W })}\cong \textnormal{Cone}\big(\mathcal{P}_{(R(T,v)\to\bullet_{ W })}\big).$$
\end{lem}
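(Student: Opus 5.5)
The plan is to build an explicit order isomorphism directly from the definitions, using contraction systems (Lemma \ref{lem: iso between contractions and systems}). Unwinding Definition \ref{join_dfn}, $\mathrm{Cone}(P)=P*\{\mathrm{pt}\}$ consists of three kinds of elements: $(x,0)$ and $(x,\mathrm{pt})$ for $x\in P$, together with $(0,\mathrm{pt})$, ordered componentwise. Write $e:T\to R(T,v)$ for the elementary contraction of type (3), (2) or (1) that defines $R(T,v)$; it has codimension $1$. I would split $\mathcal{P}_T$ according to whether an intermediate contraction $\mathfrak{c}:T\to T''$ factors through $e$. If it does, send it to $(x,\mathrm{pt})$, where $x$ is the induced contraction $R(T,v)\to T''$. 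The element $(0,\mathrm{pt})$ is then $e$ itself. If it does not, send it to $(x,0)$ for a suitable element $x\in\mathcal{P}_{R(T,v)}$ described next.

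The first step characterizes the non-factoring contractions: I claim they are exactly those in which $\{v\}$ is a block of the associated contraction system, still colored $R$. Since $v$ is a leaf, its block is either $\{v\}$ or contains $\mathrm{parent}(v)$.
\begin{enumerate}
\item Suppose the parent is $R$ or $W$. If $v$ is merged with its parent, the block's color is forced to be $R$ or $W$, because a $B$-block must consist only of $B$-vertices. In either case the contraction factors through $e$. If instead $\{v\}$ is its own block and is recolored $W$, then the parent's block must be $B$ by legality, which is impossible for the same reason.
\item Suppose the parent is $B$. Merging $v$ with its parent is illegal, so $\{v\}$ is its own block, and the contraction factors through $e$ exactly when $v$ is recolored $W$.
\end{enumerate}

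The second step constructs the inverse on the non-factoring part. Given a nontrivial contraction $x:R(T,v)\to T'''$, take its contraction system on $R(T,v)$ and pull it back along $e$. In case (1), the block containing the merged vertex keeps its color, and $v$ is split off as a red singleton. In case (2), $v$ is reverted to the color $R$. Legality has to be checked. In case (1), the block that receives $v$ as a red child is colored $R$ or $W$, since it contains an $R$- or $W$-vertex, so the pair is legal. In case (2), a red vertex below anything is legal, and the blocks not containing $v$ are unchanged. This pullback is inverse to the restriction described in the first step. The trivial $x$ would give the trivial contraction of $T$, which is excluded, and this matches the fact that $(0,0)\notin\mathrm{Cone}$. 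This yields a bijection $\mathcal{P}_T\to\mathrm{Cone}(\mathcal{P}_{R(T,v)})$.

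The third step, which I expect to be the main obstacle, is checking that the bijection and its inverse preserve order, using the poset of contraction systems. Relations within each part are clear. Within the factoring part the relation is inherited from $\mathcal{P}_{R(T,v)}$, and within the non-factoring part the blocks away from $v$ coincide. No factoring contraction can lie below a non-factoring one, since an ancestor of something factoring through $e$ factors through $e$. It remains to show that a non-factoring $\mathfrak{c}_1\leftrightarrow(x,0)$ lies below a factoring $\mathfrak{c}_2\leftrightarrow(y,\mathrm{pt})$ if and only if $x\le y$. I would argue this blockwise. The blocks of $\mathfrak{c}_1$ are the pulled-back blocks of $x$ plus $\{v\}$ in color $R$. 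The blocks of $\mathfrak{c}_2$ are those of $y$ with $v$ absorbed into the block containing $\mathrm{parent}(v)$ in case (1), or colored $W$ in case (2). The color compatibility conditions ($R$-blocks refine into $R$-blocks, $B$-blocks into $B$-blocks) transfer directly, and a red singleton $\{v\}$ is compatible with any target color. Once this is verified, Lemma \ref{lem: iso between contractions and systems} converts the statement about contraction systems into the claimed isomorphism of posets of contractions.
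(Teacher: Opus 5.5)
Your overall strategy works: split $\mathcal{P}_T$ into the contractions that factor through $e$ and those that do not. The factoring part is $[e,\bullet_W]\cong\underline{\mathcal{P}_{R(T,v)}}$ and supplies the elements $(x,\mathrm{pt})$ and $(0,\mathrm{pt})$, and case (1) goes through as you describe. For comparison, the paper never proves this lemma; it only mentions it as an alternative route and proves Proposition \ref{link_prop} with height-function polytopes instead.

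\textbf{The gap is in case (2).} Your claim that merging $v$ with its $B$-parent $p$ is illegal is false. A block containing $v$ and $p$ can be contracted to a $W$-vertex, and the top element $T\to\bullet_W$ of $\mathcal{P}_T$ always does this. Your characterization of the non-factoring contractions survives, because such a block is forced to be $W$ and the contraction then factors through $e$. The bijection of Step 2 does not survive.
\begin{enumerate}
\item In $R(T,v)$ the vertex $v$ is white, and an element $x\in\mathcal{P}_{R(T,v)}$ may place $v$ in a $W$-block together with $p$. ``Reverting $v$ to $R$'' then gives a factoring contraction, not a non-factoring one.
\item Conversely, recoloring $v$ to $W$ in a non-factoring $\mathfrak{c}_1$ whose block $P\ni p$ is colored $W$ produces a $W$-vertex with a $W$-child, which is illegal.
\end{enumerate}
A concrete counterexample: let $T$ have a $B$-root $p$ with children $v$ ($R$) and $w$ ($W$). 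The unique non-factoring element of $\mathcal{P}_T$ contracts $\{p,w\}$ to $W$ and keeps $v$ red. Meanwhile $\mathcal{P}_{R(T,v)}=\{R(T,v)\to\bullet_W\}$, whose single element merges $v$ with $p$. Your maps send neither element to the other.

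\textbf{The repair} is to handle case (2) like case (1) whenever the parent's block is white. For a non-factoring $\mathfrak{c}_1$, the block $P\ni p$ is colored $B$ or $W$.
\begin{enumerate}
\item If $P$ is colored $B$, recolor the singleton $\{v\}$ to $W$.
\item If $P$ is colored $W$, absorb $v$ into $P$.
\end{enumerate}
The inverse on $x\in\mathcal{P}_{R(T,v)}$ is as follows.
\begin{enumerate}
\item If $x$ keeps $\{v\}$ as a singleton, it is necessarily $W$ with a $B$-colored parent block; revert it to $R$.
\item Otherwise $v$ lies in a $W$-block $X\ni p$; split $v$ off as a red singleton and keep $X\setminus\{v\}$ colored $W$. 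This is still a subtree because $v$ is a leaf.
\end{enumerate}
Nontriviality is preserved in both directions.

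Step 3 then needs extra arguments that your blockwise sketch skips. Within the non-factoring part, check the mixed subcase where $P$ is $B$ and $P'$ is $W$; it is routine. For the cross relation $\mathfrak{c}_1\le y\circ e\iff x\le y$ when $P$ is white, argue as follows. The block $Y$ of $y$ containing $P$ cannot be $B$, by colour compatibility, and cannot be $R$, since it contains $p$. So $Y$ is $W$. Legality of the target of $y$ then forces $v\in Y$, because a $W$-vertex cannot have a $W$-child. Only this shows that $P\cup\{v\}$ lies in a single block of $y$, which is what $x\le y$ requires.
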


\begin{lem}\label{link2_lmm}
    Let $T$ be a colored, labeled tree with no $R$-colored vertices. Let $Br_1,\ldots Br_n$ be the subtrees of $T$ that are the connected components of $T$ with the root and the edges attached to the root removed (``$Br$'' stands for ``branch''). Suppose $Br_1,\ldots, Br_m$ have root color $B$ and $Br_{m+1},\ldots,Br_{n}$ have root color $ W $ (so they must be single-vertex trees). Then
    $$\partial\mathcal{P}_{(T\to \bullet_{ W })}\cong \Sigma \partial \mathcal{P}_{(Br_1\to\bullet_{ W })}*\ldots*\Sigma \partial \mathcal{P}_{(Br_m\to\bullet_{ W })}.$$
\end{lem}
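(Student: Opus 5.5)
The plan is to write down an explicit bijection of posets using the contraction-system description of Lemma \ref{lem: iso between contractions and systems}, and to treat each branch separately, much as in Proposition \ref{prop: join of contraction systems}.

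\emph{Reductions.} If the root of $T$ is $W$, then every other vertex would have to be $R$. Since there are none, $T=\bullet_W$ and there is nothing to prove. So assume the root $r$ is $B$.

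Let $\mathfrak{c}:T\to T'$ be an element of $\partial\mathcal{P}_T$, so $T'\neq\bullet_W$.
\begin{itemize}
\item The root of $T'$ cannot be $W$. Otherwise all other vertices of $T'$ would be $R$, impossible, and $T'$ would be $\bullet_W$.
\item Hence the root of $T'$ is $B$, and $\mathfrak{c}^{-1}(\mathrm{root})$ is an all-$B$ subtree $T_0\ni r$.
\item $T_0$ is determined by which root edges $r$--$Br_i$ it contains, together with a $B$-subtree of each such $Br_i$ containing its root.
\item For $i>m$, the branch $Br_i$ is a single $W$ vertex. It can neither be recolored nor merged into a $B$ vertex, so it is fixed by every contraction and contributes a trivial factor.
\end{itemize}

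\emph{The bijection.} Recall $\underline{\Sigma\partial\mathcal{P}_{Br_i}}=\{0\}\sqcup(\partial\mathcal{P}_{Br_i}\times\{0\})\sqcup(\underline{\partial\mathcal{P}_{Br_i}}\times\{pt\})\sqcup\{D\}$. To $\mathfrak{c}$ and a branch $Br_i$ with $i\le m$, assign:
\begin{itemize}
\item $0$, if $\mathfrak{c}$ is trivial on $Br_i\cup\{r\text{--}Br_i\}$;
\item $(x,0)$, if the root edge is not contracted and $\mathfrak{c}$ restricts on $Br_i$ to $x\in\partial\mathcal{P}_{Br_i}$;
\item $D$, if the root edge is not contracted and $Br_i$ is contracted all the way to $\bullet_W$;
\item $(x,pt)$ with $x\in\underline{\partial\mathcal{P}_{Br_i}}$, if the root edge is contracted, so that the $B$ root of $Br_i$ merges into $T_0$, and $x$ is the restricted contraction of $Br_i$.
\end{itemize}

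The restriction in the last case makes sense for two reasons. The root of the image of $Br_i$ must be $B$ in order to merge with $T_0$, so the restriction is never the full contraction to $\bullet_W$ and lies in $\underline{\partial\mathcal{P}_{Br_i}}$. Conversely, every $x\in\partial\mathcal{P}_{Br_i}$ has a $B$ root by the first bullet of the reductions applied to $Br_i$, so it can be merged.

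This defines a map $\partial\mathcal{P}_T\to\Sigma\partial\mathcal{P}_{Br_1}*\dots*\Sigma\partial\mathcal{P}_{Br_m}$. The tuple $(0,\dots,0)$ corresponds exactly to the trivial contraction, which is excluded on both sides. The inverse reassembles a contraction system: take $T_0$ to be $r$ together with the merged root subtrees, colored $B$, and use the branch systems elsewhere. Legality across branches is checked edge by edge as in the proof of Proposition \ref{prop: join of contraction systems}. Across the root edge the only possible colorings are $(B,B)$ and $(W,B)$, both legal.

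\emph{Order-preservation and the expected obstacle.} It remains to check that the bijection and its inverse are order preserving. This is the main obstacle. Coarsening a contraction can do three things on a branch:
\begin{itemize}
\item coarsen the restriction $x$ on $Br_i$;
\item contract the root edge, i.e.\ move from the $\times\{0\}$ part to the $\times\{pt\}$ part;
\item turn $Br_i$ into a single $W$ vertex, i.e.\ move to $D$.
\end{itemize}
Once the edge has been merged into the blue root one can never reach $\bullet_W$ on that branch. This matches the suspension relations, where $D>(x,0)$ but $D$ is incomparable to every $(y,pt)$. One also has to confirm that the restriction map from contraction systems of $T$ to contraction systems of $Br_i$ is monotone. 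This uses the color-compatibility clause of the order on $\mathcal{P}^\mathrm{sys}$: a $B$-colored piece can only be refined into $B$-colored pieces. With these two points verified case by case, the map is a poset isomorphism, which proves Lemma \ref{link2_lmm}.
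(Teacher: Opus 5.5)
Your argument is correct, and the order check goes through as you sketch. Under coarsening the root piece $T_0$ can only grow, so a merged branch stays merged. The color clause in the order on $\mathcal{P}^{\mathrm{sys}}_T$ forbids a $W$-colored branch piece from lying inside the $B$-colored root piece. That is exactly why $D$ is incomparable to every $(y,pt)$, matching the suspension relations. Together with monotonicity of restriction to a branch and of your reassembly map, this gives the poset isomorphism.

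Note, however, that the paper never proves this lemma. It appears only inside a remark, as one of two ingredients for an alternative proof of Proposition~\ref{link_prop}. The authors instead prove Proposition~\ref{link_prop} directly with the height-function argument suggested by Turchin. That argument identifies $\mathcal{P}_T^{\mathrm{op}}$ with the face poset of the cone $X(T)$ of height functions, slices it to a bounded polytope, and invokes polytope duality.

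The two routes buy different things.
\begin{itemize}
\item Your route is purely elementary and makes the recursive join-of-suspensions structure explicit. Combined with Lemma~\ref{link3_lmm} (stripping red leaves by coning) and Lemma~\ref{lem: join and suspension of CW}, it would give an inductive proof of Proposition~\ref{link_prop}. The induction must first establish that each $\partial\mathcal{P}_{Br_i}$ is a regular CW-sphere before suspending, since $\Sigma$ only behaves well on spheres (cf.\ the Warning following Lemma~\ref{lem: join and suspension of CW}).
\item The polytope argument avoids all case analysis and gets regularity together with the ball/sphere conclusion at once from convex geometry. It also transfers almost verbatim to the five-colored trees of Theorem~\ref{thm: composition is manifold}, via the product decomposition of $X(T)$. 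What it never makes visible is the branch decomposition your lemma records.
\end{itemize}
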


The proof we present instead was suggested to us by Victor Turchin.
\end{remark}

Given a poset $P$, we write $P^{\mathrm{op}}$ for its opposite poset with the underlying set being the same as $P$ and all inequalities reversed. 
\begin{definition}
    A {\it (convex) polytope} is a subset $P$ of some $\mathbb{R}^n$ defined by a finite collection of linear inequalities
    $$P=\big\{ \mathbf{x}\in\mathbb{R}^n \,\big|\, f_i(\mathbf{x})\le b_i,\ \forall\ 1\le i\le k \big\}$$
    for some linear functions $f_1,\ldots,f_k:\mathbb{R}^n\to\mathbb{R}$ and numbers $b_1,\ldots,b_k\in\mathbb{R}$. We do not require $P$ to be bounded. We call a polytope a {\it cone} if all the defining inequalities are homogeneous, i.e. all the $b_i$ are $0$. 

    A {\it face} of a polytope $P$ is a non-empty subset of $\mathbb{R}^n$ of form 
    $$\big\{ \mathbf{x}\in\mathbb{R}^n \,\big|\, f_i(\mathbf{x})=b_i,\ \forall i\in I;\ f_i(\mathbf{x})\le b_i,\ \forall i\in \{1,\ldots,k\}\backslash
    I \big\}$$
    for some $I\subset\{1,\ldots,k\},I\neq\emptyset$. 
    
    Ordered by $f_1\le f_2\iff f_1\subset f_2$, the set of faces of $P$ forms a poset $\mathcal{F}(P)$ called the {\it face poset} of $P$. 
\end{definition}

By convexity, every bounded polytope $P$ is homeomorphic to a closed ball and has a regular CW-structure where the closed cells are its faces and $P$ itself; the face poset of $\partial P$ is $\mathcal{F}(P)$. 
Every bounded polytope $P$ has a dual polytope $P^*$ whose face poset is the opposite of that of $P$ -- we do not need any explicit description.

\begin{proof}[Proof of Proposition \ref{link_prop}]
The argument follows a construction in \cite[Section 2.2]{Ducoulombier}. Define a {\it height function} on an RBW-tree $T$ to be a map $h:V(T)\to \mathbb{R}$ such that
    \begin{itemize}
        \item for all $W$-vertices $v$ of $T$, $h(v)=0$;
        \item for all $R$-vertices $v$ of $T$, $h(v)\ge0$;
        \item for all $B$-vertices $v$ of $T$, $h(v)\le0$;
        \item if $v$ is a child of $w$, then $h(v)\ge h(w)$. 
    \end{itemize}
    
Let $V^R(T)$ (resp. $V^B(T)$) be the set of $R$- (resp. $B$-) vertices of $T$. Define $\hat{V}(T)=V^R(T)\cup V^B(T)$. 
Define the following subset of $\mathbb{R}^{\hat{V}(T)}$
$$X(T):=\big\{(x_v)_{v\in \hat{V}(T)}\in \mathbb{R}^{\hat{V}(T)}\,\big|\,\exists \textnormal{ height function }h \textnormal{ such that }h(v)=x_v, \forall v\big\}.$$
Clearly $X(T)$ can be viewed as the set of height functions on $T$. 
Equivalently, $X(T)$ is the set of points in $\mathbb{R}^{\hat{V}(T)}$ satisfying the following homogeneous linear inequalities
$$\{x_v\ge0\}_{v\in V^R(T)}, \{x_v\le0\}_{v\in V^B(T)},\{x_v\ge x_w\}_{v,w\in \hat{V}(T),\ v\textnormal{ is a child of }w}.$$
Therefore, the space of height functions $X(T)$ is a polytope in $\mathbb{R}^{\hat{V}(T)}$, and in particular a cone.

Let us demonstrate that there is a poset isomorphism $\mathcal{I}:\mathcal{F}(X(T))\longrightarrow\mathcal{P}_T^\mathrm{op}$: 

$\mathcal{F}(X(T))\xrightarrow{\mathcal{I}}\mathcal{P}_T$: Let $F\in \mathcal{F}(X(T))$. Take $h\in \mathring{F}\in \mathbb{R}^{\hat{V}(T)}$ (recall if $F$ is a point $\mathring{F}:=F$). Define a (nontrivial) uncolored tree contraction $T\to T'$ as follows: an edge of $T$ is contracted if and only if its two endpoints have the same height under the height function $h$. We can use $h$ to construct a function $h': V(T')\to \mathbb{R}$, whose value on $v'\in V(T')$ is defined to be $h(v)$ for some vertex $v\in V(T)$ that contracts to $v'$. We then color $T'$ such that a vertex $v$ is red if $h'(v)>0$, white if $h'(v)=0$, and blue if $h'(v)<0$. It is clear that $T'$ is a legally colored RBW-tree. 
That the contraction $T\to T'$ is legal is because if $v'\in V(T')$ is red we have $h'(v')>0$, then for all vertices $v\in V(T)$ that contracts to $v'$, $h(v)>0$, and so they are also red; similarly for blue. Set $\mathcal{I}(F)$ equal to this contraction. 

$\mathcal{P}_T\xrightarrow{\mathcal{I}^{-1}}\mathcal{F}(X(T))$:
Given a nontrivial contraction $T\to T'$, let $F\subset X(T)$ be the subset of height functions on $T$ such that if $v_1,v_2\in V(T)$ are contracted to the same vertex in $T'$, then $h(v_1)=h(v_2)$. Using the least upper bound of $v_1,v_2$ in the tree, we see that $F$ can be defined by turning some of the defining inequalities of $X(T)$ into equalities. The subset $F$ is non-empty because taking a height function on $T'$ and pulling it back to $T$ gives an element in $F$. We declare $\mathcal{I}^{-1}(T')=F$. 

It is easy to check that these two constructions are inverse to each other and reverse poset structures.

We first prove the statement of Proposition \ref{link_prop} in the case $\mathfrak{c}$ is a nontrivial contraction of the form $T\to\bullet_W$. 
Intersecting $X(T)$ with the hyperplane $\sum_{v\in\hat{V}(T)} \delta_v x_v=1$, where $\delta_v=+1$ if $v$ is red and $\delta_v=-1$ if $v$ is blue, we obtain a bounded polytope $\tilde{X}(T)$. 
The face poset of $\tilde{X}(T)$ is $\mathcal{F}(X(T))$ minus the minimal element (corresponding to the cone point in $X(T)$), and so the poset isomorphism $\mathcal{I}$ induces a poset isomorphism $\mathcal{F}(\tilde{X}(T)^*) \cong \partial \mathcal{P}_T$. 
Since the boundary of the polytope $\tilde{X}(T)^*$ is homeomorphic to a sphere, attaching a single cell allows us to conclude that $\mathcal{P}_T$ is a CW-poset whose realization is homeomorphic to a ball. 

The complete statement of Proposition \ref{link_prop} for a general nontrivial contraction $\mathfrak{c}:T\to T'$ follows since $\partial\mathcal{P}_\mathfrak{c}$ corresponds to the boundary of the subpolytope of $\tilde{X}(T)^*$ given by the face associated to the contraction $\mathfrak{c}\in \mathcal{P}_T$.
\end{proof}

\section{Stratifications of manifold operads and bimodule cobordisms}\label{section: Stratifications of manifold operads and bimodule cobordisms}

\subsection{Stratifications of operads}\label{section: strat operad}

In this section, we introduce operads stratified by the category $\mathcal{T}$ of single-colored trees. In a stratified operad, the poset of nontrivial contractions $Lk(T)$ of a tree $T \in \mathcal{T}(S)$ controls the regular neighborhoods of the stratum associated to $T$. This structure generalizes the combinatorics that the strata of $\mathrm{FM}_d$ have. A priori only the strata of a stratified operad are required to be manifolds, but basic combinatorics of single-colored trees will imply a stratified operad is actually a manifold operad. Conversely, we will demonstrate that if a manifold operad is collarable, then it is stratifiable, i.e. it has the structure of a stratified operad.

 We fix some $n\leq \infty $. In this section, all $n$-truncated operads are assumed to be levelwise compact, normal, and second countable.

Given an $n$-truncated operad represented by $\phi:\mathcal{T}_{\leq n} \rightarrow \mathrm{Top}$, the open stratum associated to $T$ is
$$\mathring{\phi}(T):=\phi(T)\Big\backslash\bigcup_{\textnormal{non-trivial contractions }T'\to T}\textnormal{image}\big(\phi(T'\to T)\big).$$

\begin{definition}
    Given an $n$-truncated operad represented by $\phi:\mathcal{T}_{\leq n} \rightarrow \mathrm{Top}$, we say it is \textit{stratified} if for all finite sets $S$ of cardinality $ \leq n$,  each contraction of $S$-labeled trees $T\to T'$ is given the data \label{Ncondition5_item} of an open subset $\mathcal{N}_{T\to T'}\subset \phi(T')$ and a homeomorphism\footnote{
    Given a CW-complex $X$ with face poset $P$, we write $\mathring{\textnormal{Cone}}(X)$ or $\mathring{\textnormal{Cone}}(P)$ for the open cone $\big(X\times [0,1)\big)/X\times\{0\}$, and write ${\textnormal{Cone}}(X)$ or ${\textnormal{Cone}}(P)$ for the closed cone $\big(X\times [0,1]\big)/X\times\{0\}$. 
    }
    $$\nu_{T\to T'}: \mathring\phi(T)\times\mathring{\textnormal{Cone}}(Lk(T\to T'))\xlongrightarrow{\cong}\mathcal{N}_{T\to T'} $$
 commuting with the symmetric group actions and subject to the following conditions:

    \begin{enumerate}
    
        \item \label{stratifiedoperad1_item}
        The spaces $\mathring{\phi}(\bullet_S)$ are topological manifolds.

    \item \label{stratifiedoperad2_item}
    The function
    $$\phi: \colim_{{\mathcal{T}(S)\backslash\bullet_{S }}}\phi 
            \longrightarrow\phi(\bullet_{ S})$$
            maps the domain homeomorphically onto $ \phi (\bullet_S) \setminus \mathring{\phi}(\bullet_S)$.

            \item \label{stratifiedoperad3_item}
            $\mathcal{N}_{T\to T'}$ is a neighborhood of the image of $\mathring\phi(T)$ under $\phi(T\to T')$, and 
            $$\nu_{T\to T'}|_{\mathring{\phi}(T)\times \{\text{the cone point}\}}=\phi(T\to T')|_{\mathring{\phi}(T)}: \mathring{\phi}(T)\longrightarrow \phi(T').$$

            \item \label{stratifiedoperad4_item}

            For contractions $T\to T''\to T'$, 
            $$
            \mathcal{N}_{T\to T''}=\mathcal{N}_{T\to T'}\cap \phi(T''\to T')\big(\phi(T'')\big),
            \qquad
            \nu_{T\to T''}=\nu_{T\to T'}|_{\mathring\phi(T)\times \mathring{\textnormal{Cone}}(Lk(T\to T''))}. 
            $$            
              \[
            \begin{tikzpicture}
               \coordinate (o) at (0,0); 
               \coordinate (a) at (2,1); 
               \coordinate (b) at (2.6, 0.4);
               \coordinate (c) at (2.2,-0.4);
               \coordinate (d) at (1.5,-0.5);
               \coordinate (e) at (1.3,0.9); 
               \draw[fill,purple] (o) circle [radius=0.03]; 
               \node [left,purple] at (o) {\tiny $\phi(T\to T')\big(\mathring\phi(T)\big)$};
               \draw [fill, seagreen, opacity=0.15] (a)--(b)--(c)--(d)--(e)--cycle; 
               \draw[fill,gray,opacity=0.3] (o)--(e)--(a)--(b)--(c)--(d)--cycle;

                \draw (o) to ($1.4*(d)$); 
                \draw [cyan] (o) to ($1.4*(e)$); 
                \node [cyan, above] at (-0.3,1) {\tiny $\phi(T''\to T')\big(\phi(T'')\big)$}; 
                \draw [cyan, ->] (-0.3,1) to ($0.5*(e)$); 
                \draw [fill,blue] (e) circle [radius=0.02];
                \node [above, blue] at ($(e)+(0.3,0.5)$) {\tiny $Lk(T\to T'')$};
                \draw [blue, ->] ($(e)+(0.3,0.5)$) to (e);
                
                \node [seagreen] at (3.5,-0.2) {\tiny $Lk(T\to T')$};
                \draw [seagreen, ->] (2.75,-0.2) to ($0.5*(b)+0.5*(c)$);
                \draw  (o) to ($1.3*(a)$); 
                \draw  (o) to ($1.3*(b)$);
                \draw  (o) to ($1.3*(c)$); 
                \node at (3.5,1) {\tiny $\phi(T')$};
                \node [gray] at (1,-0.8) {\tiny $\mathcal{N}_{T\to T'}$};
                \draw[gray, ->] (1,-0.7) to (0.9,0);
            \end{tikzpicture}
            \]

\end{enumerate}

Finally, associated to contractions of decomposable trees $T \rightarrow T'$ we require the stratification take the following form. Denote by $\Theta=\{(T_i,\eta_i)\}_{i\in I}$ the contraction system of $T$ such that $T'=T/\Theta$. Since the contraction is not to $\bullet$, each $T_i$ lies in $\mathcal{T}_{\le n-1}$. Denote by $[T_i]$ the vertex in $T'$ obtained by contracting $T_i$.

By the definition of an operad, the map $\phi(T\to T'):\phi(T)\to \phi(T')$ is given by the product
\begin{align*}
    \prod_{i\in I} \prod_{v\in V(T_i)}\phi(\overline{cld}(v))&\xlongrightarrow{\prod_{i \in I} \phi}
    \prod_{i\in I}\phi(\overline{cld}([T_i])).
\end{align*}
We let
$$f: \mathring{\textnormal{Cone}}(Lk(T\to T'))\xrightarrow{\cong} \prod_{i\in I}\mathring{\textnormal{Cone}}(Lk(T_i\to \bullet))$$
be the homeomorphism induced from the obvious isomorphism of posets \footnote{Suppose 
$$\alpha: X_1*X_2*\ldots*X_n\xrightarrow{\cong} X$$
is an isomorphism of regular CW-complexes. 
Recall by how the join is defined, 

a point in $X_1*\ldots *X_n$ can be written in the form $(x_{i_1},\ldots,x_{i_k})$, where $\emptyset\neq\{i_1,\ldots,i_k\}\subset\{1,\ldots n\}$ and $x_{i_j}\in X_{i_j}$. 
We define the induced map 
\begin{align*}
\hat{\alpha}:\mathring{\textnormal{Cone}}(X_1)\times\ldots\times\mathring{\textnormal{Cone}}(X_n)&\xrightarrow{\cong}\mathring{\textnormal{Cone}}(X)\\
\Big(\big((x_{i},t_i)\big)_{i\in I\subset\{1,\ldots,n\}},(\textnormal{Cone point})_{i\notin I}\Big)
&\longrightarrow
\Big(\alpha\big((x_i)_{i\in I})\big),\prod_{i\in I}t_i\Big).
\end{align*}
} $$Lk(T\to T')\cong *_{i\in I}\,Lk(T_i\to\bullet).$$

We require 
$$\mathcal{N}_{T\to T'}:=\prod_{i\in I}\mathcal{N}_{T_i\to\bullet}\subset\prod_{v\in V(T')}\phi(\overline{cld}(v))=\phi(T'),$$ and require $\nu_{T\to T'}$  is the following composition\footnote{The vertices which are not contracted have empty links, and thus their cones consist of a single point. They do not affect this product.}
\[
\begin{tikzcd}
        \mathring{\phi}(T)\times \mathring{\textnormal{Cone}}(Lk(T\to T'))\dar[equal]
        \\
        \Big(
        \displaystyle\prod_{i\in I} \displaystyle\prod_{v\in V(T_i)} \mathring{\phi}(\overline{cld}(v))
        \Big)
        \times \mathring{\textnormal{Cone}}(Lk(T\to T'))
        \dar{(\textnormal{id}, f)}
        \\
        \displaystyle\prod_{i\in I}\Big(\mathring{\phi}(T_i)\times \mathring{\textnormal{Cone}}(Lk(T_i\to \bullet))\Big)
         \dar{\prod_{i\in I}\nu_{T_i\to \bullet}}
        \\
        \displaystyle\prod_{i\in I} \mathcal{N}_{T_i\to\bullet}.
\end{tikzcd}
\]

\end{definition}

This finishes the definition of a stratified operad. Observe that the equivariance of these decompositions is immediate and that the conditions $(\ref{stratifiedoperad3_item})$ and $(\ref{stratifiedoperad4_item})$ for such contractions of decomposable trees immediately hold by induction.

The remainder of this section will be dedicated to proving the following theorem.

\begin{thm}\label{thm: manifold operad is stratified}
    Let $n \leq \infty$. An $n$-truncated stratified operad is an $n$-truncated manifold operad. Conversely, an $n$-truncated collarable manifold operad has a stratification.
\end{thm}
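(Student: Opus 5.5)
Both directions go by induction on the arity $m=|S|$. The stratification data supplies local cone charts around every stratum, and Proposition \ref{link_prop} says that for single-colored trees the links are balls. For a decomposable tree, the link of the contraction to the corolla is the full ball $Lk(T\to\bullet_S)$, and we use its boundary sphere $\partial Lk$ when we analyse $\phi(\bullet_S)$ near the decomposables.

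\textbf{Stratified $\Rightarrow$ manifold.} Fix $S$ with $|S|=m\le n$, and assume inductively that $O(S')$ is a compact manifold of dimension $|S'|d-d-1$ for $|S'|<m$. Condition (\ref{manifoldoperad2_item}) of Definition \ref{manifoldoperad_dfn} is exactly condition (\ref{stratifiedoperad2_item}) of a stratification, so what remains is to show that $\phi(\bullet_S)$ is a manifold with boundary equal to the decomposables.

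Interior points lie in $\mathring\phi(\bullet_S)$, which is a manifold by (\ref{stratifiedoperad1_item}). Every other point lies in $\phi(T\to\bullet_S)(\mathring\phi(T))$ for a unique decomposable $T$, and there it has the chart $\nu_{T\to\bullet_S}$ onto the open set $\mathcal{N}_{T\to\bullet_S}$:
\[\mathring\phi(T)\times\mathring{\mathrm{Cone}}(Lk(T\to\bullet_S)).\]
By Proposition \ref{link_prop}, $Lk(T\to\bullet_S)$ is a ball of dimension $\mathrm{Codim}-1=|E(T)|$. For single-colored trees the codimension is $|V(T)|$, and contracting to $\bullet_S$ gives codimension $|V(T)|-1$... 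Here we must be careful with how the single-colored contraction count is set up; the point is that the dimension works out to $|E(T)|-1$, which equals the number of edges minus one. The open cone on a ball of dimension $k$ is homeomorphic to $\mathbb{R}^k\times[0,1)$, so $\nu$ is a chart to a half-space, and the point lies on the boundary.

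For the dimension count: $\mathring\phi(T)$ is a product of interiors of $O(\overline{cld}(v))$, of dimension $\sum_v(|\overline{cld}(v)|d-d-1)=md-d-|V(T)|$. Adding the cone dimension $|V(T)|$ gives $md-d-1$, as required. Compactness is assumed. Hence $O(S)$ is a manifold whose boundary is exactly $\phi(\bullet_S)\setminus\mathring\phi(\bullet_S)$, which equals $\mathrm{Decom}(O)(S)$.

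\textbf{Collarable manifold $\Rightarrow$ stratified.} Use Proposition \ref{prop: isomorphic to w construction} to replace $O$ by $W(O)$, where the decomposable metric trees form a collar. For a decomposable $T$, a point of $\mathring\phi(T)$ together with edge lengths gives the chart. The open cone on the link is identified, via the polytope of Proposition \ref{link_prop}, with the space of edge-length assignments in $[0,1)^{E(T)}$ near $0$, together with contractions of the edges whose length is $0$. Concretely, the face poset of $[0,1)^{E}$ stratified by which coordinates vanish realises the cone on the ball $Lk(T\to\bullet)$.

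Define $\mathcal{N}_{T\to\bullet_S}$ to be the set of points represented by trees that contract from $T$, with the edges of $T$ having length $<1$. The compatibility conditions (\ref{stratifiedoperad3_item}) and (\ref{stratifiedoperad4_item}) hold because restricting to a face sets some lengths to $0$. The product form for decomposable $T\to T'$ holds because $W(O)$ is built vertexwise, so the charts factor over the vertices.

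\textbf{Main obstacle.} I expect the hardest step to be making the identification of $\mathring{\mathrm{Cone}}(Lk)$ with the length coordinates canonical and compatible with the join homeomorphism $f$ in the definition, including the $\prod t_i$ scaling. I would first try to fix a single homeomorphism $[0,1)^k\cong\mathring{\mathrm{Cone}}(\text{simplex-like ball})$ that is natural in faces, and then check that it is multiplicative under joins.
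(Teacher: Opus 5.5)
Your stratified $\Rightarrow$ manifold half is the paper's argument. It uses the given charts $\nu_{T\to\bullet_S}$, the fact that $\mathring\phi(T)=\prod_v\mathring\phi(\bullet_{\overline{cld}(v)})$ is a manifold by condition (1), and the fact that the open cone on a ball is a half-space with the cone point on its boundary. Only the bookkeeping is off:
the interval $(T,\bullet_S]$ is the poset of nonempty subsets of $E(T)$, so $Lk(T\to\bullet_S)$ is the simplex of dimension $|E(T)|-1$. Its open cone therefore has dimension $|E(T)|=|V(T)|-1$, not $|V(T)|$ (which would give $md-d$), and $(md-d-|V(T)|)+(|V(T)|-1)=md-d-1$.

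For the converse you take a genuinely different route.
\begin{itemize}
\item The paper works inductively and uses only the equivariant collar. Lower-arity product charts glue to a neighbourhood $\mathring\phi(T)\times\mathring{\mathrm{Cone}}(\partial Lk(T))$ inside $\partial O(S)$ (Lemma~\ref{lem: stratification of boundary of operad}). The half-ball reparametrization in $\partial O(S)\times[0,1)$ then pushes it inward. This template is reused verbatim for bimodule cobordisms and for $X_L,X_R$, where no metric-tree model is available.
\item You instead transport global edge-length coordinates along $O\cong W(O)$. This makes equivariance, conditions (3) and (4), and the product form essentially tautological, since $W(O)$ is built vertexwise, and it needs no induction.
\item The price is that the real work is outsourced to Proposition~\ref{prop: isomorphic to w construction}. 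The paper proves it only by reference, and, as the paper itself remarks, it already encodes compatible tubular neighbourhoods of the strata.
\end{itemize}

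As written, however, your coordinates run the wrong way. In $W(O)$, composition grafts along a length-$1$ edge, while length-$0$ edges are collapsed by composing labels in $O$. So $\phi(T\to\bullet_S)(\mathring\phi(T))$ consists of the points represented, without length-$0$ edges, by trees \emph{refining} $T$:
\begin{itemize}
\item each vertex $v$ of $T$ carries an interior tree of $W(O)(\overline{cld}(v))$;
\item the $T$-edges have length exactly $1$;
\item all other internal edges have length $<1$.
\end{itemize}
Your $\mathcal{N}_{T\to\bullet_S}$ (``trees that contract from $T$'' with $T$-edges of length $<1$) does not contain this stratum. Likewise, ``setting lengths to $0$'' collapses edges, whereas $\mathcal{N}_{T\to\bullet_S}\cap\phi(T''\to\bullet_S)(\phi(T''))$ is where the $T''$-edges have length $1$.

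The fix is to let $\nu_{T\to\bullet_S}:\mathring\phi(T)\times[0,1)^{E(T)}\to W(O)(S)$ graft along each $e$ with length $1-s_e$. This map is injective by uniqueness of representatives without length-$0$ edges, and its image is open. The stratum $\{s_e>0\iff e\in A\}$ of the cube is the one for $T\to\mathfrak{c}_A(T)$, which matches the open cone on the simplex. With this model, the join compatibility you flagged as the main obstacle is literally $[0,1)^{\sqcup_i E(T_i)}=\prod_i[0,1)^{E(T_i)}$. That is legitimate, because links are only fixed up to reparametrization of cells.
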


\begin{proof}
$(\impliedby)$: We construct $\mathcal{N},\nu$ inductively on the size of the finite set $S$. The initial step is when $S=2$. 

\textbf{Initial $|S|=2$ case.} 

When $|S|=2$, there are no nontrivial tree contractions, and so it suffices to check property (\ref{stratifiedoperad1_item}) which is true by Definition \ref{manifoldoperad_dfn}.

\textbf{Inductive step.}
Properties (\ref{stratifiedoperad1_item}) and (\ref{stratifiedoperad2_item}) are true by the definition of a manifold operad. 

Suppose for $N$ such that $n \geq N\ge3$ we have constructed $\mathcal{N}_{T\to T'},\nu_{T\to T'}$ for all contractions where $T,T'\in\mathcal{T}_{\le N-1}$, subject to the listed conditions. We will now construct $\mathcal{N}_{T\to T'},\nu_{T\to T'}$ for $T,T'\in \mathcal{T}_N$ such that conditions $(\ref{stratifiedoperad3_item}),(\ref{stratifiedoperad4_item})$, and equivariance are satisfied.

\textbf{Step 1: understanding the topology of the boundary}

\begin{lem}\label{lem: stratification of boundary of operad}
    Suppose $(\phi,\mathcal{N},\nu)$ has been shown to be a stratified operad, up to sets of cardinality $|\mathcal{S}|-1$.
    Then for every $T\in \mathcal{T}(S)\backslash\{\bullet\}$, $\mathring\phi(T)$ has an open neighborhood $\mathcal{N}^\partial_T$ in  $\colim_{{\mathcal{T}(S)\backslash\bullet_{ }}} \phi$ canonically homeomorphic to 
$\mathring{\textnormal{Cone}}\big(\partial Lk(T)\big)\times \mathring{\phi}(T)$, up to reparametrization of the cells of $\partial Lk(T)$. 

\end{lem}

\begin{proof}
Let $T\in \mathcal{T}_N\backslash\{\bullet\}$. Define $\mathcal{T}_{\geq T,\neq\bullet}$ to be the subcategory of $\mathcal{T}_N\backslash\{\bullet\}$ consisting of trees $T'$ such that there exists a, possibly trivial, contraction $T\to T'$. Define $\mathcal{N}^\partial_T\subset \colim_{{\mathcal{T}(S)\backslash\bullet_{ }}} \phi$ to be 
$$\mathcal{N}^\partial_{T}=\bigcup_{T'\in \mathcal{T}_{\geq T,\neq\bullet}}\mathcal{N}_{T\to T'}.$$
Then, 
\begin{align*}
    \mathcal{N}^\partial_{T}&=\colim_{T'\in \mathcal{T}_{\geq T,\neq\bullet}} (\mathcal{N}_{T\to T'})\\
    &\stackrel{\nu}{=}\colim_{T'\in \mathcal{T}_{\geq T,\neq\bullet}}
    \big(\mathring{\phi}(T)\times\mathring{\textnormal{Cone}}(Lk(T\to T'))\big)\\
    &=\mathring{\phi}(T)\times \colim_{T'\in \mathcal{T}_{\geq T,\neq\bullet}}
    \mathring{\textnormal{Cone}}(Lk(T\to T'))\\
    &=\mathring{\phi}(T)\times \mathring{\textnormal{Cone}}\big(\colim_{T'\in \mathcal{T}_{\geq T,\neq\bullet}}
    Lk(T\to T')\big).
\end{align*}
The first hypothesis of property $(4)$ allows us to form the first commutative diagram which is the union by definition.

The second hypothesis of property $(4)$ allows us to make the identification of colimit diagrams ``$\stackrel{\nu}{=}$''. Since in the category of posets
$$\colim_{T'\in \mathcal{T}_{\geq T,\neq\bullet}}
    Lk(T\to T')=\partial Lk(T),$$
the conclusion of the lemma follows by Lemma \ref{CWcolim_lmm}. 

\end{proof}

\textbf{\mynameis{Step 2}\label{step 3 of operad strat}: define $\mathcal{N}_{T\to \bullet}$ and $\nu_{T\to \bullet}$.}

We need to define an embedding 
\[\nu_{T\to\bullet}:\mathring{\phi}(T)\times\mathring{\textnormal{Cone}}(Lk(T\to\bullet))\longrightarrow \phi(\bullet)=\phi(S)\]
for each $S$ such that $|S|=N$ and each $T\in \mathcal{T}(S)\backslash\bullet$, that is suitably equivariant and natural enough to satisfy properties (\ref{stratifiedoperad3_item}) and (\ref{stratifiedoperad4_item}). 
Since $\phi(S)$ is assumed to have a $\Sigma_S$-equivariant collar, it suffices to embed instead into the product
\[\colim_{{\mathcal{T}(S)\backslash\bullet_{ }}} \phi \times [0,1].\]

We denote the cone parameter map by
$r:\mathcal{N}^\partial_T\to [0,1)$. 
On the other hand, define 
\begin{align*}
    h:\colim_{{\mathcal{T}(S)\backslash\bullet_{ }}} \phi\times[0,1]&\longrightarrow [0,1],\qquad\qquad\quad\  h(p,t)=t;\\
    \pi:\colim_{{\mathcal{T}(S)\backslash\bullet_{ }}} \phi\times[0,1]&\longrightarrow \colim_{{\mathcal{T}(S)\backslash\bullet_{ }}} \phi,\qquad \pi(p,t)=p.
\end{align*}

For a contraction $T\to \bullet$, define  
$$\mathcal{N}_{T\to \bullet}=\big\{p\in \mathcal{N}^\partial_{T}\times[0,1)\big|\ r(p)^2+h(p)^2\leq 1\big\}\subset \colim_{{\mathcal{T}(S)\backslash\bullet_{ }}} \phi \times [0,1].$$

We define $\nu_{T\to\bullet}:\mathring{\phi}(T)\times \mathring{\textnormal{Cone}}(Lk(T\to\bullet))\to \mathcal{N}_{T\to\bullet}$ to be the unique homeomorphism such that 
\begin{itemize}
    \item it agrees with the cone structure on $(\colim_{{\mathcal{T}(S)\backslash\bullet_{ }}} \phi )\times\{0\}$;
    \item $\sqrt{r^2+h^2}$ is the cone parameter;
    \item for each cone ray $l$, $\pi(l)$ is again a cone ray in $\colim_{{\mathcal{T}(S)\backslash\bullet_{ }}} \phi$;
    \item along each cone ray, $r/h$ is constant.  
\end{itemize}
See Figure \ref{halfball_fig} on page \pageref{halfball_fig} for an illustration of the three-colored variant.

It is straightforward to check the neighborhood and identification are natural, and so this completes the inductive step. 

 $(\implies)$: We now prove that an $n$-truncated stratified operad is an $n$-truncated manifold operad.

\begin{prop}\label{prop: decomposables are manifold}
    If $(\phi,\mathcal{N},\nu)$ is an $n$-truncated stratified operad, then for every finite set $S$ with $|S|\leq n$,
$\phi(\bullet_S)$ is a manifold with boundary 
    $\phi(\bullet_S) \setminus \mathring{\phi}(\bullet_S) \cong \colim_{{\mathcal{T}(S)\backslash\bullet_{ }}} \phi$.
\end{prop}
\begin{proof}
For $|S|=2$ the statement is true.  
    Inductively, suppose that it holds for every finite set $S'$ of cardinality
    $2 \leq |S'|<|S|$, so that the spaces  $\phi(\bullet_{S'})$ are manifolds with boundary. By Lemma \ref{colimtopology_lmm} and Lemma \ref{operadtopology_lmm}, $\colim_{T\in \mathcal{T}(S)\setminus \bullet} \phi$ is a Hausdorff space. 
    By Lemma \ref{lem: stratification of boundary of operad} each stratum $\mathring{\phi}(T)$ has neighborhoods in $\phi(\bullet_S)$, respectively 
in $\phi(\bullet_S) \setminus \mathring{\phi}(\bullet_S) $, that are homeomorphic to the product
of $\mathring{\phi}(T)$ with the open cones 
on $Lk(T)$, respectively $\partial Lk(T)$,
    
that  are the CW-complexes associated to the combinatorial links of Definition \ref{definition: link of a tree} and their boundary. The analog of Proposition \ref{link_prop} for single-colored trees applied to $T \rightarrow \bullet$ implies these are topologically discs, respectively their boundary spheres, and so  $\phi(\bullet_S)$ is a manifold with boundary  $\phi(\bullet_S) \setminus \mathring{\phi}(\bullet_S)$. 
    
\end{proof}
This finishes the proof of Theorem \ref{thm: manifold operad is stratified}.
\end{proof}

\subsection{Stratification of bimodules}\label{section: strat bimodule}
In this section, we introduce bimodules stratified by the category $\mathcal{T}^{RBW}$. The combinatorial analysis of the links performed in Section \ref{link_sec} is much more delicate than in the case of stratified operads, however the construction and conclusion of this section will be analogous. We take some number $n\leq \infty$.
In this section, all $n$-truncated operads and bimodules are assumed to be levelwise compact, normal, and second countable.

Given a bimodule $\phi:\mathcal{T}^{RBW} \rightarrow \mathrm{Top}$, denote
    $$\mathring{\phi}(T):=\phi(T)\Big\backslash\bigcup_{\textnormal{non-trivial contractions }T'\to T}\textnormal{image}\big(\phi(T'\to T)\big).$$
    
For each $v\in V(T)$, denote 
    $\mathring{\mathcal{S}}(v):=\mathring{\phi}(\bullet_{\textnormal{Color}(v),\overline{cld}(v)})$; define $\mathring{R}(S),\mathring{B}(S),\mathring{W}(S)$ to be $\mathring{\phi}(\bullet_{R,S})$, $\mathring{\phi}(\bullet_{B,S})$, $\mathring{\phi}(\bullet_{W,S})$, respectively. 

\begin{definition}\label{stratifiedbimodule_dfn}
    Given an $n$-truncated bimodule represented by $\phi:\mathcal{T}^{RBW}_{\le n}\rightarrow \mathrm{Top}$, we say it is {\it stratified} if for all finite sets $S$ of cardinality $ \leq n$,  each contraction of colored $S$-labeled trees $T\to T'$ is associated with the data of an open subset $\mathcal{N}_{T\to T'}\subset \phi(T')$ and a homeomorphism
    $$\nu_{T\to T'}: \mathring\phi(T)\times\mathring{\textnormal{Cone}}(Lk(T\to T'))\longrightarrow\mathcal{N}_{T\to T'} $$
commuting with symmetric group actions and subject to the following conditions:

\begin{enumerate}
    
    \item \label{RBWringmfld_item} The spaces $\mathring{R}(S),\mathring{B}(S),\mathring{W}(S)$ are topological manifolds;

    \item\label{boundaryconditionX_item} The function
    $$\phi: \colim_{{\mathcal{T}^{RBW}(S)\backslash\bullet_{W,S}}}\phi 
            \longrightarrow\phi(\bullet_{ W ,S})= W (S)$$
            maps the domain homeomorphically onto $ W (S) \setminus \mathring{W}(S)$; 
    \item\label{boundaryconditionB_item}{}
  Restricting to the category $\mathcal{T}^B$,  
            $$\phi_B: \colim_{{\mathcal{T}^{B}(S)\backslash\bullet_{B,S}}}\phi 
            \longrightarrow\phi(\bullet_{B,S})=B(S)$$
            maps the domain homeomorphically onto $ B (S) \setminus \mathring{B}(S)$; 
     \item\label{boundaryconditionR_item}{}
    Restricting to the category $\mathcal{T}^R$ 
            $$\phi_R: \colim_{{\mathcal{T}^{R}(S)\backslash\bullet_{R,S}}}\phi 
            \longrightarrow\phi(\bullet_{R,S})=R(S)$$
            maps the domain homeomorphically onto $ R(S) \setminus \mathring{R}(S)$;

    \item \label{Ncondition0_item} 
        $$\nu_{T\to T'}|_{\mathring{\phi}(T)\times \{\text{the cone point}\}}=\phi({T\to T'})|_{\mathring{\phi}(T)}: \mathring{\phi}(T)\longrightarrow \phi(T');$$
        
    \item \label{Ncondition_item}
            For contractions $T\to T''\to T'$, 
            $$
            \mathcal{N}_{T\to T''}=\mathcal{N}_{T\to T'}\cap \phi(T''\to T')\big(\phi(T'')\big),
            \qquad
            \nu_{T\to T''}=\nu_{T\to T'}|_{\mathring\phi(T)\times \mathring{\textnormal{Cone}}(Lk(T\to T''))}. 
            $$

\end{enumerate}

As before, we require the following decomposability for contractions $T \rightarrow T'$ such that $T' \neq \bullet_W$.

Suppose $T'=\bullet_{R}$, then $T$ must be a tree with all red vertices, and so we define this to be the stratification of the operad $R$ guaranteed by the previous section. Similarly for $T'=\bullet_{B}$. The remaining situation is that $T'\neq \bullet_{R \text{ or }B\text{ or } W }$.

If $\Theta=\{(T_i,\eta_i)\}_{i\in I}$ is the contraction system of $T$ such that $T'=T/\Theta$, then each $T_i$ lies in $\mathcal{T}^{RBW}_{\le n-1}$.

Then, by Definition \ref{bimodule_dfn}, the map $\phi(T\to T'):\phi(T)\to \phi(T')$ is given by 
\begin{align*}
    \prod_{i\in I} \prod_{v\in V(T_i)}\mathcal{S}(v)&\longrightarrow
    \prod_{i\in I}\mathcal{S}([T_i])\\      
     ((p_v)_{v\in T_i}) _{i\in I}
     &\longrightarrow  
    \phi_{T_i\to \bullet_{\eta_i}} ((p_v)_{v\in V(T_i)}) 
\end{align*}
We require 
$$\mathcal{N}_{T\to T'}:=\prod_{i\in I}\mathcal{N}_{T_i\to\bullet_{\eta_i}}\subset\prod_{v\in V(T')}\mathcal{S}(v)=\phi(T')$$
and require $\nu_{T\to T'}$ to be the following composition
\[
\begin{tikzcd}
        \mathring{\phi}(T)\times \mathring{\textnormal{Cone}}(Lk(T\to T'))\dar[equal]
        \\
        \Big(
        \displaystyle\prod_{i\in I} \displaystyle\prod_{v\in V(T_i)} \mathring{\mathcal{S}}(v)
        \Big)
        \times \mathring{\textnormal{Cone}}(Lk(T\to T'))
         \dar{(\textnormal{id}, f)}
        \\ 
        \displaystyle\prod_{i\in I}\mathring{\phi}(T_i)\times \mathring{\textnormal{Cone}}(Lk(T_i\to \bullet_{\eta_i}))
        \dar{(\nu_{T_i\to \bullet_{\eta_i}})_{i\in I}}
        \\ {}
                \end{tikzcd}\]
\[\begin{tikzcd}
        \displaystyle\prod_{i\in I} \mathcal{N}_{T_i\to\bullet_{\eta_i}} \subset\mathcal{N}_{T\to T'},
\end{tikzcd}
\]
where 
$$f: \mathring{\textnormal{Cone}}(Lk(T\to T'))\xrightarrow{\cong} \prod_{i\in I}\mathring{\textnormal{Cone}}(Lk(T_i\to \bullet_{\eta_i}))$$
is induced from the map
$$Lk(T\to T')\cong *_{i\in I}\,Lk(T_i\to\bullet_{\eta_i})$$
given by Lemma \ref{link1_lmm}.

\end{definition}

The equivariance of these choices is immediate. That conditions (\ref{Ncondition0_item}), (\ref{Ncondition_item}) automatically hold for such contractions is clear from induction.

The remainder of this section will be dedicated to proving the following theorem.

\begin{thm}\label{thm: bimodule cobordism is stratified}
    Let $n \leq \infty$. An $n$-truncated stratified bimodule is an $n$-truncated bimodule cobordism. Conversely, an $n$-truncated collarable bimodule cobordism between $n$-truncated collarable manifold operads has a stratification.
\end{thm}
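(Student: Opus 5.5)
I will follow the same two-directional pattern as the proof of Theorem \ref{thm: manifold operad is stratified}, replacing single-colored trees by $RBW$-trees and the single-colored link computation by Proposition \ref{link_prop}.

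\emph{Stratified $\Rightarrow$ cobordism.} By conditions (\ref{boundaryconditionB_item}) and (\ref{boundaryconditionR_item}), restricting the stratification to $\mathcal{T}^B$ and $\mathcal{T}^R$ gives stratified operads. Theorem \ref{thm: manifold operad is stratified} then shows that $B$ and $R$ are manifold operads. For $W(S)$ I induct on $|S|$. First I prove the analogue of Lemma \ref{lem: stratification of boundary of operad}: for each $T\in\mathcal{T}^{RBW}(S)\setminus\bullet_W$, the union of the $\mathcal{N}_{T\to T'}$ over $T'\geq T$, $T'\neq\bullet_W$, is a neighborhood of $\mathring\phi(T)$ in the colimit. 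Using condition (\ref{Ncondition_item}) and Lemma \ref{CWcolim_lmm}, this neighborhood is homeomorphic to $\mathring\phi(T)\times\mathring{\mathrm{Cone}}(\partial Lk(T))$. In $W(S)$ itself, $\mathcal{N}_{T\to\bullet_W}\cong\mathring\phi(T)\times\mathring{\mathrm{Cone}}(Lk(T))$. Proposition \ref{link_prop} says $Lk(T)$ is a ball of dimension $\mathrm{Codim}(T)-1$, so these neighborhoods are half-spaces, and $W(S)$ is a manifold with boundary $W(S)\setminus\mathring W(S)$. Hausdorffness and compactness come from the appendix lemmas. The open top stratum $\mathring W(S)$ has dimension $md-d$: a one-vertex $R$-tree has codimension $1$, so $\dim R(S)=md-d-1$ forces $\dim W(S)=md-d$.

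\emph{The $RBW$-structure.} I then check that $R(S),B(S)\subset\partial W(S)$ are codimension $0$ submanifolds and that the remainder $\mathrm{Decom}(W)(S)$ is a cobordism from $\partial B(S)$ to $\partial R(S)$. For this I analyse the links of the boundary: near a point of a stratum $T$, the subcomplex of $\partial Lk(T)$ spanned by contractions ending at $\bullet_R$ (respectively $\bullet_B$) is a face. This face is $Lk(T\to\bullet_R)$, which is a ball of codimension $0$ in the sphere $\partial Lk(T)$ when $T$ is all red, and is empty otherwise. The polytope description from the proof of Proposition \ref{link_prop} makes this transparent. Contractions to $\bullet_R$ correspond to height functions with $h\equiv$ const $>0$, which is a facet of $\tilde X(T)$, so its dual is a vertex. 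I will translate this carefully into the statement that the three pieces meet along collared codimension $1$ submanifolds. Conditions (\ref{bimodcobor1}) and (\ref{bimodcobor4}) then follow directly from (\ref{boundaryconditionX_item}).

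\emph{Cobordism with collars $\Rightarrow$ stratified.} By Theorem \ref{thm: manifold operad is stratified}, $B$ and $R$ carry stratifications, and these fix $\mathcal{N},\nu$ for contractions to $\bullet_R$ and $\bullet_B$. The decomposability requirement fixes $\mathcal{N},\nu$ for all contractions with $T'\neq\bullet_W$, by induction on arity using Lemma \ref{link1_lmm}. It remains to define $\nu_{T\to\bullet_W}$. By the boundary lemma above, the colimit $\partial W(S)$ already has conical neighborhoods $\mathcal{N}^\partial_T\cong\mathring\phi(T)\times\mathring{\mathrm{Cone}}(\partial Lk(T))$. I use the equivariant collar $\partial W(S)\times[0,1)\hookrightarrow W(S)$ and the quarter-disc construction of Step 2: set $\mathcal{N}_{T\to\bullet_W}=\{r^2+h^2<1\}$ and reparametrize rays so that $\mathring{\mathrm{Cone}}(\partial Lk(T))\times[0,1)\cong\mathring{\mathrm{Cone}}(Lk(T))$, using $Lk(T)=\mathrm{Cone}$-like ball with boundary $\partial Lk(T)$. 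This requires that $Lk(T)$ be the closed cone over the sphere $\partial Lk(T)$ glued to one top cell, which holds by Proposition \ref{link_prop} and Lemma \ref{CWuniqueness_lmm}.

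\emph{Main obstacle.} The hardest step is compatibility with condition (\ref{Ncondition_item}) for $T\to T''\to\bullet_W$ with $T''\neq\bullet_W$. The cone structure built from the collar must restrict on each subcomplex $Lk(T\to T'')$ to the already chosen $\nu_{T\to T''}$. I would first try to choose the identification $\mathring{\mathrm{Cone}}(\partial Lk)\times[0,1)\cong\mathring{\mathrm{Cone}}(Lk)$ so that it is the identity on $\mathring{\mathrm{Cone}}(\partial Lk)\times\{0\}$ and is radial, as in Step 2. Then faces lying in $\partial Lk(T)$ are automatically respected, and equivariance follows because all choices are canonical given the equivariant collar.
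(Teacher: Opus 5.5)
Your proposal is correct and follows essentially the same route as the paper. The manifold direction uses the boundary-link lemma together with Proposition \ref{link_prop}, and the converse uses the decomposability-forced data plus the collar/quarter-disc construction of Step 2; your extra check of the $RBW$-manifold structure is something the paper leaves implicit in ``this immediately implies''. One slip to fix: the face of $X(T)$ for $T\to\bullet_R$ is the ray of constant height functions, so it gives a \emph{vertex} of $\tilde X(T)$, not a facet. Its dual is therefore a \emph{facet} of $\tilde X(T)^*$, i.e.\ a top cell of $\partial Lk(T)$. That is exactly the codimension-$0$ ball you need, whereas ``its dual is a vertex'' as you wrote would contradict your own dimension count.
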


\begin{proof}

Denote by $\phi_B,\phi_R$ the restriction of $\phi$ to $\mathcal{T}^B,\mathcal{T}^R$, respectively.

$(\impliedby)$: Let $\phi:\mathcal{T}^{RBW}_{\leq n}\rightarrow \mathrm{Top}$ be a collarable $n$-truncated bimodule cobordism. Denote by $\phi_B,\phi_R$ the restriction of $\phi$ to $\mathcal{T}^B,\mathcal{T}^R$, respectively. 
The functors represent $\phi_B,\phi_R$ are manifold operads and $\phi$ is a bimodule cobordism from $\phi_B$ to $\phi_R$. 

Properties (\ref{RBWringmfld_item})-(\ref{boundaryconditionR_item}) in Definition \ref{stratifiedbimodule_dfn} follow from the definitions of manifold operads and bimodule cobordisms. 
It remains to construct $\mathcal{N},\nu$ satisfying (\ref{Ncondition0_item}), (\ref{Ncondition_item}). We construct the $\mathcal{N},\nu$ inductively on the size of the finite set $S$. 

\textbf{Initial $|S|=2$ case.} 

 There are two non-trivial tree contractions: $\bullet_{R}\to\bullet_{ W },\bullet_B\to\bullet_{ W }$. We define $(\mathcal{N}_{\bullet_{R \text{ (resp. }B)}\to \bullet_{ W }},\nu_{\bullet_{R \text{ (resp. }B)}\to \bullet_{ W }})$ to be the inclusion of a chosen equivariant collar neighborhood of $R(S)$ (resp. $B(S)$) in $ W(S)$, which exists by the assumption that $\phi$ is collarable.
It is clear that conditions (\ref{Ncondition0_item}), (\ref{Ncondition_item}) are satisfied. 

\textbf{Inductive step.}
Suppose for some $N\ge2$ we have already constructed $\mathcal{N}_{T\to T'},\nu_{T\to T'}$ for all contractions where $T,T'\in\mathcal{T}^{RBW}_{\le N-1}$, satisfying all the conditions listed in Definition \ref{stratifiedbimodule_dfn} for finite sets $S$ with $|S|\le N-1$. We now construct $\mathcal{N}_{T\to T'},\nu_{T\to T'}$ for $T,T'\in \mathcal{T}^{RBW}_N$ such that all the conditions for finite sets $S$ with $|S|\le N$ are satisfied.

\textbf{Step 1: understanding the topology of the boundary}

\begin{lem}\label{lem: stratification of boundary of bimodule}
    Suppose we have defined $\mathcal{N}_{T\to T'}$ and $\nu_{T\to T'}$ for all contractions $T\to T'$ such that $T,T'\in \mathcal{T}^{RBW}_{N}$, and $T'\neq \bullet_{W}$, satisfying properties (\ref{Ncondition0_item}), (\ref{Ncondition_item}) in Definition \ref{stratifiedbimodule_dfn}. 
    Then, for every $T\in \mathcal{T}^{RBW}(S)\backslash\{\bullet_W\}$, $\mathring\phi(T)$ has an open neighborhood $\mathcal{N}^\partial_T$ in  $\colim_{{\mathcal{T}^{RBW}(S)\backslash\bullet_{W}}} \phi$ canonically homeomorphic to 
$\mathring{\textnormal{Cone}}\big(\partial Lk(T)\big)\times \mathring{\phi}(T)$, up to reparametrization of the cells of $\partial Lk(T)$.
\end{lem}

\begin{proof}
Let $T\in \mathcal{T}^{RBW}_N\backslash\{\bullet_W\}$. Define $\mathcal{T}^{RBW}_{\ge T,\neq\bullet_W}$ to be the subcategory of $\mathcal{T}^{RBW}_N\backslash\{\bullet_W\}$ consisting of trees $T'$ such that there exists a (possibly trivial) contraction $T\to T'$. Define $\mathcal{N}^\partial_T\subset \colim_{{\mathcal{T}^{RBW}(S)\backslash\bullet_{W}}} \phi$ to be 
$$\mathcal{N}^\partial_{T}=\bigcup_{T'\in \mathcal{T}^{RBW}_{\geq T,\neq\bullet_W}}\mathcal{N}_{T\to T'}.$$
Then, 
\begin{align*}
\mathcal{N}^\partial_{T}&=\colim_{T'\in \mathcal{T}^{RBW}_{\geq T,\neq\bullet_W}} (\mathcal{N}_{T\to T'})\\
    &\stackrel{\nu}{=}\colim_{T'\in \mathcal{T}^{RBW}_{\geq T,\neq\bullet_W}}
    \big(\mathring{\phi}(T)\times\mathring{\textnormal{Cone}}(Lk(T\to T'))\big)\\
    &=\mathring{\phi}(T)\times \colim_{T'\in \mathcal{T}^{RBW}_{\geq T,\neq\bullet_W}}
    \mathring{\textnormal{Cone}}(Lk(T\to T'))\\
    &=\mathring{\phi}(T)\times \mathring{\textnormal{Cone}}\big(\colim_{T'\in \mathcal{T}^{RBW}_{\geq T,\neq\bullet_W}}
    Lk(T\to T')\big).
\end{align*}
The first hypothesis of property (\ref{Ncondition_item}) allows us to form the first commutative diagram which is the union,

and the last two equations follow from the second hypothesis of property (\ref{Ncondition_item}).

By Lemma \ref{CWcolim_lmm}, the last colimit above can be viewed as taken in the category of posets. 
Since in the category of posets
$$\colim_{T'\in \mathcal{T}^{RBW}_{\geq T,\neq\bullet_W}}
    Lk(T\to T')=\partial Lk(T)$$
the conclusion of the lemma follows. 

\end{proof}

\textbf{\mynameis{Step 2}\label{step 3 of bimodule strat}: define $\mathcal{N}_{T\to \bullet_W}$ and $\nu_{T\to \bullet_W}$.}

We need to define an embedding 

\[\nu_{T\to\bullet_W}:\mathring{\phi}(T)\times\mathring{\textnormal{Cone}}(Lk(T\to\bullet_W))\longrightarrow \phi(\bullet_W)=W(S)\]
for each $S$ such that $|S|=N$ and each $T\in \mathcal{T}^{RBW}(S)\backslash\bullet_W$, that is suitably equivariant and natural enough to satisfy properties (\ref{Ncondition0_item}) and (\ref{Ncondition_item}). 
Since $W(S)$ is assumed to have a $\Sigma_S$-equivariant collar of its boundary $\partial W(S)=\colim_{{\mathcal{T}^{RBW}(S)\backslash\bullet_{W}}} \phi$, it suffices to embed instead into the product
$\partial W(S)\times [0,1]$.
Here, the $[0,1]$ parameter is the collar neighborhood thickness, with 0 being on the boundary. 

By Lemma \ref{lem: stratification of boundary of bimodule}, $\mathring{\phi}(T)$ has a cone-shaped neighborhood $\mathcal{N}^\partial_T$ in $\colim_{{\mathcal{T}^{RBW}(S)\backslash\bullet_{W}}} \phi=\partial W(S)$. 
We denote the cone parameter map by
$r:\mathcal{N}^\partial_T\to [0,1)$. 
On the other hand, define 
\begin{align*}
    h:\partial W(S)\times[0,1]&\longrightarrow [0,1],\qquad\quad  h(p,t)=t;\\
    \pi:\partial W(S)\times[0,1]&\longrightarrow \partial W(S),\qquad \pi(p,t)=p.
\end{align*}

For a contraction $T\to \bullet_W$, define  
$$\mathcal{N}_{T\to \bullet_W}=\big\{p\in \mathcal{N}^\partial_{T}\times[0,1)\big|\ r(p)^2+h(p)^2\leq 1\big\}\subset \partial W(S)\times [0,1].$$

We define $\nu_{T\to\bullet_{ W }}:\mathring{\phi}(T)\times \mathring{\textnormal{Cone}}(Lk(T\to\bullet_{ W }))\to \mathcal{N}_{T\to\bullet_{ W }}$ (see Figure \ref{halfball_fig}) to be the unique homeomorphism such that 
\begin{itemize}
    \item it agrees with the cone structure on $\partial W(S)\times\{0\}$;
    \item $\sqrt{r^2+h^2}$ is the cone parameter;
    \item for each cone ray $l$, $\pi(l)$ is again a cone ray in $\partial W(S)$;
    \item along each cone ray, $r/h$ is constant.  
\end{itemize}

\begin{figure}
    \begin{tikzpicture}
        \coordinate (O) at (0,0);
        \coordinate (A) at (-4,-1);
        \coordinate (B) at (2,-1);
        \coordinate (C) at (4,1);
        \coordinate (D) at (-2,1);
        
        \fill [orange, name path=ell, opacity=0.2] (O) ellipse (1.5 and 0.6);
        \node [orange] at (-0.3,0.3) {\tiny $\mathcal{N}^\del_T$};

        \fill[red, opacity=0.2] ($(O)-(1.5,0)$) arc (180:360:1.5) arc (0:-180:1.5 and 0.6); 
        \node [red] at (0,-1.3){\tiny $\mathcal{N}_{T\to\bullet_W}$};

        \draw [gray] (A) -- (B) -- (C) -- (D) -- cycle; 
        
        \draw [fill] (5,0) circle [radius=0.02];
        \draw [->] (5,0) to (5,-1);
        \node [right] at (5,0) {$h=0$};
        \node [right] at (5,-1) {$h$};

        \node at (3,0.5) {$\del W$};
        \node at (3,-1.5) {$W$};
        
        \draw[fill] (O) circle [radius=0.03];

        \draw [name path=ray1] (O) to (2,0);
        \draw [name path=ray2] (O) to (0.8,0.8);
        \draw [name path=ray3] (O) to (-2,0.3);
        \draw [name path=ray4] (O) to (-2,-0.4);
        \draw [name path=ray5] (O) to (1,-0.75);    

        \node at (-0.1,-0.3) {\tiny $\mathring{\phi}(T)$};
        \node [right] at (2,0) {\tiny $\mathring{\phi}(T_1)$}; 
        \node [right] at (0.8,0.8) {\tiny $\mathring{\phi}(T_2)$}; 
        \node [above] at (-2,0.3) {\tiny $\mathring{\phi}(T_3)$}; 
        \node [left] at (-2,-0.4) {\tiny $\mathring{\phi}(T_4)$}; 
        \node [right] at (1,-0.75) {\tiny $\mathring{\phi}(T_5)$}; 
    \end{tikzpicture}
    \caption{An illustration of $\mathcal{N}_{T\to\bullet_W}$: suppose $T_1,\ldots,T_5$ are some trees that $T$ contracts to; the center dot is $\mathring{\phi}(T)$, and the rays are $\mathring{\phi}(T_1),\ldots,\mathring{\phi}(T_5)$. The orange 2-dimensional disc, lying in $\del W$, represents $\mathcal{N}^\del_T$, and the red 3-dimensional half-ball, lying in $W$, represents $\mathcal{N}_{T\to\bullet_W}$. The cone rays (not explicitly shown in the picture) are straight lines originating from $\mathring{\phi}(T)$. }
    \label{halfball_fig}
\end{figure}
The neighborhood and identification are defined so as to commute with the inclusions, and so this completes the inductive step.

$(\implies)$: Let $N \leq n$ and assume the assertion has been proven up to cardinality $N$.

\begin{prop}\label{prop: bimodule decomposables are manifold}
     If $(\phi,\mathcal{N},\nu)$ is an $N$-truncated stratified bimodule, then for every finite set $S$ with $|S|\leq N$, $\colim_{{\mathcal{T}^{RBW}(S)\backslash \bullet_{W }}} \phi$ is a manifold.
\end{prop}

\begin{proof}
That the space is locally Euclidean follows from Lemma \ref{lem: stratification of boundary of bimodule} combined with Proposition \ref{link_prop}. That it is Hausdorff follows from Lemma \ref{colimtopology_lmm} and Lemma \ref{operadtopology_lmm}.

\end{proof}
This immediately implies that an $n$-truncated stratified bimodule is, in fact, a bimodule cobordism and finishes the proof of Theorem \ref{thm: bimodule cobordism is stratified}.

\end{proof}

\subsection{Composition of stratified bimodules}
In this section, we address the question of whether bimodule cobordisms have a reasonable notion of composition.

\begin{definition}
    The category of $5$-colored trees labeled by a finite set $S$ is notated $\mathcal{T}^{5}(S)$. It has objects the trees with vertices assigned a ``color'' from $\{B,W,O,V,R\}$ with the requirement that a path from a leaf to the root encounter colors in the order 

$$\underbrace{R\ R\ \ldots\ R\ }_{\ge0 \text{ times}}\underbrace{ V }_{0 \text{ or }1 \text{ time}}\underbrace{\ O\ldots\ O\ O}_{\ge0\text{ times} } \underbrace{W}_{0 \text{ or }1 \text{ time}} \underbrace{BB \ldots B}_{\ge0 \text{ times}}.$$
We declare the morphisms to be contractions of uncolored trees which have the property that 
\begin{enumerate}
    \item the preimage of a $B$-vertex consists only of $B$-vertices,
    \item  the preimage of a $W$-vertex consists only of $B$-vertices, $W$-vertices, and $O$-vertices,
    \item  the preimage of an $O$-vertex consists only of $O$-vertices,
    \item  the preimage of a $V$-vertex consists only of $O$-vertices $V$-vertices, and $R$-vertices,
    \item  the preimage of an $R$-vertex consists only of $R$-vertices.
\end{enumerate}

\end{definition}

As expected, one studies collections of functors \[\Phi_S: \mathcal{T}^{5}(S) \rightarrow \mathrm{Top}\] together with coherence isomorphisms identifying $\Phi_S(T)$ as a product indexed over the vertices of $T$. These collections are evidently equivalent to the data of a reduced $(B,O)$-bimodule $W$ and a reduced $(O,R)$-bimodule $V$. We omit the specifics of such a definition, and from now on implicitly assume all bimodules are reduced.

There is a poset morphism $\mathcal{T}^5 (S) \rightarrow \mathcal{T}^{RBW}(S)$ which contracts the maximal subtrees which have vertices only of the colors $W,O,V$ to vertices of color $W$.

\begin{definition}
    Let $W$ be a $(B,O)$-bimodule and $V$ be a (reduced) $(O,R)$-bimodule. It is naturally represented by some collection of functors $\Phi_S: \mathcal{T}^5(S) \rightarrow \mathrm{Top}$. The relative composite $W \circ_O V$ is defined to be the $(B,R)$-bimodule which is represented by the left Kan extension of the $\Phi_S$ along $\mathcal{T}^5 (S) \rightarrow \mathcal{T}^{RBW}(S)$.
\end{definition}

To ensure that the relative composite is indeed a bimodule, we must assume that by $\mathrm{Top}$ we mean a convenient category of spaces such as the category of compactly generated, weak Hausdorff spaces. This is because a formal categorical argument shows that since $\mathcal{T}^{RBW}(S)$ has a maximal element given by $\bullet_W$, this left Kan extension evaluated at $\bullet_W$ is the space $\colim \Phi_S$. In a convenient category of spaces colimits commute with cartesian product individually in each variable, and so the coherence isomorphisms for decomposable trees can be checked.

As before, one may define what it means for the representing functors of a pair of bimodules $\Phi_S: \mathcal{T}^5(S) \rightarrow \mathrm{Top}$ to be stratified. The only modification is that the poset $\mathcal{P}_T$ appearing as the ``link'' of $T$ should be defined as the poset of all elements strictly greater than $T$. This is because the category of $5$-colored trees doesn't have a global maximum. It is immediate that such a stratification of the functors $\Phi_S$ is equivalent to a pair of stratifications of the individual bimodules which agree on the shared embedded operad.

\begin{thm}\label{thm: composition is manifold}
    If $W$ is a stratified $(B,O)$-bimodule, $V$ a stratified $(O,R)$-bimodule, and the induced stratifications of $O$ agree, then the relative composite $W \circ_O V$ has a canonical stratification as a $(B,R)$-bimodule. As a consequence, the relative composite of two collarable bimodule cobordisms over collarable manifold operads is a stratifiable bimodule cobordism.
\end{thm}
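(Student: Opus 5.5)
The plan is to describe $W\circ_O V$ explicitly as a stratified space indexed by $\mathcal{T}^{RBW}(S)$, with each stratum and link assembled from the five-colored data. Write $\pi:\mathcal{T}^5(S)\to\mathcal{T}^{RBW}(S)$ for the collapse functor. The left Kan extension at an RBW-tree $T$ is $\colim_{\pi(U)\le T}\Phi_S(U)$. Because $\pi$ is compatible with the vertex-wise product decompositions, this is the product over vertices $v$ of $T$ of the corresponding colimits at corollas. So it suffices to understand $(W\circ_O V)(S)=\colim_{\mathcal{T}^5(S)}\Phi_S$ at the white corolla; the red and blue corollas give $R$ and $B$ unchanged.

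The open strata of $(W\circ_O V)(S)$ are the images of $\mathring\Phi(U)$ for five-colored trees $U$ with $\pi(U)=\bullet_W$. These are disjoint, since the colimit only identifies points along contractions, and each point has a unique open stratum. Each $\mathring\Phi(U)$ is a product of interiors of manifolds, so it is a manifold.

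\textbf{Step 1.} Define the open stratum of $W\circ_O V$ at $\bullet_W$ to be the union of $\mathring\Phi(U)$ over the $U$ with $\pi(U)=\bullet_W$ and no $B$ or $R$ vertices. This should be a topological manifold of dimension $|S|d-d$: the pieces with $O$-vertices are codimension-one walls along which a $W$-region and a $V$-region are glued. The local model at such a point comes from the five-colored link of $U$ restricted to contractions that keep $\pi$ equal to $\bullet_W$. I would check that this restricted link is a sphere, which says the stratum is locally Euclidean.

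\textbf{Step 2.} For an RBW-contraction $T\to T'$, build $\mathcal{N}_{T\to T'}$ and $\nu_{T\to T'}$ fibrewise over each $U\in\pi^{-1}(T)$. Take the union of the five-colored neighborhoods $\mathcal{N}_{U\to U'}$ over $U'$ with $\pi(U')\le T'$, exactly as in the union/colimit argument of Lemma \ref{lem: stratification of boundary of bimodule}. The decomposability and compatibility conditions (\ref{Ncondition0_item}) and (\ref{Ncondition_item}) are then inherited from the five-colored stratification. The agreement of the two stratifications on $O$ ensures the pieces match along $O$-colored vertices.

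\textbf{Main obstacle: the link comparison.} I expect the hardest step to be showing that the resulting normal model is $\mathring{\textnormal{Cone}}(Lk(T\to T'))$ times a manifold. Concretely, I must show that the colimit over $\pi^{-1}$ of the five-colored cone neighborhoods is homeomorphic to the RBW cone times the Euclidean directions of Step 1. Here I would reuse the height-function polytope of Proposition \ref{link_prop}, giving heights to the five colors: $R$ above $V=$ constant $1$, $O$ in between, $W=0$, $B$ below. The fibre of this polytope over the RBW polytope (forgetting $O/V/W$ heights) is an interval or a point. This should show that the union of the five-colored dual cells over $\pi^{-1}(T)$ is a PL ball whose boundary stratification is the RBW link times a disc, which is what is needed.

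\textbf{Conclusion.} With the stratification in hand, Theorem \ref{thm: bimodule cobordism is stratified} gives the bimodule cobordism conclusion. Collarable cobordisms over collarable operads are stratifiable by the same theorem, and the stratifications agree on $O$ because the stratification of $O$ is constructed from its own collar alone, as in Theorem \ref{thm: manifold operad is stratified}.
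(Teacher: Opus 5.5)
Your overall route is the paper's: put a height-function polytope on five-colored trees, note that the $R$- and $B$-coordinates reproduce the RBW polytope of $\pi(U)$, and show that the remaining directions contribute a sphere. The gap is in the step you rest the link comparison on. The fibre of $X(U)$ over the RBW polytope is not ``an interval or a point'', and the strata with $O$-vertices are not all codimension-one walls.

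In your normalization every constraint between different colours is automatic ($x_R\ge 1\ge x_O\ge 0\ge x_B$). So $X(U)$ is a product of two factors: the RBW polytope of $\pi(U)$ (after shifting the $R$-heights by $-1$), and $X_O(U)=\{(x_v)\in[0,1]^{V^O(U)} : x_v\ge x_w \text{ whenever } v \text{ is a child of } w\}$. The second factor is a polytope of dimension $k=|V^O(U)|$.

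For instance, in arity $3$ let $U$ be an $O$-root labelled $3$ with an $O$-child labelled $1,2$. Then:
$\mathring O(2)\times\mathring O(2)$ has codimension $2$ in the $2d$-dimensional space $(W\circ_O V)(3)$; $X_O(U)=\{0\le x_{\mathrm{root}}\le x_{\mathrm{child}}\le 1\}$ is a triangle; and the link of $U$ is a circle. Its three vertices are the walls $O(3)$, $W(2)\times O(2)$, $O(2)\times V(2)$, and its three edges are the top pieces $W(3)$, $W(2)\times V(2)$, $V(3)$. An argument that only covers $k\le 1$ (links $S^{-1}$ or $S^0$) misses exactly the nontrivial content of your Step~1.

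What replaces it is the observation the paper actually uses: $X_O(U)$ is a \emph{bounded} polytope. Hence the opposite of its poset of proper faces is the face poset of the boundary of the dual polytope, a regular CW-sphere of dimension $k-1$. Since faces of a product are products of faces, the five-colored link (trees strictly above $U$) satisfies $\mathcal{P}_U\cong\mathcal{P}_{\pi(U)\to\bullet_W}*\mathcal{F}(X_O(U))^{\mathrm{op}}$. This is a sphere when $U$ has no $R$- or $B$-vertices, which is your Step~1 in full generality. Otherwise it is a ball, by Proposition~\ref{link_prop} applied to $\pi(U)$. On open cones it gives $\mathring{\mathrm{Cone}}(Lk(\pi(U)\to\bullet_W))\times\mathbb{R}^k$, with $\mathbb{R}^k$ the normal slice of $\mathring\Phi(U)$ inside the RBW stratum $\mathring\phi(\pi(U))$ of the composite. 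That is the local splitting you were after.

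With this repair your argument is essentially the paper's proof. The paper establishes only that $\colim\Phi_S$ is a manifold with boundary through this join decomposition, treating the $R$- and $B$-factors as two separate cone discs instead of quoting Proposition~\ref{link_prop}. It regards the remaining axioms as formal consequences of Kan-extending the stratification data.

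One caution for your more explicit Step~2. Gluing the fibrewise charts into a single $\nu_{T\to T'}$ on all of $\mathring\phi(T)\times\mathring{\mathrm{Cone}}(Lk(T\to T'))$ needs the five-colored cone structures for different $U\in\pi^{-1}(T)$ to agree on overlaps. The stratification axioms do not by themselves guarantee this, and the paper leaves this point implicit as well.
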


The definition of the stratifications (of operads, bimodules, etc.) we use consists of a collection of functorial data that satisfies various properties. There is thus a canonical candidate for the stratification of the relative composite $W \circ_O V$: left Kan extend the functorial stratification data associated to the pair of compatible bimodules represented by $\Phi_S: \mathcal{T}^5 \rightarrow \mathrm{Top}$ using the functor $\mathcal{T}^5(S) \rightarrow \mathcal{T}^{RBW}(S)$.

In order to verify the result is a stratified bimodule, we must then check that all of the properties in Definition \ref{stratifiedbimodule_dfn} hold. Of all of the properties, only property (\ref{RBWringmfld_item}) does not interact well with colimits. We omit what is largely a formal verification of these other properties. 

To prove property (\ref{RBWringmfld_item}) we will proceed analogously to the proof of Proposition \ref{link_prop} to show that $\colim \Phi_S$ is a manifold with boundary.

\begin{proof}

To inductively demonstrate that $\colim \Phi_S$ is a manifold with boundary, what needs to be verified is that the link of any stratum in $\colim \Phi_S$ is a CW-poset associated to a sphere or disc, depending on whether or not the stratum intersects the interior, of the appropriate dimension. Here the links are the posets $\mathcal{P}_T$ defined as the posets of trees $T'$ for which $T$ admits a nontrivial contraction to $T'$. A $5$-colored tree $T$ will correspond to the boundary of $\colim \Phi_S$ if it has vertices of color $B$ or $R$, and otherwise correspond to interior. An elementary study of chains in $\mathcal{T}^5(S)$ shows that, provided these links really are $CW$-posets, the dimensions of these spheres and discs must be as expected.

 Define a height function on a five-colored tree $T$ to be a map $h:V(T)\to \mathbb{R}$ such that
    \begin{itemize}
        \item for all $W$-vertices $v$ of $T$, $h(v)=-1$;
        \item for all $V$-vertices $v$ of $T$, $h(v)=1$;
        \item for all $R$-vertices $v$ of $T$, $h(v)\ge 1$;
        \item for all $B$-vertices $v$ of $T$, $h(v)\le -1$;
        \item for all $O$-vertices $v$ of $T$, $-1\le h(v) \le 1$;
        \item if $v$ is a child of $w$, then $h(v)\ge h(w)$. 
    \end{itemize}
    
Clearly, every 5-colored tree has a height function. Let
 $V^R(T)$ (resp. $V^B(T),V^O(T)$) be the set of $R$- (resp. $B$-, resp. $O$-) vertices of $T$. Define $\hat{V}(T)=V^R(T)\cup V^B(T) \cup V^O(T)$. 
Define the following subset of $\mathbb{R}^{\hat{V}(T)}$
$$X(T):=\big\{(x_v)_{v\in \hat{V}(T)}\in \mathbb{R}^{\hat{V}(T)}\,\big|\,\exists \textnormal{ height function }h \textnormal{ such that }h(v)=x_v, \forall v\big\}.$$

The set of height functions for $T$ is in canonical bijection with the polytope $X(T) \subset \mathbb{R}^{\hat{V}(T)}$ given by

$$\{x_v\ge1\}_{v\in V^R(T)}, \{x_v\le-1\}_{v\in V^B(T)}, \{-1 \le x_v\le 1\}_{v\in V^O(T)}, \ \{x_v\ge x_w\}_{v,w\in \hat{V}(T),\ v\textnormal{ is a child of }w}.$$

An argument identical to one appearing in Proposition \ref{link_prop} shows that there is an isomorphism \[\mathcal{F}(X(T)) \cong \mathcal{P}_T^\mathrm{op}.\]

Observe that the polytope $X(T)$ decomposes as a product of the polytopes 
\begin{itemize}
    \item $X_R(T) \subset \mathbb{R}^{V^R(T)}: \{x_v\ge1\}_{v\in V^R(T)},\{x_v\ge x_w\}_{v,w\in V^R(T),\ v\textnormal{ is a child of }w}$
        \item $X_B(T) \subset \mathbb{R}^{V^B(T)}: \{x_v\le -1\}_{v\in V^B(T)},\{x_v\ge x_w\}_{v,w\in V^B(T),\ v\textnormal{ is a child of }w}$
   \item $X_O(T) \subset \mathbb{R}^{V^O(T)}: \{-1 \le x_v\le1\}_{v\in V^O(T)},\{x_v\ge x_w\}_{v,w\in V^O(T),\ v\textnormal{ is a child of }w}$
\end{itemize}

This induces a poset isomorphism
\[\mathcal{F}(X(T)) \cong \mathcal{F}(X_R(T)) \ast  \mathcal{F}(X_B(T)) \ast \mathcal{F}(X_O(T)),\]
and so to finish the proof we should show the first two factors are CW-posets associated to discs, and the third is a CW-poset associated to a sphere.

In the first case, we can renormalize so that $x_v \geq 0$ upon which $X_R(T)$ becomes a cone. Intersecting with the hyperplane $\sum_{v\in V^R(T)} x_v=1$, we are left with a bounded polytope with face poset $\mathcal{F}(X_R(T)) \setminus \{\mathrm{cone \: point}\}$. By the existence of the dual polytope, the opposite of $\mathcal{F}(X_R(T)) \setminus \{\mathrm{cone \: point}\}$ is the CW-poset of a sphere. This implies $\mathcal{F}(X_R(T))^\mathrm{op} $ is the CW-poset associated to a disc. The argument for the second case is identical.

In the third case, we have that $X_O(T)$ is a bounded polytope. This means we can directly apply the existence of the dual polytope to deduce that  $\mathcal{F}(X_O(T))^\mathrm{op}$ is the CW-poset associated to a sphere, finishing the analysis of the links.
    
\end{proof}

\subsection{Reversing right nullbordisms} \label{section: reverse}

In this short section, we give the proof of Theorem \ref{thm: swap} which asserts that a right nullbordism induces a canonical left nullbordism. Recall the poset introduced in Section \ref{section: bimodule cobordisms}:

\begin{definition}
Let $\mathcal{T}_\mathrm{local}^{RW}(S)$ be the category 
with objects $S$-labeled trees with vertices of color $R$ or $W$, with no requirement of legality as for $RBW$-trees, and with morphisms tree contractions $f: T\to T'$ such that for each vertex $v$ of $T'$, $f^{-1}(v)\to v$ is a $RBW$-contraction of $\overline{cld}(v)$-labeled trees. 
\end{definition}

As in our three previous situations, once we require the data of product decompositions, collections of functors
\[\Phi_S: \mathcal{T}_\mathrm{local}^{RW}(S) \rightarrow \mathrm{Top}\]
are equivalent to reduced $(1,O)$-bimodules, though presented in an unexpected way. Here $O$ is the operad determined by the restriction of the collection $\Phi$ to the subcategory of red trees. As before, there is a notion of stratified functors out of this category, and it is equivalent to stratified right nullbordisms.

Surprisingly, there is an interesting functor from the poset $\mathcal{T}_\mathrm{local}^{RW}(S) $ to $\mathcal{T}^{RBW}(S)$ which lands in the subcategory of legal $RBW$-trees with only white and \textit{blue} vertices. Namely take a given $T \in \mathcal{T}_\mathrm{local}^{RW}(S) $ and set the color of the red vertices blue, then search for the largest blue subtree containing the root and contract all the subtrees which make up its complement into white vertices.

\begin{definition}
    Given a right $O$-module $W$ represented by $\Phi_S: \mathcal{T}_\mathrm{local}^{RW}(S) \rightarrow \mathrm{Top}$, the left Kan extension along the functors $\mathcal{T}_\mathrm{local}^{RW}(S)\rightarrow \mathcal{T}^{RBW}(S)$ defines the \textit{dual} $(O,1)$-bimodule $W^*$.
\end{definition}

The following implies Theorem \ref{thm: swap}.

\begin{prop}
    If $W$ is a stratified right nullbordism of $O$, then the dual $W^*$ is canonically a stratified left nullbordism of $O$.
\end{prop}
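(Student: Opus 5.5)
The strategy follows the proof of Theorem \ref{thm: composition is manifold}. First we left Kan extend the stratification data along the functor $\mathcal{T}_\mathrm{local}^{RW}(S)\to\mathcal{T}^{RBW}(S)$. Then we check that every condition of Definition \ref{stratifiedbimodule_dfn} except (\ref{RBWringmfld_item}) is formal. Finally we verify (\ref{RBWringmfld_item}) by a polytope computation of links.

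\textbf{Formal structure.} Since $\mathcal{T}^{RBW}(S)$ has maximal element $\bullet_W$, we have $W^*(S)=\colim\Phi_S$, computed in a convenient category of spaces. For a tree $T'$ with only blue and white vertices, $W^*(T')$ is computed by the colimit over the fibre. This fibre factors as a product over vertices: the blue vertices give copies of $O$, and each white vertex gives a copy of $W^*(\overline{cld}(v))$. Hence $W^*$ is an $(O,1)$-bimodule whose blue restriction is $O$ itself, because a blue corolla has only the red corolla in its preimage. The neighborhoods $\mathcal{N}$ and maps $\nu$ are defined by colimits of the stratification data of $\Phi_S$. Conditions (\ref{boundaryconditionX_item})--(\ref{Ncondition_item}) then follow as in the composition theorem, since they commute with colimits. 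The strata of $W^*(S)$ are the open strata $\mathring\Phi_S(T)$ for $T\in\mathcal{T}_\mathrm{local}^{RW}(S)$. The ones mapping to $\bullet_W$ are exactly the trees with white root. This matches the explicit picture in the sketched proof of Theorem \ref{thm: swap}.

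\textbf{Links.} The remaining task is to show that for each $T\in \mathcal{T}_\mathrm{local}^{RW}(S)$ the poset $\mathcal{P}_T$ of strict contractions out of $T$ is a CW-poset. It should be that of a disc when the root of $T$ is white (an interior stratum) and that of a sphere when the root is red (a boundary stratum), in the appropriate dimension $r-1$, where $r$ is the number of red vertices. Following Proposition \ref{link_prop}, I would define height functions on $T$ as follows.
\begin{itemize}
\item White vertices have height $0$.
\item Each red vertex $v$ has a height $h(v)\ge0$ subject only to the local constraint $h(v)\ge h(\mathrm{parent}(v))$ when the parent is red.
\item A red vertex with a white parent is constrained only by $h(v)\ge 0$.
\end{itemize}
This gives a polyhedral cone $X(T)\subset\mathbb{R}^{V^R(T)}$. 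Faces correspond to setting some heights to $0$ (turning a red vertex white, or merging it into its white parent) or making heights of adjacent red vertices equal (contracting an edge). This should give $\mathcal{F}(X(T))\cong \mathcal{P}_T^{\mathrm{op}}$.

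The cone decomposes as a product over the maximal red subtrees $U$ of $T$, with one factor $X(U)$ each. Each maximal red subtree hanging off a white vertex gives a pointed cone, as in the red factor $X_R$ of the composition theorem. After slicing by $\sum x_v=1$ and dualizing, its face poset gives a disc. If the root is red, however, the root red component behaves differently: its root stratum lies in the boundary. I expect its factor to yield the poset $\mathcal{P}$ of a disc with the top cell removed, i.e. a sphere, via the ``$\partial$'' version as in Proposition \ref{link_prop}. The join of discs and one sphere is then a sphere (or a disc), by Lemma \ref{lem: join and suspension of CW} and Lemma \ref{link1_lmm}.

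\textbf{Main obstacle.} The hard step is the precise identification of $\mathcal{P}_T$ in the non-legal local category, and in particular the correct treatment of the root component. One must check that the local contraction rules (merge a red vertex into a white parent, turn red to white, contract red--red edges) match exactly the faces of the product cone. One must also check that the root-red case gives a boundary-type link of dimension $r-2$, with the extra $[0,1)$ direction accounting for boundary. I would first test this on the $\mathrm{FM}_1$, $G=S^1$ example in arity $3$, where $Z(3)$ is the genus-one surface built from hexagons and squares.
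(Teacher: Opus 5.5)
Your formal part is fine, but the one non-formal step, computing the links $\mathcal{P}_T$ in $\mathcal{T}^{RW}_{\mathrm{local}}(S)$, fails as proposed, and the target answer is reversed.

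\textbf{The expected answer.} Trees with white root index interior strata of $W^*(S)$, so their links must be spheres $S^{r-1}$. Trees with red root index boundary strata, so their links must be discs $D^{r-1}$. For example, $T=\bullet_R$ indexes the codimension-one stratum $O(S)\subset\partial W^*(S)$. Its link is a point (the white corolla), not a sphere.

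\textbf{The height-function cone.} More seriously, your cone does not have face poset $\mathcal{P}_T^{\mathrm{op}}$.
\begin{itemize}
\item No legality is imposed in the local category. So a red vertex $v$ can always be recolored white by itself, whatever its neighbours' colours.
\item This contraction is distinct from, and incomparable with, merging $v$ into its parent's cluster. Your face $\{x_v=0\}$ conflates the two.
\item When the parent is red, your inequalities forbid recolouring $v$ white while the parent stays red. Yet this is a legitimate local contraction.
\end{itemize}
Two small examples show the mismatch:
\begin{itemize}
\item White root with one red child: $\mathcal{P}_T$ is two incomparable elements, an $S^0$, as an interior codimension-one stratum requires. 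Your cone is a ray with one face.
\item Red root with one red child: $\mathcal{P}_T$ has five elements forming an interval. Your $2$-dimensional cone has three faces.
\end{itemize}
Your proposed test case ($\mathrm{FM}_1$, $G=S^1$, arity $3$) would expose this. Every vertex of $Z(3)$ is of the second type: a boundary point where one hexagon and one square meet. The inequalities $x_v\ge x_{\mathrm{parent}(v)}$ encode the legality constraints of $\mathcal{T}^{RBW}$, which are absent here.

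\textbf{The missing idea.} Red vertices do not interact at all in the local category, and white vertices have no freedom. A nontrivial contraction out of $T$ is an independent choice, for each red vertex $v$, of one of three states, not all equal to the first:
\begin{itemize}
\item $v$ is the root of a red cluster;
\item $v$ is merged into its parent's cluster (only if $v$ is not the root);
\item $v$ is the root of a white cluster.
\end{itemize}
The order is componentwise: the first state lies below the other two, which are incomparable. Hence $\mathcal{P}_T$ is the join of one copy of $S^0$ for each non-root red vertex, together with one point if the root is red. Lemma \ref{lem: join and suspension of CW} then gives $S^{r-1}$, respectively $D^{r-1}$.

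\textbf{The paper's route.} The paper reaches this by induction rather than polytopes. It discards white leaves, which are inert, and removes a red leaf $v$ with edge $e_v$. This exhibits $\mathcal{P}_T$ as two cones on $\mathcal{P}_{T\setminus\{v,e_v\}}$: one from contracting $e_v$, one from recolouring $v$ white. The cones meet exactly in their common base. The base cases are a single white vertex, with empty link, and a single red vertex, whose link is a point.
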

As in the previous section, there is a canonical candidate for this stratification obtained via left Kan extension. The crucial step in showing it is indeed a stratification is the assertion that the poset of elements $\mathcal{P}_T$ strictly greater than some $T \in \mathcal{T}_\mathrm{local}^{RW}(S)$ is the CW poset of either a sphere or disc of the appropriate dimension, depending on if $T$ will correspond to a stratum in the interior or on the boundary. 
\begin{proof}
    First, if $T$ has more than $1$ vertex we may assume without loss of generality that it has no external white vertices (i.e. a white vertex with a label attached). This is because in the poset $\mathcal{T}_\mathrm{local}^{RW}(S)$ an external white vertex cannot be involved in an (uncolored) contraction or have its color change to red. 

    Now take a red external vertex $v$ which is adjacent to the edge $e_v$ of $T$; we will relate the link of $T$ to the link of the tree 
    \[T':=T \setminus \{v,e_v\}.\]

    Any contraction of $T'$ gives rise to a contraction of $T$ by either $(1)$ doing nothing or $(2)$ contracting $e_v$. The former is strictly less than the latter, and so together this gives an embedding 
    \[\mathrm{Cone}(\mathcal{P}_{T'}) \rightarrow \mathcal{P}_T.\]

    However, this is not the only such embedding: another option is to change $v$ from red to white. This provides a second embedding, and these two embedded  cones intersect exactly on their base. Thus, if we know $\mathcal{P}_{T'}$ is a CW-poset then $\mathcal{P}_T$ will be the CW-poset corresponding to the suspension with the standard cell structure.

 By iteratively removing external red and white vertices, we are reduced to the case of a single vertex of either white or red color. The single white vertex has empty link. The suspension of the empty CW-complex is a sphere, and so we see that the link of $T$ is a sphere if the root is white.
 
The single red vertex has its link a point, and so if the root of $T$ is red then the link is homeomorphic to a disc. Since length of chains in $\mathcal{T}_\mathrm{local}^{RW}(S)$ may be identified with the difference in number of red vertices, we conclude that these discs and spheres are of the correct dimension.
\end{proof}

\section{Constructing manifold operads}\label{section: surgery on manifold operads} 

In this section, we develop cut-and-paste techniques for the construction of manifold operads. We refer to this as surgery. Surgery on operads is developed in two stages. First, we describe surgery on the top arity of a truncated operad and its associated truncated bimodule cobordism trace. In the following section, we describe how to canonically propagate surgeries and their traces up arity.

\subsection{Surgery on the top arity}
  Up until this point, we have only given one example of a $d$-dimensional manifold operad, the Fulton-MacPherson operad $\mathrm{FM}_d$. However, $n$-truncated manifold operads are plentiful and can be obtained by truncating the Fulton-MacPherson operad and performing a series of equivariant surgeries on $\mathring{\mathrm{FM}_d}(n)$, leaving us with an $n$-truncated manifold operad which agrees with $\mathrm{FM}_d$ up to arity $n-1$, but uses a different resolution of the obstruction of Proposition \ref{prop: operad obstruction}. As the $n$th space of operations is obtained by surgery, one might expect that the resulting operad is bimodule cobordant to the Fulton-MacPherson operad, and this is indeed true. We fix some finite $n< \infty$. Suppose we are supplied with a $RBW$ $\Sigma_n$-manifold $W$ from $O(n)$ to $M$ which satisfies 
  \begin{enumerate}
  \item  \label{surgerycollaritem1} $O(n)$ has a $\Sigma_n$-equivariant collar;
  \item \label{surgerycollaritem2} $W$ has a $\Sigma_n$-equivariant collar;
      \item \label{surgerycollaritem3} $M$ has a $\Sigma_n$-equivariant collar;
      \item \label{surgerycollaritem4}  $W$ is equipped with an identification 
\[f: \partial W \setminus (\mathring O(n) \sqcup \mathring M) \cong \partial O(n) \times [0,1],\]
and this restricts to the identity on $\partial O(n)$.
\item \label{surgerycollaritem5} The trace of the surgery $(O(n)\times[0,1]) \cup_{O(n)} W $ admits an equivariant collar.
  \end{enumerate}

Define $\sim$ to be the equivalence relation on $W$ generated by 
\[p \sim p' \iff p,p' \in  \partial W \setminus (\mathring O(n) \sqcup \mathring M) \: \text{and}\: pr_1(f(p))=pr_1(f(p')),\]
where $pr_1$ is the projection of $\partial O(n)\times[0,1]$ to the first factor composed with $f$. 
Let $\bar{W}:= W/\sim$. Note that the quotient map embeds $O(n)$ and $M$ into $\bar{W}$, intersecting along their boundary. By construction, there is a canonical equivariant homeomorphism $\partial (W/\sim) \cong O(n) \cup_{\partial O(n)} M $

Our assumptions imply 

\begin{lem}

There is an isomorphism of $\, \Sigma_n$-equivariant pairs of spaces $(W,O(n)) \cong (\bar{W},O(n))$.
\end{lem}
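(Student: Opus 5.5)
The plan is to show that collapsing the side boundary $\partial_0 W:=\partial W\setminus(\mathring O(n)\sqcup\mathring M)\cong\partial O(n)\times[0,1]$ along its interval direction does not change $W$ up to equivariant homeomorphism rel $O(n)$. The model case is the square $[0,1]\times[0,1]$ with one side edge $\{0\}\times[0,1]$ collapsed to a point: the result is again homeomorphic to a square, with the bottom edge fixed. The proof thickens this picture over $\partial O(n)$.

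\textbf{Step 1: a neighborhood of the side boundary.} Let $c:\partial W\times[0,1)\hookrightarrow W$ be the $\Sigma_n$-equivariant collar from (\ref{surgerycollaritem2}). Restricting it to $\partial_0 W$ and composing with $f$ gives an equivariant embedding
\[\partial O(n)\times[0,1]_s\times[0,\epsilon)_u\hookrightarrow W.\]
Here $s$ is the $f$-coordinate along $\partial_0 W$, running from $O(n)$ at $s=0$ to $M$ at $s=1$, and $u$ is the collar depth. This is not yet a corner chart, because near $s=0$ and $s=1$ the collar of $\partial_0 W$ must be reconciled with the collars of $O(n)$ and of $M$ inside $\partial W$. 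I would straighten the corner along $\partial O(n)\subset\partial W$ using the collar of $O(n)$ from (\ref{surgerycollaritem1}), and along $\partial M$ using the collar of $M$ from (\ref{surgerycollaritem3}). I would treat (\ref{surgerycollaritem5}) as the guarantee that these choices can be made compatibly at the $O(n)$ end; that is presumably why it is among the hypotheses. The outcome should be an equivariant embedding $\partial O(n)\times Q\hookrightarrow W$, where $Q=[0,1]_s\times[0,\epsilon]_u$ is a square, such that:
\begin{itemize}
\item $\partial O(n)\times\{u=0\}$ maps onto $\partial_0W$ via $f$;
\item $\partial O(n)\times\{s=0\}$ lands in $O(n)$ as a collar of $\partial O(n)$;
\item $\partial O(n)\times\{s=1\}$ lands in $M$ as a collar of $\partial M$.
\end{itemize}

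\textbf{Step 2: the fiberwise homeomorphism.} The relation $\sim$ is supported in the image of $\partial O(n)\times\{u=0\}$, where it collapses each segment $\{x\}\times[0,1]_s$. Choose a homeomorphism
\[g:Q\to Q/(\{u=0\}),\]
that is, from the square to the square with its bottom edge collapsed to a point. Require $g$ to be the identity near the top edge $\{u=\epsilon\}$ and to preserve the side $\{s=0\}$ pointwise up to the reparametrization already built into the collar. Define $\Phi:W\to\bar W$ by $\mathrm{id}\times g$ on $\partial O(n)\times Q$ and by the identity elsewhere. Because $g$ is the identity near the seam, $\Phi$ is a continuous bijection. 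It is equivariant because $\Sigma_n$ acts only through the $\partial O(n)$ factor. It is a homeomorphism since $W$ is compact and $\bar W$ is Hausdorff; the latter holds because $\sim$ is a closed relation with compact fibers on a compact Hausdorff space.

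\textbf{Step 3: what happens to $O(n)$.} By construction $\Phi$ carries $O(n)$ onto its image in $\bar W$. On the collar part $\partial O(n)\times\{s=0\}$ it acts by a self-homeomorphism, which can be absorbed into the identification of pairs. This gives $(W,O(n))\cong(\bar W,O(n))$.

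The main obstacle is Step 1. One must produce an honest equivariant product chart near the codimension-two corners $\partial O(n)$ and $\partial M$ of $W$, compatible with $f$. Topologically this needs the given collars to fit together, not just to exist. My first attempt would be to build the chart from the collar of $\partial W$ together with the collars of $O(n)$ and $M$, and to use the equivariant collar of the trace in (\ref{surgerycollaritem5}) to interpolate at the $O(n)$ end.
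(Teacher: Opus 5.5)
Your overall strategy is the right one. The paper itself gives no argument beyond ``our assumptions imply'', and the content is exactly what you describe: in a product neighborhood of the side boundary $\partial_0 W$, the relation $\sim$ is a planar collapse crossed with $\partial O(n)$, and a planar homeomorphism that fixes the frontier does the job. Your Steps 2 and 3 are fine. The gap is Step 1, which you leave as an unresolved ``main obstacle'', so the proposal never produces the chart that Step 2 depends on. Your diagnosis of Step 1 is also off. No compatibility among the collars is needed, no corner has to be straightened inside $W$, and hypothesis (\ref{surgerycollaritem5}) plays no role here; the paper invokes it afterwards, for the collarability of $T_1$.

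Here is the missing step.
\begin{itemize}
\item Let $c_O$, $c_M$, $c$ be the equivariant collars of $\partial O(n)\subset O(n)$, $\partial M\subset M$ and $\partial W\subset W$ from (\ref{surgerycollaritem1}), (\ref{surgerycollaritem3}), (\ref{surgerycollaritem2}). Recall that $f$ is the identity on $\partial O(n)$ and restricts to $\partial M\cong \partial O(n)\times\{1\}$.
\item Define $\beta:\partial O(n)\times(-1,2)\to\partial W$ by $c_O(x,-s)$ for $s\le 0$, by $f^{-1}(x,s)$ for $0\le s\le 1$, and by $c_M(f^{-1}(x,1),s-1)$ for $s\ge 1$. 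This is a well-defined equivariant continuous injection between boundaryless manifolds of the same dimension. By invariance of domain it is an open embedding onto a neighborhood of $\partial_0 W$ in $\partial W$.
\item Then $\Psi(x,s,u)=c(\beta(x,s),u)$ is an equivariant open embedding of $\partial O(n)\times P$ into $W$, where $P=(-1,2)\times[0,1)$. In this chart $O(n)$, $\partial_0 W$ and $M$ all lie flat in $\{u=0\}$, as $s\le 0$, $0\le s\le 1$ and $s\ge 1$ respectively. The relation $\sim$ collapses $[0,1]\times\{0\}$ in each fiber, and the ``corner'' along $\partial O(n)$ is just the point $(0,0)$ of the planar model.
\item Now run your Step 2 inside $P$. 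Take the disc $D=[-\tfrac12,\tfrac32]\times[0,\tfrac12]$; its quotient $\bar D$ by $[0,1]\times\{0\}$ is again a disc. Consider the homeomorphism of boundary circles that is the quotient map on $[-\tfrac12,0]\times\{0\}$ and on the other three sides of $D$, and that carries $[0,\tfrac32]\times\{0\}$ homeomorphically onto the image of $[1,\tfrac32]\times\{0\}$. By the Alexander trick it extends to a homeomorphism $g_D:D\to\bar D$.
\item Set $\Phi=\mathrm{id}\times g_D$ on $\Psi(\partial O(n)\times D)$ and the quotient map elsewhere. The two definitions agree on the frontier, so $\Phi$ is an equivariant homeomorphism $W\to\bar W$.
\end{itemize}
Because $g_D$ is the quotient map on the $O(n)$ side, $\Phi$ restricts to the identity on $O(n)$, not merely to a self-homeomorphism that has to be ``absorbed''. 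This rel-$O(n)$ form is what the later gluing $W_{[0,1]}(O)(n)\cup_{O\times\{0\}}\bar W\cong W_{[0,1]}(O)(n)\cup_{O\times\{0\}}W$ actually uses.
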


\begin{prop}
    Suppose $O$ is a collarable $n$-truncated  manifold operad with $W$ and $M$ as above. There is a canonical choice of $n$-truncated collarable manifold operad  $S$ which satisfies
\[
  S(m) \cong
  \begin{cases}
    O(m) & m <n\\
   M  & m =n.
  \end{cases}
\]

Additionally the $n$-truncated symmetric sequence $T$
\[
  T(m) \cong
  \begin{cases}
    O(m) \times [0,1] & m <n\\
   W  & m =n.
  \end{cases}
\]
  has both an $(O,S)$-bimodule structure and a $(S,O)$-bimodule structure that make it into a collarable $n$-truncated bimodule cobordism.

\end{prop}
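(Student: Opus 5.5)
The plan is to define $S$ by keeping all the lower-arity data of $O$ and replacing only the top space, and then to build $T$ by gluing the collar-thickened operad $W_{[0,1]}(O)$ to the surgery trace in arity $n$.

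\textbf{Step 1: the operad $S$.} For $m<n$ set $S(m):=O(m)$, with the same composition maps. For arity $n$, the definition of a manifold operad forces only the boundary of $S(n)$. Since $O$ is a manifold operad, $\partial O(n)\cong\colim_{\mathcal{T}(n)\backslash\bullet_n}\phi$, and by the RBW-manifold structure this closed manifold is also identified with $\partial M$ via $f$ restricted to $\partial O(n)\times\{1\}$. We set $S(n):=M$ and let the partial composites landing in arity $n$ factor through this identification $\colim\phi\cong\partial M\hookrightarrow M$, exactly as in the converse direction of the proof of Proposition \ref{prop: operad obstruction}. Conditions (\ref{manifoldoperad1_item}) and (\ref{manifoldoperad2_item}) then hold by construction, and collarability follows from hypothesis (\ref{surgerycollaritem3}).

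\textbf{Step 2: the bimodule $T$.} By Corollary \ref{cor: reflexivity} (via Proposition \ref{prop: thick W bimodule} and Proposition \ref{prop: isomorphic to w construction}), $O\cong W(O)$, and $W_{[0,1]}(O)$ is an $n$-truncated collarable $(O,O)$-bimodule cobordism. Its underlying symmetric sequence is $O\times[0,1]$. In arities $m<n$ we take $T(m):=W_{[0,1]}(O)(m)\cong O(m)\times[0,1]$ together with its bimodule structure. In arity $n$ we must replace the interior $O(n)\times[0,1]$ by $W$, keeping the decomposable part of the boundary, which is $\mathrm{Decom}\times[0,1]\cong\partial O(n)\times[0,1]$. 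Hypothesis (\ref{surgerycollaritem4}) identifies exactly this piece of $\partial W$ with $\partial O(n)\times[0,1]$. We therefore let the bimodule structure maps into arity $n$ factor through
\[
\colim_{\mathcal{T}^{RBW}(n)\backslash\bullet_W}\phi \;\cong\; O(n)\cup(\partial O(n)\times[0,1])\cup M=\partial W.
\]
This colimit depends only on arities $<n$ together with the top-arity pieces $O(n)$ and $S(n)=M$.

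\textbf{Step 3: checking the colimit and the two bimodule structures.} For the $(O,S)$-structure, the red operad is $S$ and $M$ sits at the top. For the $(S,O)$-structure we use the reversed identification, so that $M$ is on the left. The key point is that in arities $<n$ the operads $O$ and $S$ coincide, so both colorings give literally the same lower bimodule $W_{[0,1]}(O)^{\le n-1}$. The asymmetry warned about in the Warning after Proposition \ref{prop: thick W bimodule} disappears because we only use the $[0,1]$-direction in arity $n$ through $f$.

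\textbf{Main obstacle.} The hardest part is verifying condition (\ref{bimodcobor4}) for both structures: that the canonical map from the RBW-colimit to $\partial W$ is a homeomorphism onto $O(n)\cup\mathrm{Decom}\cup M$, compatibly with $f$. First I would check this for the $(O,O)$-bimodule $W_{[0,1]}(O)$ itself, where it holds by Proposition \ref{prop: thick W bimodule}. Then I would use that the colimit for $T$ differs only by replacing the top red or blue corolla $O(n)$ with $M$, which is attached along the same $\partial O(n)$. Collarability of $T(n)$ is hypothesis (\ref{surgerycollaritem2}). Hypothesis (\ref{surgerycollaritem5}), together with the lemma $(W,O(n))\cong(\bar W,O(n))$, is what lets us realize the ends of the collar consistently as $\partial O(n)\times\{0,1\}$. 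That consistency is what makes the lower-arity gluing well defined in both orientations.
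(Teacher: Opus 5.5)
\textbf{There is a genuine gap.} Your Step~2 rests on the identification $\colim_{\mathcal{T}^{RBW}(n)\backslash\bullet_W}\phi\cong O(n)\cup(\partial O(n)\times[0,1])\cup M=\partial W$, and this identification is not available.

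The arity-$n$ decomposables are forced by the lower arities of $W_{[0,1]}(O)$. By the proof of Proposition~\ref{prop: thick W bimodule} they are
\[
L=\big((W(O)(n)\setminus \mathring O(n))\times\{0\}\big)\cup\big(\mathrm{Decom}(W(O))(n)\times[0,1]\big).
\]
This is not $\mathrm{Decom}(O)(n)\times[0,1]$. One end of $L$ is $\partial O(n)\times\{0\}$. The other end is $\mathrm{Decom}(W(O))(n)\times\{1\}$, which is identified with $\mathrm{Decom}(O)(n)$ only through the isomorphism $O\cong W(O)$.

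Since $T(n)=W$ is a new space, there is no canonical map from the colimit to $\partial W$; you have to build one. Concretely, you need a homeomorphism from $L$ onto $\partial W\setminus(\mathring O(n)\sqcup\mathring M)$ that restricts on the two ends to the prescribed gluings: the identity of $\partial O(n)$, and the identification $\partial M\cong\mathrm{Decom}(O)(n)$ you fixed in Step~1. The map $f$ of hypothesis~(\ref{surgerycollaritem4}) only parametrizes the side of $\partial W$ and says nothing about $L$.

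So you would need $L$ to be a product $\partial O(n)\times[0,1]$ \emph{relative to both end identifications}. Equivalently, the discrepancy between the collar flow of $W(O)(n)$ and the boundary restriction of $O(n)\cong W(O)(n)$ must be pseudo-isotopic to the identity. The alternative is a twisted choice of $S$, but with a single $S$ the twist lands on the $M$-end for the $(O,S)$-structure and on the $O(n)$-end for the $(S,O)$-structure.

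Invoking hypothesis~(\ref{surgerycollaritem5}) and the lemma $(W,O(n))\cong(\bar W,O(n))$ does not supply this. Hypothesis~(\ref{surgerycollaritem5}) concerns the trace $(O(n)\times[0,1])\cup_{O(n)}W$, which never appears in your construction; that is a sign the intended $T(n)$ is different. Your discussion of the Warning is also beside the point: the only asymmetry here is which end of $L$ receives $M$.

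\textbf{The missing idea is to glue $W$ onto $W_{[0,1]}(O)(n)$ rather than replace it.} The paper takes $T(n)=W_{[0,1]}(O)(n)\cup_{O(n)\times\{0\}}W$ for the $(S,W(O))\cong(S,O)$-structure. For the $(O,S)$-structure it takes $W_{[0,1]}(O)(n)\cup_{W(O)(n)\times\{1\}}W$, glued via $O(n)\cong W(O)(n)$.

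Passing to $\bar W$ collapses the side $\partial O(n)\times[0,1]$ via $pr_1\circ f$. As a result $M$ is attached to $L$ exactly where the old end corolla was, along the same boundary and by the same map. Since $L$ is untouched, conditions (\ref{bimodcobor3}) and (\ref{bimodcobor4}) are inherited from $W_{[0,1]}(O)$, and no comparison between $L$ and the side of $W$ is ever needed. Your own description of the colimit (``replace the end corolla by $M$, attached along the same $\partial O(n)$'') is precisely the boundary of this glued space.

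The resulting $T(n)$ is the surgery trace. It is homeomorphic to $W$ by the lemma $(W,O(n))\cong(\bar W,O(n))$ together with Proposition~\ref{prop: isomorphic to w construction}, and hypothesis~(\ref{surgerycollaritem5}) is exactly its collarability.
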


\begin{proof}
    By hypothesis, there is an equivariant homeomorphism $\partial M \cong \partial O(n) = \mathrm{Decom}(O)(n)$, and so since $S^{\leq n-1} = O ^{\leq n-1}$ we can define the truncated operad composites of $S$ to agree with $O$ if the target arity is $<n$ and
    to factor through the given isomorphism and boundary inclusion if the target arity is $n$.
    By construction, the result is an $n$-truncated manifold operad which is collarable by hypothesis.

We now define the relevant truncated bimodules which we will call $T_1$ and $T_2$. 

We define \[
  T_1(m) \cong
  \begin{cases}
    W_{[0,1]}(O)(m) & m <n\\
   W_{[0,1]}(O)(n) \cup_{O \times \{0\}} W  & m =n.
  \end{cases}
\]
 There is a reduced $(S,W(O))$-bimodule structure on this truncated symmetric sequence: note that the reduced bimodule decomposables of $W_{[0,1]}(O)$ still embed into this $n$-truncated symmetric sequence, and so we only need to describe a suitable  cospan $M \rightarrow T_1(n) \leftarrow W(O)(n)$. The map $T_1(n) \leftarrow W(O)(n)$ is defined as before embedding at $W(O) \times \{1\}$. The map $M \rightarrow T_1(n)$ is then defined using the isomorphism mentioned above
 \[M \hookrightarrow  W_{[0,1]}(O)(n) \cup_{O \times \{0\}} \bar{W} \cong W_{[0,1]}(O)(n) \cup_{O \times \{0\}} W =: T_1(n).\]

 By Proposition \ref{prop: isomorphic to w construction}, $T_1 \cong T$ as symmetric sequences. The introduction of the reparametrization using $W \cong \bar{W}$ ensures that the resulting bimodule satisfies the property that the reduced bimodule decomposables form an embedded cobordism between $\partial O (n)$ and $\partial S(n)$. This truncated bimodule cobordism is collarable by hypothesis (\ref{surgerycollaritem5}).

Similarly, one defines 
\[
  T_2(m) \cong
  \begin{cases}
    W_{[0,1]}(O)(m) & m <n\\
   W_{[0,1]}(O)(n) \cup_{W(O) \times \{1\}} W  & m =n.
  \end{cases}
\]
where the gluing uses the operad isomorphism $O \cong W(O)$ in arity $n$ guaranteed by Proposition \ref{prop: isomorphic to w construction}. The argument that this can be made a collarable $(O,S)$-bimodule cobordism is as before.
\end{proof}

\subsection{The arity extension of truncated bimodule cobordisms}\label{X_sec}

In this section, we discuss how to extend truncated bimodule cobordisms by arity given an extension of either the truncated operad acting on the left or the truncated operad acting on the right.  

\subsubsection{Left surgery}
  Fix $l< \infty$ and $n \leq \infty$ such that $l \leq n$. Let $R$ be an $n$-truncated manifold operad.

\begin{thm}\label{thm: left surgery}
    Given a stratifiable $l$-truncated bimodule cobordism $W$ from $B$ to $R^{\leq l}$, there is a functor $\phi: \mathcal{T}^{RBW}_{\leq n} \rightarrow \mathrm{Top}$ determining a stratifiable $n$-truncated, $d$-dimensional bimodule cobordism, which we call $X_L(W)$, from the $n$-truncated manifold operad $\partial_L X_L(W)$ to $R$. We have that $X_L(W)$ is collarable if $W$ is collarable, and  $\partial_L X_L(W)$ is collarable if $B$ is collarable.
    
    The functor $\phi$ is defined by the conditions:
\begin{enumerate}
\item $\phi: \mathcal{T}^{RBW}_{\leq l}  \rightarrow \mathrm{Top}$ represents the reduced bimodule $W$.
    \item $\phi: \mathcal{T}^{RBW}_{\leq n} \rightarrow \mathrm{Top}$ represents a reduced bimodule whose right action is through the $n$-truncated operad $R$.
    
    \item  If $|S|>l$, then
    \[\phi(\bullet_{W,S}) =X_L(W)(S):=
    (\colim_{\mathcal{T}^{RBW}(S)\backslash\{\bullet_{ W ,S},\bullet_{B,S}\}}\phi \cup \text{external collar})   \times [0,1].\]
    \item If $|S|>l$, then \[\phi (\bullet_{B,S})=\partial_LX_L(W)(S):=\overline{ \partial X_L(W)(S) \setminus ((\colim_{\mathcal{T}^{RBW}(S)\backslash\{\bullet_{ W ,S},\bullet_{B,S}\}}\phi) \times \{1\})}.\]

    \item If $|S|>l$, then the values $\phi(\bullet_{B,S})\rightarrow \phi(\bullet_{W,S}) \leftarrow \phi(\bullet_{R,S})$ are given by the inclusions. 
    \[\begin{tikzpicture}
    \path (0,0) coordinate (A0) -- (1,0) coordinate (B0) -- (1.7,0.5) coordinate (C0) -- (1.2,1) coordinate (D0) -- (0.3,1) coordinate (E0) -- (-0.3,0.5) coordinate (F0) -- cycle;
    \draw ($(A0)+(0,1.5)$) coordinate (A1) -- ($(B0)+(0,1.5)$) coordinate (B1) -- ($(C0)+(0,1.5)$) coordinate (C1) -- ($(D0)+(0,1.5)$) coordinate (D1) -- ($(E0)+(0,1.5)$) coordinate (E1) -- ($(F0)+(0,1.5)$) coordinate (F1) -- cycle;
    \draw [fill=red,opacity=0.5] (0.2,1.8) coordinate (Ai) -- (0.9,1.8) coordinate (Bi) -- (1.3,2.06) coordinate (Ci) -- (1.1,2.25) coordinate (Di) -- (0.4,2.25) coordinate (Ei) -- (0.05,2) coordinate (Fi) -- cycle; 
    \draw ($1.4*(A1)-0.4*(Ai)$) coordinate (A11) -- ($1.4*(B1)-0.4*(Bi)$) coordinate (B11) -- ($1.4*(C1)-0.4*(Ci)$) coordinate (C11) -- ($1.4*(D1)-0.4*(Di)$) coordinate (D11) -- ($1.4*(E1)-0.4*(Ei)$) coordinate (E11) -- ($1.4*(F1)-0.4*(Fi)$) coordinate (F11) -- cycle;  
    \draw ($(A11)+(0,-1.5)$) coordinate (A00) -- ($(B11)+(0,-1.5)$) coordinate (B00) -- ($(C11)+(0,-1.5)$) coordinate (C00) -- ($(D11)+(0,-1.5)$) coordinate (D00) -- ($(E11)+(0,-1.5)$) coordinate (E00) -- ($(F11)+(0,-1.5)$) coordinate (F00) -- cycle;
    \draw (A00)--(A11); 
    \draw (B00)--(B11);
    \draw (C00)--(C11);
    \draw (D00)--(D11);
    \draw (E00)--(E11);
    \draw (F00)--(F11);
    \draw[fill=blue, opacity=0.2] (F00)--(A00)--(B00)--(C00)--(C11)--(D11)--(E11)--(F11)--cycle;  
    \path [fill=green, opacity=0.3, even odd rule]
    (A1)--(B1)--(C1)--(D1)--(E1)--(F1)--cycle
    (Ai)--(Bi)--(Ci)--(Di)--(Ei)--(Fi)--cycle;
    \draw [<-,red] (1,2) -- (2,2.5);
    \node [right, red] at (2,2.5) {\small $R(S)$};
    \draw [->, seagreen] (-0.5,1.7)--(0,1.7); 
    \node [left, seagreen] at (-0.5,1.7) {\small $\colim_{\mathcal{P}_{\mathcal{T}^{RBW}(S)\backslash\{\bullet_{W,S},\bullet_{B,S},\bullet_{R,S}\}}}\phi$};
    \draw [<-,blue] (1.6,0.4)--(2,0.4);
    \node [right,blue] at (2,0.4) {\small $\begin{aligned}\partial_L X_L(W)(S)\end{aligned}$};
    \node at (0.7,0.7) {\small $ X_L(W) (S)$};
\end{tikzpicture}\]
    \item The values of $\phi$ on all other contractions with codomain $\bullet_{W,S}$ factor through \[\colim_{\mathcal{T}^{RBW}(S)\backslash\{\bullet_{ W ,S},\bullet_{B,S},\bullet_{R,S}\}}\phi\] via inclusion at $\{1\}$. \
    
    \item The value of $\phi$ on contractions with target $\bullet_{B,S}$ factors through
    \[\colim_{\mathcal{T}^{B}(S)\backslash\{\bullet_{B,S}\}}\phi \subset \partial \colim_{\mathcal{T}^{RBW}(S)\backslash\{\bullet_{ W ,S},\bullet_{B,S},\bullet_{R,S}\}}\phi\]
    via inclusion at $\{1\}$.
    \item The value of $\phi$ on label changes for either $\bullet_B$ or $\bullet_W$ is induced by the natural action on the colimits.
\end{enumerate}

\end{thm}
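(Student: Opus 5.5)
The construction is by induction on the arity $N=|S|$ from $l+1$ up to $n$. At each stage we will show simultaneously three things: that $B(S):=\partial_L X_L(W)(S)$ makes $\partial_L X_L(W)^{\leq N}$ an $N$-truncated manifold operad; that $X_L(W)^{\leq N}$ is an $N$-truncated bimodule cobordism from it to $R^{\leq N}$; and that this bimodule carries a stratification in the sense of Definition \ref{stratifiedbimodule_dfn}. The last property is what feeds the next step. By Theorem \ref{thm: bimodule cobordism is stratified} it suffices to produce a stratification. Stratifiability of $W$ supplies the stratification data $(\mathcal{N},\nu)$ for $|S|\leq l$, and that of $R$ supplies it on $\mathcal{T}^R$.

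\textbf{Setup at arity $N$.} Assume the data exist on $\mathcal{T}^{RBW}_{\leq N-1}$. Put
\[
C(S):=\colim_{\mathcal{T}^{RBW}(S)\backslash\{\bullet_{W,S},\bullet_{B,S}\}}\phi,
\qquad
C^\circ(S):=\colim_{\mathcal{T}^{RBW}(S)\backslash\{\bullet_{W,S},\bullet_{B,S},\bullet_{R,S}\}}\phi .
\]
The first key step is to show that $C(S)$ is a compact topological manifold with boundary. Its interior should contain $\mathring R(S)$, and its boundary should be
\[
\partial C(S)=\colim_{\mathcal{T}^B(S)\backslash\bullet_B}\phi.
\]
To prove this, we run Lemma \ref{lem: stratification of boundary of bimodule} on the subposet that omits $\bullet_B$ and $\bullet_W$. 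For this we use the decomposable stratification data for $T'\neq\bullet_{B},\bullet_W$, which is forced by Definition \ref{stratifiedbimodule_dfn} and the inductive hypothesis. The outcome is that a stratum $T$ has a neighbourhood in $C(S)$ of the form $\mathring\phi(T)\times\mathring{\mathrm{Cone}}(L_T)$. Here $L_T$ is the realisation of the subposet of $\partial\mathcal{P}_T$ consisting of the contractions whose target is not $\bullet_B$.

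The main combinatorial obstacle is to show that $L_T$ is a sphere when $T$ has no blue vertex, and a disc otherwise. I would attack this with the polytope $\tilde X(T)$ from the proof of Proposition \ref{link_prop}. Removing the element $T\to\bullet_B$ deletes exactly one vertex of $\tilde X(T)$, namely the height function concentrated at the root with negative value. In the dual polytope this corresponds to removing one open facet from a sphere, which leaves a disc. If $T\to\bullet_B$ is not legal, nothing is removed and $L_T$ is the whole sphere. A parity and dimension count on chains, as in the lemma on maximal chain lengths, then gives the dimension $Nd-d-1$.

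\textbf{Building the new spaces.} Let $\tilde C(S):=C(S)\cup(\partial C(S)\times[0,1])$ be $C(S)$ with an external collar, and set $X_L(W)(S)=\tilde C(S)\times[0,1]$. This is a manifold of dimension $Nd-d$. Its boundary decomposes into three pieces:
\begin{itemize}
\item $\tilde C(S)\times\{1\}\supset C^\circ(S)\cup R(S)$, which is the reduced decomposables together with $R(S)$;
\item the remainder of the boundary, which is exactly $\partial_L X_L(W)(S)$ as defined in the statement;
\item the corner $\partial C(S)\times\{1\}$, where the two pieces above meet.
\end{itemize}
From this description, $\partial_L X_L(W)(S)$ is a compact manifold of dimension $Nd-d-1$ whose boundary is $\partial C(S)\times\{1\}=\colim_{\mathcal{T}^B(S)\backslash\bullet_B}\phi$. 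That verifies conditions (1) and (2) of Definition \ref{manifoldoperad_dfn} for the new operad $B$. Conditions (1)--(4) of Definition \ref{bimodulecobordism_dfn} for $X_L(W)$ then follow by reading off the boundary decomposition above. The external collar is there so that $\partial_L$ is the whole closure, and it keeps the homeomorphisms $\Sigma_S$-equivariant.

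\textbf{Stratification at arity $N$.} Next we extend the stratification data.
\begin{itemize}
\item \emph{Contractions $T\to\bullet_B$.} Define $(\mathcal{N},\nu)$ by the half-ball construction of \ref{step 3 of operad strat}, using the external collar coordinate as $h$.
\item \emph{Contractions $T\to\bullet_W$.} Use the half-ball construction of \ref{step 3 of bimodule strat}, now with the second $[0,1]$ factor as $h$.
\end{itemize}
Equivariance holds because every ingredient is canonical. Compatibility condition (\ref{Ncondition_item}) for composites $T\to\bullet_B\to\bullet_W$ needs the two cone parameters to be matched. I expect this to be the fiddliest point. I would handle it by choosing the cone rays in the product of the two collars to be straight lines in the corner chart $[0,1)^2$.

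\textbf{Remaining claims.} Functoriality in $W$ is immediate, since every step is a colimit or a product with intervals. Collarability also follows directly: an equivariant collar of $X_L(W)(S)$ is obtained by pushing inward in the two interval coordinates. Likewise $\partial_L X_L(W)(S)$ is collared by the external collar whenever $B$ is collarable in the lower arities.
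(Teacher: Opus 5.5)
Your proposal is correct and follows the paper's proof: induction on arity, identifying the link of a stratum of $C(S)$ with $\partial Lk(T)$ minus the cell $T\to\bullet_B$, the externally collared product defining $X_L(W)(S)$ and $\partial_L X_L(W)(S)$, and the two half-ball constructions for $T\to\bullet_B$ and $T\to\bullet_W$. The only variation is that you get the disc by deleting the open facet of $\tilde X(T)^*$ dual to a vertex of $\tilde X(T)$, whereas the paper takes a barycentric subdivision and uses the PL fact that the complement of a top-dimensional ball in a sphere is a ball.

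There are two slips to fix. First, the disc case occurs exactly when $T\in\mathcal{T}^B(S)$: a tree with a blue root and some white or red vertex has $T\to\bullet_B$ illegal and the whole sphere as its link, as your next sentence in fact says. Second, the vertex of $\tilde X(T)$ corresponding to $T\to\bullet_B$ is the constant negative height function. The height function supported at the root corresponds instead to a blue root with each branch collapsed to a white vertex.
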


\begin{proof}
     The functor $\phi$ is well defined provided we know that $\colim_{\mathcal{T}^{RBW}(S)\backslash\{\bullet_{ W ,S},\bullet_{B,S}\}}\phi$ is a manifold, and so can refer to its boundary to define $\phi(\bullet_{B,S})$. This assertion as well as the fact that the bimodule is an $n$-truncated bimodule cobordism between $n$-truncated manifold operads follows from demonstrating that $X_L(W)$ is an $n$-truncated stratified bimodule by Theorem \ref{thm: bimodule cobordism is stratified}.

Supposing the stratification $(\mathcal{N},\nu)$ has been defined up to cardinality $N$, we will extend the stratification to contractions of $S$-labeled trees where $|S|=N+1$. The stratification is automatically defined for contractions where the codomain is a decomposable tree, as in Step 1 in the proof of Theorem \ref{thm: bimodule cobordism is stratified}. The three remaining cases are the contractions to a tree with a single vertex. To define them in the case of $\bullet_{B,S}$, it suffices to prove the following lemma.

\begin{lem}
    For every finite set $S$ with $|S|\leq N+1$, the pair

   \[( \colim_{{\mathcal{T}^{RBW}_{S}\backslash\{\bullet_{ W ,S},\bullet_{B,S}\}}}\phi,\colim_{{\mathcal{T}^{B}_{S}\backslash\{{\bullet_{B,S}\}}}}\phi)\]
   is a topological manifold with its boundary. For every $T\in \mathcal{T}^{RBW}(S)\backslash\{\bullet_{ W },\bullet_{B}\}$, $\mathring\phi(T)$ has an open neighborhood $\mathcal{N}^\partial_T$ in $\colim_{{\mathcal{T}^{RBW}_{S}\backslash\{\bullet_{ W ,S},\bullet_{B,S}\}}}\phi$ homeomorphic to 
    \begin{itemize}
        \item $\mathring{\textnormal{Cone}}\big(\partial Lk(T)\big)\times \mathring{\phi}(T)$, if $T\notin \mathcal{T}^B(S)$;
        \item $\mathring{\textnormal{Cone}}\big(\partial Lk(T)\backslash\textnormal{the cell corresponding to }T\to \bullet_B\big)\times \mathring{\phi}(T)$, if $T\in \mathcal{T}^B(S)$. 
    \end{itemize}
    This homeomorphism is natural up to reparametrization of cells.
\end{lem}
\begin{proof}
We first show that if $T \in \mathcal{T}^B(S)$,
\[\partial Lk(T)\backslash\textnormal{the cell corresponding to }T\to \bullet_B\] is a disc. The argument is as follows:
$\partial Lk(T)$ is a regular CW-complex homeomorphic to a sphere by Proposition \ref{link_prop}, and the cell $T\to \bullet_B$ is a sub-CW-complex; taking barycentric subdivision, $\partial Lk(T)$ is a simplicial complex and the cell $T\to\bullet_B$ is a subcomplex which is also a disc of the same dimension. 
By \cite[Corollary 3.13]{PLtopology}, in the piecewise-linear setting, the complement of a same-dimensional disc in a sphere is always a disc. 

Let $T\in \mathcal{T}^{RBW}_{N+1}\backslash\{\bullet_{W},\bullet_B\}$. Define $\mathcal{T}_{ \geq T,\neq\bullet_{B},\bullet_{W}}$ to be the subcategory of $\mathcal{T}^{RBW}_{N+1}\backslash\{\bullet_{W},\bullet_B\}$ consisting of trees $T'$ such that there exists a possibly trivial contraction $T\to T'$. 

Define $\mathcal{N}^\partial_T\subset \colim_{{\mathcal{T}^{RBW}_{S}\backslash\{\bullet_{ W ,S},\bullet_{B,S}\}}}\phi$ to be 
$$\mathcal{N}^\partial_{T}=\bigcup_{T'\in \mathcal{T}_{ \geq T,\neq\bullet_{B},\bullet_{W}}}\mathcal{N}_{T\to T'}.$$
Then, 
\begin{align*}
    \mathcal{N}^\partial_{T}&=\colim_{T'\in \mathcal{T}_{ \geq T,\neq\bullet_{B},\bullet_{W}}} (\mathcal{N}_{T\to T'},\subset)\\
    &\stackrel{\nu}{=}\colim_{T'\in \mathcal{T}_{ \geq T,\neq\bullet_{B},\bullet_{W}}}
    \big(\mathring{\phi}(T)\times\mathring{\textnormal{Cone}}(Lk(T\to T'))\big)\\
    &=\mathring{\phi}(T)\times \colim_{T'\in \mathcal{T}_{ \geq T,\neq\bullet_{B},\bullet_{W}}}
    \mathring{\textnormal{Cone}}(Lk(T\to T'))\\
    &=\mathring{\phi}(T)\times \mathring{\textnormal{Cone}}\big(\colim_{T'\in \mathcal{T}_{ \geq T,\neq\bullet_{B},\bullet_{W}}}
    Lk(T\to T')\big),
\end{align*}

By Lemma \ref{CWcolim_lmm}, we can compute the colimit of regular CW-complexes by passing to the associated face posets. One computes
$$\colim_{T'\in \mathcal{T}_{ \geq T,\neq\bullet_{B},\bullet_{W}}}
    Lk(T\to T')=\begin{cases}
        \partial Lk(T),\textnormal{ if }T\notin\mathcal{T}^B,\\
        \partial Lk(T)\backslash(T\to\bullet_B), \textnormal{ if }T\in \mathcal{T}^B. 
    \end{cases}$$ 
\end{proof}

This defines the stratified structure for the contractions which don't have targets  $\bullet_{B,S}$ or $\bullet_{W,S}$. To construct the remaining contractions proceed as in \ref{step 3 of operad strat} of the proof of Theorem \ref{thm: manifold operad is stratified} and \ref{step 3 of bimodule strat} of the proof of Theorem \ref{thm: bimodule cobordism is stratified}, respectively.  We only need to verify that $\partial_L X_L (W)(S)$ and $ X_L (W)(S)$ have equivariant collars. The first has an external collar by construction, and the second is the product of two equivariant manifolds with equivariant collars.

\end{proof}

\subsubsection{Right surgery}

Fix $l< \infty$ and $n \leq \infty$ such that $l \leq n$. Let $B$ be an $n$-truncated manifold operad.

\begin{thm}\label{thm: right surgery}

      Given a stratifiable $l$-truncated bimodule cobordism $W$ from  $B^{\leq l}$ to $R$, there is a $\phi: \mathcal{T}^{RBW}_{\leq n} \rightarrow \mathrm{Top}$ determining a stratifiable $n$-truncated, $d$-dimensional bimodule cobordism, which we call $X_R(W)$, from $B$ to the $n$-truncated manifold operad $\partial_R X_R(W)$. $X_R(W)$ is collarable if $W$ is collarable, and $\partial_R X_R(W)$ is collarable if $R$ is collarable.
    
    The functor $\phi$ is defined by the conditions:

\begin{enumerate}
\item $\phi: \mathcal{T}^{RBW}_{\leq l}  \rightarrow \mathrm{Top}$ represents the reduced bimodule $W$.

    \item $\phi: \mathcal{T}^{RBW}_{\leq n} \rightarrow \mathrm{Top}$ represents a reduced bimodule whose left action is through the $n$-truncated operad $B$.

    \item  If $|S|>l$, then   \[\phi(\bullet_{W,S}) =X_R(W)(S):=(\colim_{\mathcal{T}^{RBW}(S)\backslash\{\bullet_{ W ,S},\bullet_{R,S}\}}\phi \cup \text{external collar}) \times [0,1]\]

    \item If $|S|>l$, then \[\phi (\bullet_{R,S})=\partial_RX_R(W)(S):= \overline{\partial X_R(W)(S) \setminus ((\colim_{\mathcal{T}^{RBW}(S)\backslash\{\bullet_{ W ,S},\bullet_{R,S}\}}\phi) \times \{1\})}  \]

    \item If $|S|>l$, the values $\phi(\bullet_{B,S})\rightarrow \phi(\bullet_{W,S}) \leftarrow \phi(\bullet_{R,S})$ are given by the inclusions.  
    \[\begin{tikzpicture}
    \path (0,0) coordinate (A0) -- (1,0) coordinate (B0) -- (1.7,0.5) coordinate (C0) -- (1.2,1) coordinate (D0) -- (0.3,1) coordinate (E0) -- (-0.3,0.5) coordinate (F0) -- cycle;
    \draw ($(A0)+(0,1.5)$) coordinate (A1) -- ($(B0)+(0,1.5)$) coordinate (B1) -- ($(C0)+(0,1.5)$) coordinate (C1) -- ($(D0)+(0,1.5)$) coordinate (D1) -- ($(E0)+(0,1.5)$) coordinate (E1) -- ($(F0)+(0,1.5)$) coordinate (F1) -- cycle;
    \draw [fill=blue,opacity=0.5] (0.2,1.8) coordinate (Ai) -- (0.9,1.8) coordinate (Bi) -- (1.3,2.06) coordinate (Ci) -- (1.1,2.25) coordinate (Di) -- (0.4,2.25) coordinate (Ei) -- (0.05,2) coordinate (Fi) -- cycle; 
    \draw ($1.4*(A1)-0.4*(Ai)$) coordinate (A11) -- ($1.4*(B1)-0.4*(Bi)$) coordinate (B11) -- ($1.4*(C1)-0.4*(Ci)$) coordinate (C11) -- ($1.4*(D1)-0.4*(Di)$) coordinate (D11) -- ($1.4*(E1)-0.4*(Ei)$) coordinate (E11) -- ($1.4*(F1)-0.4*(Fi)$) coordinate (F11) -- cycle;  
    \draw ($(A11)+(0,-1.5)$) coordinate (A00) -- ($(B11)+(0,-1.5)$) coordinate (B00) -- ($(C11)+(0,-1.5)$) coordinate (C00) -- ($(D11)+(0,-1.5)$) coordinate (D00) -- ($(E11)+(0,-1.5)$) coordinate (E00) -- ($(F11)+(0,-1.5)$) coordinate (F00) -- cycle;
    \draw (A00)--(A11); 
    \draw (B00)--(B11);
    \draw (C00)--(C11);
    \draw (D00)--(D11);
    \draw (E00)--(E11);
    \draw (F00)--(F11);
    \draw[fill=red, opacity=0.2] (F00)--(A00)--(B00)--(C00)--(C11)--(D11)--(E11)--(F11)--cycle;  
    \path [fill=green, opacity=0.3, even odd rule]
    (A1)--(B1)--(C1)--(D1)--(E1)--(F1)--cycle
    (Ai)--(Bi)--(Ci)--(Di)--(Ei)--(Fi)--cycle;
    \draw [<-,blue] (1,2) -- (2,2.5);
    \node [right, blue] at (2,2.5) {\small $B(S)$};
    \draw [->, seagreen] (-0.5,1.7)--(0,1.7); 
    \node [left, seagreen] at (-0.5,1.7) {\small $\colim_{\mathcal{P}_{\mathcal{T}^{RBW}(S)\backslash\{\bullet_{W,S},\bullet_{B,S},\bullet_{R,S}\}}}\phi$};
    \draw [<-,red] (1.6,0.4)--(2,0.4);
    \node [right,red] at (2,0.4) {\small $\begin{aligned}\partial_RX_R(W)(S)\end{aligned}$};
    \node at (0.7,0.7) {\small $ X_R(W) (S)$};
\end{tikzpicture}\]

    \item The values of $\phi$ on all other contractions with codomain $\bullet_{W,S}$ factor through \[\colim_{\mathcal{T}^{RBW}(S)\backslash\{\bullet_{ W ,S},\bullet_{B,S},\bullet_{R,S}\}}\phi\] via inclusion at $\{1\}$.

    \item The value of $\phi$ on contractions with target $\bullet_{R,S}$ factors through
    \[\colim_{\mathcal{T}^{R}(S)\backslash\{\bullet_{R,S}\}}\phi \subset \partial \colim_{\mathcal{T}^{RBW}(S)\backslash\{\bullet_{ W ,S},\bullet_{B,S},\bullet_{R,S}\}}\phi\]
    via inclusion at $\{1\}$.

    \item The value of $\phi$ on label changes for either $\bullet_R$ or $\bullet_W$ is induced by the natural action on the colimits.
\end{enumerate}

\end{thm}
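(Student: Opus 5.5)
The proof should be the mirror image of the proof of Theorem \ref{thm: left surgery}, with the roles of $R$ and $B$ exchanged. We work by induction on $|S|$, extending a stratification $(\mathcal{N},\nu)$ from cardinality $N$ to $N+1$. By Theorem \ref{thm: bimodule cobordism is stratified} it suffices to show that the functor $\phi$ described by conditions (1)--(8) is a stratified bimodule. That theorem then also gives that $\phi(\bullet_{W,S})$ and $\phi(\bullet_{R,S})$ are manifolds with the correct boundaries, so that $\partial_R X_R(W)$ is a manifold operad and $X_R(W)$ is a bimodule cobordism from $B$ to it.

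For $|S|\leq l$ nothing needs to be done, since $\phi$ is $W$. For $|S|=N+1>l$, the stratification data on contractions whose target is a decomposable tree is forced by the product decomposition, exactly as in Step 1 of Theorem \ref{thm: bimodule cobordism is stratified}. The data on contractions with target $\bullet_{B,S}$ is the given stratification of the collarable manifold operad $B$ from Theorem \ref{thm: manifold operad is stratified}. What remains are the targets $\bullet_{R,S}$ and $\bullet_{W,S}$.

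The key step is the analogue of the lemma in the left surgery proof. The pair
\[\Big(\colim_{\mathcal{T}^{RBW}(S)\backslash\{\bullet_{W,S},\bullet_{R,S}\}}\phi,\ \colim_{\mathcal{T}^{R}(S)\backslash\{\bullet_{R,S}\}}\phi\Big)\]
should be a topological manifold together with its boundary. Each stratum $\mathring\phi(T)$ should have a neighborhood homeomorphic to $\mathring{\mathrm{Cone}}(\partial Lk(T))\times\mathring\phi(T)$ when $T\notin\mathcal{T}^R(S)$, and to $\mathring{\mathrm{Cone}}(\partial Lk(T)\setminus(T\to\bullet_R))\times\mathring\phi(T)$ when $T\in\mathcal{T}^R(S)$.

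To prove this, set $\mathcal{N}^\partial_T=\bigcup_{T'}\mathcal{N}_{T\to T'}$ over the trees $T'\geq T$ with $T'\neq\bullet_W,\bullet_R$. Property (\ref{Ncondition_item}) identifies this union with $\mathring\phi(T)$ times the open cone on a colimit of links. By Lemma \ref{CWcolim_lmm}, that colimit can be computed on face posets, giving $\partial\mathcal{P}_T$ or $\partial\mathcal{P}_T$ with the element $T\to\bullet_R$ removed.

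By Proposition \ref{link_prop}, $\partial Lk(T)$ is a sphere. The closed cell corresponding to $T\to\bullet_R$ is a top-dimensional disc subcomplex. After barycentric subdivision, \cite[Corollary 3.13]{PLtopology} shows that the closure of its complement is a PL disc. Hence the local models are Euclidean spaces and half-spaces, and the boundary points are exactly the points of the $\mathcal{T}^R$-strata. Hausdorffness follows from Lemma \ref{colimtopology_lmm} and Lemma \ref{operadtopology_lmm}.

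With this in hand, $\phi(\bullet_{W,S})$ is defined as (colimit $\cup$ external collar)$\times[0,1]$, and $\phi(\bullet_{R,S})$ as the closure of the complement of the top face. The neighborhoods $\mathcal{N}_{T\to\bullet_R}$ and $\mathcal{N}_{T\to\bullet_W}$ are then built with the half-ball construction of Step 2 in Theorems \ref{thm: manifold operad is stratified} and \ref{thm: bimodule cobordism is stratified}. For $\bullet_R$ the construction uses the external collar of the colimit manifold; for $\bullet_W$ it uses the collar of $X_R(W)(S)$ coming from the product structure. These choices are compatible with the inclusions at $\{1\}$ demanded by conditions (6) and (7).

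Equivariance holds because every construction is natural in the label set. Collarability holds because $\partial_R X_R(W)(S)$ carries an external equivariant collar by construction, and $X_R(W)(S)$ is a product of equivariantly collared pieces. In the low arities $\leq l$ these pieces are collared by the hypotheses on $W$ and $R$.

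I expect the main obstacle to be verifying that the colimit of link posets is exactly $\partial\mathcal{P}_T$ minus the single cell $T\to\bullet_R$, and that this complement is a disc of the right dimension. The combinatorics must be checked carefully for trees $T\in\mathcal{T}^R(S)$, where $\bullet_R$ is the unique excluded target above $T$ other than $\bullet_W$. The remaining work is the formal compatibility of the half-ball neighborhoods with condition (\ref{Ncondition_item}).
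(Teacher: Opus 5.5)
Your proposal is correct and follows the paper's argument: the paper proves this theorem by swapping the roles of $R$ and $B$ in the proof of Theorem \ref{thm: left surgery}. Your outline is exactly that mirrored proof, namely the lemma on the pair of colimits with boundary $\colim_{\mathcal{T}^R(S)\setminus\bullet_{R,S}}\phi$, the face-poset computation of the link colimit, the PL disc-complement argument, and the half-ball construction using the external collar and the product collar. Like the paper, you tacitly assume a stratification of $B$ in arities above $l$ (for instance from collarability via Theorem \ref{thm: manifold operad is stratified}), even though the statement does not list this as a hypothesis.
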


\begin{proof}
     In the proof of Theorem \ref{thm: left surgery}, swap the roles of $R$ and $B$.
\end{proof}

\subsection{Basic applications of surgery}
Our main application of operadic surgery is the construction of manifold operads that are distinct from the Fulton-MacPherson operads.

\begin{thm}\label{thm: sufficient condition for existence}
    If $M$ is a $(d-1)$-dimensional $\Sigma_2$-manifold which is $\Sigma_2$-nullbordant, then there is a collarable manifold operad $O$ such that $O(2)=M$. Furthermore, if $M$ is connected and admits a connected $\Sigma_2$-cobordism $W$ to $S^{d-1}$ with the antipodal action, then the operad $O$ may be taken to be levelwise connected. The symmetric group actions on $O$ can be taken to be free if the $\Sigma_2$-action on the cobordism $W$ is free.

\end{thm}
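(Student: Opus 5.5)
The plan is to apply the arity extension functors of Theorem \ref{thm: left surgery} (or Theorem \ref{thm: right surgery}) to a $2$-truncated bimodule cobordism built from the given $\Sigma_2$-nullbordism. Let $N$ be a $\Sigma_2$-nullbordism of $M$. I may take $N$ collarable by gluing an external equivariant collar, as discussed before Proposition \ref{prop: operad obstruction}.

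The starting data is a $2$-truncated bimodule cobordism with $R^{\leq 2}=1$ on the right. In arity $2$, a $(B,1)$-bimodule cobordism is just an $RBW$ $\Sigma_2$-manifold $W(2)$ of dimension $d$ with $B(2)\subset\partial W(2)$ and $R(2)=\emptyset$. There are no decomposables in arity $2$, so $\partial W(2)=B(2)$. I set $B(2)=M$ and $W(2)=N$. This is a $2$-truncated collarable bimodule cobordism from the $2$-truncated manifold operad $B$ with $B(2)=M$ to the trivial operad $1$. It is stratifiable by Theorem \ref{thm: bimodule cobordism is stratified}. In the base case $|S|=2$, the only data needed is the collar of $M$ in $N$, and $M$, being closed, trivially satisfies the manifold operad axioms in arity $2$.

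Now apply $X_L$ with $l=2$, $n=\infty$, and $R=1$. Theorem \ref{thm: left surgery} produces a stratifiable bimodule cobordism $X_L(W)$ from $O:=\partial_L X_L(W)$ to $1$. Condition (1) of that theorem gives $O(2)=B(2)=M$, and $O$ is a manifold operad because it is the left boundary of a bimodule cobordism. Collarability of $O$ follows from the last sentence of Theorem \ref{thm: left surgery}, since $B$ is collarable ($M$ is closed). This proves the first claim.

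For the refinements I would instead start with the $2$-truncated bimodule cobordism from $M$ to $\mathrm{FM}_d^{\leq 2}$ given by the connected cobordism $W$ between $M$ and $S^{d-1}=\mathrm{FM}_d(2)$. I then apply $X_L$ with $R=\mathrm{FM}_d$, as in the outline of the introduction.

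For connectivity, I would argue inductively that each $O(m)=\partial_L X_L(W)(m)$ is built from the connected manifold $\mathrm{FM}_d(m)$ by gluing pieces. These pieces are products of connected lower-arity spaces $O(-)$, $W(-)$ and $\mathrm{FM}_d(-)$, glued along nonempty common faces. Each piece meets the $\mathrm{FM}_d(m)$ part, for example through the stratum where the white vertex is replaced by $R$-colored vertices, so the union stays connected.

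For freeness, the construction only uses products and colimits of spaces on which $\Sigma$-actions are induced from free actions. These are the free actions on $\mathrm{FM}_d(m)$ and on $W(2)$, and the arity-$m$ spaces $W(m)$, which are products with $[0,1]$ of colimits of such pieces. Any point is then represented by a tree labeled by points in free orbits, and a permutation fixing it would have to fix a label in some free space.

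The main obstacle I expect is the connectivity and freeness bookkeeping in the colimits, in particular checking that a permutation fixing a point of a colimit must act trivially on the tree and on its labels. I would handle this using the unique representation of points by trees labeled in open strata, via the stratification. The existence statement itself is essentially formal given Theorem \ref{thm: left surgery}.
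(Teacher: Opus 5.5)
Your proposal is correct. For the existence claim you use a different auxiliary operad than the paper, and you need a different route for the refinements.

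\textbf{Existence.} The paper first notes that $M$ is $\Sigma_2$-cobordant to $S^{d-1}=\mathrm{FM}_d(2)$ with the antipodal action, using $N\sqcup D^d$, since the antipodal sphere bounds the disc. It then runs left or right surgery relative to $\mathrm{FM}_d$. You instead regard the nullbordism $N$ itself as a $2$-truncated $(B,1)$-bimodule cobordism and run $X_L$ relative to the trivial operad $1$. This is legitimate: $1$ is a collarable manifold operad of every dimension, and the arity-$2$ conditions hold trivially.

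\textbf{Comparison.} Your route skips the detour through $S^{d-1}$ and shows in addition that $O$ is left nullbordant. The paper's route shows instead that $O$ is bimodule cobordant to $\mathrm{FM}_d$. Its advantage is that the same construction, fed with the connected (free) cobordism, immediately gives the refinements. Yours cannot do this on its own: with $R=1$ one gets $O(3)\cong\bigsqcup_3 M\times N$, which is disconnected. So your switch to $R=\mathrm{FM}_d$ for the second part is necessary. There you are following the paper's argument, in more detail than the paper gives.

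\textbf{Two corrections to your connectivity and freeness bookkeeping.}
\begin{itemize}
\item A piece indexed by a blue-rooted facet (blue root with white children) does not meet $\mathrm{FM}_d(m)$ directly. No all-red tree contracts onto a tree with a blue vertex, so by Lemma \ref{colimtopology_lmm} the two images are disjoint. Such a piece does meet a white-rooted facet (white root, one red child) along the stratum of a blue root with red children. That facet in turn meets $\mathrm{FM}_d(m)$ along an all-red stratum. Connectivity follows from this chain of nonempty intersections, using inductively that all lower-arity $O(-)$, $W(-)$, $\mathrm{FM}_d(-)$ are nonempty and connected.
\item For freeness, a permutation fixing a point need not act trivially on the representing tree. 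If it moves some vertex, take a moved vertex of minimal depth; its parent $u$ is fixed, and the permutation acts nontrivially on $\overline{cld}(u)$. If it moves no vertex, it permutes some $lb(u)$ nontrivially. Either way the label at $u$, which lies in a space with free $\Sigma_{\overline{cld}(u)}$-action, would be fixed by a nontrivial element. This is a contradiction.
\end{itemize}
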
 \label{thm: z2bord}
\begin{proof}
    If $M$ is $\Sigma_2$-nullbordant, then it is in particular $\Sigma_2$-cobordant to $S^{d-1}$ with the antipodal action since the latter is also $\Sigma_2$-nullbordant. By adding an external collar, we may assume this cobordism is collarable. It therefore gives a stratifiable $2$-truncated bimodule that we may apply either Theorem \ref{thm: left surgery} or Theorem \ref{thm: right surgery} to construct the desired operad.
    
    The connected version of the theorem follows from applying either Theorem \ref{thm: left surgery} or Theorem \ref{thm: right surgery} to the connected ($\Sigma_2$-free) cobordism and observing everything remains connected (and free).

\end{proof}

The thickened $W$-construction $W_{[0,1]}(O)$ can be interpreted as associating to a vertex $v$ and its unique outgoing edge a point of the trivial cobordism $O(\overline{cld}(v))\times [0,1]$. The $X$-construction generalizes the choice of labels to a general $n$-truncated bimodule cobordism. Let $O(2) \times [0,1]$ denote the $2$-truncated bimodule cobordism from $O^{\leq 2}$ to itself. Implicitly we consider $O(2)$ as the operad extending $O^{\leq 2}$.

\begin{prop}\label{prop: right trivial surgery}

    Let $n \leq \infty$ and suppose $O$ is an $n$-truncated stratifiable manifold operad. 
    
    There is an isomorphism of operads \[\partial_{R} X_R( O(2) \times [0,1])  \cong W(O),\] and a compatible isomorphism of $(O,W(O))$-bimodules
    \[X_R( O(2) \times [0,1]) \cong W_{[0,1]}(O).\]

\end{prop}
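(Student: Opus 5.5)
The plan is to prove both isomorphisms simultaneously by induction on arity $m$, building an identification of functors $\mathcal{T}^{RBW}_{\leq n}\to\mathrm{Top}$. Here $\phi$ represents $X_R(O(2)\times[0,1])$, and $\psi$ represents $W_{[0,1]}(O)$, viewed as an $(O,W(O))$-bimodule via Proposition \ref{prop: thick W bimodule}. Both bimodules have left action through $O$, so the blue parts agree tautologically. In arity $2$ both sides are $O(2)\times[0,1]$ with $O(2)\times\{0\}$ and $O(2)\times\{1\}$ as the embedded operads, and $W(O)(2)=O(2)$; this starts the induction. The inductive claim at arity $m$ is an isomorphism $\phi|_{\mathcal{T}^{RBW}_{\leq m}}\cong\psi|_{\mathcal{T}^{RBW}_{\leq m}}$ that is natural in contractions and label changes and restricts on red trees to an operad isomorphism $\partial_R X_R\cong W(O)$.

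For the inductive step, fix $|S|=m>2$. By the product decomposition in Definition \ref{bimodule_dfn}, the isomorphism is already determined on every decomposable tree. It therefore induces a homeomorphism
\[\colim_{\mathcal{T}^{RBW}(S)\backslash\{\bullet_W,\bullet_R\}}\phi\;\cong\;\colim_{\mathcal{T}^{RBW}(S)\backslash\{\bullet_W,\bullet_R\}}\psi .\]
I would identify the right-hand colimit inside $W_{[0,1]}(O)(S)$ as the set of metric $O$-labelled trees that either have root hair of length $0$, or are decomposable with a blue root, i.e.\ all internal edges at the root have length $1$ in the appropriate normalization. The key geometric point concerns the complement of the open $\bullet_R$-stratum in $W_{[0,1]}(O)(S)$. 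Using Proposition \ref{prop: isomorphic to w construction} and the product splitting $W_{[0,1]}(O)\cong W(O)\times[0,1]$ given by the root hair length, this complement should be homeomorphic to
\[\bigl(\colim_{\mathcal{T}^{RBW}(S)\backslash\{\bullet_W,\bullet_R\}}\psi\cup\text{external collar}\bigr)\times[0,1].\]
Here the external collar parameter is the largest internal edge length (the decomposable-tree collar of Proposition \ref{prop: isomorphic to w construction}), and the extra $[0,1]$ is the hair length. Under this homeomorphism, the face at $\{1\}$ should be exactly the decomposable part, and the remaining boundary should be $W(O)(S)$ embedded at hair length $1$. This matches clauses (3)--(7) of Theorem \ref{thm: right surgery}, which gives $\phi(\bullet_{W,S})\cong W_{[0,1]}(O)(S)$ and $\phi(\bullet_{R,S})=\partial_RX_R(S)\cong W(O)(S)$.

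The remaining checks are that the contraction maps into $\bullet_{W,S}$ and $\bullet_{R,S}$ agree with grafting, which holds because both factor through the decomposable colimit at $\{1\}$, and that $\Sigma_S$-equivariance holds, which follows since all constructions are canonical. Compatibility with the operad composites of $W(O)$ then follows from naturality on red trees.

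I expect the main obstacle to be the homeomorphism in the middle step. The external collar in $X_R$ is an abstract choice, while in $W_{[0,1]}(O)$ it is the specific collar coming from edge lengths, so the isomorphisms can only be canonical up to reparametrization. I would handle this by choosing the external collar in Theorem \ref{thm: right surgery} to be precisely the $W$-construction collar. I would then build the homeomorphism stratum by stratum, using the cone coordinates $\nu$ from Theorem \ref{thm: bimodule cobordism is stratified} and rescaling internal edge lengths against the hair length, in the same way as the collar in the proof of Proposition \ref{prop: thick W bimodule}. After that, I would verify that the resulting map respects the identifications in the metric tree quotient, in particular that length-$0$ edges contract correctly.
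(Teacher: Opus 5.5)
Your overall architecture matches the paper's: induct on arity, use that the identification is forced on decomposable trees, compare with clauses (3)--(7) of Theorem \ref{thm: right surgery}, and absorb the external collars using an equivariant collar of $W(O)(S)$. The gap is in the middle step, and that step is where the actual content of the proof lies. You misidentify the colimit $C:=\colim_{\mathcal{T}^{RBW}(S)\backslash\{\bullet_W,\bullet_R\}}\psi$ inside $W_{[0,1]}(O)(S)$.

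Your first clause (root hair $0$) is right: it is the image of $\bullet_B$ and of the trees with a blue root and white children. The remaining trees are those with a \emph{white} root and a red vertex attached. These go to graftings of an element of $W(O)$, which has root hair of length $1$, into some input of a lower-arity element of $W_{[0,1]}(O)$, which is an arbitrary metric tree. So they are exactly the metric trees having some internal edge of length $1$, anywhere in the tree, with arbitrary root hair.

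Your description does not capture this set. ``Decomposable with a blue root'' is redundant, since a blue root already forces hair $0$. ``All internal edges at the root have length $1$'' is wrong: a tree with root hair $1/2$ and root edges of lengths $1$ and $1/2$ lies in $C$ but not in your set. The correct statement, which is what the paper proves, is that under $W_{[0,1]}(O)(S)\cong W(O)(S)\times[0,1]$ (hair length)
\[C\;=\;\bigl(W(O)(S)\times\{0\}\bigr)\cup\bigl(\partial W(O)(S)\times[0,1]\bigr),\]
the bottom and lateral faces of the cylinder. Only with this identification do $C$ and $W(O)(S)\times\{1\}$ together make up the whole boundary, as your later claim requires.

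This also shows why your coordinates cannot work. $C$ is not a level set of the hair length, so the $[0,1]$-factor of $X_R(S)=(C\cup\text{collar})\times[0,1]$ cannot be the hair length. With that choice the two end faces are $W(O)(S)\times\{0\}$ and $W(O)(S)\times\{1\}$, and neither of them is $C$. Separately, the complement of the open $\bullet_R$-stratum is not compact; the space you want is all of $W_{[0,1]}(O)(S)$.

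In the correct picture the hair length is a collar coordinate \emph{inside} $C$:
\begin{itemize}
\item The lateral face is an external collar on $W(O)(S)\times\{0\}$, so $C$ is $W(O)(S)$ with one collar attached.
\item $\partial_R X_R(S)$ is then $W(O)(S)$ with further collars attached.
\item These collars are absorbed by an equivariant collar of $W(O)(S)$, compatibly with the identification of the outer boundary with $\partial W(O)(S)$. This compatibility is what makes the operad composites agree.
\end{itemize}
For the bimodule statement you then need a corner-straightening homeomorphism of pairs $\bigl(W(O)(S)\times[0,1],\,C\bigr)\cong\bigl(X_R(S),\,C\times\{1\}\bigr)$ that is the identity on $C$, rather than a product splitting by hair length. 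With these corrections your plan becomes the paper's argument.
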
 
\begin{proof}
Suppose $W_{[0,1]}(O)$ is represented by $\phi:\mathcal{T}^{RBW}_{\leq n} \rightarrow \mathrm{Top}$.
We proceed inductively by constructing the isomorphism on arities $N \leq n$. Note that the base case of $N=2$ is trivial.  Suppose we have inductively made the identifications \[\partial_{R} X_R( O(2) \times [0,1])^{\leq N} \cong W(O)^{\leq N},\] 
    \[X_R( O(2) \times [0,1]) ^{\leq N}\cong W_{[0,1]}(O)^{\leq N}.\]
By definition, $\partial_{R}X_R( O(2) \times [0,1])(N+1)$ is homeomorphic to the union of the value of $\phi$ at the following types of trees: \begin{itemize}
        \item the blue corolla,
        \item a blue root with at least one white vertex attached,
        \item a white root with a red vertex attached.
    \end{itemize}
together with an external collar.

We identify the values of $\phi$ on these $RBW$-trees with subspaces of $W_{[0,1]}(O)(N+1)$. The value of  $\phi$ on the blue corolla is the space $O(N+1)$, and we identify this with the space of  corollas labeled by $O(N+1)$ with a root length of $0$. Under $\phi$, the second is a product of the appropriate $O(-)$ with a copy of $W_{[0,1]}(O)(-)$ for each white vertex, by induction. We identify this product with the subspace of $W_{[0,1]}(O)(N+1)$ of decomposable metric trees with root length $0$. Under $\phi$ the third class of $RBW$-trees is sent to a product of $W_{[0,1]}(-)$ and a copy of $W(-)$, by induction. We identify this with the space of $O$-labeled metric trees which can be decomposed along a length $1$ edge with no restriction on the root length.

We have described it in this way, so that the union canonically embeds into \[W_{[0,1]}(O)(N+1)\cong W(O)(N+1) \times [0,1]\] as the subspace \[(W(O)(N+1)\times \{0\} )\cup (\partial W(O)(N+1) \times [0,1]).\]

Hence, $\partial_{R} X_R( O(2) \times [0,1])(N+1)$ with its boundary stratification is isomorphic to $W(O)$ with two external collars attached. Here one collar comes from the above description as a union, and the other via the construction itself. Since $W(O)(N+1)$ is equivariantly collarable, this completes the operad component of the induction step. A similar analysis holds for the bimodule structure.

\end{proof}

\begin{cor}\label{cor: w of stratified}
    If $O$ is an $n$-truncated stratified manifold operad, then $W(O)$ is an $n$-truncated manifold operad which is collarable, and $W_{[0,1]}(O)$ is a collarable bimodule $h$-cobordism from $O$ to $W(O)$.
\end{cor}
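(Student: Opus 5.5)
The corollary follows from Proposition \ref{prop: right trivial surgery}, combined with the fact that the $X_R$ construction produces collarable stratified objects.

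First, I will check that the $2$-truncated input qualifies. Since $O$ is stratified, it is a manifold operad by Theorem \ref{thm: manifold operad is stratified}. The $2$-truncated symmetric sequence $O(2)\times[0,1]$ is then a $2$-truncated bimodule cobordism from $O^{\leq 2}$ to itself, and it is collarable and stratifiable. The collar comes from the product structure: $O(2)$ is closed, and the ends $O(2)\times\{0,1\}$ have evident product collars. Its stratification is just the collar data used in the initial $|S|=2$ step of Theorem \ref{thm: bimodule cobordism is stratified}. Note that no equivariant collar on the higher $O(m)$ is needed, because the stratification of $O$ itself supplies all the data on $O$.

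Next, I will apply Theorem \ref{thm: right surgery} with $B=O$, $l=2$ and $W=O(2)\times[0,1]$. This yields a stratifiable, collarable $n$-truncated bimodule cobordism $X_R(O(2)\times[0,1])$ from $O$ to the manifold operad $\partial_R X_R(O(2)\times[0,1])$. By construction, each $\partial_R X_R(-)(S)$ carries an external collar, so this right operad is collarable. Proposition \ref{prop: right trivial surgery} identifies this operad with $W(O)$ and the bimodule with $W_{[0,1]}(O)$, compatibly with the bimodule structures. The manifold-operad property, collarability, and the bimodule cobordism property all transfer along these isomorphisms. The one place that needs care is that the inductive proof of Proposition \ref{prop: right trivial surgery} invokes equivariant collarability of $W(O)(N+1)$. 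I will run that induction simultaneously with the present statement: at stage $N+1$, collarability of $W(O)^{\leq N}$ is already known, and the external collar produced by the construction supplies the collar at arity $N+1$.

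Finally, for the $h$-cobordism claim, I need the inclusions $O\hookrightarrow W_{[0,1]}(O)\hookleftarrow W(O)$ to be levelwise weak equivalences. There is a homeomorphism $W_{[0,1]}(O)\cong W(O)\times[0,1]$ of symmetric sequences, under which $W(O)$ sits as $W(O)\times\{1\}$, so that inclusion is a homotopy equivalence. For $O$, I will deformation retract $W_{[0,1]}(O)$ onto the corollas with root length $0$ by shrinking all edge and hair lengths to $0$ and composing labels. This is the usual proof that $W(O)\to O$ is an equivalence, and it restricts to the identity on $O$.

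The main obstacle is the circularity noted above: establishing collarability of $W(O)$ without assuming $O$ is collarable, which is exactly the hypothesis used in Proposition \ref{prop: isomorphic to w construction}. I plan to resolve it by the simultaneous induction, using the external collar built into $X_R$ at each arity.
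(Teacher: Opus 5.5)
Your overall route is the paper's. You apply Theorem \ref{thm: right surgery} to $O(2)\times[0,1]$, transport along the isomorphisms of Proposition \ref{prop: right trivial surgery}, and use the classical retractions for the $h$-cobordism claim. The paper takes the collar of $W_{[0,1]}(O)$ from the explicit description in Proposition \ref{prop: thick W bimodule} rather than transporting it from $X_R$, but either works.

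The problem is the step you single out as the main obstacle: your proposed fix is circular. At arity $N+1$, the inductive step of Proposition \ref{prop: right trivial surgery} needs an equivariant collar of $\mathrm{Decom}(W(O))(N+1)$ \emph{inside} $W(O)(N+1)$. That collar is what lets the two external collars be absorbed, so that $\partial_R X_R(O(2)\times[0,1])(N+1)\cong W(O)(N+1)\cup(\text{collars})$ can be identified with $W(O)(N+1)$ itself. Collarability of $W(O)^{\leq N}$ says nothing about this. The external collar built into $X_R$ is a collar of the enlarged space $W(O)(N+1)\cup(\text{collars})$, and converting it into a collar of $W(O)(N+1)$ presupposes the very identification being proved. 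Nor can you conclude that $W(O)(N+1)$ is a manifold merely because it becomes one after a collar is attached. That inference fails in general: the closure of the wild side of the Alexander horned sphere, with an external collar glued along the sphere, is a $3$-ball.

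The obstacle is not real, for a simpler reason. $W(O)(m)$ carries an explicit $\Sigma_m$-equivariant collar for every reduced operad $O$, with no collar on $O$ needed. Let $\mu$ be the maximal internal edge length of a representing metric tree, with $\mu=0$ on corollas. Then:
\begin{enumerate}
\item $\mu$ is well defined and continuous, since contracting length-$0$ edges does not change the maximum.
\item $\mathrm{Decom}(W(O))(m)=\{\mu=1\}$, because composition in $W(O)$ grafts along a root hair of length $1$.
\item Uniformly scaling internal edge lengths gives an equivariant homeomorphism $\{\mu=1\}\times(0,1]\to\{\mu>0\}$, $(x,\lambda)\mapsto\lambda x$, with inverse $y\mapsto(\mu(y)^{-1}y,\mu(y))$.
\end{enumerate}
This is the same device as the collar in Proposition \ref{prop: thick W bimodule}, and it is what the paper implicitly relies on. With it, Proposition \ref{prop: right trivial surgery} holds as stated for stratified $O$, and you can cite it directly; the rest of your argument then goes through unchanged.
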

\begin{proof}
    Since $X_R(O(2) \times [0,1]) \cong W_{[0,1]}(O)$, Theorem \ref{thm: right surgery} shows $W_{[0,1]}(O)$ is a bimodule cobordism. Its collar is described in Proposition \ref{prop: thick W bimodule}, and it is classical that the relevant inclusions are homotopy equivalences.
\end{proof}

In contrast to Proposition \ref{prop: right trivial surgery} which exhibits the ``trivial'' right surgery as the $W$-construction, it turns out that the ``trivial'' left surgery is more-or-less the inverse of the $W$-construction. Let $W(O)(2) \times [0,1]$ denote the $2$-truncated bimodule cobordism from $W(O)^{\leq 2}$ to itself (which is of course homeomorphic to $O(2) \times [0,1]$). Implicitly we consider $W(O)$ as the operad extending $W(O)^{\leq 2}$.

\begin{prop} \label{prop: left trivial surgery}
      Let $n\leq \infty$ and suppose $O$ is an $n$-truncated collarable manifold operad. There is an isomorphism of operads \[\partial_{L} X_L( W(O)(2) \times [0,1])  \cong O,\] and a compatible isomorphism of $(O,W(O))$-bimodules

    \[X_L(W(O)(2) \times [0,1]) \cong W_{[0,1]}(O).\]
\end{prop}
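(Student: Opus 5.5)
We argue by induction on arity $N\le n$, mirroring the proof of Proposition \ref{prop: right trivial surgery} with the roles of the colors $R$ and $B$ exchanged. Let $\psi:\mathcal{T}^{RBW}_{\le n}\to\mathrm{Top}$ represent the $(O,W(O))$-bimodule $W_{[0,1]}(O)$ of Proposition \ref{prop: thick W bimodule}. Here $O$ sits inside $W_{[0,1]}(O)$ as corollas with root hair of length $0$, and $W(O)$ as trees with root hair of length $1$. In arity $2$ both sides are $O(2)\times[0,1]$ with $\partial_L=O(2)$, so the base case is trivial. Assume we have compatible identifications
\[\partial_L X_L(W(O)(2)\times[0,1])^{\le N}\cong O^{\le N},\qquad X_L(W(O)(2)\times[0,1])^{\le N}\cong W_{[0,1]}(O)^{\le N},\]
respecting the left $O$-action, the right $W(O)$-action, and the boundary stratifications.

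For the inductive step in arity $N+1$, we first analyse $C:=\colim_{\mathcal{T}^{RBW}(N+1)\backslash\{\bullet_W,\bullet_B\}}\phi$. By the inductive hypothesis, its pieces are indexed by the red corolla, trees with a white root and at least one red vertex above it, and trees with a blue root and white vertices above it. We embed these into $W_{[0,1]}(O)(N+1)\cong W(O)(N+1)\times[0,1]$ as follows:
\begin{itemize}
\item the red corolla $W(O)(N+1)$ goes to the root-hair-length-$1$ copy $W(O)(N+1)\times\{1\}$;
\item white-root/red-vertex trees go to metric trees decomposable along a length-$1$ internal edge, i.e.\ $\partial W(O)(N+1)\times[0,1]$ as described in Proposition \ref{prop: thick W bimodule};
\item blue-root trees go to the left $O$-decomposables, i.e.\ $(W(O)(N+1)\setminus\mathring O(N+1))\times\{0\}$.
\end{itemize}
Gluing these, $C$ is identified with $\partial W_{[0,1]}(O)(N+1)$ minus the interior of the embedded $O(N+1)$. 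The manifold $X_L(N+1)$ is then this colimit with an external collar, times $[0,1]$. Using the equivariant collars of $O(N+1)$ and of $W_{[0,1]}(O)(N+1)$, we absorb the external collar and the $[0,1]$-factor, giving a stratification-preserving homeomorphism $X_L(N+1)\cong W_{[0,1]}(O)(N+1)$. Under this homeomorphism, $\partial_L X_L(N+1)$, the closure of the boundary minus $C\times\{1\}$, corresponds to a collared copy of $O(N+1)\cong\partial W_{[0,1]}(O)\setminus(\text{rest})$.

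Next we check the operad structure. The boundary of $\partial_L X_L(N+1)$ is $\colim_{\mathcal{T}^B\backslash\bullet_B}\phi\times\{1\}$, which by induction is $\mathrm{Decom}(O)(N+1)$. The structure maps of $\partial_L X_L$ land there by condition (7) of Theorem \ref{thm: left surgery}. Hence the identification of $\partial_L X_L(N+1)$ with $O(N+1)$ is an operad isomorphism provided it restricts to the inductively given map on decomposables. To arrange this, we choose the homeomorphism $\partial_L X_L(N+1)\cong O(N+1)$ as the identity on the boundary, extended over the two collars: the external one and the one from the $[0,1]$ direction. This uses the collar of $O(N+1)$, which is where collarability of $O$ enters. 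Compatibility with the bimodule structure maps then holds by construction, since all structure maps of $X_L$ factor through $C\times\{1\}$ or through the inclusions.

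The main obstacle is the precise bookkeeping in the first step. We must show that the three families of trees glue to exactly the complement of $\mathring O(N+1)$ in $\partial W_{[0,1]}(O)(N+1)$, with the correct stratifications. In particular we must check that the left-decomposables, which carry root length $0$ but nonzero internal lengths, meet the length-$1$ decomposables along the right loci. We must also verify that absorbing the collars can be done equivariantly and compatibly with the previously chosen identifications in lower arity. We would handle this by working throughout with the metric-tree description and the explicit collar of Proposition \ref{prop: thick W bimodule}, which is given by rescaling the root hair and the internal edge lengths.
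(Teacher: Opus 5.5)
Your proposal is correct and follows the paper's proof. It uses the same induction on arity. It matches the red corolla, the white-root/red-vertex trees and the blue-root trees with the same three pieces $W(O)(N+1)\times\{1\}$, $\partial W(O)(N+1)\times[0,1]$ and $(W(O)(N+1)\setminus\mathring O(N+1))\times\{0\}$ of $W_{[0,1]}(O)(N+1)\cong W(O)(N+1)\times[0,1]$, and it absorbs the collars using collarability of $O$ in the same way.

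The only difference is the order of the steps. The paper identifies $\partial_L X_L(N+1)$ directly with the collared colimit, i.e.\ $W(O)(N+1)$ with two collars, invokes $O\cong W(O)$, and dismisses the bimodule part as ``similar''. You instead build $X_L(N+1)\cong W_{[0,1]}(O)(N+1)$ first and read off $\partial_L X_L(N+1)$ as the complementary boundary piece $O(N+1)\times\{0\}$, which makes the compatibility of the two isomorphisms automatic.
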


\begin{proof}
We proceed as in the case of right surgery, noting that the base case of $N=2$ is trivial. Suppose we have inductively made the identifications 
  \[\partial_{L} X_L( W(O)(2) \times [0,1])^{\leq N}  \cong O^{\leq N},\] 
    \[X_L( W(O)(2) \times [0,1])^{\leq N}  \cong W_{[0,1]}(O)^{\leq N}.\]
    
    By definition, $\partial_{L}X_L( W(O)(2) \times [0,1])(N+1) $  is the union of the value of $\phi$ at the following types of trees:
\begin{itemize}
\item the red corolla,
        \item a blue root with at least one $W$ vertex attached directly to it,
        \item a white root with a red vertex attached to it.
\end{itemize}
together with an external collar.

   We identify the values of $\phi$ on these $RBW$-trees with subspaces of $W_{[0,1]}(O)(N+1)$. By definition, the value of $\phi$ at the red corolla is $W(O)(N+1)$, and we declare these trees as having root edge length $1$. Using the inductive hypothesis, we then identify the value of $\phi$ at the second type as consisting of decomposable trees which we declare have root length $0$. Using the inductive hypothesis once more, we identify the value at the third type with trees decomposable along a length $1$ edge and with arbitrary root length. As before, this canonically embeds in \[W_{[0,1]}(O)(N+1)\cong W(O)(N+1) \times [0,1],\] but this time as the union of the three subspaces \[W(O)(N+1) \times \{1\},\] \[\partial W(O)(N+1) \times [0,1],\] \[W(O)(N+1) \times \{0\} \setminus \mathring O(N+1) \times \{0\}.\] Since $O$ has an equivariant collar, this last subspace is a collar neighborhood of $\partial W(O)(N+1) \times \{0\} \subset W(O)(N+1) \times \{0\}$ by Proposition \ref{prop: isomorphic to w construction}.

   Hence, $\partial_{L}X_L( W(O)(2) \times [0,1])(N+1)$ with its boundary stratification is isomorphic to $W(O)(N+1)$ with two collars attached.
 Since $O$ is assumed to be collarable, this is isomorphic in a stratified way to $O(N+1)$ by Proposition \ref{prop: isomorphic to w construction}, and we have completed the operad component of the induction step. A similar analysis holds for the bimodule structure.

\end{proof}

\begin{remark}
    In either Proposition \ref{prop: right trivial surgery} or Proposition \ref{prop: left trivial surgery} one can replace the $2$-truncated bimodule cobordisms with $W_{[0,1]}(O)^{\leq l}$ for any $l$, and the conclusions remain true with identical proofs.
\end{remark}

\subsection{Examples of surgery} \label{subsection: examples}
\begin{ex} \label{ex: torus}
 Identify $S^{d-1}$ with the boundary of the product of discs $\del(D_m \times D_{d-m})=(S^{m-1} \times D_{d-m} \cup D_m \times S^{d-m-1})$, where $m \in \{0,\dots,d\}$ 
and $\Z/2$ acts everywhere by the antipodal action. Removing $S^{m-1} \times D_{d-m}$
and replacing it by a copy of $D_m \times S^{d-m-1}$ yields 
the $\Z/2$-manifold $S^m \times S^{d-m-1}$. The trace of the surgery $W$ is obtained by gluing $D_m \times D_{d-m}$ and
$S^{d-1} \times [0,1]$ along $S^{m-1} \times D_{d-m} \times \{1\}$ and is 
a $\Z/2$-cobordism from $S^{d-1}$ to $S^m \times S^{d-m-1}$. The $\Z/2$-action on $W$ is not free, but if $m<d-1$ then Theorem \ref{thm: sufficient condition for existence} guarantees there is a levelwise path-connected manifold operad $O$ such that $O(2)= S^m \times S^{d-m-1}$. 
\end{ex}

The next two examples show how left and right surgery differ when applied to the same cobordism on $O(2)$.

\begin{ex}\label{542_example}
Consider the above example when $m=d$. Then the cobordism is $W(2)=D_d$, the disc with the antipodal $\Z/2$-action thought of as a bordism between $S^{d-1}=\FM_d(2)$ and 
the empty manifold. In the case of left surgery, choose
$R=\FM_d$ and $B(2)=\emptyset$ in Theorem \ref{thm: left surgery} and write
$B(3):=\partial_L X_L(W)(3)$.
Then $B(3)$ is the configuration space of three distinct points in $S^d \cong\mathbb{R}^d \cup \{\infty\}$ modulo translations and positive dilations.
This space is homeomorphic to $STS^d$, the unit tangent bundle of $S^d$.
Namely given $(x,y) \in STS^d$ with 
$|x|=|y|=1$ and $x \cdot y=0 $, the map
$(x,y) \mapsto [x,y,-y]$ is a homeomorphism $STS^d \cong B(3)$.
\end{ex}

\begin{ex} \label{ex: null}
Now instead of the left surgery to the empty set, consider the right surgery. We claim the left nullbordism from $\FM_d$ to the trivial operad $1$ introduced in Proposition \ref{leftnullbordant} can be obtained by a countable sequence of right surgeries, one for each arity $l \geq 2$.
Let us choose $B=\FM_d$ in Theorem \ref{thm: right surgery}. The first step is for $l=2$: choose $W=D_d$ as a $\Z/2$-cobordism between $\FM_d(2)=S^{d-1}$ and the empty manifold as in the previous example. The result of the right surgery is an operad $O_2$ such that $O_2(2)=\emptyset$ and $O_2(3) \cong S^{2d-1}$,  identified to the space of non-diagonal $3$-tuples in $\mathbb{R}^d$, modulo translations and positive scalings, with the natural $\Sigma_3$-action. There is a $3$-truncated bimodule cobordism between $O_2$ and $1$, induced by the cone $S^{2d-1} \subset D_{2d}$. The corresponding right surgery produces an operad $O_3$ such that $O_3$ is empty in arity $2,3$, and $O_3(4) \cong S^{3d-1}$. In general $O_l$ is empty between arity 2 and $l$, and $O_l(l+1) \cong S^{ld-1}$.
\end{ex}

\begin{ex}
    From a nonequivariant cobordism between $S^{d-1}$ and $M$ there are several natural ways to produce a $\mathbb{Z}/2$-cobordism between $S^{d-1}$ with the antipodal action and $M \sharp M$.  After choosing one, we may apply the surgery extension theorems then to show, e.g., the genus $4$ surface 
\begin{center}
    \includegraphics[scale=0.2]{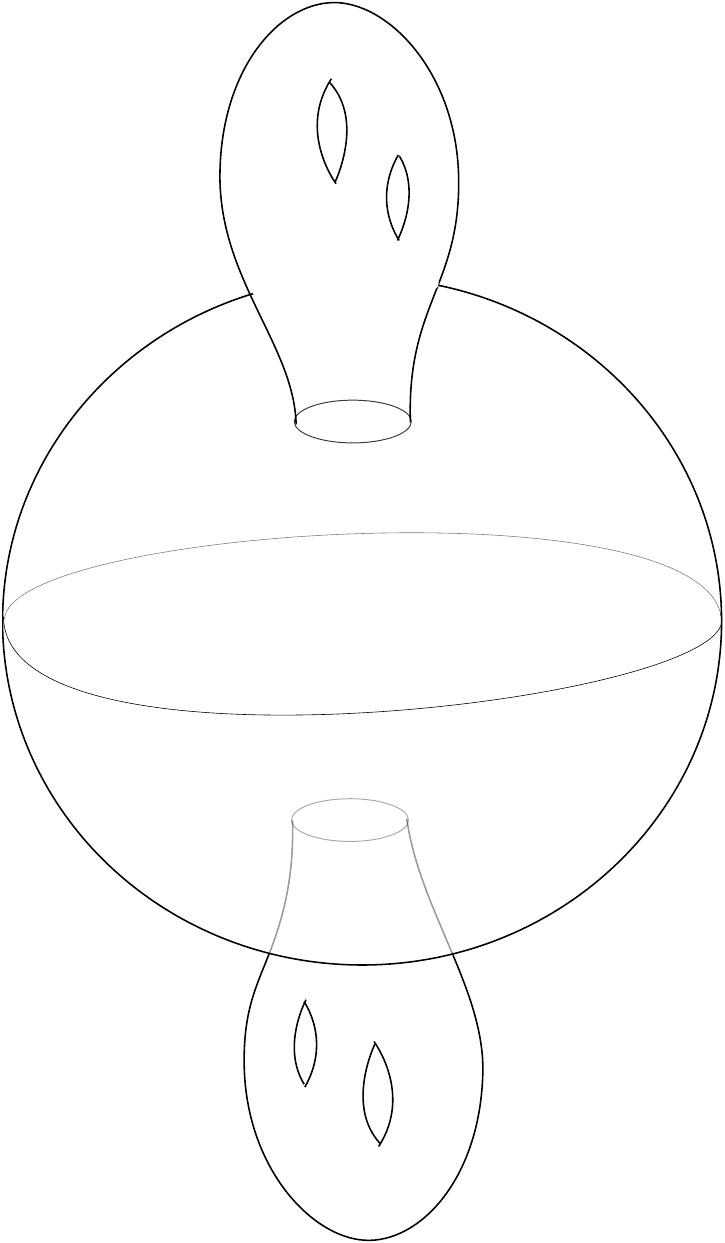}
\end{center}
with its antipodal action can be taken as the space of $2$-ary operations of a manifold operad.
\end{ex}

\begin{ex}
Fix a nontrivial homology sphere $X$ homology cobordant
to $S^{d-1}$ (these always exist in sufficiently high dimension, say $d-1\geq 6$). With a little care, the equivariant cobordism mentioned in the above example between $S^{d-1}$ and $X \sharp X$ can be chosen to be a homology $h$-cobordism. It is not difficult to see that the bimodule cobordism extension construction preserves homology bimodule $h$-cobordisms, and so by Remark \ref{remark: homology h cob} the output is a manifold operad $O$ such that \[H_\ast(O) \cong H_\ast (\mathrm{FM}_d),\] but $O(2) \cong X \sharp X$. In particular, $H_\ast(O)$ is $\mathrm{Pois}_{d-1}$, the operad governing Poisson algebras with a Lie bracket of degree $d-1$, but the fundamental group of $O(2)$ is the free product $\pi_1(X) \ast \pi_1(X)$ (since $d-1>2$).
\end{ex}

\begin{ex}
    Let $O$ be a manifold operad such that $O(2)$ admits a nontrivial collarable $\mathbb{Z}/2$-equivariant $h$-cobordism $W$ to a nonhomeomorphic manifold (such operads exist as a consequence of Theorem \ref{thm: sufficient condition for existence}), then applying Theorem \ref{thm: left surgery} or Theorem \ref{thm: right surgery} to $W$ gives examples of bimodule $h$-cobordisms on or under $O$
    which are not isomorphic to the thickened $W$-construction. In particular, by Corollary \ref{cor: h cobordism implies equiv} we can conclude that the surgered manifold operad is weakly homotopy equivalent as an operad to $O$, but is distinguished by the homeomorphism type of its space of $2$-ary operations.
\end{ex}

\subsection{Examples of algebras and modules over manifold operads}
\label{subsection: examples of algebras}

\begin{ex}
Let $W$ be an $(O,\FM_d)$-bimodule cobordism. 
For example, one could take $O=\partial X_L$ and $W=X_L$ from Theorem \ref{thm: left surgery}, obtained by left surgery applied to $R=\FM_d$. 
Let $A$ be a left $\FM_d$-module.
The relative composite $W \circ_{\FM_d} A$ is 
naturally a left module over the manifold operad $O$. An interesting case occurs when $A$ is an $\FM_d$-algebra, i.e. it is a symmetric sequence concentrated in degree $0$ ($A(n)=\emptyset$ for $n>0)$. From a homotopical perspective, these are the same as non-unital $E_d$-algebras. The relative composite $W \circ_{\FM_d} A$ is naturally an
$O$-algebra. Some explicit examples of $A$ as configuration spaces in $\mathbb{R}^d$ with summable labels are described in \cite{salvatore_proc}. 
\end{ex}

\begin{ex}
Consider the case that $W=\FM_{S^1}/S^1$ is the right nullbordism of $\FM_1$ described in Proposition \ref{prop: rightnullbordant} and $O=1$ is the trivial operad; then
$W \circ_{\FM_1} A$ is the homotopy quotient of the topological Hochschild homology of the nonunital $E_1$-algebra $A$ by its usual circle action.
\end{ex}

\begin{ex} \label{ex: right}
Let $Z$ be a collarable $(\FM_d,P)$-bimodule cobordism
where $P$ is a collarable manifold operad.
For example, one could take $Z=X_R, P=\partial X_R$ obtained from
Theorem \ref{thm: right surgery} by
performing right surgery on the Fulton-MacPherson operad 
$B=\FM_d$.
If $C$ is an $\FM_d$-right module, then 
$C \circ_{\FM_d} Z$ is a right module over $P$.

The most interesting examples of such $C$ are the partially compactified configuration spaces 
$C=\FM_M$ with $M$ a framed $d$-manifold
\cite{markl}.
In this case, $\FM_M \circ_{\FM_d} Z$ is a symmetric sequence of manifolds of dimension $md$ by a similar argument to the proof of Theorem \ref{thm: composition is manifold}.

There is a sequence of embeddings given by inclusion into the first factor
\[\mathrm{Conf}(M,\bullet) \hookrightarrow \mathrm{FM}_M \circ_{\FM_d} Z\] 
which allows one to regard this right module as a sequence of exotic partial compactifications
of the configuration spaces of $M$. These partial compactifications are controlled by infinitesimal information, defined by the preimage of the fat diagonal in $M^{\times \bullet}$ under the composite
\[\mathrm{FM}_M \circ_{\FM_d} Z \rightarrow \mathrm{FM}_M \circ_{\mathrm{FM}_d} \mathrm{com}\cong M^{\times \bullet}.\]
Unlike $\mathrm{FM}_M$, the infinitesimal information controlling these partial compactifications is stored in the bimodule cobordism $Z$ rather than the Fulton-MacPherson operad $\FM_d$. 
\end{ex}

\begin{ex}
Consider the case that
$Z$ is the left nullbordism $L$ of $\FM_d$ described in Proposition \ref{leftnullbordant} and $P=1$ is the trivial operad; then $\FM_M \circ_{\FM_d} Z $
is isomorphic to
$M^{\times \bullet}$, and this isomorphism can even be made one of right $\mathrm{com}$-modules (since we noted that $L$ has a lift to a $(\mathrm{FM}_d,\mathrm{com})$-bimodule). Unlike the standard compactification $\mathrm{Conf}(M,\bullet) \hookrightarrow M^{\times \bullet}$, the infinitesimal data of this compactification is not the fat diagonal, but rather a tubular neighborhood of the fat diagonal.
\end{ex}

\

\section{Appendix: Point set topology of colimits} \label{section: appendix}
\begin{lem}\label{colimtopology_lmm}
Let $D$ be a finite poset, i.e. a category whose object set is finite and for every pair of objects $(x,y)$ of $D$ the number of elements in $\mathrm{Mor}_D(x,y)$ is 0 or 1. Let $I:D\to \mathrm{Top}$ be a functor such that 
\begin{itemize}
    \item for all objects $x\in D$, $I(x)$ is normal (i.e., satisfies separation axioms $T_1+T_4$);
    \item for all morphisms $f$ in $D$, $I(f)$ is closed and injective; 
    \item if $f_1:x_1\to y,f_2:x_2\to y$ are morphisms of $D$ and $I(f_1)(I(x_1))\cap I(f_2)(I(x_2))\neq\emptyset$ in $I(y)$, then there exist objects $w^1,\ldots,w^n$ of $D$ and morphisms $g_1^1,\ldots,g_1^n,g_2^1,\ldots,g_2^n$ of $D$, where $g_i^j\in \mathrm{Mor}_D(w^j,x_i)$, such that 
    $f_1\circ g^j_1=f_2\circ g_2^j$ for all $j$ and 
    $$I(f_1)\big(I(x_1)\big)\cap I(f_2)\big(I(x_2)\big)=\bigcup_{j=1}^nI(f_1\circ g^j_1)\big(I(w^j)\big).$$ 
\end{itemize}
Then, 
\begin{itemize}
    \item $\colim_DI$ is normal; 
    \item for each object $x\in D$, the induced map $I(x)\to\colim_DI$ is closed and injective;  

    \item
    
    Suppose $\hat{f}_1:I(x_1)\to \colim_DI,\hat{f}_2:I(x_2)\to\colim_DI$ are the induced morphisms and $\hat{f}_1(I(x_1))\cap \hat{f}_2(I(x_2))\neq\emptyset$. Suppose $w^1,\ldots,w^n$ are those objects of $D$ such that $\mathrm{Mor}_D(w^i,x_1)\neq\emptyset$ and  $\mathrm{Mor}_D(w^i,x_2)\neq\emptyset$, and denote by $g^i_1\in \mathrm{Mor}_D(w^i,x_1)$ the morphisms. Then 
    $$\hat{f}_1\big(I(x_1)\big)\cap \hat{f}_2\big(I(x_2)\big)=\bigcup_{j=1}^n\big(\hat{f}_1\circ I(g^j_1)\big)\big(I(w^j)\big).$$ 
\end{itemize}

\end{lem}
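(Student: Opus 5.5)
We realize the colimit concretely as a quotient $q:\bigsqcup_{x\in D} I(x)\to \colim_D I$. Because every $I(f)$ is injective and the image intersections are controlled by the third hypothesis, the equivalence relation generated by the maps $I(f)$ has a simple description. Two points $a\in I(x_1)$ and $b\in I(x_2)$ are identified exactly when there is some $w$ with morphisms $g_i:w\to x_i$ and a point $c\in I(w)$ such that $I(g_1)(c)=a$ and $I(g_2)(c)=b$.

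\textbf{Step 1: the identification relation.} To establish this description, we argue by induction on the length of a zigzag in $D$. Consider a span step $a\mapsto y\leftarrow b$, where both $a$ and $b$ map to the same point of $I(y)$. The third hypothesis supplies a $w^j$ and a common preimage in $I(w^j)$. Injectivity of all the maps then lets us shorten zigzags until they reduce to a single cospan-type witness $x_1\leftarrow w\to x_2$. Once the relation is in this form, two facts follow: injectivity of $I(x)\to\colim_D I$ (take $x_1=x_2$ and use injectivity of $I(g)$), and the formula for $\hat f_1(I(x_1))\cap\hat f_2(I(x_2))$ in the third conclusion. The one subtlety is that chains may pass through objects not comparable to $x_1,x_2$. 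I would handle this with a small lemma: if $I(f_1)(c_1)=I(f_2)(c_2)$ in $I(y)$, then the witness $w^j$ can be chosen to be a lower bound of $x_1$ and $x_2$.

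\textbf{Step 2: closedness.} We must show that each map $I(x)\to\colim_D I$ is closed, i.e.\ that for $C\subset I(x)$ closed, $q^{-1}(\hat f(C))$ is closed in the disjoint union. Its intersection with $I(y)$ is the union, over the finitely many $w$ with maps $g:w\to x$ and $h:w\to y$, of the sets $I(h)\big(I(g)^{-1}(C)\big)$. Each such set is closed because $I(h)$ is a closed map. Since the union is finite, the intersection with $I(y)$ is closed. Finiteness of $D$ is used essentially here.

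\textbf{Step 3: normality.} The colimit is then a finite union of closed subspaces $\hat f(I(x))$, each homeomorphic to the normal space $I(x)$. Points are closed, so $T_1$ holds. For $T_4$ we proceed by induction over a linear extension of the finite poset, adjoining one $I(x)$ at a time. Given disjoint closed sets $A,B$ in the current colimit, we first separate them on the image $K=\bigcup_{z<x}\hat f(I(z))$. We then extend continuous Urysohn functions from the closed subspace corresponding to $K$ inside $I(x)$, namely $I(x)\cap q^{-1}(K)$, to all of $I(x)$ using Tietze extension in the normal space $I(x)$, keeping the prescribed values $0$ on $A\cap I(x)$ and $1$ on $B\cap I(x)$. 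The glued function is continuous by the pasting lemma, since the pieces form a finite closed cover. We expect this Tietze/pasting step, which must check that the new functions agree on overlaps by Step 1, to be the main technical obstacle. Step 1 is the combinatorial heart of the argument.
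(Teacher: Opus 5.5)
Your Steps 1 and 2 coincide with the paper's proof. The paper likewise reduces an arbitrary zigzag to a single span $a\xleftarrow{I(f_0)}p\xrightarrow{I(f_1)}b$ by trading each cospan $c_{i-1}\to c_i\leftarrow c_{i+1}$ for a span, using the third hypothesis and injectivity; note that you have the words ``span'' and ``cospan'' interchanged. It then proves closedness by exactly your finite-union formula $\bigcup I(h)\big(I(g)^{-1}(C)\big)$. The ``small lemma'' you anticipate is unnecessary: the third hypothesis already gives $g^j_i\in\mathrm{Mor}_D(w^j,x_i)$, and the apex of the final span is automatically a common lower bound of $x_1,x_2$ by composing.

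The real difference is in normality. The paper observes that your Step 2 computation already says the quotient map $q:\bigsqcup_x I(x)\to\colim_D I$ is closed, since every closed subset of the finite disjoint union is a finite union of closed subsets of the $I(x)$. It then invokes the standard fact that a closed continuous surjective image of a normal space is normal, which gives $T_1$ and $T_4$ at once. You instead use Steps 1--2 to exhibit $\colim_D I$ as a finite union of closed subspaces $\hat f(I(x))\cong I(x)$, and then prove by hand, via Tietze extension and the pasting lemma, that such a union is normal. That argument is correct. Compatibility on overlaps is automatic, because the Tietze extension agrees with $f_K\circ q$ on $I(x)\cap q^{-1}(K)$ and $\hat f$ is a homeomorphism onto its image. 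Your route is more self-contained but longer, and the ``main technical obstacle'' you foresee disappears entirely on the paper's route.

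One small point: $K$ should be the union of the images of the objects already adjoined, so read $z<x$ as precedence in the linear extension, not in $D$. For $z<x$ in $D$ one has $\hat f_z(I(z))\subseteq\hat f_x(I(x))$, which would make the step vacuous. In fact any enumeration of the objects works, so no linear extension is needed.
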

\begin{proof}
We first note what the third condition yields: 
recall that $\colim_DI=\bigsqcup_{x\in D}I(x)/\sim$, where $\sim$ is the equivalence relation generated by $a=I(f)(b)\implies a\sim b$ for all morphisms $f$ in $D$. 
Suppose $x,y\in D$ and $a\in I(x), b\in I(y)$, then $a\sim b$ if and only if there is a sequence of morphisms $f_0,\ldots, f_n$ in $D$ and elements $c_1,\ldots,c_n$, each $c_i$ in some $I(x_i)$ for some object $x_i$ of $D$, such that 
\[
\begin{tikzcd}
a \rar[shift left=0.2cm]{I(f_0)} \rar[phantom]{\tiny \textnormal{or}} &
c_1 \lar[shift left=0.2cm]{I(f_0)} \rar[shift left=0.2cm]{I(f_1)} \rar[phantom]{\tiny \textnormal{or}}&
c_2\lar[shift left=0.2cm]{I(f_1)}\rar[shift left=0.2cm]{I(f_2)} \rar[phantom]{\tiny \textnormal{or}}&
\cdots\lar[shift left=0.2cm]{I(f_2)}\rar[shift left=0.2cm]{I(f_n)} \rar[phantom]{\tiny \textnormal{or}}&
b\lar[shift left=0.2cm]{I(f_n)}.
\end{tikzcd}
\]
Here between each entries $c_i,c_{i+1}$ (resp. $a,c_1$ and $c_n,b$) we mean to draw only one arrow $I(f_i)$, but it can be either left-pointing or right-pointing. 

(Be aware that the diagrams in this proof mean something different than usual: $a\xrightarrow{f}b$ does not mean a map $f$ from $a$ to $b$, but rather means $f(a)=b$.)
Since if $I(f_i),I(f_{i+1})$ are both left-pointing or right-pointing, we can just compose them, we conclude that $a\sim b$ if and only if there is such a sequence where the arrows $I(f_0),\ldots,I(f_n)$ are alternatingly left-pointing and right-pointing. 
Now, by the third condition of the lemma and the injectivity condition, if part of the sequence is of the form 
$$c_{i-1}\xrightarrow{I(f_{i-1})}c_i\xleftarrow{I(f_i)}c_{i+1},$$
then there exists some object $w$ of $D$, morphisms $g_1,g_2$, and a point $p\in I(w)$ such that 
\[
\begin{tikzcd}
& p \dlar["I(g_1)"'] \drar["I(g_2)"] &\\
c_{i-1} \drar["I(f_{i-1})"'] & & c_{i+1} \dlar["I(f_i)"]\\
& c_i &
\end{tikzcd}
\]
Therefore, by replacing 
$$c_{i-1}\xrightarrow{I(f_{i-1})}c_i\xleftarrow{I(f_i)}c_{i+1}$$
with 
$$c_{i-1}\xleftarrow{I(g_1)}p\xrightarrow{I(g_2)}c_{i+1},$$
and doing this multiple times, we can change the original sequence to the form where all the arrows before a certain $c_i$ are pointing left and all the arrows after $c_i$ are pointing right. By composing some arrows together, we conclude that $a\sim b$ if and only if there exist some object $z\in D$, a point $p\in I(z)$, and morphisms $f_0,f_1$ in $D$ such that 
$$a\xleftarrow{I(f_0)}p\xrightarrow{I(f_1)}b.$$

Now that we have illustrated how the third condition in the lemma is used, we begin the proof of the statements of the lemma. 
The first statement follows from the fact that the image of a normal space under a closed surjection is normal (\cite[Proposition 1.1]{ncatlab}), and that the quotient map $q:\sqcup_{x\in D}I(x)\to \sqcup_{x\in D}I(x)/\sim=\colim_DI$ is closed. The reason $q$ is closed is the following: Let $C\subset\sqcup_{x\in D}I(x)$ be a closed subset. Since $q(C_1\cup C_2)=q(C_1)\cup q(C_2)$, without loss of generality we can assume $C\subset I(x)$ for some object $x\in D$. To say $q(C)$ is closed is to say that for all objects $y\in D$ (including $x$ itself), $\{a\in I(y)|\,\exists b\in C, a\sim b\}$ is closed in $I(y)$. 
But for $a\in I(y)$, the condition $\exists b\in C, a\sim b$ is the same as $\exists b\in C, z\in D, p\in I(z), f\in\mathrm{Mor}_D(z,y),g\in\mathrm{Mor}_D(z,x)$, such that $I(f)(p)=a,I(g)(p)=b$. Therefore, 
$$\{a\in I(y)|\,\exists b\in C, a\sim b\}=\bigcup_{z\in D,f\in\mathrm{Mor}_D(z,y),g\in\mathrm{Mor}_D(z,x)} I(f)\circ I(g)^{-1}(C).$$
Since $D$ is finite, this is a finite union. By the condition that $I(f),I(g)$ are closed, each term is closed. So, $q(C)$ is closed.

The proof of the injectivity statement is similar to the above: suppose $a,b\in I(x)$, $q(a)=q(b)\in\colim_DI$, then there exist $z, p, f,g$ as in the last paragraph, such that $I(f)(p)=a,I(g)(p)=b$. By the assumption that $|\mathrm{Mor}_D(z,x)|\le1$, the fact that $f,g\in \mathrm{Mor}_D(z,x)$ means $f=g$, and so by the injectivity of $I(f)$, $a=b$. 

It remains to prove the last statement of the lemma. That RHS $\subset$ LHS is clear. To show LHS $\subset$ RHS, suppose $c\in$ LHS, then there exist
$a\in I(x_1)$ and $b\in I(x_2)$ such that $\hat{f}_1(a)=c$, $\hat{f}_2(b)=c$. This means $a\sim b$, and 
so there exist $z, p, f,g$ as in the previous paragraph, such that $I(f)(p)=a,I(g)(p)=b$. This shows that $z$ is one of the $w^i$'s in the RHS and $c\in$ RHS. 
\end{proof}

What Lemma \ref{colimtopology_lmm} says is as follows: if a functor $I$ satisfies the three conditions, then adding a new final term ---the colimit--- to the diagram $I$, these three conditions are still satisfied for the new diagram. 

\begin{lem}\label{operadtopology_lmm}
    Suppose $O$ is an $n$-truncated operad (resp. bimodule) represented by $\phi:\mathcal{T}_{\le n}\to \mathrm{Top}$ (resp. $\phi:\mathcal{T}^{RBW}_{\le n}\to \mathrm{Top}$), satisfying the conditions 
    \begin{enumerate}
        \item For every finite set $S$ such that $|S|\le n$, $\phi(\bullet_S)$ is a normal space (resp., for all $C\in\{R,B,W\}$, $\phi(\bullet_{C,S})$ is a normal space);
        \item \label{702embeddingcondition_item}
        for every finite set $S$ with $|S|\le n$, the map(s) 
        $$\colim_{\mathcal{T}(S)\backslash\bullet_S}\phi \longrightarrow \phi(\bullet_S)$$
        \begin{align*}  
        \big(resp. \qquad &\colim_{\mathcal{T}^{RBW}(S)\backslash\bullet_{W,S}}\phi \longrightarrow \phi(\bullet_{W,S}),\\
        &\colim_{\mathcal{T}^{R}(S)\backslash\bullet_{R,S}}\phi \longrightarrow \phi(\bullet_{R,S}),\\
        &\colim_{\mathcal{T}^{B}(S)\backslash\bullet_{B,S}}\phi \longrightarrow \phi(\bullet_{B,S})\quad\big)
        \end{align*}
        are closed and injective. 
    \end{enumerate}
     Then, for every $S$ such that $|S|\le n$ the functor 
    $$\phi: \mathcal{T}(S)\backslash\bullet_S\longrightarrow\mathrm{Top} \qquad \big(resp.\quad \phi:\mathcal{T}^{RBW}(S)\backslash\bullet_{W,S}\longrightarrow\mathrm{Top}\big)$$
    satisfies the conditions imposed on $I$ in Lemma \ref{colimtopology_lmm}.
\end{lem}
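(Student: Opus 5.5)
We need to verify the three hypotheses of Lemma \ref{colimtopology_lmm} for the restriction of $\phi$ to $D=\mathcal{T}(S)\backslash\bullet_S$ (resp. $\mathcal{T}^{RBW}(S)\backslash\bullet_{W,S}$). This diagram is a finite poset, so Lemma \ref{colimtopology_lmm} applies once the hypotheses hold. The proof is an induction on $|S|$, combined with the product decomposition $\phi(T)\cong\prod_{v}\phi(\bullet_{\overline{cld}(v)})$ and the formula expressing the value of $\phi$ on a contraction as a product of contractions of subtrees to corollas.

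\textbf{Normality and closed injectivity.} Each $\phi(T)$ is a finite product of the spaces $\phi(\bullet_{\overline{cld}(v)})$, each of which is compact Hausdorff. (Compactness is the standing assumption; Hausdorffness follows from normality.) A finite product of compact Hausdorff spaces is compact Hausdorff, hence normal, so $\phi(T)$ is normal. For a morphism $T\to T'$, the map $\phi(T\to T')$ is a product over the vertices $[T_k]$ of $T'$ of maps $\phi(T_k)\to\phi(\bullet_{\overline{cld}([T_k])})$, where $|\overline{cld}([T_k])|\le|S|$. Each factor is the composite
$$\phi(T_k)\to\colim_{\mathcal{T}(\overline{cld}([T_k]))\backslash\bullet}\phi\to\phi(\bullet).$$
The first arrow is closed and injective by the inductive conclusion of Lemma \ref{colimtopology_lmm} in smaller arity (or trivially, if $T_k$ is a single vertex). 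The second arrow is closed and injective by hypothesis (\ref{702embeddingcondition_item}). A finite product of closed injections between compact Hausdorff spaces is a closed injection, since an injective continuous map from a compact space to a Hausdorff space is a closed embedding. In the bimodule case, for a corolla of color $R$ or $B$ we use the corresponding hypothesis for $\mathcal{T}^R$ or $\mathcal{T}^B$ in the same way.

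\textbf{Intersection condition.} This is the main obstacle. Suppose $f_i:T_i\to T$ for $i=1,2$ with intersecting images in $\phi(T)$. We claim the intersection is the union of the images of $\phi(U)$ over the trees $U$ admitting contractions to both $T_1$ and $T_2$ compatible over $T$; since contractions are unique, commutativity is automatic. By the product decomposition it suffices to treat a single vertex $v$ of $T$, since images and intersections of product maps are products, and a product of unions of images is the union of the images of the corresponding product trees. So reduce to $T=\bullet_{S'}$ (or $\bullet_{C,S'}$) with $|S'|\le|S|$, where $T_1,T_2$ are non-corolla trees mapping to it. Here the third conclusion of Lemma \ref{colimtopology_lmm}, already established for the smaller colimit in the inductive step, describes the intersection inside $\colim_{D'}\phi$ as the union of the images of the common lower bounds. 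Injectivity of $\colim_{D'}\phi\to\phi(\bullet)$ transfers this description to $\phi(\bullet)$. If $T_1$ or $T_2$ equals $T$ itself, the condition is trivial.

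The delicate points are bookkeeping. First, the induction must run simultaneously on $|S|$ for the statement "the conclusions of Lemma \ref{colimtopology_lmm} hold for the diagram of $S'$-trees" for all $|S'|<|S|$, together with the top arity handled through the corolla case as above. Second, in the RBW case one must check that common lower bounds of colored trees exist in the colored poset and correspond to those from the product factors. This follows from the compatibility of the contraction-system description (Lemma \ref{lem: iso between contractions and systems} and Proposition \ref{prop: join of contraction systems}) with products over the vertices of $T$.
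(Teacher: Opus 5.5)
Your proposal is correct and follows essentially the same route as the paper. Both argue by induction on $|S|$, using the product decomposition of $\phi$ on non-corolla trees to reduce all three conditions to corollas of smaller arity, where the conclusions of Lemma \ref{colimtopology_lmm} and hypothesis (\ref{702embeddingcondition_item}) apply; in the bimodule case the full-arity $R$- and $B$-corollas are handled via the operad case. You give more detail than the paper's terse proof, notably the grafting argument for the intersection condition and the explicit appeal to levelwise compactness (the standing assumption of Section \ref{section: Stratifications of manifold operads and bimodule cobordisms}), which supplies the normality of the product spaces $\phi(T)$ that the paper leaves implicit, since finite products of normal spaces need not be normal.
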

\begin{proof}
    By induction on the size of $S$. When $|S|=2$ this is clear, as there is no object in $\mathcal{T}(S)\backslash\bullet_S$ and no morphism in $\mathcal{T}^{RBW}(S)\backslash\bullet_{W,S}$.

    Suppose the conclusion of the lemma holds for $|S|<m$, then by the conclusion of Lemma \ref{colimtopology_lmm} and condition (\ref{702embeddingcondition_item}) above the functor
    $$\phi: \mathcal{T}(S)\longrightarrow\mathrm{Top} \qquad \big(resp.\quad \phi:\mathcal{T}^{RBW}(S)\longrightarrow\mathrm{Top}\big)$$
    also satisfies the conditions in Lemma \ref{colimtopology_lmm}. 

    It remains to show the conclusion of the lemma for $S$ with $|S|=m$.     
    In the operad case, this works because $\mathcal{T}(S)\backslash\bullet_S$ consists of trees with more than one vertex, and so $\phi(T)$ is defined by taking products of some $\phi(\bullet_{S'})$ with $|S|<m$.
   The conditions of Lemma \ref{colimtopology_lmm} can then be checked using the inductive hypothesis.

    In the bimodule case, the argument is similar, except that we also need to apply the conclusion of the lemma in the operad case to the $R$- and $B$-parts first. 
\end{proof}

\bibliographystyle{plain}
\bibliography{main}

\vspace{1cm}

\includegraphics[scale=.33]{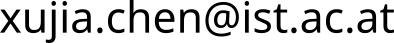}\\
Institute of Science and Technology Austria\\
Am Campus 1, 3400 Klosterneuburg, Austria

~\\
\includegraphics[scale=0.33]{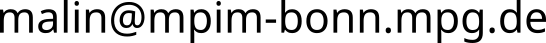}\\
Max Planck Institute for Mathematics\\
Vivatsgasse 7, 53111 Bonn, Germany
~\\

\includegraphics[scale=0.33]{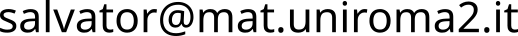}

Dipartimento di Matematica \\
Universit\`a di Roma Tor Vergata\\
Via della Ricerca Scientifica,
00133 Roma, Italy
\end{document}